\numberwithin{equation}{section}
\theoremstyle{plain}
\newtheorem{theorem}{Théorème}[section]
\newtheorem*{theorem*}{Théorème}
\newtheorem{lemma}[theorem]{Lemme}
\newtheorem*{lemma*}{Lemme}
\newtheorem{proposition}[theorem]{Proposition}
\newtheorem*{proposition*}{Proposition}
\newtheorem{corollaire}[theorem]{Corollaire}
\newtheorem*{corollaire*}{Corollaire}
\theoremstyle{remark}
\newtheorem{remark}[theorem]{Remarque}
\newtheorem*{remark*}{Remarque}
\newtheorem{exemple}[theorem]{Exemple}
\theoremstyle{definition}
\newtheorem{Def}[theorem]{Définition}
\newtheorem*{Def*}{Définition}
\newtheorem{notation}[theorem]{Notation}
\newtheorem*{notation*}{Notation}
\newtheorem{question}[theorem]{Question}
\newcommand{\corn}{{\,\begin{picture}(5,5)(0,0) \put(0,0){\line(1,0){6}}\put(0,0){\line(0,1){6}}\end{picture}\,}}
\newcommand {\ie}{c.\mbox-à\mbox-d. }
\newenvironment{keywords}{
       \list{}{\advance\topsep by0.35cm\relax\small
       \leftmargin=1cm
       \labelwidth=0.35cm
       \listparindent=0.35cm
       \itemindent\listparindent
       \rightmargin\leftmargin}\item[\hskip\labelsep
                                     \bfseries Mots-clés:]}
{\endlist}
\def\G{\mathbb G}
\def\H{\mathcal{H}}
\def\S{\mathcal{S}}
\def\diam{\operatorname{diam}}
\def\dist{\operatorname{dist}}
\def\h{\mathbb{H}^1}
\def\f{F^{-1}(0)}
\def\r{\mathbb{R}}
\def\Ker{\operatorname{Ker}}
\def\Im{\operatorname{Im}}
\def\Var{\underline{\operatorname{Var}}}
\def\a{\mathcal{T}}
\def\Z{\mathcal{Z}}
\def\Lift{\operatorname{Z}}
\begin{document}

\title{Rugosité des lignes de niveau des applications différentiables sur le groupe d'Heisenberg}

\author{KOZHEVNIKOV Artem\\
\'{E}cole Normale Supérieure, Paris, France\\
email: artem.kozhevnikov@ens.fr}

\date{4 janvier 2011}

\maketitle
\begin{abstract} 
Nous étudions les propriétés métriques des lignes de niveau des applications dif\-fé\-ren\-tiables (au sens intrinsèque) définies sur le premier groupe d'Heisenberg $(\h,d_\infty)$, muni de sa structure sous-riemannienne standard. Nous présentons notamment l'analyse exhaustive du cas d'une application $F\in C^1_H(\h,\r^2)$ dont la différentielle horizontale $D_hF$ est surjective. On découvre, en particulier, que les lignes de niveau d'une telle application peuvent être de nature rugueuse et ne font pas partie des "sous-variétés intrinsèques régulières".  
\end{abstract}

\begin{keywords} groupe d'Heisenberg, différentiabilité horizontale, théorème des fonctions implicites, sous-variété intrinsèque, mesure de Hausdorff sous-riemannienne, ensemble Reifenberg plat, courbes verticales, régularité d'Ahlfors, chemins rugueux, intégration de Stieltjes, formule de l'aire, formule de la coaire, EDO sans unicité.   
\end{keywords}

\newpage   
\setcounter{page}{2}
\tableofcontents
\newpage 
\section{Introduction}
\subsection{Motivation et résultats principaux} 
Cet article est consacré à l'étude des propriétés métriques des \textit{sous-variétés intrinsèques} dans le cadre de la géométrie sous-riemannienne. 
Nous traitons ici l'exemple de la structure sous-riemannienne non-triviale la plus simple, \ie celui du premier groupe d'Heisenberg $\h$. 
Pour nous, une sous-variété intrinsèque signifie un ensemble qui coïncide localement avec une ligne de niveau d'une application différentiable intrinsèquement dont la différentielle est surjective, par analogie avec le théorème classique des fonctions implicites. 

Le théorème classique des fonctions implicites dit que localement l'ensemble de niveau $G^{-1}(0)$ d'une application $G\in C^1(\r^n, \r^m)$, $G(0)=0$ et $dG(0)\langle \r^n\rangle =\r^m$, peut être représenté comme un graphe (d'une application $C^1$) associé à une décomposition de l'espace tangent en $\r^n=\Ker dG(0) \oplus \r^m$, où la différentielle $dG(0)$ restreinte au deuxième facteur $\r^m$ est un isomorphisme.  
Si on se place maintenant dans le cadre de la différentiabilité horizontale, on voit apparaître un nouveau phénomène algébrique propre à  la \textit{géométrie non-commutative}. Etant donné un morphisme de groupe surjectif $L: \G\to \mathbb{H}$, il n'est pas toujours possible de trouver "le deuxième facteur", \ie d'avoir une décomposition $\G=(\Ker L)\cdot \mathbb {H}$ en produit (semi-direct) de deux sous-groupes. Autrement dit, un morphisme $L: \G\to \mathbb{H}$ dans la suite exacte de groupes
 $$\{0\} \longrightarrow \Ker L \longrightarrow \G \xrightarrow{\ L\ } \mathbb{H} \to \{0\}$$ 
 n'est pas toujours scindé (à la différence du cas des groupes abéliens $\r^n$ et $\r^m$). 
 
Si, tout de même, la différentielle horizontale $D_hF(0) : \G\to \mathbb{H}$ entre deux groupes de Carnot est scindée, alors le théorème des fonctions implicites a lieu dans sa forme presque classique \cite{magnani} (voir théorème \ref{imptheor} pour la formulation précise). Les ensembles de niveau correspondants s'appellent dans ce cas-là sous-variétés intrinsèques \textit{régulières}  selon la terminologie de \cite{regularsubmanifoldsheisenberg}. Les variétés intrinsèques régulières ont été surtout beaucoup étudiées en codimension $1$ (hypersurfaces régulières), où leur régularité du point de vue de la géométrie sous-riemannienne a été établie  \cite{ structureperimeter, Heishypesurfaces, regularsubmanifoldsheisenberg}. 
 
Dans le travail présent nous considérons l'exemple modèle le plus simple d'une sous-variété intrinsèque pour laquelle la différentielle horizontale en question n'est pas scindée et le théorème \ref{imptheor} ne s'applique pas. Cet exemple est donné par une ligne de niveau de l'application $F\in C^1_H(\h,\r^2)$ avec $D_h F$ surjective. 
Les résultats que nous obtenons sont partiellement résumés dans 

\begin{theorem}\label{t: prinsipal} Soit $F\in C^1_H(\h,\r^2)$ telle que $F(0)=0$ et $D_h F(0)$ est surjective. Alors il existe un voisinage compact $U$ de l'élément neutre $0\in \h$ tel que les propriétés suivantes sont vérifiées :
\begin{enumerate}
\item L'ensemble $\f$ est $\varepsilon$\textsf{-Reifenberg plat} dans $U$ par rapport au centre de $\h$ (dit sous-groupe vertical) avec $\varepsilon \to 0$ uniformément lorsque l'échelle se raffine (définition \ref{d: reifenbegplat}, lemme \ref{platequation}). C'est en fait une caractérisation locale \textsf{exacte} de $\f$ au voisinage de $0$ (proposition \ref{p: reifWhitneysuff });
\item\label{mleo} L'ensemble $\Gamma:=\f\cap U$ est un arc simple (appelé \textsf{courbe verticale}) (corollaire  \ref{reifenbergcurve} du théorème \ref{t: parametrtheorem});
\item La dimension sous-riemannienne de $\Gamma$ égale $2$ : $\dim_h \Gamma = 2$ (corollaire \ref{airegamma} du lemme \ref{dimensionestime});
\item La dimension euclidienne de $\Gamma$ peut prendre toute valeur dans l'intervalle $[1,2]$ (lemme \ref{dimriem});
\item La "\textsf{formule de l'aire}" pour la mesure de Hausdorff sous-riemannienne de $\Gamma$ est donnée par 
\begin{equation}\label{e: aireenonce} 
 \H^2_\infty(\Gamma)=\frac{1}{2} \S^2_\infty(\Gamma)=\liminf\limits_{\|\sigma\|\to 0} \sum_\sigma d_\infty(A_i,A_{i+1})^2,
\end{equation}
 où $\sigma= \{A_0<A_1<\ldots< A_n\}$ désigne une subdivision ordonnée de $\Gamma$ ($A_0$ et $A_n$ sont les extrémités de $\Gamma$), et $\|\sigma\|= \max\limits_i d_\infty(A_i,A_{i+1})$ (corollaire \ref{airegamma} de la proposition \ref{generaire});  
\item Si, en outre, $F\in C^{1,\alpha}_H(\h,\r^2)$, $\alpha>0$, (ou même sous des hypothèses plus faibles sur la régularité de $F$), alors 
la courbe verticale $\Gamma$ est fortement régulière au sens d'Ahlfors, et la formule de l'aire se réécrit (quitte à choisir la bonne orientation) 
\begin{equation}\label{e: aireregulierenonce}
  \H^2_\infty(\Gamma)=\frac{1}{2} \S^2_\infty(\Gamma) = \int\limits_{\Gamma} \,dz+2\int\limits_{\Gamma}x\,dy-2\int\limits_{\Gamma} y\,dx,
\end{equation}
où il s'agit des \textsf{intégrales de Stieltjes} (lemme \ref{aireregulier}, proposition \ref{fortregular}, lemme \ref{regularderivees});
\item \label{irreg} Il existe, néanmoins, des exemples de courbes verticales $\Gamma_{1,2}$ "\textsf{rugueuses}" telles que 
$$\H_\infty^2(\Gamma_1)=\infty, \quad \H_\infty^2(\Gamma_2)=0 \quad (\text{exemples \ref{mesureinfinie} et \ref{mesurenulle}}).$$
  Nous obtenons également la \textsf{formule de la coaire}  (théorème \ref{theoremcoaire}) pour les applications $F\in C^{1,\alpha}_H(\h,\r^2)$, $\alpha>0$.
\end{enumerate}
\end{theorem}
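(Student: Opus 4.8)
Le plan est de traiter ce th\'eor\`eme comme la synth\`ese des sections qui suivent, la clef \'etant de d\'ecrire $\f$ au voisinage de $0$ comme une perturbation du sous-groupe vertical (le centre) $\Ker D_hF(0)$. On se ram\`ene d'abord au cas o\`u $D_hF(0)$ est sous forme normalis\'ee, quitte \`a composer $F$ par un automorphisme lin\'eaire de $\r^2$: son noyau est alors le centre de $\h$; la suite exacte correspondante n'est pas scind\'ee --- il n'y a pas de sous-groupe horizontal de dimension $2$ dans $\h$ ---, ce qui explique que le th\'eor\`eme \ref{imptheor} ne s'applique pas. On \'etablit ensuite l'estim\'ee de base issue de la $C^1_H$-diff\'erentiabilit\'e: pour $p,q\in\f$ proches de $0$ on a $D_hF(p)[p^{-1}q]=o(d_\infty(p,q))$ uniform\'ement, et $D_hF(p)$ est proche de $D_hF(0)$, dont la restriction au plan horizontal est inversible; comme $D_hF(p)$ et $D_hF(0)$ s'annulent tous deux sur le centre, on en d\'eduit que le d\'eplacement horizontal entre $p$ et $q$ est $o(d_\infty(p,q))$. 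En couplant ceci \`a une surjectivit\'e quantitative (argument de degr\'e ou de point fixe reposant sur la continuit\'e de $D_hF$) assurant que $\f$ ``remplit'' tout un petit segment vertical au voisinage de chacun de ses points, on obtient le lemme \ref{platequation}.

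\textit{Points (1) et (2).} La platitude de Reifenberg par rapport aux verticales (d\'efinition \ref{d: reifenbegplat}) r\'esulte des deux contr\^oles pr\'ec\'edents ($\f$ proche d'un segment vertical et r\'eciproquement), avec $\varepsilon(r)\to0$; pour la caract\'erisation exacte (proposition \ref{p: reifWhitneysuff }) on proc\`ede par une extension \`a la Whitney produisant un $F\in C^1_H$ s'annulant sur un ensemble donn\'e plat au sens de Reifenberg. Pour l'arc simple, je param\'etrerais $\Gamma$ par la coordonn\'ee centrale $z$, $z\mapsto\gamma(z)=(x(z),y(z),z)$, la surjectivit\'e quantitative (sous sa forme raffin\'ee de type point fixe, qui fournit aussi l'unicit\'e) montrant que chaque niveau $\{z=\text{cste}\}$ rencontre $\f$ en exactement un point pr\`es de $0$; continuit\'e et injectivit\'e de $\gamma$ en r\'esultent (th\'eor\`eme \ref{t: parametrtheorem}, corollaire \ref{reifenbergcurve}). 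L'obstacle ici est qu'aucune information sur la d\'eriv\'ee de $F$ dans la direction verticale n'est disponible: le th\'eor\`eme des fonctions implicites euclidien ne s'applique pas, et la r\'egularit\'e de $\gamma$ doit \^etre extraite de la seule diff\'erentiabilit\'e horizontale et de la structure de Reifenberg.

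\textit{Points (3), (4) et (5).} Pour le point (3), la majoration $\dim_h\Gamma\le2$ vient de recouvrements adapt\'es \`a la structure quasi-verticale: une boule de rayon $\rho$ centr\'ee sur $\Gamma$ ne rencontre qu'une portion contr\^ol\'ee de $\Gamma$ (un petit intervalle de $z$, $\Gamma$ \'etant un graphe sur $z$), d'o\`u $\H^2_\infty(\Gamma\cap B(p,r))\lesssim r^2$; la minoration $\dim_h\Gamma\ge2$ s'obtient via la projection centrale $\pi_z$, qui contracte les distances essentiellement comme leur carr\'e et envoie $\Gamma$ sur un vrai intervalle, suivie d'un argument de Frostman (lemme \ref{dimensionestime}) --- c'est l'un des points d\'elicats, pr\'ecis\'ement parce que la $C^1_H$-diff\'erentiabilit\'e ne fournit aucun taux. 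Le point (4): la minoration $1$ est topologique, et pour chaque $\beta\in[1,2]$ on construit $F$ dont les fonctions-graphe $x(z),y(z)$ ont l'oscillation h\"old\'erienne voulue pour r\'ealiser la dimension euclidienne $\beta$ (lemme \ref{dimriem}). Le point (5): on identifie $\S^2_\infty$ et $\H^2_\infty$ \`a la ``$d_\infty^2$-variation'' $\liminf_{\|\sigma\|\to0}\sum_\sigma d_\infty(A_i,A_{i+1})^2$; pour le segment vertical mod\`ele de ``longueur'' $L$ en $z$, un calcul \'el\'ementaire de recouvrements donne $\S^2_\infty=2L$, $\H^2_\infty=L$ et $\liminf\sum d_\infty^2=L$, le facteur $2$ \'etant exactement l'\'ecart entre les mesures de Hausdorff sph\'erique et non sph\'erique pour une verticale; on transf\`ere ensuite \`a $\Gamma$ gr\^ace \`a la platitude de Reifenberg et au fait qu'un arc vertical a un diam\`etre comparable \`a la $d_\infty$-distance de ses extr\'emit\'es, de sorte que les sous-arcs d'une subdivision forment des recouvrements quasi-optimaux par boules et par ensembles (proposition \ref{generaire}, corollaire \ref{airegamma}).

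\textit{Points (6) et (7).} Sous l'hypoth\`ese $F\in C^{1,\alpha}_H$, le taux de platitude s'am\'eliore en $\varepsilon(r)\lesssim r^\alpha$, d'o\`u $x(z)$ et $y(z)$ deviennent $\tfrac{1+\alpha}{2}$-h\"old\'eriennes en $z$ (estim\'ee quantitative de type fonctions implicites: c'est le point technique central du cas r\'egulier). On en d\'eduit la r\'egularit\'e d'Ahlfors $\H^2_\infty(\Gamma\cap B(p,r))\asymp r^2$ --- majoration par la platitude, minoration par une estim\'ee de non-d\'eg\'en\'erescence assurant une progression assez monotone en $z$, la ``forte'' r\'egularit\'e donnant en outre la comparabilit\'e avec la restriction de $\S^2_\infty$ \`a $\Gamma$ (proposition \ref{fortregular}) --- et, comme $2\cdot\tfrac{1+\alpha}{2}=1+\alpha>1$, la convergence des int\'egrales de Young $\int_\Gamma x\,dy$ et $\int_\Gamma y\,dx$. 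En observant que $d_\infty(\gamma(z_i),\gamma(z_{i+1}))^2$ est, \`a un terme d'ordre inf\'erieur pr\`es, la valeur absolue de l'accroissement, le long de $\Gamma$, de la forme de contact $dz+2x\,dy-2y\,dx$, et en montrant que, pour la bonne orientation, cette forme est de signe constant le long de $\Gamma$ (les valeurs absolues et la $\liminf$ disparaissant alors), on obtient \eqref{e: aireregulierenonce} (lemme \ref{aireregulier}, lemme \ref{regularderivees}); la formule de la coaire (th\'eor\`eme \ref{theoremcoaire}) en r\'esulte en int\'egrant cette formule de l'aire sur les niveaux $F^{-1}(c)$. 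Enfin, pour le point (7), avec la seule hypoth\`ese $C^1_H$ les fonctions $x,y$ ne sont que ``$o(\sqrt{\cdot})$-h\"old\'eriennes'', et l'on construit $F$ dont la courbe de niveau spirale de fa\c{c}on contr\^ol\'ee: soit l'aire sign\'ee $\int x\,dy-y\,dx$ s'accumule assez violemment pour que $\liminf\sum d_\infty^2=\infty$ (exemple \ref{mesureinfinie}), soit elle compense exactement la progression en $z$ pour que $\liminf\sum d_\infty^2=0$ (exemple \ref{mesurenulle}, $\Gamma_2$ restant de dimension sous-riemannienne $2$, donc non rectifiable). L'obstacle principal, pr\'esent \`a tous les \'etages, est l'absence de tout contr\^ole de la direction verticale sous la seule hypoth\`ese $C^1_H$: chaque outil classique d'EDP ou de fonctions implicites doit \^etre remplac\'e par des arguments purement m\'etriques (Reifenberg), et, dans le cas r\'egulier, par l'int\'egration de chemins rugueux, la param\'etrisation naturelle n'\'etant jamais $C^1$.
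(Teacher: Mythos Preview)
Votre plan est globalement fid\`ele \`a l'architecture de l'article, mais il contient une erreur de fond au point~(2) qui contamine aussi vos points~(3) et~(4). Vous proposez de param\'etrer $\Gamma$ par la coordonn\'ee centrale $z$, en affirmant que l'argument de point fixe fournit non seulement la surjectivit\'e mais aussi l'unicit\'e de l'intersection de $\f$ avec chaque niveau $\{z=\text{cste}\}$. C'est faux: l'article construit explicitement (exemple~\ref{nonunique}) un compact $\mathcal{A}=\{0\}\cup\{A_n\}\cup\{B_k\}$ avec $A_n=(1/n,0,1/n)$ et $B_k=(1/k,-1/k^2,1/k)$, v\'erifiant la condition de Whitney, donc contenu dans un $\f$ avec $D_hF$ surjective, et pour lequel $A_k$ et $B_k$ sont deux points distincts de $\f$ \`a m\^eme hauteur $z=1/k$. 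La seule unicit\'e disponible (proposition~\ref{injplanhorizontal}) concerne les \emph{plans horizontaux} $h_{A,r}$ attach\'es aux points de $\f$, pas les niveaux euclidiens $\{z=\text{cste}\}$; ces deux familles de surfaces ne co\"incident pas hors de l'origine.

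C'est pr\'ecis\'ement cette obstruction qui motive la strat\'egie de l'article, assez diff\'erente de la v\^otre. Pour le point~(2), le th\'eor\`eme~\ref{t: parametrtheorem} construit le param\'etrage par un algorithme dyadique de type Reifenberg: \`a chaque \'etape on choisit un point ``au milieu'' (au sens de $d_\infty$, pas de $z$) entre deux points d\'ej\`a obtenus, en exploitant la platitude~\eqref{parReif} et le fait que les c\^ones verticaux $\mathcal{C}^\pm_{r,\varepsilon}$ se s\'eparent (proposition~\ref{diamcone}); une approche alternative (sous-section~\ref{s: considtop}) passe par la s\'election d'un ``flot sans p\'en\'etration'' des courbes int\'egrales de $W^\phi$ sur une surface $\h$-r\'eguli\`ere. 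Pour les points~(3) et~(5), l'article n'utilise pas la projection sur $Oz$ mais une structure abstraite de \emph{quasi-m\'etrique plate}: $d_\infty^2$ restreinte \`a $\Gamma$ v\'erifie~\eqref{e: kappaplate}, et c'est cette seule propri\'et\'e qui donne $\dim_\kappa=1$ (donc $\dim_h\Gamma=2$) via une distribution de masse dyadique (proposition~\ref{dimensionestime}) et la formule de l'aire (proposition~\ref{generaire}). Pour le point~(4), les exemples sont construits par \emph{rel\`evement vertical} de courbes planes h\"old\'eriennes (d\'efinition~\ref{d: relevementvertic}), et non comme graphes sur $z$. Votre esquisse des points~(6) et~(7) est en revanche conforme \`a l'esprit de l'article, les exemples irr\'eguliers \'etant r\'ealis\'es via des s\'eries de Fourier lacunaires (sous-section~\ref{ondelettes}).
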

Notons que la propriété \ref{mleo} a été récemment indépendamment obtenue dans \cite{magnleo}. 

Ainsi,  la propriété \ref{irreg} dit qu'une sous-variété intrinsèque $\Gamma$, qui est "irrégulière" pour la raison terminologique, peut, en effet, être irrégulière du point de vue métrique. La question naturelle dans ce contexte est de comprendre si ce phénomène se produit de façon systématique. Autrement dit, est-ce que dans la situation, où la différentielle horizontale (d'une application de classe $C^1_H$ entre deux groupes de Carnot) est surjective mais n'est pas scindée, des lignes de niveau correspondant sont toujours de la bonne dimension mais peuvent être irrégulières (au sens d'Ahlfors, par exemple)? Le cas où le groupe de Carnot image est non abélien sera certainement plus intrigant. D'une part, cela nécessitera probablement un certain analogue du théorème de prolongement de Whitney pour les applications de classe $C^1_H$ dont l'image est un groupe non abélien. D'autre part, il se peut que pour une raison de rigidité  structurelle, toute application de $C^1_H$ avec la différentielle horizontale surjective soit forcement plus régulière qu'on n'y penserait (un morphisme, par exemple). On peut se demander s'il existe des groupes de Carnot $\G$ et $\Bbb H$ tels que l'espace  $C^1_H(\G, \Bbb H)$, quotienté par l'action évidente des dilatations et translations, possède des points isolés.

\subsection{Description et structure de l'article}

Nous commençons par introduire la notion de groupe de Carnot dans la sous-section \ref{ss: notations}. Nous concentrons notre attention sur une structure métrique intrinsèque qui est donnée sur les groupes de Carnot par une \emph{distance homogène invariante à gauche} (définition \ref{d: normhomogene}). Remarquons qu'une telle distance sur un groupe de Carnot non-abélien n'est pas (bilipschitz) équivalente à une métrique riemannienne,  même localement. Un exemple privilégié d'une distance intrinsèque est celui d'une métrique dite de \emph{Carnot-Carathéodory}. Elle est définie entre deux points comme l'infimum de longueurs de courbes horizontales (absolument continues) reliant ces points, ce qui est naturellement motivé par la théorie du contrôle optimal. 

Dans notre étude, le choix concret d'une distance intrinsèque ne joue aucun rôle important. C'est, d'une part, parce que deux choix différents donnent toujours deux distances (bilipschitz) équivalentes et, d'autre part, si on veut passer d'une distance à une autre dans les formules \eqref{e: aireenonce} \eqref{e: aireregulierenonce}  il suffit seulement de tenir compte de leur rapport le long de l'axe vertical (remarque \ref{rapportdeuxdist}).  

Dans la sous-section \ref{ss: differenthorizont} nous présentons le concept de la différentiabilité horizontale (définition \ref{d: derivationhorizontale}) et ainsi que certains résultats la concernant qui nous seront utiles dans la suite. Ainsi, nous utiliserons le théorème de \textit{prolongement de Whitney} (théorèmes \ref{Whitneytheorem}, \ref{Whitneyholder}) pour caractériser de manière réversible les courbes verticales (condition \eqref{whitneycond} et remarque \ref{Whit}) et pour en construire des exemples avec des propriétés désirées par la suite. 

Dans la section \ref{hsurfaces} nous présentons un survol de certains aspects de la théorie des hypersurfaces $\h$-régulières qui est  basé sur les travaux \cite{Heishypesurfaces, regularsubmanifoldsheisenberg, submanifold}. On rappelle d'abord le théorème des fonctions implicites  (le cas de la différentielle horizontale scindée; théorème \ref{imptheor}) qui nous met en contexte général où émergent les surfaces intrinsèques régulières. Parmi elles, c'est notamment le cas des hypersurfaces (en $\operatorname{codim}=1$) qui a été le plus étudié (théorème \ref{imphypersurf}, définition \ref{defhypersurf}). Ceci est dû non seulement à sa simplicité en comparaison avec les cas de codimension plus grande, mais aussi à l'importance des hypersurfaces dans un théorème de structure des ensembles de périmètre fini (voir \cite{structureperimeter} pour le cas des groupes de profondeur $m=2$; \cite{ambrosio2008rectifiability} pour un progrès récent dans le cas général). 

Dans la sous-section \ref{regulsurf} nous donnons, en particulier, une nouvelle démonstration élémentaire du fait qu'une surface $\h$-régulière est feuilletée par des courbes horizontales de classe $C^1$ orthogonales à la normale horizontale (lemme  \ref{l: reguler}). Ceci est un ingrédient indispensable pour les considérations de la partie \ref{apptopol}.  
 A la différence de celle dans \cite{bigolin}, notre approche permet d'obtenir ce fait directement sans passer par un résultat (très remarquable, d'ailleurs) de Dafermos  \cite{dafermoscontinuous} sur des équations quasi-linéaires.

Le lemme \ref{metriqueR} dans la sous-section \ref{geomdescrp} fournit la description géométrique de la forme approximative de l'intersection des boules métriques avec une surface $\h$-régulière.  

Dans la section \ref{courbesvetricales} nous nous lançons dans l'étude des lignes de niveau d'une application $F\in C^1_H(\h, \r^2)$, $F(0)=0$, avec $D_hF$ surjective. 

En premier lieu, nous donnons un critère métrique (dit "condition de Whitney" \eqref{whitneycond}, remarque \ref{Whit})  pour qu'un ensemble compact $E$ soit contenu dans une ligne de niveau de $F$. Ce critère dit, grosso modo, que l'ensemble $E$ en tout point doit être tangent  (au sens de dilatations homogènes) à l'axe vertical $Oz$. 
Il est important de noter que les valeurs de la différentielle horizontale $D_hF$ prises sur $\f$ n'ont aucune conséquence sur l'ensemble $\f$ (de façon informelle : "la différentielle horizontale ne capte pas le comportement vertical des ensembles de niveau"). En effet, pourvu que $D_hF$ soit surjective sur $\f$, on peut toujours trouver une autre application $\tilde F \in C^1_H(\h, \r^2)$ telle que (au moins localement) $\f\subset \tilde F^{-1}(0)$ et, par exemple, $D_h\tilde F\corn \f= \operatorname{Id}\circ\, \pi$ soit l'identité sur le plan horizontal.  En particulier, la régularité de $D_hF\corn \f$ ne dit absolument rien sur la régularité de $\f$ (à la différence de la régularité de $D_hF$ au voisinage de $\f$). C'est un phénomène nouveau même pour la géométrie sous-riemannienne, car, la régularité d'une surface $\h$-régulière se
reflète dans la régularité de sa normale horizontale \cite{bigolinregul}. Cela ne laisse pas d'espoir d'avoir une expression de la mesure $\H^2_\infty\corn \f$ (formule de l'aire) qui ferait intervenir de façon non-triviale les valeurs $D_hF\corn \f$ (comparer avec le cas euclidien ou le cas des hypersurfaces régulières sous-riemanniennes). 

Comme le noyau $\Ker D_h F = Oz$ n'admet pas de groupe complémentaire dans $\h$, il n'existe donc pas de graphe intrinsèque associé (définition \ref{d: grapheintrinseque}). Par conséquent, même le paramétrage local de l'ensemble $\f$ est un problème nouveau non-trivial. La stratégie qui consisterait  à couper $\f$ par des surfaces lisses feuilletant l'espace (qu'on se donne à l'avance) ne peut réussir à la différence du cas où la différentielle est scindée.  En effet, on peut montrer que pour toute surface $S$ de classe $C^1$, on peut trouver un exemple de $F$ telle que localement $S$ contient plusieurs points de $\f$. Pour ce faire, il suffit de se placer au voisinage d'un point où la distribution horizontale n'est pas tangente à la surface et d'utiliser la condition de Whitney. 

Pour surmonter cette difficulté on fait appel à la théorie du paramétrage de Reifenberg des ensembles plats (voir \cite{reifenberg, vanishingreifenberg, reifenbergflatmetric} pour certains aspects de cette théorie dans $\r^n$). On démontre d'abord par un argument topologique qui fait intervenir le degré d'une application continue \cite{degree} que la projection de l'ensemble $\f$ sur l'axe $Oz$ est surjective au voisinage de $0$. Ce fait combiné avec la condition de Whitney conduit à ce que $\f$ est $\varepsilon$-Reifenberg plat par rapport à l'axe $Oz$ (définition \ref{d: reifenbegplat}), où $\varepsilon \to 0 $ uniformément lorsque l'échelle devient de plus en plus fine (lemme \ref{l: reifenberg}). Une simple réécriture de la condition de Whitney implique que c'est en fait une caractérisation locale exacte de $\f$ (proposition \ref{p: reifWhitneysuff }). On démontre ensuite (théorème \ref{t: parametrtheorem}) un analogue du théorème du paramétrage de Reifenberg adapté à notre situation. Grâce au fait que la dimension topologique du noyau $\Ker D_hF =Oz$  égale $1$, la preuve s'obtient facilement par un algorithme dyadique qui consiste à chercher à chaque étape un point "quelque part au milieu" entre deux points obtenus à l'étape précédente. 
Nous en déduisons ainsi que $\f$ est localement un arc simple. Ce résultat est avant tout topologique, car le paramétrage obtenu de $\f$ est encore plus "implicite" (vu l'absence de graphe adéquat) que dans le théorème usuel des fonctions implicites. 

Une autre approche du paramétrage de $\f$ consiste à regarder $\f$ comme une intersection de deux surfaces $\h$-régulières qui se coupent transversalement, \ie leur normales horizontales sont linéairement indépendantes (sous-section \ref{s: considtop}, à comparer avec \cite{magnleo}).  Ceci entraîne (grâce au lemme \ref{l: reguler}) que toute courbe horizontale sur une surface rencontre l'autre surface en exactement un point. On aimerait bien alors paramétrer l'ensemble  d'intersection par des courbes horizontales.  
Le problème ici est que les courbes horizontales sur une surface $\h$-régulières sont engendrées par un champ de vecteurs (non-singulier) qui n'est pas lipschitz régulier. Par conséquent, plusieurs de ses trajectoires peuvent passer par un point donné. Néanmoins, il est possible de régler ce problème en choisissant une sous-famille ordonnée de trajectoires (définition \ref{d: flotpent}) qui, munie de la topologie uniforme, est homéomorphe à un intervalle (théorème \ref{t: flotintervalle}). Cette sous-famille est appelée "flot sans pénétration" par analogie avec la mécanique de fluides. Cette méthode de sélection du flot continu (analogue, en fait, à celle dans \cite{funnel}) peut être vue comme un substitut du théorème de redressement d'un champ de vecteurs non-singulier qui est seulement continu. Malheureusement, elle ne s'applique qu'en dimensions $1$ et $2$ essentiellement; son extension à des dimensions plus grandes est probablement impossible. Or, la question qui nous intéresse dans ce contexte, c'est plutôt la topologie d'un ensemble transverse aux trajectoires (voir problème topologique \ref{probtop}), et elle est encore plus délicate. 

Tout au long de la section \ref{s: proprietemetrique} nous discutons les propriétés métriques d'une courbe  verticale $\Gamma=\f\cap U$ (définition \ref{d: courbevertic}). La première observation est que la courbe $\Gamma$ traverse la distribution de plans horizontaux (la structure de contact) toujours dans un seul sens  \eqref{e: contactpositif}. 
Cela avec la condition de Whitney entraîne que la distance intrinsèque au carré $d^2_\infty$ est une quasi-métrique plate sur $\Gamma$ (remarque \ref{r: dcarreplat}, condition \ref{e: kappaplate}).

 Nous passons ensuite quelque temps à regarder dans le contexte abstrait les courbes $\lambda$ munies de quasi-métriques plates $\kappa$ (sous-section \ref{ss: quasiplate}). En particulier, nous déduisons la dimension de Hausdorff de ces courbes ($\dim_\kappa \lambda =1$, proposition \ref{dimensionestime}) et une formule générale pour la mesure de Hausdorff $\H^1_\kappa\corn \lambda$ (proposition \ref{generaire}). Nous donnons également une condition suffisante pour la régularité au sens d'Ahlfors de la mesure $\H^1_\kappa\corn\lambda$ (remarque \ref{ahlfors}). Cette condition s'avère être assez forte : elle garantit, en outre, que la mesure $\H^1_\kappa\corn\lambda$ peut être calculée en terme de \emph{l'intégrale abstraite de Stieltjes} (lemme \ref{aireregulier}, lemme \ref{couture}). L'estimée des sommes de Stieltjes implique que la mesure $\H^1_\kappa$ d'un intervalle sur $\lambda$ est asymptotiquement égale à son diamètre (formules \eqref{youngestime} et proposition \ref{diametre}). 

De cette dernière considération nous déduisons des conséquences pour les courbes verticales (corollaire \ref{airegamma}, proposition \ref{fortregular}). Il est clair aussi que la régularité supplémentaire de $F$ améliore la régularité de ses lignes de niveau (lemme \ref{regularderivees}, corollaire \ref{regularitealpha}).
A titre d'exemple, nous caractérisons les lignes de niveau des applications $F\in C^{1,\alpha}_H(\h,\r^2)$, $\alpha>0$, comme des \emph{relèvements verticaux} (définition \ref{d: relevementvertic}, lemme \ref{l: vertcaract}) de courbes $\frac{1+\alpha}{2}$-hölderiennes planaires. Nous utilisons également l'idée du relèvement vertical des courbes fractales pour construire des courbes verticales de dimension euclidienne $\beta$ pour tout $\beta\in [1,2]$ (lemme \ref{dimriem}).  

En revanche, sans hypothèse de la régularité supplémentaire sur $F$, ses lignes de niveau ne sont pas forcement régulières. Dans la section \ref{exempleirregulier} nous construisons des exemples de courbes verticales $\Gamma_{1,2}$ avec $\H^2_\infty(\Gamma_1)=\infty$ et  $\H^2_\infty(\Gamma_2)=0$. En particulier, elles ne sont pas $2$-Ahlfors régulières. Remarquons que si $\H^2_\infty(\Gamma)=\infty$ pour une courbe verticale $\Gamma$, alors \emph{l'aire de Lévy}, calculée le long de sa projection $\pi(\Gamma)$ sur le plan horizontal, est infinie. L'aire de Lévy signifie ici l'intégrale $\int x\,dy-y\,dx$ comprise au sens de Stieltjes; pour prendre la limite on considère ainsi (notation \ref{n: airelevy}) le filtre de toutes les subdivisions (avec le pas maximal décroissant) sur une courbe et pas seulement celui de subdivisions dyadiques, par exemple (comparer avec le cas du mouvement brownien).  
La construction des exemples irréguliers (lemme \ref{verticalgros}) s'inspire notamment de la théorie de chemins rugueux qui a connu un développement majeur ces dernières années (on renvoie le lecteur à \cite{MR2036784, MR2314753} pour ses fondements). Pour réaliser cette construction nous avons besoin des estimations précises des sommes de Stieltjes associées à l'aire de Lévy pour les courbes $\frac{1}{2}$-hölderiennes ($\frac{1}{2}$ est un exposant critique ici; voir théorème \ref{young}, proposition \ref{p: youngconseque}, remarque \ref{r: souscritiqueLevy}). On obtient ces estimations  \eqref{totalformuleStiel}\eqref{totalformuleStiel1} dans le cas particulier des courbes qui sont données comme des séries de Fourier lacunaires \eqref{ondelettedecomp}\eqref{ondelettedecomp1} (voir aussi \cite{MR1116958}). 

A titre d'application de l'étude des courbes verticales, dans la section \ref{s :coaire} nous démontrons la formule de la coaire \eqref{coarea} pour les applications $F\in C^1_H(\h,\r^2)$ sous une hypothèse de régularité supplémentaire de ses lignes de niveau (théorème \ref{theoremcoaire}). Il est intéressant de comprendre si cette formule est valable  sans hypothèse de régularité supplémentaire. 

\paragraph{Remerciement.}
Je tiens à remercier profondément  Pierre Pansu pour les discussions scientifiques autour du sujet considéré ainsi que pour son soutien constant tout au long de ce travail. Je remercie également Guy David qui m'a fait découvrir des aspects importants de la théorie de paramétrage de Reifenberg. 

\section{Notions de base et notations}
\subsection{Groupe de Carnot}\label{ss: notations}
Un groupe de \emph{Carnot} \cite{FS} est un groupe de Lie nilpotent $\G$, connexe et simplement connexe, dont l'algèbre de Lie $\mathfrak{g}$ est  
\emph{stratifiée},
\ie se décompose en somme directe $\mathfrak{g}= V_1\oplus\ldots\oplus
V_m,\, \dim V_1 \ge 2 $, 
d'espaces vectoriels tels que $
\left[V_1,V_i\right]=V_{i+1}$ pour \,$1 \le i\le m-1 $
et $\left[V_1,V_m\right]=\{0\}$. Les vecteurs appartenant à $V_1$ s'appellent \emph{horizontaux} à la différence des vecteurs n'y appartenant pas appelés \emph{verticaux}.  La distribution horizontale $H\G$ associée est  
le sous-fibré de $T\G$ 
défini par $H_g\G=(\tau_g)_\ast(V_1)$, où $\tau_g(h)=gh$ est une translation à gauche.  L'entier $m$ est appelé la \emph{profondeur} de groupe de Carnot. On rappelle 
 \begin{theorem*} Soit $\G$ un groupe de Lie, connexe, simplement connexe et nilpotent. Alors l'application exponentielle $\exp: \mathfrak{g} \to \G$ est un difféomorphisme global.
 \end{theorem*}
Grâce à la graduation, on définit sur l'algèbre de Lie $\mathfrak{g}$ le groupe multiplicatif d'automorphismes à un paramètre  $\{\delta_t\}_{t>0}$ qu'on appellera les \textit{dilatations} :
$$\text{ pour } X\in V_i \text{ on a } \delta_t(X)=t^i X.$$

 Par le biais de l'application exponentielle $\exp:\mathfrak{g} \to \G$, on  transmet l'action de $\{ \delta_t\}_{t>0}$ sur $\G$ en gardant la même notation. Ainsi, $\Bbb G$ est-il également un espace homogène de dimension topologique $N=\sum\limits_{i=1}^m \dim V_i$ et de dimension homogène $Q=\sum\limits_{i=1}^m i\dim V_i$. La mesure de Lebegue $\mu$ sur $\Bbb R^N$ transportée via $\exp$ est une mesure de Haar biinvariante sur $\G$ et $d(\delta_t\mu)=t^{Q}d\mu$.
 
\begin{Def}\label{d: normhomogene}
Une \emph{norme homogène} $\rho$ est une fonction continue sur $\mathbb{G}$, qui satisfait les propriétés suivantes: 
\begin{itemize}
\item $\rho(x)\ge 0$, $\rho(x)=0$ si et seulement si  $x=e$ (l'élément neutre); 
\item $\rho(x^{-1})=\rho(x)$,  $\rho(\delta_t(x))=t\rho(x)$, $t>0$; 
\item l'inégalité triangulaire généralisée:  $\rho(x y)\le c(\rho(x)+\rho(y))$, $c\ge 1$. 
\end{itemize}
A partir d'une norme homogène $\rho$, on construit la \emph{distance homogène} $d_\rho$ sur
$\mathbb{G}$: on pose $d_\rho(x,y)=\rho(y^{-1}x)$ pour tous $x,y \in \G$. La distance homogène (\ie $d_\rho(\delta_t x,\delta_t y)=t d_\rho(x,y), \, t>0$) est invariante par translations à gauche $d_\rho(zx,zy)=d_\rho(x,y)$.
\end{Def}

\begin{remark*}
Si $c=1$ dans la définition de $\rho$, alors la distance homogène $d_\rho$ est une vraie métrique, tandis qu'en général $d_\rho$ est seulement une quasi-métrique.
\end{remark*}

 \begin{remark*}
Si $d_\rho$ est une métrique sur $\G$, alors  $\diam_\rho B_\rho(x,r)=2r$ \cite{structureperimeter}, où $B_\rho(x,r)=\{y\in \G \mid d_\rho(x,y)<r\}$ est une boule ouverte en distance $d_\rho$. 
\end{remark*}

\begin{remark*}
Deux normes homogènes, $\rho^{\prime}$ et $\rho$, définies sur $\G$ sont toujours équivalentes \cite{FS}:
il existe des constantes $c_1$ et $c_2$ telles que
$0<c_1\le \rho^{\prime}(x)/\rho(x)\le c_2<\infty$ pour tout
$x\in \G \setminus \{e\}$.
\end{remark*}

\begin{proposition*}[\cite{FS}]  Quelle que soit une métrique riemannienne $d_{riem}$ sur $\G$, pour chaque partie compacte $K$ de $\G$ il existe deux constantes positives $C_1$ et $C_2$ telles que pour tous $x,y \in K$ on a l'estimation  suivante :
\begin{equation*} 
C_1d_{riem}(x,y)\le d_\rho(x,y) \le C_2d_{riem}(x,y)^{\frac{1}{m}}.
\end{equation*}
En particulier, les topologies définies par $d_\rho$ et $d$ coïncident. 
\end{proposition*}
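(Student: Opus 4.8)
\emph{Esquisse de la d\'emonstration propos\'ee.} Le plan est de se ramener d'abord \`a une m\'etrique riemannienne invariante \`a gauche, puis de comparer la jauge $\rho$ \`a une jauge mod\`ele sur l'alg\`ebre de Lie. Je commencerais par rappeler que deux m\'etriques riemanniennes sur $\G$ sont bilipschitz \'equivalentes sur tout compact (leurs tenseurs m\'etriques ont un rapport born\'e sur un compact, donc les longueurs des courbes -- et, quitte \`a \'elargir l\'eg\`erement le compact, les distances elles-m\^emes -- sont comparables). On peut donc supposer $d_{riem}$ invariante \`a gauche ; dans ce cas $d_{riem}(x,y)=|y^{-1}x|_{riem}$ avec $|g|_{riem}:=d_{riem}(e,g)$, tandis que $d_\rho(x,y)=\rho(y^{-1}x)$ par d\'efinition. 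Comme l'ensemble $\{y^{-1}x:x,y\in K\}$ est compact, il suffit d'\'etablir, pour $g$ dans un voisinage compact $W$ de $e$, l'encadrement
\begin{equation*}
C_1\,|g|_{riem}\le \rho(g)\le C_2\,|g|_{riem}^{1/m},
\end{equation*}
le r\'egime o\`u $y^{-1}x$ reste loin de $e$ \'etant trivial : les deux membres y sont alors compris entre deux constantes strictement positives, par continuit\'e et compacit\'e.

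Via l'exponentielle (qui est un diff\'eomorphisme global), j'\'ecrirais ensuite $g=\exp X$ avec $X=X_1+\dots+X_m$, $X_i\in V_i$. La fonction $X\mapsto\rho(\exp X)$ est continue, strictement positive hors de $0$ et $\delta_t$-homog\`ene de degr\'e $1$ : $\rho(\exp\delta_tX)=t\,\rho(\exp X)$, puisque les dilatations de $\G$ sont d\'efinies pour que $\exp\circ\,\delta_t=\delta_t\circ\exp$. J'introduirais la jauge mod\`ele $\|X\|:=\sum_{i=1}^m|X_i|^{1/i}$, elle aussi continue, positive hors de $0$, $\delta_t$-homog\`ene de degr\'e $1$, et dont la sph\`ere $\{\|X\|=1\}$ est compacte (si $\|X\|=1$, chaque $|X_i|\le 1$). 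En comparant ces deux fonctions $\delta_t$-homog\`enes sur cette sph\`ere -- o\`u $\rho(\exp\,\cdot)$ atteint un minimum strictement positif et un maximum fini -- et en invoquant l'homog\'en\'eit\'e, on obtient $c_1\|X\|\le\rho(\exp X)\le c_2\|X\|$ pour tout $X\in\mathfrak g$.

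Il resterait \`a comparer $\|X\|$ \`a la norme euclidienne $|X|$ pour $X$ petit, ce qui correspond \`a $g$ proche de $e$ ; dans cette zone $|g|_{riem}\asymp|X|$ car $\exp$ est un diff\'eomorphisme. Pour $|X|\le 1$ : d'une part $\|X\|=\sum_i|X_i|^{1/i}\ge\sum_i|X_i|\ge|X|$ (in\'egalit\'e triangulaire), d'o\`u $\rho(g)\ge c_1\|X\|\gtrsim|g|_{riem}$ ; d'autre part $|X_i|\lesssim|X|$ et $1/i\ge 1/m$ donnent, d\`es que $|X|$ est assez petit, $|X_i|^{1/i}\lesssim|X|^{1/m}$, donc $\|X\|\lesssim|X|^{1/m}$ et $\rho(g)\le c_2\|X\|\lesssim|g|_{riem}^{1/m}$. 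Un recollement des r\'egimes ``proche'' et ``loin'', puis le retour \`a la m\'etrique $d_{riem}$ initiale (via l'invariance \`a gauche et l'\'equivalence bilipschitz des m\'etriques riemanniennes), ach\`event la preuve ; la co\"incidence des topologies s'ensuit, l'identit\'e r\'ealisant localement un hom\'eomorphisme entre $(\G,d_\rho)$ et $(\G,d_{riem})$.

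Le point le plus d\'elicat me para\^it \^etre la deuxi\`eme \'etape : v\'erifier soigneusement que $X\mapsto\rho(\exp X)$ est $\delta_t$-homog\`ene -- ce qui repose sur la compatibilit\'e des dilatations avec $\exp$ -- puis en d\'eduire la comparaison bilat\'erale avec la jauge mod\`ele gr\^ace \`a la compacit\'e de la sph\`ere unit\'e. C'est aussi dans la majoration $|X_i|^{1/i}\le|X_i|^{1/m}$ que l'exposant $1/m$ et le r\^ole essentiel de la localit\'e (la compacit\'e de $K$, \ie $|X|$ petit) apparaissent naturellement : sans la borne $|X_i|\le 1$, l'estimation en puissance s'effondre. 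Le reste -- \'equivalence bilipschitz des m\'etriques riemanniennes sur un compact et recollement des r\'egimes -- rel\`eve de la routine.
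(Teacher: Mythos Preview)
The paper does not actually prove this proposition: it is stated with a citation to \cite{FS} (Folland--Stein) and no argument is given. There is therefore no ``paper's own proof'' to compare against.

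Your sketch is correct and is essentially the standard argument one finds in \cite{FS}. The reduction to a left-invariant Riemannian metric, the introduction of the model gauge $\|X\|=\sum_i|X_i|^{1/i}$, and the comparison of two $\delta_t$-homogeneous functions on the compact unit sphere $\{\|X\|=1\}$ are exactly the ingredients used there. The only place I would tighten the writing is the inequality $\sum_i|X_i|\ge|X|$: this is just $|X|=|\sum_iX_i|\le\sum_i|X_i|$, so calling it the triangle inequality is fine, but it might help the reader to recall that the $V_i$ are in direct sum so that each $|X_i|$ is controlled by $|X|$ as well (which you use implicitly in the upper bound). Otherwise nothing is missing.
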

\paragraph{Mesure de Hausdorff.}
On se place dans l'espace quasi-métrique $(X, d)$. Par $\diam E=\sup\{d(x,y)\mid x,y\in E\}$ on note le diamètre d'un ensemble $E\subset X$. 
 Soit $0\le k\in \r$; pour $\varepsilon>0$ et un ensemble $E\subset X$ on définit
\begin{align*}
&\displaystyle \H_{\varepsilon}^k(E)=\inf \left\{\sum\limits_{i=1}^\infty (\diam E_i)^k \mid E \subset \bigcup\limits_{i=1}^\infty E_i, \diam E_i\le\varepsilon\right \},\\
& \mathcal{S}_{\varepsilon}^k(E)=\inf \left\{\sum\limits_{i=1}^\infty (\diam E_i)^k \mid E \subset \bigcup\limits_{i=1}^\infty E_i, \diam E_i\le \varepsilon, E_i \text{ est une boule}\right\}.
\end{align*}

\begin{Def*}
On définit \emph{la mesure de Hausdorff  $k$-dimensionnelle} $\H^k$ de $E$ comme $\H^k(E)=\lim\limits_{\varepsilon\to 0+}   \H_{\varepsilon}^k(E) =\sup\limits_{\varepsilon> 0}   \H_{\varepsilon}^k(E)$, ainsi que \emph{la mesure de Hausdorff $k$-dimensionnelle sphérique} $\mathcal{S}^k$  de $E$ comme $\mathcal{S}^k(E)=\lim\limits_{\varepsilon\to 0+}   \mathcal{S}_{\varepsilon}^k(E) =\sup\limits_{\varepsilon> 0}   \mathcal{S}_{\varepsilon}^k(E)$.
\end{Def*}
Les deux mesures, $\H^k$ et $\S^k$, sont des mesures (extérieures) régulières sur $\G$ et comparables entre elles.
 On appelle 
\emph{dimension de Hausdorff} de $E$
le nombre
   $$\dim E = \sup\{k \mid \H^k(E)=\infty \}=\inf\{k \mid \H^k(E)=0 \} .$$ 
\paragraph{Groupes d'Heisenberg.} 
Le $n$-ème groupe  d'Heisenberg $\Bbb H^n$ est un groupe de Carnot de dimension topologique $N=2n+1$ dont l'algèbre de Lie de profondeur $m=2$ : $\mathfrak{h} = V_1 \oplus V_2$. Ici $V_1$ est de dimension $2n$, il est engendré par les vecteurs $X_1, \dots ,X_n, Y_1, \dots , Y_n$, tandis que 
$\dim V_2 = 1$ et $V_2 = \operatorname{span} \{Z\}$. La dimension homogène de $\Bbb H^n$ égale donc $Q=2n+2$. Les seules relations de commutation non-triviales sont données par $[X_j , Y_j ] = -4Z$. Par la formule de Baker-Campbell-Hausdorff on obtient l'opération de groupe sur $\Bbb H^n=\r^{2n+1}=\r^n\times\r^n\times\r$ : 
$$\left(\begin{array}{c}x \\y \\ z\end{array}\right)\left(\begin{array}{c}x' \\y' \\z'\end{array}\right)= \left(\begin{array}{c}x+x' \\y+y' \\z+z' +2(x', y)_{\r^n} - 2(x, y')_{\r^n}\end{array}\right).$$  
L'action des dilatations est donnée par $\delta_t(x,y,z)=(tx,ty,t^2z)$. On note aussi $\pi(x,y,z):=(x,y)$ la projection sur le plan horizontal.
Dans tout ce qui suit nous utiliserons la norme homogène suivante sur $\Bbb H^n$ :$$\rho_\infty(x,y,z)=\max\{\sqrt{\| x\|^2+\| y \|^2},|z|^{1/2}\},$$
où $\|\cdot\|$ signifie la norme euclidienne.   
Par un calcul direct on peut montrer que la distance engendrée, notée $d_\infty$, est en fait une métrique. Les mesures de Hausdorff construites à partir de $d_\infty$ serons notées $\S^k_\infty$ et $\H^k_\infty$. La dimension de Hausdorff correspondante sera notée $\dim_h E$ pour $E\subset \Bbb H^n$. 
 Les champs de vecteurs invariants à gauche sont
 \begin{align*}
& X_i(x,y,z)=\frac{\partial}{\partial x_i} +2y_i\frac{\partial}{\partial z}, \quad Y_i(x,y,z)=\frac{\partial}{\partial y_i} -2x_i\frac{\partial}{\partial z}, \\
&Z(x,y,z)=\frac{\partial}{\partial z}=-\frac{1}{4}[X_i,Y_i],   \quad i=1, \dots, n.
\end{align*}

\subsection{Différentiabilité horizontale}\label{ss: differenthorizont}
\paragraph{Homomorphisme homogène}
\begin{Def*} Soit $\mathbb{G}^1$ et $\mathbb{G}^2$~--- deux groupes de Carnot avec les dilatations respectives $\delta_t^1$ et $\delta_t^2$. L'homomorphisme continu $L:\mathbb{G}^1\to \mathbb{G}^2$ est dit \emph{homogène} (ou \emph{horizontal}), si  $L\circ\delta_t^1=\delta^2_t\circ L$ pour tout $t>0$.
\end{Def*}

A un morphisme continu de groupes de Carnot  $L:\mathbb{G}^1\to \mathbb{G}^2$
correspond  un morphisme 
d'algèbres de Lie $\mathcal{L}=\exp_2^{-1}\circ
L\circ\exp_1 :\mathfrak{g}^1\to \mathfrak{g}^2$. Dans le cas d'un homomorphisme homogène $L$, $\mathcal{L}(V_1^1)\subset V_1^2$.

Le noyau d'un homomorphisme homogène $\Ker L=\mathbb{K}$ est un sous-groupe distingué homogène dans
$\mathbb{G}^1$. De même, $\Ker
\,\mathcal{L}=\mathcal{K}$ est un idéal homogène dans 
$\mathfrak{g}^1$.  L'ensemble est dit homogène s'il est préservé par l'action des dilatations 
$\{\delta_t\}_{t>0}$.  On remarque qu'un sous-espace homogène $W\subset \mathfrak{g}$ admet une décomposition en somme directe: $W=(W\cap V_1)\oplus \ldots \oplus (W\cap V_m)$.

\paragraph{Notion de dérivée horizontale.} Nous introduisons la notion de différen\-tia\-bi\-li\-té bien adaptée à la métrique interne, originalement  
due à P. Pansu.   
\begin{Def}[\cite{Pansu}] \label{d: derivationhorizontale}Soit $f:\mathbb{G}^1\to \mathbb{G}^2$ une application 
entre des ouverts de deux groupes de Carnot, $(\G^1, d^1)$ et $(\G^2, d^2)$, munis 
de distances homogènes. L'application $f$ est appelée \emph{horizontalement différentiable} en $x \in \G^1$, s'il existe un homomorphisme horizontal $L$ tel que 
$$ d^2\big(f(x)^{-1}f(xh), L(h)\big)=o(d^1(h,e)) \text{ lorsque } h \to 0.$$
Si c'est le cas, $L$ est noté par $D_hf(x)$ et  appelé la \emph{différentielle horizontale} de $f$ en $x$.
\end{Def} 
\begin{remark*}
 L'espace des homomorphismes horizontaux $\operatorname{Hom}_h(\G^1, \G^2)$ entre $\G^1$ et $\G^2$ est muni naturellement 
 d'une structure de groupe et de la norme
 $$ \|L\|=\sup\limits_{A\in \G^1\setminus e_1}\frac{d^2(L(A),e_2)}{d^1(A,e_1)}. $$
\end{remark*}
Si $f$ est différentiable en tout point d'un ouvert $\Omega$ et sa différentielle $D_hf(x)$ dépend con\-ti\-nû\-ment de $x$, alors l'application $f$ est dite \emph{horizontalement continûment différentiable} sur  $\Omega$. La classe de 
ces applications est notée $C^1_H(\Omega, \G^2)$, ou en abrégé $C^1_H$, si le contexte détermine sans ambiguïté les espaces de départ et de d'arrivée. 
Comme dans la situation classique, la règle de dérivation 
des fonctions composées est vérifiée pour des applications de classe $C_H^1$, ainsi que 
les autres règles arithmétiques. On remarque aussi qu'une application de classe $C^1_H$ entre deux groupes de Carnot est localement lipschitzienne en distances homogènes.
Pour désigner les applications de $C^1_H$ on utilise également  le terme \emph{horizontale} ou \emph{de contact}.

\paragraph{Critère en termes des dérivées partielles horizontales.}
 Nous présentons l'analogue du théorème classique qui dit que la continuité des dérivées partielles entraîne la différentiabilité continue (voir \cite{vquasi}, par exemple). 

\begin{theorem*} Soit $f:\G^1\to\G^2$ une application entre deux 
groupes de Carnot.  Alors $f\in C^1_H(\Omega,\G^2)$ si et seulement si pour  
tout champ de vecteur $X\in H\h$ horizontal invariant à gauche la dérivée partielle $Xf(x)=\frac{d}{dt}[\exp(tX)(x)]_{t=0}$ est continue sur $\Omega$ et pour tout $x\in \Omega$ le vecteur $Xf(x)$ appartient à la distribution horizontale  de $\G^2$ 
au point $f(x)$: $Xf(x)\in H_{f(x)}\G^2$.
\end{theorem*} 
Dans les coordonnées choisies ci-dessus sur le groupe $\h\cong \r^3$, la différentielle horizontale pour $f\in C^1_H(\h,\r)$ s'écrit 
$$ D_hf(A)(v_x,v_y,v_z)=v_xXf(A)+v_yYf(A)=\nabla_{\h} f(A)\cdot \pi(v), $$
où on note par $\nabla_{\h} f(A):= \big(Xf(A),Yf(A)\big)$ le gradient horizontal de $f$ en $A$.
 
Il sera utile de se rappeler aussi la généralisation du théorème classique de Lagrange.  
\begin{theorem}[de Lagrange \cite{FS}]\label{lagrange}
Pour $f\in C^1_H(\h,\r)$ l'inégalité suivante est vérifiée pour tous $A,B\in \h$.
 \begin{multline*}
|f(A)-f(B)-D_h f(B)(B^{-1}A)|\le \\
\le Cd_\infty(A,B)\big(\|Xf(\cdot)-Xf(B)\|_{\infty, \,B_\infty(B,r)}+ \|Yf(\cdot)-Yf(B)\|_{\infty, \, B_\infty(B,r)} \big),
\end{multline*}
où $B_\infty (B,r)$ désigne une boule en métrique $d_\infty$ de centre $B$ et de rayon $r=cd_\infty(A,B)$, $c,C<\infty$.
\end{theorem}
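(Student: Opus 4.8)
The plan is to join $B$ to $A$ by an explicit horizontal path made of segments of integral curves of the left-invariant horizontal fields $X$ and $Y$, and then to integrate $f$ along each of them. Write $B^{-1}A=(a,b,c)$ in the chosen coordinates, so that $\pi(B^{-1}A)=(a,b)$ and, by the formula recalled above, $D_hf(B)(B^{-1}A)=a\,Xf(B)+b\,Yf(B)$. The path is built in six flow steps: flow along $X$ for time $a$ (from $B$ to $B(a,0,0)$), then along $Y$ for time $b$ (reaching $B(a,b,-2ab)$), then run the commutator loop $\exp(sX)\exp(uY)\exp(-sX)\exp(-uY)$ (four further flow segments), whose net effect is multiplication on the right by the element $(0,0,-4su)\in\h$ --- exactly so, thanks to the step-$2$ structure; choosing $s,u$ with $su=-\tfrac{1}{4}(c+2ab)$ and $|s|=|u|=\tfrac{1}{2}|c+2ab|^{1/2}$ makes the path terminate precisely at $B(a,b,c)=A$. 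Since $2|ab|\le d_\infty(A,B)^2$ and $|c+2ab|^{1/2}\le|c|^{1/2}+(2|ab|)^{1/2}$, each of $|a|,|b|,|s|,|u|$ is at most $d_\infty(A,B)$, so the total length $\ell:=|a|+|b|+2|s|+2|u|$ satisfies $\ell\le 6\,d_\infty(A,B)$. Since the flow segments of $X$ and $Y$ realize $d_\infty$ (one has $d_\infty(p,p\exp(\tau X))=|\tau|$) and $d_\infty$ is a genuine metric, every point of the path lies within distance $\ell$ of $B$; taking $r=c\,d_\infty(A,B)$ with $c=6$, the whole path lies in $\overline{B_\infty(B,r)}$.

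Along a segment of the flow of $X$ issued from a point $p$, the function $\tau\mapsto f(p\exp(\tau X))$ is $C^1$ with derivative $Xf(p\exp(\tau X))$ (by definition of the horizontal partial derivative, $Xf$ being continuous since $f\in C^1_H$), and similarly for $Y$. Summing the increments of $f$ over the six segments (with the times $-s,-u$ absorbed into the orientation of the integration intervals) gives
\[
 f(A)-f(B)=\sum_{j=1}^{6}\int_{I_j} g_j\big(\gamma_j(\tau)\big)\,d\tau,\qquad g_j\in\{Xf,Yf\},
\]
where $\gamma_j$ denotes the $j$-th segment and $|I_j|\in\{|a|,|b|,|s|,|u|\}$. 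The key observation is that replacing $g_j(\gamma_j(\tau))$ by the constant $g_j(B)$ in this sum recovers exactly $a\,Xf(B)+b\,Yf(B)=D_hf(B)(B^{-1}A)$: the four terms coming from the loop cancel in pairs ($s\,Xf(B)-s\,Xf(B)=0$, $u\,Yf(B)-u\,Yf(B)=0$), leaving only the contributions of the first two segments. Hence
\[
 f(A)-f(B)-D_hf(B)(B^{-1}A)=\sum_{j=1}^{6}\int_{I_j}\big(g_j(\gamma_j(\tau))-g_j(B)\big)\,d\tau.
\]

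It then remains to estimate in absolute value: as $\gamma_j(\tau)\in B_\infty(B,r)$ for every $\tau\in I_j$, each integral is at most $|I_j|$ times $\|Xf(\cdot)-Xf(B)\|_{\infty,\,B_\infty(B,r)}+\|Yf(\cdot)-Yf(B)\|_{\infty,\,B_\infty(B,r)}$; summing and using $\sum_j|I_j|=\ell\le 6\,d_\infty(A,B)$ yields the claimed inequality, e.g. with $C=c=6$ (or slightly larger absolute constants if one wants strict inclusion in the open ball).

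The step I expect to require the most care is the verification that the chosen path stays confined to a ball of radius comparable to $d_\infty(A,B)$, together with the bookkeeping that fixes $c$ and $C$; everything else is routine once $B^{-1}A$ has been decomposed into horizontal steps --- two translations in the horizontal plane plus a small commutator loop to correct for the vertical component.
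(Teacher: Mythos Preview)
Your argument is correct and complete: the six-step horizontal path you build does land exactly at $A$, stays in $B_\infty(B,r)$ with $r=6\,d_\infty(A,B)$, and the cancellation of the commutator-loop contributions when the integrands are frozen at $B$ is exactly what produces $D_hf(B)(B^{-1}A)$. The constants $c=C=6$ work as you say.

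There is nothing to compare against in the paper itself: the theorem is merely quoted from~\cite{FS} with no proof given here. Your approach is in fact the standard one used in that reference (connect the two points by a broken horizontal line whose total length is controlled by the homogeneous distance, then apply the one-variable fundamental theorem of calculus on each piece), so you have essentially reconstructed the cited proof.
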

\begin{Def*}
On dit qu'une application $l:(X,d_X)\to (Y,d_Y)$ entre deux espaces quasi-métriques est Hölder continue 
d'exposant $\beta>0$, et on note $l\in H^\beta(X,Y)$, si  
\begin{equation*}
\|l\|_{H^\beta}:=\sup\limits_{d_X(a,b)>0} d_Y(l(a),l(b))d_X(a,b)^{-\beta}<\infty.
\end{equation*}
On dit que $l\in h_\beta(X,Y)$ si pour tout ensemble compact $K\Subset X$
\begin{equation*}
\sup_{\substack{0<d_X(a,b)<\delta,\\ a,b\in K}} d_Y(l(a),l(b))d_X(a,b)^{-\beta}\to 0 \text{ quand } \delta\to 0.
\end{equation*}
\end{Def*}
\begin{Def*}
On notera 
$C^{1,\alpha}_H(\h;\r)$, $0<\alpha<1$, l'espace des fonctions horizontalement dérivables dont la différentielle horizontale satisfait pour  une certaine constante $M$ et tous $A,B\in \h$
$$ \|D_hf(A)^{-1}D_hf(B)\|\le Md_\infty(A,B)^\alpha.$$
\end{Def*}

\paragraph{Théorèmes de prolongement de Whitney.} Il s'agit de prolonger une application scalaire dif\-fé\-re\-ntiable initialement donnée sur un ensemble fermé 
en une application définie sur l'espace tout entier. Nous formulons ici ce théorème pour le groupe d'Heisenberg $\h$.

\begin{theorem}[\cite{pupychev}]\label{Whitneytheorem} Soit $F\subset \h$ un ensemble fermé. Supposons que $f:F\to \r$ et $k:F \to \operatorname{Hom}_h(\h, \r)$ sont continues. On pose 
$$ R(A,B):=f(A)-f(B)-k(A)(B^{-1}A),$$
et pour un ensemble compact $K\subset F$ on définit
$$ \rho_K(\varepsilon):=\sup\left\{\frac{| R(A,B)|}{d_\infty(A,B)} \mid A,B \in K, 0<d_\infty(A,B)<\varepsilon\right\}.$$
Si $\rho_K(\varepsilon)\to 0$ quand $\varepsilon \to 0+$ quel que soit un ensemble compact $K\subset F$, alors 
il existe une fonction $\tilde f \in C^1_H(\h,\r)$ telle que $\tilde f_{\arrowvert F}=f$ et $D_h\tilde f_{\arrowvert F}=k$. 
\end{theorem}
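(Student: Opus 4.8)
La strat\'egie est de transposer la d\'emonstration classique du th\'eor\`eme de prolongement $C^1$ de Whitney \`a la structure homog\`ene de $(\h,d_\infty)$. On suppose $F\neq\emptyset$ et l'on pose $\Omega:=\h\setminus F$. Je commencerais par une \emph{d\'ecomposition de Whitney} de l'ouvert $\Omega$ : comme $d(\delta_t\mu)=t^Q\,d\mu$, l'espace $(\h,d_\infty)$ est doublant, ce qui fournit une famille d\'enombrable de boules $B_i=B_\infty(c_i,r_i)$ recouvrant $\Omega$, avec $r_i\approx\dist_\infty(c_i,F)$, dont les boules dilat\'ees $B_i^*:=B_\infty(c_i,\lambda r_i)$ ($\lambda>1$ fix\'e) forment un recouvrement uniform\'ement born\'e et v\'erifient $r_i\approx\dist_\infty(A,F)$ pour tout $A\in B_i^*$ (en particulier, les rayons $r_i$ des $B_i^*$ passant par un point donn\'e sont comparables entre eux). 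J'y associerais une partition de l'unit\'e $\{\varphi_i\}\subset C^1_H(\Omega,\r)$ subordonn\'ee aux $B_i^*$, avec $0\le\varphi_i\le1$, $\sum_i\varphi_i\equiv1$ sur $\Omega$ et $\|\nabla_\h\varphi_i\|\le C/r_i$ --- construction standard, le point d\'elicat \'etant que c'est bien la borne sur le gradient \emph{horizontal}, et non sur le gradient euclidien, qui est requise. Enfin, pour chaque $i$ je choisirais $p_i\in F$ avec $d_\infty(c_i,p_i)\le2\,\dist_\infty(c_i,F)$.

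Je poserais alors $\tilde f:=f$ sur $F$ et, sur $\Omega$,
\[ \tilde f(A):=\sum_i\varphi_i(A)\,P_i(A),\qquad P_i(A):=f(p_i)+k(p_i)(p_i^{-1}A). \]
Un homomorphisme homog\`ene $L\in\operatorname{Hom}_h(\h,\r)$ \'etant lin\'eaire en les coordonn\'ees horizontales et ind\'ependant de la coordonn\'ee verticale, chaque $P_i$ appartient \`a $C^1_H(\h,\r)$ avec $D_hP_i\equiv k(p_i)$ ; la somme \'etant localement finie sur $\Omega$ (recouvrement born\'e), il vient imm\'ediatement $\tilde f\in C^1_H(\Omega,\r)$, et, puisque $\sum_iD_h\varphi_i\equiv0$ sur $\Omega$,
\[ D_h\tilde f(A)=\sum_i\varphi_i(A)\,k(p_i)+\sum_i\bigl(P_i(A)-P_j(A)\bigr)\,D_h\varphi_i(A) \]
pour n'importe quel indice $j$ tel que $A\in B_j^*$. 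Il reste \`a contr\^oler le comportement de $\tilde f$ et de $D_h\tilde f$ le long de $F$ (la relation $\tilde f|_F=f$ \'etant, elle, vraie par construction).

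C'est le c{\oe}ur de la preuve, et c'est l\`a que l'hypoth\`ese $\rho_K(\varepsilon)\to0$ est consomm\'ee. Je fixerais un compact $K\subset F$, un point $B\in K$ et examinerais $A\in\Omega$ avec $t:=d_\infty(A,B)$ petit. Tout indice $i$ tel que $A\in B_i^*$ v\'erifie alors $r_i\approx\dist_\infty(A,F)\le t$, $d_\infty(p_i,A)\lesssim r_i$ et $d_\infty(p_i,B)\lesssim t$ ; en particulier $p_i$ reste dans un voisinage compact fixe $K'\Subset F$ de $B$. En d\'eveloppant $P_i(A)-f(B)-k(B)(B^{-1}A)$ et $P_i(A)-P_j(A)$ de fa\c{c}on \`a y faire figurer le reste $R(\cdot,\cdot)$ de l'\'enonc\'e et le module de continuit\'e $\omega_{K'}$ de $k$ --- on se sert de ce que $k(B)$ est un homomorphisme additif (d'o\`u $k(B)(B^{-1}p_i)+k(B)(p_i^{-1}A)=k(B)(B^{-1}A)$), de $\|k(p)-k(q)\|\le\omega_{K'}(d_\infty(p,q))$ et de $|L(g)|\le\|L\|\,\rho_\infty(g)$ --- on obtient, pour $t$ assez petit, les estimations $|P_i(A)-f(B)-k(B)(B^{-1}A)|\le C\bigl(\rho_{K'}(Ct)+\omega_{K'}(Ct)\bigr)t$ et $|P_i(A)-P_j(A)|\le C\bigl(\rho_{K'}(Ct)+\omega_{K'}(Ct)\bigr)r_i$. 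La premi\`ere, somm\'ee via $\sum_i\varphi_i(A)=1$, donne $|\tilde f(A)-f(B)-k(B)(B^{-1}A)|=o(d_\infty(A,B))$ uniform\'ement en $B\in K$ (le cas $A\in F$ y est inclus), d'o\`u la continuit\'e de $\tilde f$ sur $\h$ et l'\'egalit\'e $D_h\tilde f|_F=k$. La seconde, report\'ee dans la formule ci-dessus avec $\|D_h\varphi_i\|=\|\nabla_\h\varphi_i\|\le C/r_i$ et le recouvrement born\'e (les facteurs $r_i$ se simplifiant terme \`a terme), donne $\|D_h\tilde f(A)-k(B)\|\le C\bigl(\rho_{K'}(Ct)+\omega_{K'}(Ct)\bigr)\to0$ lorsque $A\to B$ ; jointe \`a la continuit\'e d\'ej\`a acquise de $D_h\tilde f$ sur $\Omega$ et \`a celle de $k$ sur $F$, ceci montre que $x\mapsto D_h\tilde f(x)$ est continue sur $\h$ tout entier, donc $\tilde f\in C^1_H(\h,\r)$.

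L'obstacle principal n'est pas la structure combinatoire, identique au cas euclidien, mais la gestion minutieuse qu'imposent la non-commutativit\'e (l'ordre des arguments dans $B^{-1}A$, $p_i^{-1}A$, $p_j^{-1}p_i$, et l'emploi r\'ep\'et\'e de l'in\'egalit\'e triangulaire g\'en\'eralis\'ee) et le fait que les "polyn\^omes affines" $P_i$ ne d\'ependent que de la partie horizontale ; l'usage syst\'ematique de la norme homog\`ene fait heureusement que les d\'eplacements verticaux ($\sim d_\infty^2$) sont absorb\'es automatiquement dans les estimations ci-dessus. Le remplacement de $f(A)-f(B)$ par $R(A,B)$ via l'hypoth\`ese $\rho_K(\varepsilon)\to0$ joue exactement le r\^ole que joue, dans la preuve classique, la coh\'erence au premier ordre des donn\'ees de jet : c'est l\`a que toute l'information est consomm\'ee. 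Un point technique \`a ne pas sous-estimer est la construction m\^eme de la partition de l'unit\'e de classe $C^1_H$ avec la borne $\|\nabla_\h\varphi_i\|\le C/r_i$ : contrairement au cas euclidien, des bosses euclidiennes subordonn\'ees aux boules $B_\infty$ (tr\`es aplaties dans la direction verticale) ne donneraient pas la bonne borne sur le gradient horizontal, d'o\`u la n\'ecessit\'e de bosses adapt\'ees \`a la structure sous-riemannienne (par exemple construites \`a partir d'une jauge homog\`ene r\'eguli\`ere, ou par lissage de bosses lipschitziennes).
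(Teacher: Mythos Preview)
The paper does not give its own proof of this statement: Theorem~\ref{Whitneytheorem} is quoted from \cite{pupychev} as a black box, with no argument supplied. There is therefore nothing in the paper to compare your proposal against.

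That said, your sketch is the standard and correct route --- it is precisely the adaptation of Whitney's classical $C^1$ extension argument to the homogeneous structure of $(\h,d_\infty)$ that one finds in the literature (Folland--Stein for stratified groups, and the cited \cite{pupychev} for this specific setting). The architecture you describe --- Whitney decomposition via the doubling property, $C^1_H$ partition of unity with $\|\nabla_\h\varphi_i\|\lesssim r_i^{-1}$, extension by affine jets $P_i(A)=f(p_i)+k(p_i)(p_i^{-1}A)$, and the two key estimates controlling $\tilde f$ and $D_h\tilde f$ near $F$ --- is exactly right, and you have correctly identified the two genuinely non-Euclidean points: (i) the construction of bump functions with the right \emph{horizontal} gradient bound (a smooth function of a regular homogeneous gauge does the job), and (ii) the bookkeeping forced by non-commutativity when expanding $P_i(A)-P_j(A)$ and $P_i(A)-f(B)-k(B)(B^{-1}A)$. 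Your observation that vertical displacements are automatically of order $d_\infty^2$ and hence absorbed is also to the point. As a sketch this is sound; a full write-up would just need to make the constants in the Whitney decomposition and the chain of triangle inequalities explicit.
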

Nous aurons également besoin du théorème de prolongement de Whitney avec le contrôle sur le module de continuité des dérivées horizontales. Nous utiliserons en particulier ce résultat pour le  module de continuité de Hölder.
\begin{theorem}[\cite{pupychev}]\label{Whitneyholder} Soit $K\subset \h$ un ensemble compact. On se donne $0<\alpha<1$, $f:K\to \r$ et $k:K \to \operatorname{Hom}_h(\h, \r)$ tels que pour tous $A,B\in K$ 
$$\|k(A)^{-1}k(B)\|\le Md_\infty(A,B)^\alpha,\quad |R(A,B)|\le Md_\infty(A,B)^{1+\alpha}.$$ 
Alors il existe une fonction $\tilde f \in C^{1,\alpha}_H(\h,\r)$ telle que $\tilde f_{\arrowvert K}=f$, $D_h\tilde f_{\arrowvert K}=k$ et 
$$\|D_h\tilde f(A)^{-1}D_h\tilde f(B)\|\le CMd_\infty(A,B)^\alpha$$ 
pour tous $A,B\in \h$, où $C$ est une constante universelle. 
\end{theorem}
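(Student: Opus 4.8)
Le plan est de reprendre la construction par \emph{partition de l'unit\'e de Whitney} utilis\'ee dans la preuve du th\'eor\`eme \ref{Whitneytheorem}, puis d'en extraire l'estim\'ee quantitative h\"old\'erienne de $D_h\tilde f$. Posons $\Omega:=\h\setminus K$ et fixons un recouvrement de Whitney $\{B_\infty(c_j,r_j)\}_j$ de l'ouvert $\Omega$ adapt\'e \`a la m\'etrique homog\`ene $d_\infty$, avec $r_j\simeq d_\infty(c_j,K)$, de multiplicit\'e born\'ee (chaque point de $\Omega$ appartient \`a au plus $\nu$ boules dilat\'ees $B_\infty(c_j,2r_j)$, $\nu$ universel), ainsi qu'une partition de l'unit\'e subordonn\'ee $\{\varphi_j\}$, $\sum_j\varphi_j\equiv 1$ sur $\Omega$, $\operatorname{supp}\varphi_j\subset B_\infty(c_j,2r_j)$, dont les d\'eriv\'ees partielles horizontales, premi\`eres et secondes, v\'erifient $\|\nabla_{\h}\varphi_j\|_\infty\le C/r_j$ et $\|\nabla_{\h}\nabla_{\h}\varphi_j\|_\infty\le C/r_j^2$. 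Pour chaque $j$ on choisit $a_j\in K$ avec $d_\infty(a_j,c_j)\le 2\,d_\infty(c_j,K)$ et l'on pose $\tilde f:=f$ sur $K$ et, sur $\Omega$,
\begin{equation*}
\tilde f(x):=\sum_j\varphi_j(x)\,P_j(x),\qquad P_j(x):=f(a_j)+k(a_j)(a_j^{-1}x).
\end{equation*}
Comme le montre la preuve du th\'eor\`eme \ref{Whitneytheorem}, cette fonction appartient d\'ej\`a \`a $C^1_H(\h,\r)$ et v\'erifie $\tilde f_{|K}=f$ et $D_h\tilde f_{|K}=k$; il reste donc seulement \`a \'etablir la majoration $\|D_h\tilde f(A)^{-1}D_h\tilde f(B)\|\le CM\,d_\infty(A,B)^\alpha$ pour \emph{tous} $A,B\in\h$.

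L'observation cl\'e est que $k(a_j)\in\operatorname{Hom}_h(\h,\r)$ est une forme lin\'eaire sur la couche horizontale, de sorte que $P_j(x)=f(a_j)+k(a_j)\cdot(\pi(x)-\pi(a_j))$ est une fonction \emph{affine au sens euclidien} des coordonn\'ees horizontales, ind\'ependante de la variable $z$; en particulier $\nabla_{\h}P_j\equiv k(a_j)$. Fixons $x\in\Omega$, notons $r:=d_\infty(x,K)$ et choisissons $j_0=j_0(x)$ tel que $x\in B_\infty(c_{j_0},2r_{j_0})$; pour tout autre indice $j$ tel que $x\in B_\infty(c_j,2r_j)$ on a alors $r_j\simeq r_{j_0}\simeq r$ et $d_\infty(a_j,x),\ d_\infty(a_j,a_{j_0})\le Cr$. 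En d\'erivant et en utilisant $\sum_j\nabla_{\h}\varphi_j\equiv 0$ sur $\Omega$,
\begin{equation*}
D_h\tilde f(x)-k(a_{j_0})=\sum_j\varphi_j(x)\big(k(a_j)-k(a_{j_0})\big)+\sum_j\nabla_{\h}\varphi_j(x)\,\big(P_j(x)-P_{j_0}(x)\big).
\end{equation*}
Le premier terme est de norme au plus $\max_j\|k(a_j)-k(a_{j_0})\|\le CMr^\alpha$. Pour le second, la d\'ecomposition $a_{j_0}^{-1}x=(a_{j_0}^{-1}a_j)(a_j^{-1}x)$ et l'additivit\'e de $k(a_{j_0})$ donnent l'identit\'e
\begin{equation*}
P_j(x)-P_{j_0}(x)=R(a_j,a_{j_0})+\big(k(a_j)-k(a_{j_0})\big)(a_{j_0}^{-1}x),
\end{equation*}
d'o\`u, gr\^ace aux hypoth\`eses $|R(A,B)|\le Md_\infty(A,B)^{1+\alpha}$ et $\|k(A)^{-1}k(B)\|\le Md_\infty(A,B)^\alpha$, la majoration $|P_j(x)-P_{j_0}(x)|\le CMr^{1+\alpha}$; jointe \`a $\|\nabla_{\h}\varphi_j(x)\|\le C/r$ et \`a la multiplicit\'e born\'ee du recouvrement, il vient
\begin{equation*}
\|D_h\tilde f(x)-k(a_{j_0(x)})\|\le CM\,d_\infty(x,K)^\alpha,\qquad x\in\Omega.
\end{equation*}

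On conclut alors par s\'eparation des \'echelles. Si $A,B\in K$, l'in\'egalit\'e cherch\'ee est exactement l'hypoth\`ese. Si $A\in\Omega$ et $B\in K$: comme $d_\infty(a_{j_0(A)},B)\le d_\infty(a_{j_0(A)},A)+d_\infty(A,B)\le C\,d_\infty(A,B)$, l'estim\'ee pr\'ec\'edente et l'hypoth\`ese sur $k$ permettent de conclure. Si $A,B\in\Omega$ avec $d_\infty(A,B)\ge\tfrac12 d_\infty(A,K)$, on compare $D_h\tilde f(A)$ et $D_h\tilde f(B)$ \`a $k(a_{j_0(A)})$, respectivement \`a la valeur de $k$ en un point de $K$ proche de $B$, au moyen de la m\^eme estim\'ee. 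Enfin, si $A,B\in\Omega$ et $d_\infty(A,B)$ est petit devant $r:=d_\infty(A,K)$: en d\'erivant une seconde fois la formule pr\'ec\'edente, les trois types de termes $\nabla_{\h}\varphi_j\otimes(k(a_j)-k(a_{j_0}))$, $\nabla_{\h}\nabla_{\h}\varphi_j\,(P_j-P_{j_0})$ et $\nabla_{\h}\varphi_j\otimes\nabla_{\h}(P_j-P_{j_0})$ sont tous de norme $\lesssim Mr^{\alpha-1}$, donc $\|\nabla_{\h}\nabla_{\h}\tilde f\|\le CMr^{\alpha-1}$ sur la r\'egion $\{\,d_\infty(\cdot,K)\simeq r\,\}$; en int\'egrant le long d'un chemin horizontal de longueur $\lesssim d_\infty(A,B)$ joignant $A$ \`a $B$ dans cette r\'egion, on obtient $\|D_h\tilde f(A)-D_h\tilde f(B)\|\le CMr^{\alpha-1}\,d_\infty(A,B)=CM\,d_\infty(A,B)^\alpha\big(d_\infty(A,B)/r\big)^{1-\alpha}\le CM\,d_\infty(A,B)^\alpha$.

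Le point technique principal est, d'une part, la mise en place du recouvrement de Whitney dans $(\h,d_\infty)$ avec les bornes $\|\nabla_{\h}\varphi_j\|\lesssim 1/r_j$ et $\|\nabla_{\h}\nabla_{\h}\varphi_j\|\lesssim 1/r_j^2$ et la multiplicit\'e born\'ee (standard mais indispensable), et, d'autre part, le soin \`a apporter au dernier cas ci-dessus afin que l'estim\'ee d'ordre deux sur $\tilde f$, qui explose comme $d_\infty(\cdot,K)^{\alpha-1}$, s'int\`egre exactement en la puissance $\alpha$ de $d_\infty(A,B)$, tout en gardant des constantes ne d\'ependant que de $\h$ (ni de $K$, ni de $M$, ni de $\alpha$). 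Une fois ces points acquis, le reste se r\'eduit \`a la manipulation alg\'ebrique \'el\'ementaire des polyn\^omes horizontaux $P_j$, rendue transparente par leur caract\`ere affine euclidien.
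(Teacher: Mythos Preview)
The paper does not prove this theorem: it is stated with a citation to \cite{pupychev} and no argument is given in the text. There is therefore nothing in the paper to compare your proposal against; one can only assess it on its own merits and against what the cited reference presumably does.

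Your approach is correct and is the standard one for Whitney extension with H\"older control on Carnot groups: Whitney decomposition of $\h\setminus K$ in the metric $d_\infty$, a subordinate partition of unity with the scale-invariant bounds $\|\nabla_{\h}\varphi_j\|\lesssim r_j^{-1}$ and $\|\nabla_{\h}\nabla_{\h}\varphi_j\|\lesssim r_j^{-2}$, extension by the horizontal first-order polynomials $P_j$, the key pointwise estimate $\|D_h\tilde f(x)-k(a_{j_0(x)})\|\lesssim M\,d_\infty(x,K)^\alpha$, and the four-case scale separation. The algebraic identity for $P_j-P_{j_0}$ is right (it uses that each $k(a)\in\operatorname{Hom}_h(\h,\r)$ is a genuine group homomorphism, hence factors through $\pi$ and sends inverses to negatives), and the handling of the ``near'' case $d_\infty(A,B)\ll d_\infty(A,K)$ via the bound $\|\nabla_{\h}\nabla_{\h}\tilde f\|\lesssim M\,d_\infty(\cdot,K)^{\alpha-1}$ integrated along a short horizontal curve is the correct mechanism. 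The only points that would need to be spelled out in a complete write-up are precisely those you flag at the end: the existence of the Whitney cover with bounded overlap and the stated derivative bounds on $\varphi_j$ in $(\h,d_\infty)$ (standard, obtained by dilating and translating a fixed smooth bump), and the fact that a sub-Riemannian geodesic from $A$ to $B$ of length $\simeq d_\infty(A,B)$ remains in the region $\{d_\infty(\cdot,K)\simeq r\}$ when $d_\infty(A,B)$ is small compared to $r$, which follows from the equivalence of $d_\infty$ with the Carnot--Carath\'eodory distance.
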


\section{Les surfaces  $\h$-régulières}\label{hsurfaces}
\subsection{Théorème des fonctions implicites}
 
\begin{Def}[Factorisation des groupes de Carnot]  Soit $\Bbb N \triangleleft \G$ un sous-groupe homogène. Un sous-groupe homogène $\Bbb H \triangleleft \G$ est dit  \emph{complémentaire} à $\Bbb N$ si $\G$ est un produit semi-direct de $\Bbb N$ et  $\Bbb H$, \ie $\Bbb N \cap\Bbb H= e$ et tout élément $g\in\G$ peut s'écrire $g=nh$, où $n\in \Bbb N$ et $h\in \Bbb H$.
\end{Def}    

\begin{Def}[de graphe intrinséque régulier \cite{regularsubmanifoldsheisenberg}]\label{d: grapheintrinseque}
Etant donnée une factorisation $\G=\Bbb N\cdot\Bbb H$ en produit semi-direct des sous-groupes homogènes, et $\Bbb N$ est normal, le \textit{graphe intrinsèque régulier} associé, qui est engendré par une application $\phi : \Omega\subset \Bbb N \to \Bbb H$, est un ensemble   
$$\S_\phi =\{ n\phi(n) \mid n\in \Omega\} \subset \G.$$
\end{Def}

 \begin{theorem} [des fonctions implicites \cite{magnani}]\label{imptheor}
 Soit $\Omega \subset \G^1$ un ensemble ouvert et $f\in C^1_H(\Omega, \G^2)$. Supposons que la différentielle horizontale  $D_hf(a_0):\G^1\to \G^2$, $a_0\in \Omega$, est surjective. On suppose en outre que le noyau $\Bbb N=\Ker D_hf(a_0)$ admet un groupe complémentaire $\Bbb H$ (on a donc une décomposition associée  $a_0=n_0h_0$). Alors il existe un voisinage $U_{\Bbb N}$ de $n_0$ dans $\Bbb N$, un voisinage $U_{\Bbb H}$ de $h_0$ dans $\Bbb H$ tels que localement la ligne de niveau $f^{-1}(f(a_0))$ s'écrit comme un graphe intrinsèque régulier engendré par une unique application continue $\varphi:U_{\Bbb N}\to U_{\Bbb H}$, \ie 
 $$ f^{-1}\big(f(a_0)\big) \cap U_{\Bbb N}U_{\Bbb H}= \{n\varphi(n)\mid n\in U_{\Bbb N}\}.$$ 
De plus, il existe une constante $C$ telle que 
$$d_\rho\big(\varphi(n),\varphi(n') \big) \le C d_\rho\big(n\varphi(n'),n'\varphi(n') \big).$$
En particulier, l'application $\varphi$ est de classe de Hölder d'exposant $\mfrac{1}{m}$ par rapport à la distance $d_\rho$ sur $U_{\Bbb H}$ et à une norme euclidienne $\|\cdot \|$ sur $U_{\Bbb N}$, où $m$ est la profondeur de $\G^1$.
\end{theorem}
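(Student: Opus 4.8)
The plan is to follow the classical strategy, replacing linear algebra by the group factorization $\G^1 = \Bbb N\cdot\Bbb H$ and the ordinary contraction argument by a quantitative estimate coming from horizontal differentiability. First I would normalize: translating by $a_0^{-1}$ on the left and composing with the left translation by $f(a_0)^{-1}$ on the target, we may assume $a_0 = e$, $f(a_0)=e$, so that $n_0 = h_0 = e$. Write a generic point of $\G^1$ as $g = n h$ with $n\in\Bbb N$, $h\in\Bbb H$ (this decomposition is unique and the maps $g\mapsto n$, $g\mapsto h$ are continuous, in fact polynomial in exponential coordinates, because $\Bbb N$ and $\Bbb H$ are homogeneous subgroups in direct sum at the Lie algebra level). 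Consider the map $\Psi(n,h) := f(nh)$; the claim to prove is that for $n$ near $e$ in $\Bbb N$ there is a unique $h = \varphi(n)$ near $e$ in $\Bbb H$ with $\Psi(n,h) = e$, together with the stated Hölder-type estimate.

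The key step is the following. Since $D_hf(e):\G^1\to\G^2$ is a surjective homogeneous homomorphism that kills $\Bbb N = \Ker D_hf(e)$, its restriction $D_hf(e)|_{\Bbb H}:\Bbb H\to\G^2$ is a bijective homogeneous homomorphism, hence a bi-Lipschitz homeomorphism for the homogeneous distances, with a homogeneous inverse; in particular there are constants $0<c_1\le c_2<\infty$ with $c_1 d_\rho(h,h') \le d_\rho\big(D_hf(e)(h),D_hf(e)(h')\big)\le c_2 d_\rho(h,h')$ for $h,h'\in\Bbb H$. Next I would use the Lagrange-type mean value inequality (the analogue of Théorème \ref{lagrange} in the target $\G^2$, valid for $C^1_H$ maps between Carnot groups) to compare $\Psi(n,h)$ with its "linearization". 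Concretely, writing $\Psi(n,h)^{-1}\Psi(n,h') $ and comparing to $D_hf(e)\big((nh)^{-1}(nh')\big)$, the $C^1_H$ hypothesis gives that on a small compact neighborhood $U$ of $e$ the remainder is bounded by $\omega(U)\, d_\rho(nh, nh')$ where $\omega(U)\to 0$ as $U$ shrinks (here $\omega$ is built from the modulus of continuity of the horizontal partials of $f$). Because $\Bbb N$ is normal and homogeneous, $(nh)^{-1}(nh')$ projects onto $h^{-1}h'$-type quantities in a controlled way; the upshot is that for fixed $n$ the map $h\mapsto \Psi(n,h)$ is, after composing with the bi-Lipschitz inverse of $D_hf(e)|_{\Bbb H}$, a contraction of $\Bbb H$ in the homogeneous distance once $U$ is chosen small enough that $c_2^{-1}\omega(U) < 1$. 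A fixed-point / continuity-of-roots argument (or directly an Implicit-Function-type iteration) then yields existence, uniqueness and continuity of $\varphi$.

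For the final estimate I would argue as follows: from $f(n\varphi(n)) = f(n'\varphi(n')) = e$ and the Lagrange inequality applied at the base point $n'\varphi(n')$,
\[
d_\rho\big(D_hf(e)\big(\varphi(n)^{-1}\varphi(n')\big), e\big)\ \lesssim\ \omega(U)\, d_\rho\big(n\varphi(n'), n'\varphi(n')\big) + \big(\text{error terms also }\lesssim \omega(U)\, d_\rho(\cdots)\big),
\]
and then invoking the lower bi-Lipschitz bound $c_1 d_\rho(\varphi(n),\varphi(n')) \le d_\rho\big(D_hf(e)(\varphi(n)^{-1}\varphi(n')),e\big)$ together with the triangle inequality to absorb $d_\rho(n\varphi(n'), n'\varphi(n'))$ into $d_\rho(\varphi(n),\varphi(n'))$ and $d_\rho(n,n')$-type terms gives the desired $d_\rho(\varphi(n),\varphi(n'))\le C\, d_\rho(n\varphi(n'),n'\varphi(n'))$. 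The Hölder conclusion is then immediate: $n\varphi(n')$ and $n'\varphi(n')$ differ by left multiplication by $(n')^{-1}n \in \Bbb N$ up to the conjugation action, and on $\Bbb N$ (which sits in all strata $V_1,\dots,V_m$) the homogeneous distance is controlled by the Euclidean distance to the power $\frac1m$ — this is the last bullet of the Folland–Stein proposition recalled above.

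The main obstacle I anticipate is \textbf{bookkeeping the non-commutativity}: unlike the abelian case, $(nh)^{-1}(n'h')$ is not simply $(n^{-1}n')(h^{-1}h')$, and the cross terms (conjugations of $\Bbb N$ by $\Bbb H$ and vice versa) have to be shown to be higher order — dominated by $d_\rho$ with small constant — so that they do not destroy the contraction. This requires carefully using that $\Bbb N$ is \emph{normal} and homogeneous, so that conjugation preserves $\Bbb N$ and is homogeneous, hence Lipschitz in $d_\rho$ uniformly on $U$; the smallness then comes entirely from the $o(d_\rho)$ in the definition of the horizontal differential, not from the conjugations. A secondary technical point is to make sure the neighborhoods $U_{\Bbb N}$, $U_{\Bbb H}$ can be taken as coordinate boxes adapted to the factorization so that "$g\in U_{\Bbb N}U_{\Bbb H}$, $g = n\varphi(n)$" is an honest graph description and not merely a local parametrization.
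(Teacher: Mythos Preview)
The paper does not give its own proof of this theorem: it is stated with a citation to \cite{magnani} and used as a background result, so there is nothing in the text to compare your argument against. Your outline is essentially the strategy of Magnani's original proof (normalize, use that $D_hf(e)|_{\Bbb H}$ is a bi-Lipschitz group isomorphism onto $\G^2$, run a Newton/contraction iteration in the $\Bbb H$-fibre, then extract the metric estimate), and the obstacles you flag---the non-commutative cross terms in $(nh)^{-1}(n'h')$ and the need for $\Bbb N$ to be normal so that conjugation by elements of $\Bbb H$ preserves it with uniform Lipschitz control---are exactly the ones that have to be handled. Two places in your sketch would need tightening in an actual write-up: first, the ``compose with the inverse and get a contraction'' sentence is imprecise---what one actually iterates is a Newton-type map $h\mapsto h\cdot\big(D_hf(e)|_{\Bbb H}\big)^{-1}\!\big(f(nh)^{-1}\big)$ (or a close variant), and one must check this self-map of a small ball in $\Bbb H$ is a contraction; second, the Lagrange inequality you invoke for Carnot-group-valued $C^1_H$ maps is not the scalar Th\'eor\`eme~\ref{lagrange} of this paper and requires its own justification (it holds, but via stratified Taylor expansion rather than a one-line reduction).
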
 

 On voit que dans le cas de la factorisation, $\Bbb H$ est isomorphe à $\G^2 \simeq \G^1/ \Bbb N$. Donc, en particulier, $\Bbb H$ lui-même est un sous-groupe de Carnot de $\G^1$. On constate ainsi qu'en général la condition de la  factorisation ne se réalise que dans des cas assez spéciaux. 
\paragraph{Hypersurfaces sous-riemanniennes.} Le cas le plus simple et le plus étudié auquel le théorème \ref{imptheor} s'applique est celui d'une application scalaire $f:\G\to \r$. Si $D_hf(a_0)$ est surjective, alors il suffit de prendre un élément $a\notin \Ker D_hf(a_0)$ et définir $\Bbb H$ comme étant le sous-groupe engendré par $a$. Autrement dit, il existe un champ de vecteurs horizontal $X$ tel que $Xf(a_0)\not= 0$. Par conséquent, localement l'application $f$ est strictement monotone le long des lignes intégrales du champs $X$.   

\begin{theorem}[des fonctions implicites en $\operatorname{codim} =1$ \cite{Heishypesurfaces}]\label{imphypersurf} Soit $\Omega\subset \G$ un ensemble ouvert, $0\in \Omega$, où  on identifie $\G\cong \r^N$ via les coordonnées exponentielles. Soit  $f\in C_H^1(\G,\r)$ telle que $f(0)=0$ et $X_1f(0)>0$. On note  $I_\delta=\{\xi=(\xi_2,\ldots,\xi_N)\in \r^{N-1}, |\xi_i|<\delta\}$. Il existe un voisinage $U\subset \Omega$ de $0$, $\delta>0$, et une fonction continue $\phi: I_\delta \to \r$ tels que 
$$ f^{-1}(0)\cap U=\big\{ \Phi(\xi):=\exp(\phi(\xi)X_1)(0,\xi) \mid \xi \in I_\delta \big\}. $$
 En outre, il existe $\mathrm{s}:f^{-1}\cap U\to \r$ une fonction borélienne, $ \frac{1}{c} \le\mathrm{s}\le c$ pour $c>1$,  telle que la formule suivante est vérifiée
$$ \int\limits_{I_\delta}\dfrac{\sqrt{\sum\nolimits_{i=1}^{\dim V_1} | X_if\circ \Phi(\xi) |^2}}{X_1f\circ \Phi(\xi)}\,d\mathcal{L}^{N-1}(\xi)=\int\limits_{f^{-1}(0)\cap U} \mathrm{s} \,d\mathcal{S}^{Q-1}_\rho,$$
où les champs de vecteurs horizontaux invariants à gauche $\{X_1,\ldots, X_{\dim V_1} \}$ forment une base de $H\G$.  
\end{theorem}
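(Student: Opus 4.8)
\textbf{Repr\'esentation en graphe.} Comme $X_1f$ est continue et $X_1f(0)>0$, on fixe $c_0>0$ et un voisinage $\Omega_0\ni 0$ sur lequel $X_1f\ge c_0$. Pour $\xi$ assez petit et $|t|$ born\'e, la courbe int\'egrale $t\mapsto\exp(tX_1)(0,\xi)$ reste dans $\Omega_0$, et $g_\xi(t):=f\big(\exp(tX_1)(0,\xi)\big)$ v\'erifie $g_\xi'(t)=(X_1f)\big(\exp(tX_1)(0,\xi)\big)\ge c_0$; donc $g_\xi$ est strictement croissante. Puisque $g_\xi(0)=f(0,\xi)\to f(0)=0$ quand $\xi\to 0$, on choisit $\delta>0$ tel que, pour tout $\xi\in I_\delta$, $g_\xi$ change de signe sur un petit intervalle autour de $0$: il existe alors un unique $\phi(\xi)$ avec $f(\Phi(\xi))=0$, o\`u $\Phi(\xi):=\exp(\phi(\xi)X_1)(0,\xi)$. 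L'unicit\'e du z\'ero (monotonie) et la continuit\'e de $f$ donnent la continuit\'e de $\phi$: si $\xi_n\to\xi$, la suite $\big(\phi(\xi_n)\big)$ est born\'ee et toute valeur d'adh\'erence $t^\ast$ v\'erifie $f\big(\exp(t^\ast X_1)(0,\xi)\big)=0$, donc $t^\ast=\phi(\xi)$. Enfin, $X_1$ \'etant transverse \`a l'hyperplan de coordonn\'ees $\{x_1=0\}$, l'application $(t,\xi)\mapsto\exp(tX_1)(0,\xi)$ est un diff\'eomorphisme d'un petit pav\'e sur un voisinage ouvert $U\ni 0$ (redressement du flot); tout point de $U$ s'\'ecrit de mani\`ere unique sous la forme $\exp\big((t-\phi(\xi))X_1\big)\big(\Phi(\xi)\big)$ et, par monotonie, appartient \`a $f^{-1}(0)$ si et seulement si $t=\phi(\xi)$. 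D'o\`u $f^{-1}(0)\cap U=\Phi(I_\delta)$.

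\textbf{Formule de l'aire: r\'eduction \`a une estim\'ee de r\'egularit\'e.} Posons $\Gamma:=f^{-1}(0)\cap U=\Phi(I_\delta)$ et, sur $I_\delta$, $w:=(X_1f\circ\Phi)^{-1}\big(\sum_i|X_if\circ\Phi|^2\big)^{1/2}$. Puisque $X_1f\ge c_0$, que les $X_if$ sont born\'ees sur $\overline U$, et que $\sum_i|X_if|^2\ge(X_1f)^2$, on a $1\le w\le M<\infty$. Comme $\Phi$ est injective, le membre de gauche de l'\'enonc\'e vaut $\int_{I_\delta}w\,d\mathcal L^{N-1}=\mu(\Gamma)$, o\`u $\mu:=\Phi_\ast\big(w\,\mathcal L^{N-1}\corn I_\delta\big)$ est une mesure de Radon port\'ee par $\Gamma$. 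Il suffit donc de produire une fonction bor\'elienne $\mathrm{s}$ avec $1/c\le\mathrm{s}\le c$ telle que $\mu=\mathrm{s}\cdot\mathcal S^{Q-1}_\rho\corn\Gamma$: on conclut alors par $\int_{I_\delta}w\,d\mathcal L^{N-1}=\mu(\Gamma)=\int_\Gamma\mathrm{s}\,d\mathcal S^{Q-1}_\rho$. Or, par les th\'eor\`emes de comparaison de densit\'es de Federer, il suffit pour cela de montrer que \emph{$\mu$ est Ahlfors $(Q-1)$-r\'eguli\`ere le long de $\Gamma$} relativement \`a $d_\rho$, c'est-\`a-dire $c_1r^{Q-1}\le\mu\big(B_\rho(p,r)\big)\le c_2r^{Q-1}$ pour tout $p\in\Gamma$ et tout $r$ assez petit (ce qui donne en particulier que $\mathcal S^{Q-1}_\rho\corn\Gamma$ est une mesure de Radon).

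\textbf{R\'egularit\'e d'Ahlfors via la platitude de $\Gamma$.} Fixons $p\in\Gamma$ et posons $\mathbb V_p:=\Ker D_hf(p)$: c'est un sous-groupe homog\`ene distingu\'e de $\G$ dont l'alg\`ebre de Lie vaut $(\mathfrak v_p\cap V_1)\oplus V_2\oplus\dots\oplus V_m$ avec $\dim(\mathfrak v_p\cap V_1)=\dim V_1-1$, donc un groupe homog\`ene de dimension homog\`ene $Q-1$ dont la mesure de Haar v\'erifie $|B_\rho^{\mathbb V_p}(q,r)|=c\,r^{Q-1}$. L'ingr\'edient cl\'e est la \emph{platitude} de $\Gamma$ pr\`es de $p$: pour $A,B\in\Gamma$ voisins de $p$, le th\'eor\`eme de Lagrange \ref{lagrange} et la continuit\'e uniforme sur $\overline U$ du gradient horizontal donnent $0=f(A)-f(B)=D_hf(B)(B^{-1}A)+o\big(d_\rho(A,B)\big)$, d'o\`u $D_hf(p)(B^{-1}A)=o\big(d_\rho(A,B)\big)$ uniform\'ement; autrement dit, la composante de $B^{-1}A$ transverse \`a $\mathbb V_p$ est n\'egligeable devant $d_\rho(A,B)$. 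On en d\'eduit que la projection $P_{\mathbb V_p}:\G\to\mathbb V_p$ le long de $\exp(\r X_1)$ (compl\'ementaire de $\mathbb V_p$, puisque $X_1f(p)>0$) v\'erifie $d_\rho(A,B)\asymp d_\rho\big(P_{\mathbb V_p}A,P_{\mathbb V_p}B\big)$ pour $A,B\in\Gamma$ proches de $p$. Ainsi $B_\rho(p,r)\cap\Gamma$ se projette sur une partie de $\mathbb V_p$ encadr\'ee par deux boules $B_\rho^{\mathbb V_p}$ de rayons comparables \`a $r$, donc de mesure de Haar $\asymp r^{Q-1}$; en transportant cette estim\'ee par la param\'etrisation $\Phi$ et en utilisant $1\le w\le M$, on obtient $\mu\big(B_\rho(p,r)\big)\asymp r^{Q-1}$, avec des constantes uniformes sur $\overline U$ (par compacit\'e et d\'ependance continue de $p\mapsto\mathbb V_p$).

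\textbf{Point d\'elicat.} La vraie difficult\'e est l'\'etape de platitude et son corollaire m\'etrique: rendre uniformes les restes $o(\cdot)$ du th\'eor\`eme de Lagrange, contr\^oler l'effet des conjugaisons (qui ne sont pas bilipschitziennes pour $d_\rho$ \`a petite \'echelle), et comparer \`a l'\'echelle homog\`ene la param\'etrisation par le pav\'e $I_\delta$ avec la projection sur $\mathbb V_p$; s'y ajoute la v\'erification des hypoth\`eses (r\'egularit\'e bor\'elienne, finitude locale) des th\'eor\`emes de comparaison. Une variante moins \'el\'ementaire consiste \`a identifier d'abord le membre de gauche au p\'erim\`etre sous-riemannien $|\partial E|_\G(U)$ du sous-niveau $E:=\{f<0\}$, puis \`a \'etablir $|\partial E|_\G=\mathrm{s}\cdot\mathcal S^{Q-1}_\rho\corn\partial E$ ($\mathrm{s}$ born\'ee) par \'eclatement: les dilat\'ees $\delta_{1/r}\big(p^{-1}E\big)$ convergent dans $L^1_{\mathrm{loc}}$ vers le demi-espace vertical de normale horizontale $-\nabla_H f(p)/|\nabla_H f(p)|$ (la normale horizontale est continue puisque $f\in C^1_H$), la densit\'e de $\mathcal S^{Q-1}_\rho$ sur un tel demi-espace \'etant born\'ee sup\'erieurement et inf\'erieurement ind\'ependamment de la normale; l'obstacle y devient la convergence des \'eclatements et le calcul des densit\'es.
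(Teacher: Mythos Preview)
Ce th\'eor\`eme est cit\'e de \cite{Heishypesurfaces} et n'est pas d\'emontr\'e dans l'article; il n'y a donc pas de preuve ici \`a laquelle comparer directement. Votre argument de repr\'esentation en graphe (monotonie le long des lignes de flot de $X_1$, unicit\'e du z\'ero, continuit\'e de $\phi$ par compacit\'e/valeurs d'adh\'erence) est l'argument standard et il est correct.

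Pour la formule de l'aire, notez toutefois que l'article donne sa propre preuve d'un \'enonc\'e plus pr\'ecis dans le cas d'Heisenberg: le lemme~\ref{aireformsurf} \'etablit $\S^{Q-1}_\rho\corn\S=\Phi_\#(J_\rho\,\mathcal L^{N-1})$ avec un jacobien $J_\rho$ explicite, et non simplement une densit\'e born\'ee $\mathrm{s}$. Cette preuve proc\`ede diff\'eremment de la v\^otre: au lieu d'encadrer $\mu(B_\rho(p,r))$ par $r^{Q-1}$ via la platitude et la projection sur $\Ker D_hf(p)$, elle calcule la limite $\lim_{r\to 0}\mu(B_\rho(p,r))/r^{Q-1}$ directement, \`a l'aide de la description g\'eom\'etrique des $d_\rho$-boules sur $\S$ fournie par le lemme~\ref{metriqueR} (ces boules sont asymptotiquement des ``rectangles curvilignes'' bord\'es par des courbes int\'egrales du champ horizontal $W^\phi$). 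Le changement de variables qui rend ce calcul effectif est explicite, et les restes sont contr\^ol\'es par le lemme de r\'egularit\'e~\ref{l: reguler}.

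Votre voie --- r\'egularit\'e d'Ahlfors puis comparaison de densit\'es \`a la Federer --- est l\'egitime en principe, mais l'\'etape que vous signalez vous-m\^eme comme ``point d\'elicat'' constitue en r\'ealit\'e tout le c\oe ur de la preuve: transformer l'estim\'ee de platitude $D_hf(p)(B^{-1}A)=o(d_\rho(A,B))$ en une comparaison bi-Lipschitz $d_\rho(A,B)\asymp d_\rho(P_{\mathbb V_p}A,P_{\mathbb V_p}B)$ sur $\Gamma$, puis transporter la mesure de Haar de $\mathbb V_p$ vers $\mathcal L^{N-1}$ sur $I_\delta$ avec jacobien born\'e, est pr\'ecis\'ement ce qui demande du travail (c'est essentiellement le contenu de l'estim\'ee $d_\rho(\varphi(n),\varphi(n'))\le C\,d_\rho(n\varphi(n'),n'\varphi(n'))$ du th\'eor\`eme~\ref{imptheor}). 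Telle qu'\'ecrite, votre proposition \'enonce la conclusion de cette \'etape sans en fournir l'argument.
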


\begin{Def}[\cite{Heishypesurfaces, regularsubmanifoldsheisenberg}]\label{defhypersurf}
L'ensemble $S\subset \G$ est appelé \emph{hypersurface régulière sous-riemannienne} (ou $\G$-hypersurface) s'il est donné localement au voisinage de tout point comme une ligne de niveau $f^{-1}(0)$ d'une application  $f\in C_H^1(\G,\r)$ dont la différentielle $D_h f$ ne s'annule pas.
On note $\nu_S(p)$ la normale horizontale vers $S$ en point $p\in S$, \ie le vecteur qui est donné localement par   $$\nu_S(p):=-\frac{\bigtriangledown_\G f (p)}{\| \bigtriangledown_\G f (p)\|_p }, \quad \bigtriangledown_\G f (p):=\big(X_1f(p), \ldots,X_{\dim V_1}f(p)\big).$$ 
\end{Def} 
 Ici la norme $\|\cdot\|_p$ est induite par un produit scalaire sur $H_p\G$ qui rend $\{X_1(p),\ldots,X_{\dim V_1}(p)\}$ orthonormaux. Remarquons que selon la définition $\nu_S$ peut être toujours choisie continue sur $S$.
  
\subsection{Paramétrage des surfaces $\h$-régulières}\label{regulsurf}

\paragraph{Notations et premières propriétés.}
Soit $\mathcal{S}$ une surface $\h$-régulière qui est localement donnée par la ligne de niveau $f^{-1}(0)$ d'une application scalaire $f\in C^1_H(\h,\r)$ telle que $Xf(0)\not = 0$ et $f(0)=0$.  En vertu du théorème des fonctions implicites, nous supposons que $\mathcal{S}$ est un $X$-graphe intrinsèque régulier, \ie  
$$\S:=f^{-1}(0)\cap U=\left\{\exp\big(\phi(y,z)X\big)(0,y,z) \mid (y,z)\in \Omega\subset \r^2\right\},$$
où $U$ est un voisinage de $0\in \h$ et l'application scalaire continue $\phi:\Omega \to \r$ satisfait certaines conditions sur un ouvert $\Omega$ que l'on précisera. On note l'application
$$ \Phi(y,z):=\exp\big(\phi(y,z)X\big)(0,y,z)=(0,y,z)(\phi(y,z),0,0)= \big(\phi(y,z),y, z+2\phi(y,z)y\big),$$
qui est en fait l'homéomorphisme de $\Omega$ sur son image $\mathcal{S}$. 
  L'opération de groupe s'écrit pour les points du graphe
$$\Phi(y_1,z_1)^{-1} \Phi(y_2,z_2)= \left(\begin{array}{c}\phi(y_2,z_2)-\phi(y_1,z_1)\\ y_2-y_1\\ z_2-z_1+2(y_2-y_1)\big(\phi(y_1,z_1)+\phi(y_2,z_2)\big)\end{array}\right).$$ 
Pour la distance  d'Heisenberg induite sur le graphe on gardera la même notation: pour  $A=(y_1,z_1), B=(y_2,z_2)\in \Omega$ on aura
$$d_\infty (A,B)=\max\{\sqrt{|\phi(B)-\phi(A)|^2+|y_2-y_1|^2}, |z_2-z_1+2(y_2-y_1)(\phi(B)+\phi(A))|^\frac{1}{2}\}.$$  
On définit également une fonction  $d_\phi:\Omega^2\to [0,\infty]$ : pour  $A, B\in \Omega$ on pose
\begin{equation*}
d_{\phi}(A,B)=\max\{|y_2-y_1|,|z_2-z_1+2(y_2-y_1)(\phi(B)+\phi(A))|^\frac{1}{2}\}.
\end{equation*}
La différentiabilité horizontale continue de $f$ se traduit  sur $\S$ comme 
$$ \phi(y_2,z_2)-\phi(y_1,z_1)-w(B)(y_2-y_1)=o(d_{\infty}(A,B)),$$
où le petit-$o$ est uniforme sur chaque partie compacte de $\Omega$, et $w(B)=-\mfrac{Yf}{Xf}\circ \Phi(B)$ une notation qu'on gardera par la suite.

On peut en déduire 
\begin{lemma}[propriétés de $\phi$ \cite{submanifold}] La fonction $d_\phi$ est une quasi-métrique sur $\Omega$ qui est localement équivalente à la distance induite $d_\infty$, \ie elle est bi-lipschitzienne par rapport à l'autre sur tout $\Omega^{\prime} \Subset\Omega$.
 En outre, l'application $\phi$ satisfait 
 \begin{align}
& |\phi(y_2,z_2)-\phi(y_1,z_1)|=O(d_\phi(A,B) ),\notag\\
& |\phi(y_2,z_2)-\phi(y_1,z_1)|=o(|A-B|^\frac{1}{2}),\label{hoderdemi} \\
& \phi(y_2,z_2)-\phi(y_1,z_1)-w(B)(y_2-y_1)=o\left(d_{\phi}(A,B)\right), \label{Whytney} 
\end{align}
où   les "o" et "O" sont uniformes sur tout $\Omega'\Subset\Omega$.
\end{lemma}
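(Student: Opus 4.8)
\emph{Plan de d\'emonstration.} Le plan consiste \`a exploiter la relation de diff\'erentiabilit\'e horizontale $\phi(B)-\phi(A)-w(B)(y_2-y_1)=o(d_\infty(A,B))$ (le petit-$o$ \'etant uniforme sur les compacts) afin de l'``inverser'' et de contr\^oler $\phi$ en fonction de la seule quantit\'e $d_\phi$. On note d'abord l'in\'egalit\'e $d_\phi\le d_\infty$, imm\'ediate car $|y_2-y_1|\le\sqrt{|\phi(B)-\phi(A)|^2+|y_2-y_1|^2}$; il restera donc \`a \'etablir la majoration r\'eciproque $d_\infty\le C\,d_\phi$ localement. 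Pour all\'eger, posons $\Delta\phi:=\phi(B)-\phi(A)$, $\Delta y:=y_2-y_1$, $\Delta z:=z_2-z_1$ et $\zeta:=\Delta z+2\,\Delta y\,(\phi(A)+\phi(B))$, de sorte que $d_\phi=\max\{|\Delta y|,|\zeta|^{1/2}\}$ et $d_\infty=\max\{\sqrt{\Delta\phi^2+\Delta y^2},|\zeta|^{1/2}\}$. On fixe $\Omega'\Subset\Omega$; comme $w=-Yf/Xf$ est continue sur $\S$, elle est born\'ee, disons $|w|\le M$, sur le compact $\Phi(\overline{\Omega'})$.

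La premi\`ere \'etape est de montrer que $|\Delta\phi|\le C\,d_\phi(A,B)$ avec $C:=2M+1$, d\`es que $A,B\in\Omega'$ sont assez proches, par disjonction de cas. Si $d_\infty=|\zeta|^{1/2}$, alors directement $|\Delta\phi|\le\sqrt{\Delta\phi^2+\Delta y^2}\le|\zeta|^{1/2}\le d_\phi$. Sinon $d_\infty=\sqrt{\Delta\phi^2+\Delta y^2}$ et, pour $d_\infty$ assez petit (de sorte que le ``$\varepsilon$'' du petit-$o$ v\'erifie $\varepsilon<1/2$), la relation de diff\'erentiabilit\'e donne $|\Delta\phi|\le M|\Delta y|+\varepsilon(|\Delta\phi|+|\Delta y|)$, d'o\`u $(1-\varepsilon)|\Delta\phi|\le(M+\varepsilon)|\Delta y|$ puis $|\Delta\phi|\le(2M+1)|\Delta y|\le(2M+1)\,d_\phi$. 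Le point \`a soigner ici est de v\'erifier, via la continuit\'e de $\phi$ et la compacit\'e de $\overline{\Omega'}$, l'existence d'un $\delta_0>0$ tel que $d_\phi(A,B)<\delta_0$ entra\^{\i}ne $d_\infty(A,B)$ assez petit : sinon une suite $(A_k,B_k)$ avec $d_\phi\to 0$ et $d_\infty\ge c>0$ aboutirait \`a une contradiction, car $d_\phi\to 0$ force $\Delta y\to 0$, $\zeta\to 0$, donc $\Delta z\to 0$, donc, apr\`es extraction, $A_k$ et $B_k$ tendent vers une m\^eme limite et $\Delta\phi\to 0$.

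La deuxi\`eme \'etape est purement formelle. De $|\Delta\phi|\le C\,d_\phi$ (pour $A,B$ proches) on tire $d_\infty^2=\max\{\Delta\phi^2+\Delta y^2,|\zeta|\}\le(C^2+1)\,d_\phi^2$, donc $d_\infty\le\sqrt{C^2+1}\,d_\phi$; joint \`a $d_\phi\le d_\infty$ et \`a un argument de compacit\'e \'el\'ementaire (les fonctions continues $d_\infty/d_\phi$ et $d_\phi/d_\infty$ sont born\'ees sur le compact $\{(A,B)\in\overline{\Omega'}\times\overline{\Omega'}\mid d_\phi(A,B)\ge\delta_0\}$), ceci fournit l'\'equivalence bilipschitzienne de $d_\phi$ et $d_\infty$ sur tout $\Omega'\Subset\Omega$, et par suite les axiomes de quasi-m\'etrique pour $d_\phi$ (sym\'etrie \'evidente, s\'eparation des points, in\'egalit\'e quasi-triangulaire h\'erit\'ee de celle de $d_\infty$). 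La premi\`ere \'etape fournit aussi directement $|\phi(y_2,z_2)-\phi(y_1,z_1)|=O(d_\phi(A,B))$, et l'\'equivalence transforme le $o(d_\infty)$ en $o(d_\phi)$ dans la relation de diff\'erentiabilit\'e, ce qui est exactement \eqref{Whytney}.

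Il reste \eqref{hoderdemi}. En partant de $|\Delta\phi|\le M|\Delta y|+C\varepsilon\,d_\phi$ avec $\varepsilon\to 0$, on majore pour $|A-B|\le 1$ : d'une part $|\Delta y|\le|A-B|$ donne $|\Delta y|\,|A-B|^{-1/2}\le|A-B|^{1/2}\to 0$; d'autre part $|\zeta|^{1/2}\le|\Delta z|^{1/2}+(2M|\Delta y|)^{1/2}\le(1+\sqrt{2M})\,|A-B|^{1/2}$, d'o\`u $d_\phi=O(|A-B|^{1/2})$; comme $d_\infty\le C\,d_\phi\le C'\,|A-B|^{1/2}\to 0$, on a $\varepsilon\to 0$ quand $|A-B|\to 0$, si bien que $C\varepsilon\,d_\phi=o(|A-B|^{1/2})$, ce qui conclut, l'uniformit\'e sur $\Omega'\Subset\Omega$ provenant de celle du petit-$o$ initial. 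L'obstacle principal de la preuve est pr\'ecis\'ement cette inversion de la premi\`ere \'etape : la diff\'erentiabilit\'e \'etant \'enonc\'ee au moyen de $d_\infty$, qui fait lui-m\^eme intervenir $\phi$, il faut un raisonnement par cas combin\'e au caract\`ere localement born\'e de $w$ pour en d\'eduire un contr\^ole autonome de $\Delta\phi$ par $\Delta y$; ceci acquis, tout le reste s'obtient m\'ecaniquement.
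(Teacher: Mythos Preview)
Your argument is correct. Note, however, that the paper does not give its own proof of this lemma: it is stated with a citation to \cite{submanifold}, and the \texttt{proof} environment that follows in the text pertains to the subsequent remark on Euclidean H\"older regularity, not to the lemma. So there is nothing to compare against; you have supplied what the paper omits.

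One small slip: in your estimate for \eqref{hoderdemi} you write $|\zeta|^{1/2}\le |\Delta z|^{1/2}+(2M|\Delta y|)^{1/2}$, but $M$ was your bound on $|w|$, whereas here you need a bound on $|\phi(A)+\phi(B)|$. The correct constant is $4\sup_{\Omega'}|\phi|$ rather than $2M$; this changes nothing structurally. Also, your derivation of the quasi-triangle inequality for $d_\phi$ via the bilipschitz comparison with $d_\infty$ yields a constant depending on $\Omega'\Subset\Omega$; strictly speaking, a single constant on all of $\Omega$ would require a further exhaustion argument, but this local version is all that is used downstream in the paper.
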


\begin{remark}
L'application $\phi$ "hérite" de la régularité euclidienne de $f$. Par exemple, si $f$ est $\alpha$-Hölder au sens euclidien, alors $\phi$ l'est aussi.  
\end{remark}
\begin{proof} Pour $A,B\in \Omega'\Subset \Omega$ on note $A'=\Phi(A)$, $B'=\Phi(B)$,  et on introduit le point mixte $C'=B\,(\phi(A),0,0)$. 
Ainsi,
$$ \min\limits_{\Omega^{'}} \{ |Xf| \} |\phi(B)-\phi(A)|\le|f(B')-f(C')|=|f(A')-f(C')|\le C_\alpha |A'-C'|^\alpha,$$
et, on conclut avec $|A'-C'|\le C_{\exp} |A-B|$  vu que l'application exponentielle est euclidienne lipschitzienne. 
\end{proof}

\begin{remark*}
En général, les surfaces $\h$-régulières ont un mauvais comportement vis-à-vis de la métrique euclidienne. Citons ainsi la construction dans \cite{kircassano} qui produit l'exemple des surfaces $\h$-régulières $\S$ de dimension euclidienne égale $\dim_E \S = 2.5$. Notons aussi qu'en général d'après le théorème de comparaison de dimensions \cite{comdim} $\dim_E \S\in [2, 2.5]$ vu que $\dim_h \S= 3$.
\end{remark*}

\paragraph{Champ horizontal transporté.}

Maintenant nous allons étudier plus en détail la géométrie de $\mathcal{S}$ à travers l'application $\phi$.  
Tout d'abord, on considère le champ de vecteurs continu sur $\Omega$ qui joue un rôle très important dans notre étude
$$
W^\phi(y,z)=\partial_y-4\phi(y,z)\partial_z.
$$
Si $\phi$ est de classe $C^1$ (autrement dit, $\mathcal{S}$ est une surface régulière au sens euclidien sans points caractéristiques), alors $W^\phi$ est le champ horizontal sur $\mathcal{S}$ tiré en arrière par $\Phi$. 

Considérons $\gamma(t)$ une ligne intégrale du champ de vecteurs $W^\phi$ issue du point $(y_0,z_0)$,  \ie une solution de problème de Cauchy 
\begin{equation}\label{cauchyproblem}
\left\{\begin{array}{l}
\gamma^{\prime}(t)=W^\phi\circ\gamma(t)\\
\gamma(0)=(y_0,z_0),
\end{array}\right.
\quad \Longleftrightarrow\quad
\left\{\begin{array}{l}
 y(t)=y_0+t \\
  z(t)=z_0-4\int\limits_0^t \phi\big(y_0+s, z(s)\big)\,ds
\end{array}\right. .
\end{equation}

D'après le théorème de Peano, une telle courbe (de classe $C^1$) existe pour un intervalle de temps non-vide $I_\gamma=(t_1, t_2)$.
Nous considérons toujours des solutions maximales de \eqref{cauchyproblem}, \ie $\gamma(t)\to \partial \Omega$ lorsque $t\to t_1$ et $t\to t_2$. 
Remarquons qu'une solution de \eqref{cauchyproblem} n'est pas nécessairement unique. 

Grâce aux propriétés  spéciales de $\phi$ on obtient le résultat clé suivant (comparer avec \cite{bigolin}). 
\begin{lemma}[Régularité supplémentaire] \label{l: reguler}Toute courbe $\gamma(\cdot)$ solution de \eqref{cauchyproblem} est de classe $C^2$, ou de façon équivalente, $\phi\circ\gamma(\cdot)$ est de classe $C^1$ et, donc, $\Phi\circ\gamma(\cdot)$ est une courbe horizontale de classe $C^1$ sur $\mathcal{S}$.
En outre, pour tout $t \in I_\gamma$
 \begin{align}
& \big( \phi\circ\gamma\big)'(t)=w(\gamma(t)), \\
& (\Phi\circ\gamma)'(t)= \Big(-\frac{Yf}{Xf}X+Y\Big)\circ\Phi\circ\gamma(t).
\end{align}
\end{lemma}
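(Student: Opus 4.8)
The plan is to exploit the two facts about $\phi$ that come from horizontal differentiability: the Whitney-type expansion \eqref{Whytney} and the fact that $w(B)=-\tfrac{Yf}{Xf}\circ\Phi(B)$ is continuous on $\Omega$. The heart of the matter is to show that $t\mapsto \phi\circ\gamma(t)$ is $C^1$ with derivative $w(\gamma(t))$; once this is known, the $C^1$-regularity of $\Phi\circ\gamma$ and the formula for its derivative follow by a routine differentiation of $\Phi(y,z)=(\phi(y,z),y,z+2\phi(y,z)y)$, and the $C^2$-regularity of $\gamma$ is then immediate from the second equation of \eqref{cauchyproblem}, namely $z'(t)=-4\phi(\gamma(t))=-4\phi\circ\gamma(t)$, whose right-hand side is now $C^1$, together with $y'(t)=1$.

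First I would fix $t_0\in I_\gamma$ and set $A=\gamma(t_0)$, $B=\gamma(t_0+h)$ for small $h$. Along the integral curve we have $y(t_0+h)-y(t_0)=h$ (from $y'=1$), so the increment $y_2-y_1$ in the expansion \eqref{Whytney} is exactly $h$. I would then estimate $d_\phi(A,B)$: since $y_2-y_1=h$ and, by the integral form of \eqref{cauchyproblem}, $z_2-z_1=-4\int_{t_0}^{t_0+h}\phi(\gamma(s))\,ds$, the vertical term of $d_\phi$ is controlled by $|z_2-z_1+2(y_2-y_1)(\phi(B)+\phi(A))|^{1/2}=O(|h|^{1/2})$ because the integrand is bounded near $t_0$ and the corrector $2h(\phi(A)+\phi(B))$ is $O(|h|)$; hence $d_\phi(A,B)=O(|h|^{1/2})$. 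Plugging this into \eqref{Whytney} gives
\[
\phi\circ\gamma(t_0+h)-\phi\circ\gamma(t_0)=w(\gamma(t_0+h))\,h+o\big(d_\phi(A,B)\big)=w(\gamma(t_0))\,h+o(|h|^{1/2})\cdot o(1)+o(|h|),
\]
where I also used continuity of $w$ and of $\gamma$ to replace $w(\gamma(t_0+h))$ by $w(\gamma(t_0))+o(1)$. The term $w(\gamma(t_0+h))h-w(\gamma(t_0))h$ is $o(1)\cdot h=o(|h|)$, and the Whitney remainder is $o(d_\phi(A,B))=o(|h|^{1/2})$, which is \emph{not} yet $o(|h|)$ — this is the main obstacle and must be handled with care.

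To overcome it, the key observation is that the error in \eqref{Whytney} is $o(d_\phi(A,B))$ \emph{uniformly}, and along the curve $d_\phi(A,B)$ is not merely $O(|h|^{1/2})$ but in fact comparable to $|h|$: indeed the vertical displacement $z_2-z_1+2(y_2-y_1)(\phi(A)+\phi(B))=-4\int_{t_0}^{t_0+h}\big(\phi(\gamma(s))-\tfrac12(\phi(A)+\phi(B))\big)ds$ is $o(|h|)$ by continuity of $\phi\circ\gamma$ (the integrand tends to $0$ as $h\to0$ uniformly in $s\in[t_0,t_0+h]$, since both $\phi(\gamma(s))$ and $\tfrac12(\phi(A)+\phi(B))$ converge to $\phi(A)$), so its square root is $o(|h|^{1/2})$ — still not enough directly. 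The clean way is therefore to argue that $g(t):=\phi\circ\gamma(t)$ satisfies, for every $t$, $g(t+h)-g(t)=w(\gamma(t))h+\varepsilon(t,h)$ with $\varepsilon(t,h)/h\to0$; this follows because $d_\phi(A,B)^2\le C\max\{h^2,\ |z_2-z_1+2h(\phi(A)+\phi(B))|\}$ and, having just shown the vertical term is $o(|h|)$, we get $d_\phi(A,B)=O(|h|)+o(|h|^{1/2})$. Bootstrapping: plug this improved bound back into \eqref{Whytney} to get $g(t+h)-g(t)=w(\gamma(t))h+o(|h|^{1/2})$; then the vertical term becomes $-4\int_{t_0}^{t_0+h}(\phi(\gamma(s))-\tfrac12(\phi(A)+\phi(B)))ds$ with integrand $O(|s-t_0|^{1/2})+o(1)=o(1)$, giving once more $o(|h|)$ for the vertical displacement; iterating (or, more simply, combining these two estimates) forces $d_\phi(A,B)=O(|h|)$, whence \eqref{Whytney} yields $g'(t)=w(\gamma(t))$. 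Since $w\circ\gamma$ is continuous, $g=\phi\circ\gamma\in C^1$, which is what we wanted. Finally, from $\Phi\circ\gamma(t)=(g(t),\,y(t),\,z(t)+2g(t)y(t))$ and the fact that $y,z\in C^1$ with $y'=1$, $z'=-4g$, we compute $(\Phi\circ\gamma)'=(g',\,1,\,z'+2g'y+2g)=(w\circ\gamma,\,1,\,-4g+2w\circ\gamma\cdot y+2g)=(w\circ\gamma,\,1,\,2y\,w\circ\gamma-2g)$, and one checks this equals $\big(-\tfrac{Yf}{Xf}X+Y\big)\circ\Phi\circ\gamma(t)$ by reading off the components of $X=\partial_x+2y\partial_z$ and $Y=\partial_y-2x\partial_z$ at the point $\Phi\circ\gamma(t)=(g,y,z+2gy)$; this last verification is a direct coordinate computation. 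The $C^2$-regularity of $\gamma$ is then immediate since $\gamma'=(1,-4g)$ with $g\in C^1$.
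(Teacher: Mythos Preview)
Your overall strategy is right, and the last paragraph (differentiating $\Phi\circ\gamma$ in coordinates, deducing $C^2$ for $\gamma$ from $z'=-4\phi\circ\gamma$) is fine. The gap is in the heart of the argument: your bootstrap does not close. From continuity of $\phi\circ\gamma$ you get that the vertical term is $o(|h|)$, hence $d_\phi(A,B)\le \max\{|h|,\,o(|h|)^{1/2}\}=o(|h|^{1/2})$; plugging this back into \eqref{Whytney} only gives a remainder $o(d_\phi)=o(|h|^{1/2})$, not $o(|h|)$. Iterating as you suggest improves the exponent according to the recursion $\alpha_{n+1}=\tfrac12(1+\alpha_n)$ (one gets $d_\phi=O(|h|^{\alpha_n})$ with $\alpha_n=1-2^{-n}$), which converges to $1$ but never reaches it; your sentence ``iterating (or, more simply, combining these two estimates) forces $d_\phi(A,B)=O(|h|)$'' is precisely the unjustified step.

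What the paper does instead is to avoid iteration altogether by a Gr\"onwall-type closure. Writing $\Delta\phi(t)=\phi\circ\gamma(t)-\phi\circ\gamma(0)$ and using \eqref{cauchyproblem}, the vertical displacement equals $-4\int_0^t\Delta\phi(s)\,ds+2t\,\Delta\phi(t)$, so \eqref{Whytney} yields
\[
|\Delta\phi(t)|\le (C+\epsilon_t)|t|+2\epsilon_t\Big(\big|\textstyle\int_0^t\Delta\phi\big|^{1/2}+|t\,\Delta\phi(t)|^{1/2}\Big).
\]
Introducing the maximal function $M(T)=\max_{[0,T]}|\Delta\phi|$ turns this into the closed inequality $M(T)\le (C+1)T+4\sqrt{T\,M(T)}$, a quadratic in $\sqrt{M(T)}$ whose solution gives $M(T)\le cT$, i.e.\ $\Delta\phi(t)=O(|t|)$ in one shot. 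Reinjecting this Lipschitz bound into \eqref{Whytney} then gives $\Delta\phi(t)-w(\gamma(0))t=o(|t|)$, and continuity of $w\circ\gamma$ finishes the $C^1$ claim. You should replace your bootstrap by this maximal-function argument.
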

\begin{proof}Pour $t\in I_\gamma$ on note 
$$\Delta \phi(t):=\phi\circ\gamma(t)-\phi(y_0,z_0);\quad \hat w(t):=w(\gamma(t)).$$
En réécrivant \eqref{Whytney} compte tenu des relations intégrales \eqref{cauchyproblem} pour $t\in[-\delta, \delta]\subset I_\gamma$, $\delta>0$, on obtient 
\begin{align*}
| \Delta \phi(t)| &\le | \hat w(t)t|+| \Delta \phi(t) )-\hat w(t) t|  
\le | \hat w(t)t| +\epsilon_t\big(| t| +| z(t)-z_0+4t\phi(y_0,z_0) +2t\Delta \phi(t)|^\frac{1}{2}\big)\\
&= (| \hat w(t)| + \epsilon_t)| t|+ \epsilon_t\big| 4\int\limits_0^t\Delta \phi(s)\,ds -2t\Delta \phi(t)\big|^\frac{1}{2}&\\
& \le (C+\epsilon_t)| t|+ 2\epsilon_t\bigg( | \int\limits_0^t\Delta \phi(s)\,ds|^\frac{1}{2}+| t\Delta \phi(t)|^\frac{1}{2}\bigg),
\end{align*}
où $C=\max\limits_{t\in[-\delta, \delta]}| \hat w(t)|<\infty$ vu la continuité de $\hat w(t)$, et (quitte à raccourcir $\delta$) $1\ge\epsilon_t\to 0$ lorsque $t\to0$ .
Pour $T\in[0, \delta]$ on introduit la fonction maximale : $M(T)=\max\limits_{t\in[0,T]}| \Delta\phi(t)|$. D'après l'estimation ci-dessus  
\begin{multline*}
M(T)\le \max\limits_{t\in[0,T]}\bigg\{ \big(C+\epsilon(t)\big)t+2\epsilon(t)\Big(|\int\limits_0^t \Delta \phi(s)\,ds|^\frac{1}{2}+|\Delta\phi(t) t|^\frac{1}{2}\Big) \bigg\} \le \big(C+1\big)T+4\big(TM(T))^\frac{1}{2}.
 \end{multline*}
 La résolution de cette inégalité élémentaire sur $M(T)$ donne l'estimation 
$M(T)\le cT, \quad c=c(C)<\infty$,
et en particulier, $\Delta\phi(T)\le  cT$.
Le même raisonnement appliqué au $T<0$ permet de conclure que 
$$ \Delta\phi(t)=O(|t|) \text{ lorsque } t\to 0.$$ 
En réinjectant cela dans \eqref{Whytney} on montre la différentiabilité de $\phi\circ\gamma$ en $0$: 
$$ \Delta \phi(t)-\hat w(t)t=o(|t|).$$
Comme $\gamma(t+t_0)$ est une solution de \eqref{cauchyproblem} issue de $\gamma(t_0)$, la courbe $\phi\circ\gamma$ est continûment dérivable sur tout $I_\gamma$ et 
$$\big( \phi\circ\gamma(t)\big)' =\hat w(t). $$
Maintenant, on voit que la courbe $I_\gamma\ni t\to \Phi \circ \gamma(t)\in\h$ est de classe $C^1$ et horizontale. En particulier, elle est continûment horizontalement différentiable \cite{vquasi}, son  vecteur tangent égale 
$$\left(-\frac{Yf}{Xf}X+Y\right)\circ\Phi\circ\gamma(t),\quad t\in I_\gamma. \qedhere$$
\end{proof}

\begin{notation*} Pour un sous-ensemble $E\Subset \Omega$ on note
\[  \omega^\phi_{1/2}(E):=\sup_{\substack{A,A'\in E;\\ A\not=A'}} \frac{| \phi(A)-\phi(A')|}{\|A -A'\|^\frac{1}{2}}; \quad \omega^w(E):=\sup_{A,A'\in E}| w(A)-w(A')|. \]
\end{notation*}
Les fonctions $\omega^\phi_{1/2}$ et $\omega^w$ sont finies, en outre d'après \eqref{hoderdemi} $\omega^\phi_{1/2}(E)\to 0$  et  $\omega^w(E)\to 0$ uniformément pour les sous-ensembles $E$ de chaque partie compacte de $\Omega$ lorsque $\diam E\to 0$.
\begin{lemma}[Divergence des courbes intégrales]\label{divergence} On considère $\gamma_i(t)=(y_i(t),z_i(t))$, $t\in(-\delta,\delta)$, deux courbes intégrales de $W^\phi$, $i=1,2$. Alors  pour tout $t\in (-\delta,\delta)$ on a que
\begin{equation}
| z_2(t)-z_1(t)|^\frac{1}{2}\le \big\{| z_2(0)-z_1(0)|+C(t)| y_2(0)-y_1(0)|^\frac{1}{2}t\big\}^\frac{1}{2}+C(t)|t|,
\end{equation}
où $C(t)=4\sup\limits_{s\in [0,t]}\omega_{1/2}^\phi\big(\gamma_1(s)\cup \gamma_2(s)\big)$.
\end{lemma}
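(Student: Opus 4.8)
Le plan est de reprendre l'astuce de la fonction maximale déjà employée dans la preuve du lemme \ref{l: reguler}, en l'appliquant cette fois à l'écart vertical $u(t):=z_2(t)-z_1(t)$. Quitte à changer $t$ en $|t|$ et à intégrer sur $[t,0]$, je supposerai $t>0$. Comme $\gamma_1$ et $\gamma_2$ sont des courbes intégrales du même champ $W^\phi=\partial_y-4\phi\,\partial_z$, on a $y_i(s)=y_i(0)+s$, donc l'écart horizontal $\Delta y:=y_2(0)-y_1(0)=y_2(s)-y_1(s)$ est indépendant de $s$, et $z_i(s)=z_i(0)-4\int_0^s\phi(\gamma_i(\tau))\,d\tau$. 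La première étape consiste simplement à soustraire ces deux relations intégrales, ce qui fournit
\[ |u(t)|\le |u(0)|+4\int_0^t\big|\phi(\gamma_2(s))-\phi(\gamma_1(s))\big|\,ds. \]

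Ensuite je majore l'intégrande à l'aide de la régularité hölderienne $\tfrac12$ de $\phi$ : par définition de $\omega^\phi_{1/2}$ on a $|\phi(\gamma_2(s))-\phi(\gamma_1(s))|\le \omega^\phi_{1/2}(\gamma_1(s)\cup\gamma_2(s))\,\|\gamma_2(s)-\gamma_1(s)\|^{1/2}$, et comme $\gamma_2(s)-\gamma_1(s)=(\Delta y,u(s))$, les inégalités élémentaires $\sqrt{a^2+b^2}\le |a|+|b|$ et $\sqrt{a+b}\le\sqrt a+\sqrt b$ donnent $\|\gamma_2(s)-\gamma_1(s)\|^{1/2}\le |\Delta y|^{1/2}+|u(s)|^{1/2}$. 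En bornant $\omega^\phi_{1/2}(\gamma_1(s)\cup\gamma_2(s))$ par $\tfrac14 C(t)$ pour tout $s\in[0,t]$, il vient
\[ |u(t)|\le |u(0)|+C(t)\,|\Delta y|^{1/2}\,t+C(t)\int_0^t|u(s)|^{1/2}\,ds. \]
Le second membre étant croissant en $t$, je passe à la fonction maximale $M(T):=\sup_{s\in[0,T]}|u(s)|$ et j'en déduis, en prenant le supremum sur $[0,T]$, l'inégalité $M(T)\le a+b\,\sqrt{M(T)}$ avec $a:=|u(0)|+C(T)\,|\Delta y|^{1/2}\,T$ et $b:=C(T)\,T$.

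La dernière étape est la résolution de cette inéquation du second degré : en posant $x:=\sqrt{M(T)}\ge 0$, l'inégalité $x^2-bx-a\le 0$ impose $x\le\tfrac12\big(b+\sqrt{b^2+4a}\big)\le b+\sqrt a$ (sous-additivité de la racine carrée), d'où, puisque $|u(t)|^{1/2}\le\sqrt{M(t)}$, l'estimation annoncée ; le cas $t<0$ s'en déduit par symétrie en remplaçant $t$ par $|t|$. Le seul point méritant attention est la finitude de $C(t)$ : sur $[0,t]$ les deux courbes restent dans une partie compacte de $\Omega$ où $\omega^\phi_{1/2}$ est bornée d'après \eqref{hoderdemi}, et l'éventuelle non-unicité des solutions de \eqref{cauchyproblem} est sans conséquence puisque $\gamma_1$ et $\gamma_2$ sont fixées d'avance. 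Aucune étape ne présente de véritable obstacle : comme dans le lemme \ref{l: reguler}, toute la (légère) difficulté tient au maniement de la non-linéarité en racine carrée, traitée par le couple fonction maximale suivie d'une inéquation quadratique.
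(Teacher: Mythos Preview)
Your proof is correct and follows the same approach as the paper up to the integral inequality
\[
|u(t)|\le |u(0)|+C(t)\,|\Delta y|^{1/2}\,t+C(t)\int_0^t|u(s)|^{1/2}\,ds.
\]
The only difference lies in how this inequality is resolved: the paper invokes a nonlinear Gr\"onwall inequality from an external reference, whereas you give a self-contained argument via the maximal function $M(T)=\sup_{[0,T]}|u|$ and a quadratic inequality---precisely the device already used in the proof of lemma~\ref{l: reguler}. Your version is thus slightly more elementary (no external citation needed) while the paper's is more concise; the underlying mechanism is the same, since the maximal-function trick is exactly how one proves such Gr\"onwall-type estimates in the first place.
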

\begin{proof}
On déduit de l'équation intégrale que pour tout $0\le t<\delta$ 
\begin{multline*}
| z_2(t)-z_1(t)|\le |z_2(0)-z_1(0)|+4\int\limits_0^{t} | \phi(y_2(0)+s,z_2(s))-\phi(y_1(0)+s,z_1(s)) |\,ds\le \\ 
\le |z_2(0)-z_1(0)|+C(t)\Big(|y_2(0)-y_1(0)|^\frac{1}{2}t + \int\limits_0^{t} | z_2(s)-z_1(s) |^\frac{1}{2}\,ds\Big).
\end{multline*}
L'estimation énoncée découle maintenant d'une inégalité non-linéaire de Grönwall  (voir \cite[Th. 2.1]{gronwall}). Le même raisonnement s'applique aux $t$ négatifs. 
\end{proof}

\subsection{Description géométrique de la métrique $d_\infty$ induite sur  $\S$ }\label{geomdescrp}
 Pour deux points notés $A=(y_1,z_1), B=(y_2,z_2)\in \Omega$ on fixe $\gamma_1(t)=(y_1+t, z_1(t))$ une courbe intégrale quelconque de $W^\phi$ issue en $t=0$ du point $A=(y_1,z_1)$.  Supposons que $[0,y_2-y_1]\subset I_{\gamma_1}$; on introduit le point $B'=\gamma_1(y_2-y_1)$, $\hat z_1=z_1(y_2-y_1)$. 
A partir de la norme homogène $\rho$ sur $\h$ nous définissons alors la fonction $d_{g,\rho}(A,B)$:
$$d_{g,\rho}(A,B):=\rho\big(w(A)(y_2-y_1), y_2-y_1, z_2-\hat z_1\big).$$
En prenant, par exemple, la norme homogène $\rho_\infty$ on obtiendra 
$$d_{g,\infty}(A,B)= \max\big\{|y_2-y_1|\sqrt{1+w(A)^2}, |z_2-\hat z_1|^\frac{1}{2}\big\}.$$

Pour $\Omega' \subset \Omega$ notons  par $D(\Omega')\subset \Omega' \times \Omega'$ l'ensemble des points où $d_{g,\rho}$ peut être définie.  Remarquons que pour toute partie compacte de $\Omega'\Subset \Omega$ la fonction $d_{g,\rho}(A,B)$ est définie pourvu que $A\in \Omega'$ et $d_\infty(A,B)\le \delta(\Omega')$, où $\delta(\Omega')=\delta\big(\operatorname{dist}(\Omega',\partial \Omega)\big)>0$. 
La fonction $d_{g,\rho}$ n'est pas symétrique et sa définition dépend du choix d'une courbe $\gamma_1$. Malgré cela $d_{g,\rho}$ est équivalente à la distance homogène $d_\rho\corn \S$, notée $\tilde d_\rho(A,B)=\rho(\Phi(A)^{-1}\Phi(B))$, induite sur $\S$ dans le sens suivant. 
\begin{lemma}\label{metriqueR}
Pour tout $A,B\in D(\Omega')$, $\Omega'\Subset \Omega$,
$$ | d_{g,\rho}(A,B)- \tilde d_\rho(A,B) | =o(d_{g,\rho}(A,B)),$$
où le petit-$o$ est uniforme lorsque $A\to B$.
\end{lemma}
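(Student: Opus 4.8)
The plan is to compare the two quantities by writing everything explicitly in coordinates and isolating the leading-order terms. Fix $A=(y_1,z_1)$, $B=(y_2,z_2)\in D(\Omega')$, set $h:=y_2-y_1$ and recall that $B'=\gamma_1(h)$ with second coordinate $\hat z_1=z_1(h)$, where $\gamma_1$ is the chosen integral curve of $W^\phi$ through $A$. By Lemma \ref{l: reguler} this curve is $C^1$ (in fact $C^2$) and $(\phi\circ\gamma_1)'(t)=w(\gamma_1(t))$, so $\phi(B')-\phi(A)=\int_0^h w(\gamma_1(t))\,dt = w(A)h + o(h)$, the last estimate by continuity of $w$ and the fact that $h=O(d_{g,\rho}(A,B))$. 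First I would use this to show that the "mixed" point $B'$ is, to first order, exactly the point predicted by the horizontal differential: the group element $\Phi(A)^{-1}\Phi(B')$ has coordinates $\big(\phi(B')-\phi(A),\,h,\,\hat z_1 - z_1 + 2h(\phi(A)+\phi(B'))\big)$, and I want to see that this is $\big(w(A)h,\,h,\,0\big) + o(d_{g,\rho})$ componentwise (using the $z$-coordinate formula from \eqref{cauchyproblem} to handle the last slot).

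Next I would split $\Phi(A)^{-1}\Phi(B) = \big(\Phi(A)^{-1}\Phi(B')\big)\big(\Phi(B')^{-1}\Phi(B)\big)$. The second factor is a pure "vertical move" along the fibre over $y_2$: since $\gamma_1$ has first coordinate $y_1+t$, the point $B'$ has the same horizontal projection ordinate $y_2$ as $B$, so $\Phi(B')^{-1}\Phi(B)$ has coordinates $(\phi(B)-\phi(B'),\,0,\,z_2-\hat z_1 + (\text{cross term vanishing since }y\text{-difference}=0))$. Here the key point is that $\phi(B)-\phi(B') = o(|B-B'|^{1/2})$ by \eqref{hoderdemi}, and one checks $|B-B'|^{1/2}$ is controlled by $|z_2-\hat z_1|^{1/2}$ up to a term $O(h)=O(d_{g,\rho})$; hence the first coordinate of this factor is $o(d_{g,\rho}(A,B))$ while the third is exactly $z_2-\hat z_1$. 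So $\Phi(B')^{-1}\Phi(B) = (o(d_{g,\rho}),\,0,\,z_2-\hat z_1)$, which is precisely $(0,0,z_2-\hat z_1)$ up to a negligible horizontal perturbation.

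Combining, $\Phi(A)^{-1}\Phi(B)$ equals the product $\big(w(A)h,\,h,\,0\big)\cdot\big(0,\,0,\,z_2-\hat z_1\big)$ up to an error that is $o(d_{g,\rho}(A,B))$ in each coordinate; and that product is exactly $\big(w(A)h,\,h,\,z_2-\hat z_1\big)$, whose $\rho$-norm is by definition $d_{g,\rho}(A,B)$. To turn the coordinatewise estimate into the claimed estimate $|d_{g,\rho}(A,B)-\tilde d_\rho(A,B)|=o(d_{g,\rho}(A,B))$, I would invoke the equivalence of homogeneous norms together with homogeneity: writing $\tilde d_\rho(A,B)=\rho\big((w(A)h,h,z_2-\hat z_1)\cdot E\big)$ with $E$ the error vector, the generalized triangle inequality plus the fact that $\rho(E)=o(d_{g,\rho}(A,B))$ gives the bound after one checks that the cross-terms produced by the group multiplication are also $o(d_{g,\rho})$ — this uses that each coordinate of the error is $o(d_{g,\rho})$ in the \emph{homogeneous} sense (the $z$-error being $o(d_{g,\rho}^2)$), which is exactly how the estimates above were organized. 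The uniformity as $A\to B$ follows because every little-$o$ invoked (from \eqref{hoderdemi}, from continuity of $w$, from \eqref{Whytney}) is uniform on compact subsets of $\Omega$.

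The main obstacle is bookkeeping the anisotropy correctly: one must be careful that errors in the horizontal ($y,\phi$) coordinates enter $\rho$ at order $1$ while errors in the vertical ($z$) coordinate enter at order $1/2$, so a vertical error must be shown to be $o(d_{g,\rho}^2)$ — not merely $o(d_{g,\rho})$ — for it to be negligible. Controlling the $z$-coordinate error of $B'$ to this precision is exactly where the extra regularity from Lemma \ref{l: reguler} (namely $(\phi\circ\gamma_1)'=w\circ\gamma_1$, giving $\phi\circ\gamma_1 = \phi(A)+w(A)t+o(t)$ and hence, after integrating in \eqref{cauchyproblem}, a sharp estimate on $z_1(h)$) is indispensable; the weaker bound $\phi\circ\gamma_1-\phi(A)=O(t)$ alone would not suffice.
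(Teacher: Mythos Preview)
Your proof is correct and is essentially the paper's argument reorganised through the group factorisation $\Phi(A)^{-1}\Phi(B)=\big(\Phi(A)^{-1}\Phi(B')\big)\cdot\big(\Phi(B')^{-1}\Phi(B)\big)$: the paper instead compares the $x$- and $z$-coordinates of $\Phi(A)^{-1}\Phi(B)$ with those of $(w(A)h,h,z_2-\hat z_1)$ directly, but the two key estimates used are identical --- the $C^1$-regularity of $\phi\circ\gamma_1$ from Lemma~\ref{l: reguler} (giving the sharp $o(h^2)$ vertical error) and $|\phi(B)-\phi(B')|=o(|z_2-\hat z_1|^{1/2})$ from \eqref{hoderdemi}. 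One small caution on your last step: the generalised triangle inequality alone is too crude (it introduces a constant $c\ge 1$); the sharp conclusion $|\tilde d_\rho-d_{g,\rho}|=o(d_{g,\rho})$ comes from rescaling by $\delta_{1/d_{g,\rho}}$ and using \emph{continuity} of $\rho$ on the unit sphere, which is exactly what the paper means by ``la continuit\'e et l'homog\'en\'eit\'e de $\rho$''.
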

\begin{proof} On notera dans la démonstration qui suit : $$\epsilon^w:=\omega^w\big(\gamma_1[0,y_2-y_1]\big), \quad \epsilon^\phi:=\omega_{1/2}^\phi(B\cup B').$$  
Il est facile de vérifier  que $\epsilon^w\to 0$ et $\epsilon^\phi\to 0$ lorsque $B\to A$ uniformément sur $D(\Omega')$.

D'abord nous obtenons l'estimation de la divergence $\Delta_z$ des composantes verticales de $\tilde d_\rho$ et $d_{g,\rho}$. On a que 
$$\Delta_z:= \big|  (z_2-\hat z_1) - ( z_2-z_1+2(y_2-y_1)(\phi(B)+\phi(A)))\big|=|\hat z_1-z_1+2(y_2-y_1)(\phi(B)+\phi(A))|.$$
On rappelle que $z_1(t)$  satisfait  $z_1(t)=z_1-4\int_0^t \phi(s+y_1,z_1(s)) \,ds$. 
On fait apparaître le terme $\phi(B')=\phi(y_2,z_1(y_2-y_1))$:
\begin{align*}
|\hat z_1-z_1+2(y_2-y_1)(\phi(B)+\phi(A))|&\le 2 |\int\limits_0^{y_2-y_1} 2\phi(s+y_1,z_1(s))-\phi(A)-\phi(B') \,ds |
\\&+2|(y_2-y_1)\big(\phi(B')-\phi(B)\big)|=:2\Delta_1+2\Delta_2.
\end{align*}
Comme $\phi(\gamma_1(\cdot))$ est de classe $C^1$, suivant la formule d'accroissements finis le premier terme est majoré par (avec $\xi_1\in [0,s]$ et $\xi_2\in[s,y_2-y_1]$)
\begin{align*}
\Delta_1&\le |\int\limits_0^{y_2-y_1} w(\xi_1+y_1,z_1(\xi_1))(s-y_1)+w(\xi_2+y_1,z_1(\xi_2))(y_2-s) \,ds |
\\ &\le  \epsilon^\omega\int\limits_0^{y_2-y_1} |s|+|y_2-y_1|\,ds\le 2 \epsilon^w |y_2-y_1|^2.
\end{align*}
Pour le deuxième, 
\begin{equation*}
\Delta_2\le \epsilon^\phi |y_2-y_1||z_2-\hat z_1|^\frac{1}{2}\le 2 \epsilon^\phi\big(|y_2-y_1|^2+|z_2-\hat z_1|\big).
\end{equation*}
Finalement, en additionnant on arrive à $\Delta_z \le 8\left( \epsilon^w+ 2 \epsilon^\phi\right)d_g(A,B)^2$.

Maintenant on fait l'estimation de la divergence des composantes en $x$ :
\begin{align*}
\Delta_x:&=\big|(\phi(A)-\phi(B))-w(A)(y_2-y_1)\big|\le |\int\limits_0^{y_2-y_1} w(\gamma_1(s))\,ds-w(A)(y_2-y_1)
\\&+|\phi(B')-\phi(B)|\le \epsilon^w|y_2-y_1|+\epsilon^\phi|\hat z_2-z_1|^\frac{1}{2}\le 2(\epsilon^\phi+\epsilon^w)d_g(A,B). 
\end{align*}
Avec les estimations de $\Delta_z$ et $\Delta_x$ et compte tenu de la continuité et l'homogénéité de $\rho$, la conclusion s'obtient aisément.
\end{proof}

On remarque que $\diam_{g,\rho}E=\sup\{d_{g,\rho}(A,B)\mid A,B\in E\}$ est défini pour tout sous-ensemble $E$ suffisamment petit dans $\Omega'\Subset \Omega$. Ainsi, nous pouvons définir les mesures de Hausdorff bâties sur $\diam_{g,\rho}$ pour tout ensemble $E\Subset \Omega$. Une conséquence immédiate du lemme \ref{metriqueR} est   
\begin{corollaire}
Les mesures de Hausdorff construites à partir de $d_\rho$ et $d_{g,\rho}$ coïncident (en toute dimension) sur les ensembles compacts de $\S$.
\end{corollaire}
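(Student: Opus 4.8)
The corollary asks us to show that the Hausdorff measures built from $d_\rho$ (more precisely from its restriction $\tilde d_\rho$ to $\mathcal{S}$, pulled back via $\Phi$) and from $d_{g,\rho}$ agree in every dimension $k$ on compact subsets of $\mathcal{S}$. The natural strategy is to deduce this directly from Lemma~\ref{metriqueR}, which says that on $D(\Omega')$ the two quasi-distances differ only by a multiplicative error tending to $1$ as the points come together. So first I would fix a compact set $E \Subset \mathcal{S}$, transport it to $E' \Subset \Omega$ via $\Phi^{-1}$, and choose $\Omega'$ with $E' \Subset \Omega' \Subset \Omega$.

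\medskip

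\noindent\textbf{Key steps.} The main point is a comparison of diameters at small scales. Given $\varepsilon > 0$, Lemma~\ref{metriqueR} provides $\delta = \delta(\Omega',\varepsilon) > 0$ such that for all $A,B \in D(\Omega')$ with $d_\infty(A,B) \le \delta$ one has
$$
(1-\varepsilon)\, d_{g,\rho}(A,B) \le \tilde d_\rho(A,B) \le (1+\varepsilon)\, d_{g,\rho}(A,B).
$$
Consequently, for any subset $F \subset E'$ with sufficiently small $d_\infty$-diameter (so that all pairs of points of $F$ lie in $D(\Omega')$ and satisfy the scale condition — here I use the remark following the definition of $d_{g,\rho}$ that $d_{g,\rho}$ is defined as soon as $d_\infty(A,B) \le \delta(\Omega')$),
$$
(1-\varepsilon)\diam_{g,\rho} F \le \diam_{\tilde d_\rho} F \le (1+\varepsilon)\diam_{g,\rho} F .
$$
Since both $d_\infty$ and $d_{g,\rho}$ are locally equivalent to $\tilde d_\rho$ (the former by the earlier lemma on $\phi$, the latter by Lemma~\ref{metriqueR} itself), a covering of $E'$ by sets of small $d_{g,\rho}$-diameter is, up to a bounded factor, a covering by sets of small $d_\infty$-diameter and vice versa, so the two pre-measures $\H^k_{\tilde d_\rho, \eta}$ and $\H^k_{d_{g,\rho}, \eta}$ are computed over essentially the same families of covers once $\eta$ is small. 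Taking the $k$-th power of the diameter inequality and summing over a cover of $E'$ gives
$$
(1-\varepsilon)^k\, \H^k_{d_{g,\rho},\eta}(E') \le \H^k_{\tilde d_\rho, \eta'}(E') \le (1+\varepsilon)^k\, \H^k_{d_{g,\rho},\eta}(E')
$$
for suitably coupled $\eta, \eta' \to 0$; letting $\eta \to 0$ and then $\varepsilon \to 0$ yields $\H^k_{\tilde d_\rho}(E') = \H^k_{d_{g,\rho}}(E')$, hence the claim on $\mathcal{S}$ via the homeomorphism $\Phi$.

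\medskip

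\noindent\textbf{Main obstacle.} The only genuinely delicate point is \emph{bookkeeping the asymmetry and the partial domain of $d_{g,\rho}$}: unlike $\tilde d_\rho$, the function $d_{g,\rho}$ is neither symmetric nor defined on all of $\Omega' \times \Omega'$, and its value depends on a choice of integral curve $\gamma_1$. One must check that the diameter $\diam_{g,\rho} F = \sup\{d_{g,\rho}(A,B) : A,B \in F\}$ is well-defined and finite for all $F$ of small enough diameter inside $\Omega' \Subset \Omega$ — which is exactly the content of the remark preceding Lemma~\ref{metriqueR} — and that in the standard majorization/minorization argument for Hausdorff measures one never needs to evaluate $d_{g,\rho}$ on a pair lying outside $D(\Omega')$, which is guaranteed once the cover is fine enough. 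The asymmetry is harmless because Hausdorff measure only uses diameters (suprema over pairs), so symmetrization is automatic. Everything else is the routine two-sided comparison of Hausdorff measures built from uniformly $(1+o(1))$-equivalent gauges, and the fact that restricting $d_\rho$ to $\mathcal{S}$ and transporting it through the homeomorphism $\Phi$ does not change Hausdorff measures since $\Phi$ is a quasi-isometry onto its image.
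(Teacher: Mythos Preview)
Your proposal is correct and follows exactly the approach the paper intends: the paper presents this corollary as ``une cons\'equence imm\'ediate du lemme~\ref{metriqueR}'' without further detail, and your argument is precisely the standard diameter-comparison elaboration of that immediate consequence. Your handling of the asymmetry and partial domain of $d_{g,\rho}$ via the remark preceding Lemma~\ref{metriqueR} is the right way to tie up the loose ends the paper leaves implicit.
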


Fixons $\Omega'\Subset \Omega$. Pour $A=(y_0,z_0)\in \Omega'$ et $0<r\le \delta(\Omega')$ on définit l'ensemble  $B_{d_{g,\infty}}(A,r)$ de la façon suivante:
\begin{equation*}
B_{d_{g,\infty}}(A,r)=\big\{(y,z(y))\in \Omega \mid | y-y_0|<\frac{r}{\sqrt{1+w(A)^2}},\, z_-(y)<z(y)<z_+(y)\big\},
\end{equation*}
 où $\big(t+y_0,z_-(t)\big)$ et $\big(t+y_0,z_+(t)\big)$ sont des courbes intégrales de $W^\phi$ issues en $t=0$ respectivement de  $(y_0,z_0-r^2)$ et de $(y_0,z_0+r^2)$.  L'ensemble $B_{d_{g,\infty}}$ joue le rôle d'une boule en "métrique" $d_{g,\infty}$; voir fig. \ref{rmetric}.
  
 \begin{corollaire}
 Il existe $C_r$ défini pour $0<r\le \delta(\Omega')$ tel que  $1\le C_r<C<\infty$, $C_r\to 1$ lorsque $r\to 0$, et pour tout $A\in \Omega'$
 $$ \tilde B_\infty(A,r/C_r)\subset B_{d_{g,\infty}}(A,r)\subset \tilde B_\infty(A,rC_r),$$
où $\tilde B_\infty$ désigne une boule en métrique $d_\infty\corn \S$.
\end{corollaire}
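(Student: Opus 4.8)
Le but est de comparer les boules métriques $\tilde B_\infty(A,r)$ sur $\S$ avec les « boules » $B_{d_{g,\infty}}(A,r)$ définies à l'aide du champ $W^\phi$, et d'en déduire un encadrement avec une constante $C_r$ qui tend vers $1$. L'idée est que le lemme \ref{metriqueR} fournit déjà l'équivalence asymptotique entre $\tilde d_\rho$ et $d_{g,\rho}$, donc entre $\tilde d_\infty$ et $d_{g,\infty}$ (en prenant $\rho=\rho_\infty$) ; il ne reste qu'à traduire cette équivalence de quasi-distances en une équivalence d'inclusions de boules, en contrôlant uniformément le petit-$o$.

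\medskip

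Concrètement, je procéderais ainsi. D'abord, fixer $\Omega'\Subset\Omega$ et poser, pour $0<r\le\delta(\Omega')$,
$$\varepsilon(r):=\sup\Big\{\frac{|d_{g,\infty}(A,B)-\tilde d_\infty(A,B)|}{d_{g,\infty}(A,B)}\ \Big|\ (A,B)\in D(\Omega'),\ 0<d_{g,\infty}(A,B)\le r\Big\},$$
quantité finie qui tend vers $0$ lorsque $r\to 0$ d'après le lemme \ref{metriqueR} (le petit-$o$ y étant uniforme lorsque $A\to B$ sur $D(\Omega')$, donc uniforme en fonction de $d_{g,\infty}(A,B)$). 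On pose alors $C_r:=\dfrac{1}{1-\varepsilon(Cr)}$ pour une constante absolue $C$ absorbant les distorsions de quasi-distance, de sorte que $C_r\to 1$ quand $r\to 0$ et $1\le C_r<C'$ sur l'intervalle $0<r\le\delta(\Omega')$ quitte à rétrécir $\delta(\Omega')$. Ensuite, vérifier les deux inclusions. Pour $B\in\tilde B_\infty(A,r/C_r)$ : on a $\tilde d_\infty(A,B)<r/C_r$, et l'équivalence donne $d_{g,\infty}(A,B)\le \tilde d_\infty(A,B)/(1-\varepsilon)<r$ ; il reste à constater que la condition $d_{g,\infty}(A,B)<r$ équivaut précisément à l'appartenance $B\in B_{d_{g,\infty}}(A,r)$, ce qui résulte de la définition de $B_{d_{g,\infty}}$ et du lemme de divergence \ref{divergence} appliqué aux courbes intégrales issues de $(y_0,z_0\pm r^2)$ et de celle issue de $A$ — c'est là qu'on utilise que les bornes $z_\pm(y)$ sont exactement les positions verticales atteintes à hauteur $z_0\pm r^2$. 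Pour l'inclusion réciproque $B_{d_{g,\infty}}(A,r)\subset\tilde B_\infty(A,rC_r)$ : si $B\in B_{d_{g,\infty}}(A,r)$ alors $d_{g,\infty}(A,B)<r$ (modulo la même traduction), d'où $\tilde d_\infty(A,B)\le (1+\varepsilon)d_{g,\infty}(A,B)<(1+\varepsilon)r\le C_r\, r$.

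\medskip

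Le point délicat sera le passage entre l'inégalité « $d_{g,\infty}(A,B)<r$ » et l'appartenance ensembliste à $B_{d_{g,\infty}}(A,r)$, car la définition de cette dernière fait intervenir simultanément la contrainte $|y-y_0|<r/\sqrt{1+w(A)^2}$ et l'encadrement vertical $z_-(y)<z<z_+(y)$, alors que $d_{g,\infty}(A,B)=\max\{|y_2-y_1|\sqrt{1+w(A)^2},|z_2-\hat z_1|^{1/2}\}$ fait intervenir $\hat z_1=z_1(y_2-y_1)$, la hauteur de la courbe intégrale issue de $A$. Il faut donc montrer que, à $y$ fixé, l'intervalle $z\mapsto z-\hat z_1(y)$ décrit bijectivement $(-r^2,r^2)$ quand $z$ décrit $(z_-(y),z_+(y))$ : cela revient à dire que le flot (non nécessairement unique) de $W^\phi$ préserve l'ordre vertical et que l'écart $z_+(y)-z_-(y)$ reste proche de $2r^2$ — ce dernier fait découlant du lemme \ref{divergence}, qui borne la divergence de deux courbes intégrales en fonction de l'écart initial et de $\omega^\phi_{1/2}$, lequel est petit sur les petites échelles. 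C'est donc encore le lemme \ref{divergence} qui fait tout le travail, et la monotonie de la composante verticale le long des courbes intégrales (qui suit de la structure de l'équation $z'(t)=-4\phi(\gamma(t))$ comparée pour deux données initiales ordonnées) qui garantit que $B_{d_{g,\infty}}(A,r)$ est bien un « tube » sans auto-intersection. Une fois ces deux ingrédients en place, les deux inclusions s'obtiennent par simple chaîne d'inégalités et l'énoncé suit.
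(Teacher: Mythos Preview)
Your approach is correct and is exactly what the paper intends: the corollary is stated there without proof, as a direct consequence of Lemma~\ref{metriqueR} (with Lemma~\ref{divergence} handling the translation between the set $B_{d_{g,\infty}}(A,r)$ and the condition $d_{g,\infty}(A,B)<r$).

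One small overstatement to fix: the equivalence you claim between ``$d_{g,\infty}(A,B)<r$'' and ``$B\in B_{d_{g,\infty}}(A,r)$'' is not \emph{exact}, and the assertion that the (non-unique) flow of $W^\phi$ preserves vertical order does not follow from a naive comparison argument in the absence of uniqueness. What Lemma~\ref{divergence} actually gives (via the Gr\"onwall-type bound and the integral equation) is $z_\pm(y)-\hat z_1(y)=\pm r^2+o(r^2)$ uniformly for $|y-y_0|\le r$, hence only an approximate correspondence with error $o(r^2)$. This is harmless: the discrepancy is absorbed into the factor $C_r$, so your chain of inclusions goes through unchanged once ``pr\'ecis\'ement'' and ``bijectivement'' are replaced by their $(1+o(1))$ versions.
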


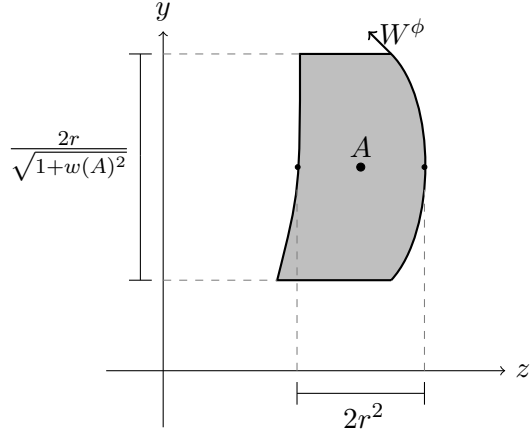
\begin{figure}[h]\centering
\begin{tikzpicture}[scale=3] \label{dessinmetricR}
\draw[->] (-0.25,0) -- (1.5,0) node[right] {$z$};
\draw[->] (0,-0.25) -- (0,1.5) node[above] {$y$};
\filldraw [fill=gray!50,draw=black,thick] (1.0,0.4) .. controls (1.2,0.62) and (1.2,1.2) .. (1.0,1.4) --
(0.6,1.4) --
(0.6,1.4) .. controls (0.6,0.8) .. (0.5,0.4) --
(1.0,0.4);
\filldraw [black] (0.8665,0.9) circle (0.5pt) node[above] {$A$};
\filldraw [black] (1.146,0.9) circle (0.3pt);
\filldraw [black] (0.59,0.9) circle (0.3pt);

\draw [gray,dashed] (0,1.4) -- (0.6,1.4);
\draw [gray,dashed] (0,0.4) -- (0.5,0.4);

\draw [gray,dashed] (0.587,0) -- (0.587,0.8);
\draw [gray,dashed] (1.146,0) -- (1.146,0.8);

\draw (0.88,-0.1) node[below] {$\footnotesize{2r^2}$};
\draw (0.587,-0.1)--(1.146,-0.1);
\draw (0.587,-0.15)--(0.587,-0.05);
\draw (1.146,-0.15)--(1.146,-0.05);

\draw (-0.1,0.4)--(-0.1,0.95) node[left] {$\frac{2r}{\sqrt{1+w(A)^2}}$};
\draw (-0.1,0.95)--(-0.1,1.4);
\draw (-0.15,0.4)--(-0.05,0.4);
\draw (-0.15,1.4)--(-0.05,1.4);
\draw [->,thick] (1.0,1.4) -- (0.9,1.5) node[right]{$W^{\phi}$};
\end{tikzpicture}
\caption{Ensemble $B_{d_{g,\infty}}(A,r)$}\label{rmetric}
\end{figure}
Le résultat suivant est bien connu \cite{Heishypesurfaces} pour les hypersurfaces régulières sur les groupes de Carnot généraux. Nous en donnons la preuve à titre d'application de la technique développée au lemme \ref{metriqueR}.  
\begin{lemma}[Formule de l'aire pour $\S$]\label{aireformsurf}  La mesure sphérique de Hausdorff $\S^3_{d_\rho}\corn \S$ est absolument continue par rapport à la mesure de Lebesgue  $\mathcal{L}^2\corn \Omega$ transportée sur $\S$ via l'application $\Phi$, \ie $\S^3_{d_\rho}\corn\S\ll \Phi_\#\big( \mathcal{L}^2\corn \Omega\big)$. Plus précisément, si $d_\rho$ une métrique sur $\h$, on a que
\begin{equation}\label{airesurface}
\S^3_{d_\rho}\corn \S= \Phi_\#\big( J_\rho(\cdot)\mathcal{L}^2\corn \Omega\big),\ J_\rho(A)^{-1}=
2^{-3}\mathcal{L}^2\{(y,z)\in \r^2\mid \rho(w(A)y,y,z)< 1\}.
\end{equation}
\end{lemma}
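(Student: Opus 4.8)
The plan is to establish the area formula by combining the density-comparison result of Lemma~\ref{metriqueR} with the standard differentiation theory of measures, much as one proves the classical area formula for $C^1$ graphs. First I would observe that since $\S^3_{d_\rho}$ and $\S^3_{d_{g,\rho}}$ agree on compact subsets of $\S$ (the corollary to Lemma~\ref{metriqueR}), it suffices to compute the density of $\S^3_{d_{g,\rho}}\corn\S$ with respect to the pushforward measure $\nu:=\Phi_\#(\mathcal L^2\corn\Omega)$. Fix $A=(y_0,z_0)\in\Omega'$ and recall the explicit description of the ``ball'' $B_{d_{g,\infty}}(A,r)$: up to the flow of $W^\phi$, it is the set of $(y,z)$ with $|y-y_0|<r/\sqrt{1+w(A)^2}$ lying between two integral curves issued from $(y_0,z_0\pm r^2)$. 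The key point is that as $r\to 0$ this region, read in the $(y,z)$ coordinates and rescaled by $\delta_{1/r}$, converges to the flat box $\{(y,z):|y|<1/\sqrt{1+w(A)^2},\ |z|<1\}$, because $\phi$ is continuous and hence the integral curves of $W^\phi$ through nearby points are, at first order, horizontal translates of each other with slope $-4\phi(A)$ — a shear that preserves $\mathcal L^2$. Therefore
\begin{equation*}
\mathcal L^2\big(\pi_{(y,z)}B_{d_{g,\infty}}(A,r)\big)=\big(1+o(1)\big)\cdot\frac{2r}{\sqrt{1+w(A)^2}}\cdot 2r^2=\big(1+o(1)\big)\,\frac{4r^3}{\sqrt{1+w(A)^2}}.
\end{equation*}

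Next I would identify this volume with the homogeneous quantity appearing in the statement. For a general homogeneous norm $\rho$, the ``metric ball'' $B_{d_{g,\rho}}(A,r)$ corresponds under the flow to the set $\{(y,z): \rho(w(A)y,y,z)<r\}$, which by homogeneity of $\rho$ under $\delta_t$ has $\mathcal L^2$-measure equal to $r^3$ times $\mathcal L^2\{\rho(w(A)y,y,z)<1\}$. Hence, combining with the corollary to Lemma~\ref{metriqueR} which sandwiches $\tilde B_\rho(A,r)$ between $B_{d_{g,\rho}}(A,r/C_r)$ and $B_{d_{g,\rho}}(A,rC_r)$ with $C_r\to1$, I get
\begin{equation*}
\lim_{r\to 0}\frac{\nu\big(\tilde B_\rho(A,r)\big)}{r^3}=\mathcal L^2\{(y,z):\rho(w(A)y,y,z)<1\}=:8\,J_\rho(A)^{-1},
\end{equation*}
uniformly for $A$ in compact subsets of $\Omega$. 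This is exactly the statement that $\nu$ has a $\Theta^3$-density (with respect to the $d_\rho$-metric on $\S$) equal to $8\,J_\rho(A)^{-1}$ at $\Phi(A)$, and the continuity of $w$ makes this density continuous and locally bounded above and below.

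The final step is to invoke the comparison between spherical Hausdorff measure and the centered density. On a metric space where the diameter of a ball of radius $r$ equals $2r$ (which holds for $(\S,d_\rho)$ since $d_\rho$ is assumed a genuine metric and $\diam_\rho B_\rho(x,r)=2r$ was recorded among the basic remarks), the Federer-type density theorem gives that for any Radon measure $\nu$ with positive finite $3$-density $\vartheta(A)=\lim_{r\to0}\nu(B(A,r))/(2r)^3$ one has $\S^3_{d_\rho}\corn\mathrm{supp}\,\nu=\vartheta^{-1}\,\nu$ — equivalently $\S^3_{d_\rho}(E)=\int_E \vartheta^{-1}\,d\nu$. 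Applying this with $\nu=\Phi_\#(\mathcal L^2\corn\Omega)$ and $\vartheta(\Phi(A))=J_\rho(A)^{-1}$ (absorbing the factor $(2r)^3$ versus $r^3$ into the normalising constant, which accounts for the $2^{-3}$ in the statement) yields $\S^3_{d_\rho}\corn\S=\Phi_\#(J_\rho(\cdot)\,\mathcal L^2\corn\Omega)$, and in particular the absolute continuity $\S^3_{d_\rho}\corn\S\ll\Phi_\#(\mathcal L^2\corn\Omega)$. I expect the main obstacle to be the uniformity in $A$ of the limit $\nu(\tilde B_\rho(A,r))/r^3\to 8J_\rho(A)^{-1}$: one must control the error in approximating the integral-curve region by the sheared flat box uniformly over $A\in\Omega'$, which is where the uniform continuity of $\phi$ on compacts (and the uniform ``$o$'' in Lemma~\ref{metriqueR}) is essential; the measure-theoretic density-to-Hausdorff conversion and the homogeneity computation are then routine.
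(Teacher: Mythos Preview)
Your approach is essentially the paper's: both invoke Federer's density theorem for $\S^3_{d_\rho}$, reduce to computing $\lim_{r\to0}r^{-3}\nu(\tilde B_\rho(A,r))$ with $\nu=\Phi_\#(\mathcal L^2\corn\Omega)$, replace $\tilde d_\rho$ by $d_{g,\rho}$ via Lemma~\ref{metriqueR}, and then compute the $\mathcal L^2$-measure of the $d_{g,\rho}$-ball by a change of variables. The one small difference is that the paper uses an explicit affine--quadratic change of variables $P$ and appeals to Lemma~\ref{l: reguler} (the $C^1$-regularity of $\phi\circ\gamma$) to get $z_2-\hat z_1=r^2z+o((ry)^2)$, whereas your flow map $(y_2,z_2)\mapsto(y_2-y_1,\,z_2-\hat z_1)$ has Jacobian exactly $1$ (since $z_1(\cdot)$ is $C^1$ from mere continuity of $\phi$), giving $\mathcal L^2\{d_{g,\rho}(A,\cdot)<r\}=r^3\mathcal L^2\{\rho(w(A)y,y,z)<1\}$ on the nose; this is a mild but genuine streamlining. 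Note finally that the uniformity in $A$ you worry about is not actually needed---Federer's theorem only asks for the limit $\mu$-a.e.---though the paper's argument (and yours) does deliver it.
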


\begin{remark*}
Si $d_\rho=d_\infty$, alors $J_\infty(A)=2\sqrt{1+w(A)^2}$.
\end{remark*}

\begin{proof}
Nous utilisons le théorème classique de différentiabilité de la mesure sphérique \cite[2.10.17, 2.10.18]{federer}, adapté au contexte de nos considérations:
\begin{theorem*}
Soit $\mu$ une mesure Borel régulière sur $\h$ et $\alpha>0$. Si 
\begin{equation*}
 \lim\limits_{r\to 0+} \frac{\mu(B_\rho(A,r))}{(\diam_\rho B_\rho(A,r))^\alpha}=s(A) \quad \mu-p.p. \quad \text{alors} \quad\mu=s(\cdot)\S_{d_\rho}^\alpha.
\end{equation*}
\end{theorem*}

Posons $\mu:=\Phi_{\#}(\mathcal{L}^2 \corn\Omega)$. Pour arriver à l'énoncé il suffit alors de montrer l'asymptotique suivante pour tout $A\in \Omega$:
$$\lim\limits_{r\to 0}r^{-3}\mathcal{L}^2\{B\in \Omega \mid \tilde d_{\rho}(A,B)< r\}=\mathcal{L}^2\{(y,z)\in \r^2\mid \rho(w(A)y,y,z)<1\}.$$
Montrons-le en remplaçant $\tilde d_\rho$ par $d_{g,\rho}$ et, par conséquent, d'après le lemme \ref{metriqueR} il sera de même pour $\tilde d_\rho$.
Pour calculer $\mathcal{L}^2\{B\in \Omega \mid d_{g,\rho}(A,B)< r\}$ on utilise le changement de variables (on revient aux notations du lemme \ref{metriqueR}) :
\begin{equation*}(y,z) \stackrel{P}\longrightarrow (y_2,z_2) :\quad
\left\{\begin{array}{l}
y_2=y_1+ry,\\
z_2=z_1+r^2z-4\phi(A)ry-2w(A)r^2y^2.
\end{array}\right.
\end{equation*}
Dans les nouvelles coordonnées on aura
\begin{align*}
z_2-\hat z^1&=r^2z-4\phi(A)ry-2w(A)r^2y^2+4\int\limits_0^{ry} \phi(\gamma_1(s))\,ds \\
&=r^2z+4\int\limits_0^{ry} \phi(\gamma_1(s))-\phi(A)-w(A)s\,ds=r^2z+o((ry)^2).
\end{align*}
On en déduit maintenant le résultat voulu par  
\begin{align*}
&\mathcal{L}^2\{B\in \Omega \mid d_{g,\rho}(A,B)< r\}=\mathcal{L}^2\{(y_2,z_2) \mid \rho\big(w(A)(y_2-y_1),y_2-y_1, z_2-\hat z_1\big)< r\}
\\&=|\det(DP)|\mathcal{L}^2\{(y,z) \mid \rho\big(w(A)ry ,ry, r^2z+o((ry)^2)\big)< r\}
\\&=r^3\mathcal{L}^2\{(y,z) \mid \rho\big(w(A)y ,y, z+o(1))\big)< 1\}=r^3\mathcal{L}^2\{(y,z) \mid \rho\big(w(A)y ,y, z)\big)< 1\}+o(r^3).
\end{align*}
\end{proof}

\section{Courbes verticales dans le groupe d'Heisenberg.}\label{courbesvetricales}
 
Nous considérons $F\in C^1_H(\h,\r^2)$ avec une condition de normalisation $F(0)=0$. Dans tout ce qui suit nous supposons que $D_hF(0)$ est surjective. Le noyau de la différentielle horizontale est donc l'axe vertical (le centre de $\h$) : $\Ker D_hF(0) = Oz=\{x=0, y=0\}$. On note alors $d_hF(A):\r^2\to \r^2$ la partie horizontale de la différentielle $D_hF(A)$, $d_hF(A)(x,y)=D_hF(A)(x,y,0)$. 

A partir de maintenant notre but est d'étudier les propriétés métriques de l'ensemble compact non-vide $\f$  vu localement au voisinage de $0\in \h$.
\begin{remark}
Compte tenu de la non-intégrabilité de la distribution horizontale, il est facile de voir que le noyau $\Ker D_hF(0)=Oz$ n'admet pas de sous-groupe complémentaire. Nous ne sommes donc plus dans le cadre du théorème \ref{imptheor}, et comme nous le verrons aussi par la suite $\f$ n'est pas "une sous-variété régulière sous-riemannienne" au sens de \cite{regularsubmanifoldsheisenberg}.
\end{remark}

\subsection{Considérations préliminaires}

\begin{proposition} Si la différentielle  $D_hF(0)$ de l'application $F\in C_H^1(\h;\r^2)$ est surjective, alors l'application $F$ est surjective  sur un certain voisinage de $F(0)$.  
 \end{proposition}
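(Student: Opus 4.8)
The plan is to show that $F$ maps a small neighborhood of $0$ onto a neighborhood of $F(0)=0$ by a degree / topological argument, exactly the one alluded to later in the paper for the surjectivity of the projection of $\f$ onto $Oz$. First I would use the horizontal differentiability of $F$ at $0$: since $D_hF(0):\h\to\r^2$ is surjective and its kernel is $Oz$, the restriction $d_hF(0):\r^2\to\r^2$ of $D_hF(0)$ to the horizontal plane $\{z=0\}$ is a linear isomorphism. Up to composing $F$ with the linear automorphism $d_hF(0)^{-1}$ of $\r^2$ (which does not affect surjectivity of $F$ onto a neighborhood of $0$), I may assume $d_hF(0)=\operatorname{Id}$, so that $D_hF(0)(x,y,z)=(x,y)$, i.e. $D_hF(0)=\pi$ on the horizontal plane.

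Next I would fix a small $r>0$ and look at the restriction of $F$ to the horizontal slice $\Sigma_r:=\{(x,y,0)\mid x^2+y^2\le r^2\}$, an Euclidean $2$-disk through $0$. By Definition~\ref{d: derivationhorizontale} of the horizontal differential (or by Theorem~\ref{lagrange} applied componentwise), for $A=(x,y,0)\in\Sigma_r$ we have
$$F(A)-F(0)-D_hF(0)(0^{-1}A)=o(d_\infty(A,0))=o(\rho_\infty(A))=o(r),$$
hence $F(x,y,0)=(x,y)+o(r)$ uniformly on $\Sigma_r$ as $r\to0$. Therefore, for $r$ small enough, the continuous map $G_r:\Sigma_r\to\r^2$, $G_r(x,y,0):=F(x,y,0)$, coincides with the identity $(x,y)\mapsto(x,y)$ up to an error strictly smaller than $r/2$ on the boundary circle $\partial\Sigma_r$. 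A standard homotopy argument (the straight-line homotopy $H_t=tG_r+(1-t)\operatorname{Id}$ stays away from any point of $\r^2$ of modulus $\le r/2$ on $\partial\Sigma_r$) shows that $G_r$ and the identity are homotopic as maps $\partial\Sigma_r\to\r^2\setminus\{w\}$ for every $w$ with $|w|\le r/2$; hence $\deg(G_r,\Sigma_r,w)=\deg(\operatorname{Id},\Sigma_r,w)=1\neq0$. By the basic property of the Brouwer degree \cite{degree}, $w\in G_r(\Sigma_r)=F(\Sigma_r)$ for every such $w$. Thus $B(0,r/2)\subset F(\Sigma_r)\subset F(\h)$, i.e. $F$ is surjective onto a neighborhood of $F(0)$, and in fact the image of any neighborhood of $0$ contains a neighborhood of $0$.

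The only genuinely delicate point is making sure the $o(r)$ estimate is uniform over the whole shrinking disk $\Sigma_r$, so that the degree computation kicks in for all sufficiently small $r$ simultaneously; this is exactly the content of horizontal differentiability at the single point $0$ (no $C^1_H$ regularity away from $0$ is needed here), since $d_\infty(A,0)=\rho_\infty(A)\le r$ on $\Sigma_r$ and the defining estimate $d^2(F(0)^{-1}F(A),D_hF(0)(A))=o(d_\infty(A,0))$ is by definition uniform in the direction of $A$. Everything else — the normalization, the homotopy, the nonvanishing of the degree, the conclusion that a nonzero degree forces surjectivity onto the relevant ball — is routine. I would also remark that the same argument, applied to the translated maps $A\mapsto F(p)^{-1}F(pA)$ near other points $p$ where $D_hF(p)$ is surjective, shows $F$ is locally open there; but for the statement as given, the computation at $0$ suffices.
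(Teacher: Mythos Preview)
Your argument is correct, but it is not the route taken in the paper. The paper proves this proposition by a Newton iteration on horizontal planes: given a target $a$ near $F(0)$, it builds the sequence $A_{n+1}=\exp\big([d_hF(A_n)]^{-1}(a-F(A_n))\big)(A_n)$ and shows via the Lagrange estimate (Theorem~\ref{lagrange}) that $\{A_n\}$ is Cauchy with limit $A^*$ satisfying $F(A^*)=a$. This uses the full $C^1_H$ hypothesis (continuity of $d_hF$ near $0$) to get a contraction.

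Your degree argument is the same one the paper deploys later, in Lemma~\ref{sujOz}, for the projection onto $Oz$; you have simply anticipated it. Your approach is more elementary and, as you note, only requires horizontal differentiability at the single point $0$, not $C^1_H$ regularity in a neighborhood. The paper's Newton method, on the other hand, is constructive and gives a quantitative preimage, which is not needed here but is sometimes useful. Either proof is perfectly adequate for the statement as given.
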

 \begin{proof}(Méthode de Newton sur les plans horizontaux)
Pour $a\in \r^2$ on définit deux suites récurrentes $\{A_n\}$ et $\{a_n\}$ par les formules
 \[\left\{\begin{aligned} & A_0=0; \\ 
& A_{n+1}=\exp\Big([d_hF \big(A_n \big)]^{-1}(a-a_n)\Big)(A_n), \end{aligned}\right. 
\left\{\begin{aligned} & a_0=F(0); \\ 
& a_{n+1}=F(A_{n+1}). \end{aligned}\right.\]
Pour tout $a$ pris sur un certain voisinage compact de $F(0)$ on vérifie par le principe des applications contraignantes (avec l'estimée \eqref{lagrange}) qu'il existe $A^\ast$ tel que  $A_n\to A^\ast$ et $F(A^\ast)=a$.
 \end{proof}

\paragraph{Condition de Whitney.}
Vu que  la différentielle horizontale $D_hF$ est continue et surjective en $0$, il existe un voisinage compact $U:=\bar B_\infty(0,R)\Subset \h$, $R>0$, de $0$ tel que $D_hF(A)$ est surjective pour tout $A\in U$. Ainsi, la valeur $\|D_hF(A)(B)\|=\|d_hF(\pi(B))\|$ est équivalente à la norme euclidienne $\|\pi(B)\|$ uniformément pour $A\in U$. 

Soit $A,B\in \f\cap U$. Nous écrivons la définition de la différentiabilité horizontale de $F$ lorsque $B\to A$ 
\begin{equation*}
0=F(A)-F(B)=D_hF(A)\big(A^{-1}B\big)+o(d_\infty(A,B)).
\end{equation*}
  On en déduit l'estimation (qu'on appellera de \emph{Whitney}) 
\begin{equation}\label{whitneycond}
\| \pi(B)-\pi(A)\|=o(|z(A^{-1}B)|^{\frac{1}{2}}), \text{ lorsque } B\to A,
 \end{equation}
où  petit-$o$ est uniforme pour $A,B\in \f\cap U$ et $z(\cdot)$ désigne la coordonnée selon $z$. 

\begin{remark}\label{Whit} Si pour toute partie compacte d'un ensemble fermé $E$ l'estimation \eqref{whitneycond} est vérifiée uniformément, alors il existe une fonction $F\in C_H^1(\h,\r^2)$  telle que $E\subset \f$ et $D_hF(A)$ est surjective quel que soit $A\in E$. En effet, il suffit de poser $D_hF(A)(x,y,0)=(x,y)$ pour tout $A\in E$ et appliquer le théorème \ref{Whitneytheorem} de prolongement de Whitney.
\end{remark}

\subsubsection{Calcules de $\H^2_\infty$ de $\f$ pour $F\in C^1(\r^3,\r^2)$.}
   Si $XF$ et $YF$ sont de classe $C^1$, alors $ZF=-\frac{1}{4}[X,Y]F$ est continue. Cela signifie que $F=(f_1,f_2)\in C^1(\r^3,\r^2)$ et d'après le théorème classique des fonctions implicites localement l'ensemble $\f$ est une courbe simple représentée comme 
\begin{equation}\label{lisserepresantaion}
\f\cap V=\big\{ \Gamma(z)=(\gamma(z),z)\mid z\in[-\delta,\delta]\big\},
\end{equation}
où $\gamma$ est de classe $C^1$ et $V$ est un voisinage de $0\in \r^3$.
Quitte à diminuer $\delta$, le paramétrage $\{z\to\Gamma(z)\}$ est bi-lipschitzien de $\big([-\delta,\delta], |\cdot|^\frac{1}{2}\big)$ dans  $(\h, d_\infty)$. En plus, compte tenu de la condition \eqref{whitneycond} la "densité" est égale à
\begin{equation*}
J(z)=\lim\limits_{t\to0+} t^{-1}d_\infty\bigl(\Gamma(z),\Gamma(z+t)\bigr)^2=1+2\lim\limits_{t\to0+} t^{-1}\det(\gamma (z),\gamma(z+t)) =
1+2\gamma'_y\gamma_x-2\gamma'_x\gamma_y.
\end{equation*}
Remarquons que la courbe $\Gamma\in C^1$ n'est jamais tangente à la distribution horizontale car $d_hF$ est injective.  
Par conséquent, la fonction continue $J(\cdot)$ ne s'annule pas. D'après \cite[2.10.10, 2.10.11]{federer}  $\H^2_\infty\corn \Gamma = \Gamma_\#\big(J(\cdot) \H^2_{1/2}\big)$ , où $\H^2_{1/2}$ est une mesure de Hausdorff sur $[-\delta,\delta]$ par rapport à la métrique $|\cdot|^\frac{1}{2}$. Par ailleurs,  $\H^2_{1/2}=\mathcal{L}^1$, et donc 
$$\H^2_\infty(\Gamma)=\int\limits_{-\delta}^{\delta}1+2\gamma'_y\gamma_x-2\gamma'_x\gamma_y\, dz=\int\limits_\Gamma dz-2(y\,dx-x\,dy). $$

Calculons maintenant la densité $J(\cdot)$  plus explicitement. La relation $\frac{d}{dz}F\big(\gamma(z),z\big)=(XF)\gamma'_x + (YF)\gamma'_y +(ZF) J=0$ conduit au système linéaire:
  \[
\left(
\begin{array}{ccc}
Xf_1  & Yf_1      \\
Xf_2  & Yf_2    
\end{array}
\right)
\left(
\begin{array}{ccc}
  \gamma'_x   \\
  \gamma'_y
\end{array}
\right)=
-J
\left(
\begin{array}{ccc}
  Zf_1  \\
  Zf_2
\end{array}
\right)
.\]
L'hypothèse de la surjectivité de $D_hF$ nous permet d'extraire $J$ de ce système: 
 \begin{align*}
\left(\begin{array}{ccc}
  \gamma'_x   \\
  \gamma'_y
\end{array}\right)&=
-J(d_hF)^{-1}\left(\begin{array}{ccc}
  Zf_1  \\
  Zf_2
\end{array}\right),\quad
 J=1-2J \left[ \gamma_y(d_hF)_1^{-1}\left(\begin{array}{ccc}
  Zf_1  \\
  Zf_2
\end{array}\right)- \gamma_x(d_hF)_2^{-1}\left(\begin{array}{ccc}
  Zf_1  \\
  Zf_2
\end{array}\right)\right],\\ 
 \intertext{ et finalement,} 
J&=\frac{1}{1+2(\star)},  \text{ où }\quad \star=\frac{Zf_2(\gamma_x Xf_2+\gamma_y Yf_2)-Zf_1(\gamma_x Xf_1+\gamma_y Yf_1)}{Xf_1Yf_2-Xf_2Yf_1}.
\end{align*}
 
Une courbe lisse simple $\Gamma$ qui n'est jamais tangente à la distribution horizontale, quitte à faire une translation, admet une représentation locale \eqref{lisserepresantaion}.  En appliquant le même raisonnement que ci-dessus, nous pouvons montrer donc que pour une telle courbe $\Gamma$  sa mesure $\H_\infty^2$ est donnée comme l'intégrale le long de $\Gamma$ de la forme de contact (voir \cite{jean}):
 $$ \H^2_\infty(\Gamma)=\int\limits_\Gamma dz-2(y\,dx-x\,dy).$$
 L'orientation de $\Gamma$ est choisie ici de façon à ce que la forme de contact soit positive sur $\Gamma$. C'est une généralisation  de cette formule que nous obtiendrons par la suite. Notons également l'interprétation géométrique remarquable de la mesure $\H_\infty^2(\Gamma)$. Par le théorème de Green, elle est égale à la différence entre l'accroissement de la coordonnée $z$ le long de $\Gamma$ et quatre fois l'aire (orientée) balayée par la projection $\pi(\Gamma)$ dans le plan horizontal (voir fig. \ref{intergeom}). 

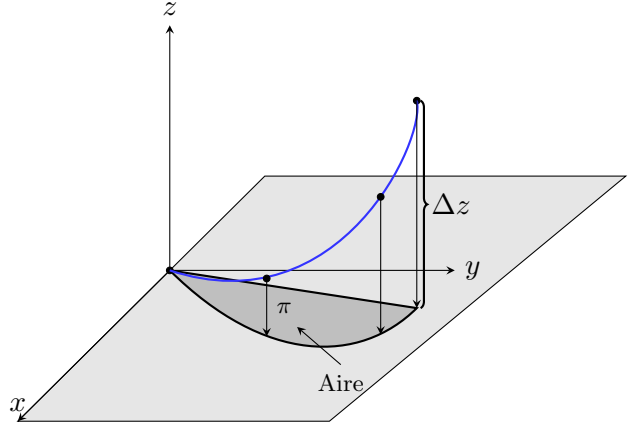
\begin{figure}[h]\centering
\begin{tikzpicture}[scale=2.5, >=stealth]
\filldraw [fill=gray!20] (-0.8, -0.8) -- (0.5,0.5) -- (2.4,0.5) -- (0.84,-0.8) -- cycle;
\draw[->] (0,0) -- (1.5,0) node[right] {$y$};
\draw[->] (0,0) -- (0,1.3) node[above] {$z$};
\draw[->] (0,0) -- (-0.8,-0.8) node[above] {$x$};
\filldraw [fill=gray!50,draw=black,thick] (0,0) .. controls (0.5,-0.5) and (1.0,-0.5) .. (1.3,-0.2) -- (0,0);
\filldraw [black] (0,0) circle (0.5pt);
\filldraw [black] (1.3,0.9) circle (0.5pt);
\draw [draw=blue!80,thick] (0,0) .. controls (0.9,-0.3) and (1.35,0.7) .. (1.3,0.9);
\draw [->](1.3,0.9) -- (1.3,-0.2);
\filldraw [black] (1.11,0.39) circle (0.5pt);
\draw [->](1.11,0.39) -- (1.11,-0.34);
\filldraw [black] (0.51,-0.043) circle (0.5pt);
\draw [->](0.51,-0.045) -- node[right,font=\footnotesize]{$\pi$}(0.51,-0.35);
\draw [<-](0.67,-0.3) -- (0.9,-0.5) node[below,font=\footnotesize]{Aire} ;
\draw[snake=brace,thick] (1.32,0.9) -- node[right]{$\Delta z$} (1.32,-0.2);
\end{tikzpicture}
\caption{Interprétation géométrique de $\H^2_\infty$} \label{intergeom}
\end{figure}

\subsection{Ensembles verticaux Reifenberg plats} 
Ici nous démontrons qu'un ensemble de niveau $\f$ est exactement l'ensemble $\varepsilon$-plat  au sens de Reifenberg avec $\varepsilon\to 0$ quand l'échelle diminue, pour lequel l'axe vertical $Oz$ joue le rôle du plan approximatif en tout point et à toute échelle. Nous en déduisons par la suite que $\f$ est localement une courbe simple. 

 \begin{proposition}\label{injplanhorizontal}
Il existe $r>0$ tel que quels que soient $A\in \f\cap U$ et $B\in h_{A,r}\cap \f\cap U$, 
$$\|F(B)\|\ge c\| \pi(B)-\pi(A)\|, $$
 où $h_{A,r}=\{A+tX(A)+sY(A) \mid (t,s) \in \bar D(0,r)\subset\r^2 \}$ est la boule fermée de rayon $r$ sur le plan horizontal  attaché à $A$ et $c>0$ est une constante. En particulier, pour tout $A\in \f\cap U$ l'ensemble $h_{A,r} \cap \f\cap U$ ne contient que le seul point $A$.
 \end{proposition}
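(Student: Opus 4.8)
The statement to prove is Proposition \ref{injplanhorizontal}: on a horizontal disc $h_{A,r}$ attached at a point $A\in\f\cap U$, the value $\|F(B)\|$ is bounded below by $c\|\pi(B)-\pi(A)\|$ for $B$ in that disc. The key point is that along a horizontal disc, the $z$-coordinate of $A^{-1}B$ is controlled quadratically by $\|\pi(B)-\pi(A)\|$, so the Whitney estimate \eqref{whitneycond} cannot coexist with $B\in\f$ unless $\pi(B)=\pi(A)$; but we want a quantitative (non-asymptotic) statement, so instead of invoking \eqref{whitneycond} directly I would go back to the horizontal differentiability of $F$ together with the Lagrange-type estimate (Theorem \ref{lagrange}, applied componentwise to $F=(f_1,f_2)$).

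**Main steps.** First, parametrize the horizontal disc: a point of $h_{A,r}$ is $B=\exp\big(tX(A)+sY(A)\big)(A)$ with $(t,s)\in\bar D(0,r)$, and in exponential coordinates one computes $A^{-1}B$ explicitly — its horizontal part is exactly $(t,s)$ so $\|\pi(B)-\pi(A)\|=\sqrt{t^2+s^2}$, while its vertical coordinate $z(A^{-1}B)$ vanishes identically (the curve $\exp(tX(A)+sY(A))$ issued from $A$ is horizontal, hence the increment of $z$ minus the swept planar area is zero; more precisely $z(A^{-1}B)=0$ because $A^{-1}B$ lies on the horizontal plane through the origin). Consequently $d_\infty(A,B)$ is comparable to $\|\pi(B)-\pi(A)\|$ on this disc, uniformly. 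Second, apply Theorem \ref{lagrange} to each component $f_i$ at the base point $B$... more conveniently at $A$: since $F(A)=0$,
\[
F(B)=D_hF(A)\big(A^{-1}B\big)+\mathbf{r}(A,B),\qquad \|\mathbf{r}(A,B)\|\le C\,d_\infty(A,B)\,\omega_A\big(d_\infty(A,B)\big),
\]
where $\omega_A(\varepsilon)=\|XF(\cdot)-XF(A)\|_{\infty,B_\infty(A,c\varepsilon)}+\|YF(\cdot)-YF(A)\|_{\infty,B_\infty(A,c\varepsilon)}$ is the modulus of continuity of the horizontal derivatives of $F$; by uniform continuity of $XF,YF$ on the compact $U$ this $\omega_A$ is bounded by a single modulus $\omega(\varepsilon)\to0$ independent of $A\in\f\cap U$. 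Third, since $z(A^{-1}B)=0$, we have $D_hF(A)(A^{-1}B)=d_hF(\pi(B)-\pi(A))$, and by the uniform surjectivity (equivalently invertibility) of $d_hF(A)$ on $U$ noted just before \eqref{whitneycond}, $\|d_hF(\pi(B)-\pi(A))\|\ge c_0\|\pi(B)-\pi(A)\|$ for some $c_0>0$ uniform in $A\in U$. Combining,
\[
\|F(B)\|\ \ge\ c_0\|\pi(B)-\pi(A)\|-C\,d_\infty(A,B)\,\omega\big(d_\infty(A,B)\big)\ \ge\ \big(c_0-C'\omega(C''r)\big)\|\pi(B)-\pi(A)\|,
\]
using $d_\infty(A,B)\le C''\|\pi(B)-\pi(A)\|\le C''r$ on the disc. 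Choosing $r>0$ small enough that $C'\omega(C''r)\le c_0/2$ gives the claim with $c=c_0/2$.

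**The last assertion** is then immediate: if $B\in h_{A,r}\cap\f\cap U$ then $F(B)=0$, so $c\|\pi(B)-\pi(A)\|\le 0$, forcing $\pi(B)=\pi(A)$; but on the horizontal disc $\pi$ is injective (the map $(t,s)\mapsto\pi(B)$ is just $(t,s)\mapsto\pi(A)+(t,s)$), hence $B=A$.

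**Main obstacle.** The only delicate point is making the error term genuinely \emph{uniform} over all base points $A\in\f\cap U$ simultaneously — i.e. producing one radius $r$ and one constant $c$ that work for every $A$. This is where compactness of $U$ (hence uniform continuity of $XF,YF$) and the uniform invertibility of $d_hF$ on $U$ are essential; one must be careful that the ball $B_\infty(A,cd_\infty(A,B))$ appearing in Theorem \ref{lagrange} stays inside a fixed slightly larger compact neighbourhood for all $A\in\f\cap U$ and all $B$ in the disc, which holds once $r$ is small relative to $\dist(\f\cap U,\partial U)$... or one simply enlarges $U$ at the outset. Everything else is a bounded, routine computation in exponential coordinates on $\h$.
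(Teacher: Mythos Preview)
Your proof is correct and follows essentially the same route as the paper's: both observe that on the horizontal disc $h_{A,r}$ one has $z(A^{-1}B)=0$, so $d_\infty(A,B)=\|\pi(B)-\pi(A)\|$, then write $F(B)=D_hF(A)(A^{-1}B)+\text{error}$, lower-bound the main term using the uniform invertibility of $d_hF$ on $U$, and absorb the error by choosing $r$ small. The only cosmetic difference is that you invoke Theorem~\ref{lagrange} explicitly to quantify the remainder via a uniform modulus of continuity of $XF,YF$, whereas the paper simply cites the definition of horizontal differentiability with a uniform little-$o$; for $F\in C^1_H$ on the compact $U$ these are equivalent, so your extra care about uniformity is well placed but not a genuinely different idea.
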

 \begin{proof}
 Par définition, lorsque $h_{A,1}\ni B \to A$
 $$ F(A)-F(B)=D_hF(A)\big(A^{-1}B\big)+o\big(\rho_\infty(A^{-1}B)\big), $$
 où le petit-$o$ est uniforme pour $A\in \f\cap U$ et $B\in h_{A,1}$. Vu que la différentielle continue $d_hF$ est inversible, l'inégalité suivante est vérifiée 
$$\| D_hF(A)\big(A^{-1}B\big)\|\ge 2c\| \pi(B)-\pi(A)\|$$
pour tout $A\in U$ et tout $B\in h_{A,1}$, $c>0$. Comme $A^{-1}B$ est horizontal (\ie $z(A^{-1}B)=0$), il existe $r>0$ tel que $\| o\big(\rho_\infty(A^{-1}B) \big)\|\le c\| \pi(B)-\pi(A)\|$ pour tout $A\in \f\cap U$ et tout $B\in h_{A,r}$. Il en découle le résultat énoncé. 
\end{proof}

\begin{lemma}[projection de $\f$ sur l'axe vertical] \label{sujOz} Il existe $r_0>0$ et $\delta>0$ tels que pour tout $z\in[-\delta,\delta]$ on peut trouver un point $\gamma(z)\in \r^2$ (pas forcement unique), $\| \gamma(z)\|<r_0$, tel que le point $\big(\gamma(z),z\big)$ appartient à $\f$. 
\end{lemma}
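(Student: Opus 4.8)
The statement asserts that the projection of $\f$ onto the vertical axis $Oz$ is surjective near $0$. The natural tool is a degree-theoretic argument, exactly as announced in the introduction ("un argument topologique qui fait intervenir le degr� d'une application continue"). First I would fix a small radius and set up an auxiliary continuous map whose zero set, restricted to a horizontal slice, is forced to be nonempty by a degree (or Brouwer fixed point / Poincar�--Miranda) argument; then a compactness argument in $z$ produces the curve $z \mapsto \gamma(z)$.

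More concretely, recall that $D_hF(0)$ is surjective with kernel $Oz$, so the horizontal part $d_hF(0) : \r^2 \to \r^2$ is a linear isomorphism. Composing with $d_hF(0)^{-1}$ I may assume $d_hF(0) = \operatorname{Id}$, i.e. $F(x,y,z) = (x,y) + (\text{higher-order terms})$ in a suitable sense; more precisely, by the definition of horizontal differentiability and continuity of $D_hF$, for $A$ in a neighbourhood of $0$ one has $F(A) = \pi(A) + R(A)$ where, on the horizontal disc $h_{z} := \{(x,y,z) : \|(x,y)\| \le r_0\}$ through height $z$, the remainder $R$ is small compared to $r_0$ once $r_0$ and $|z|$ are small (this uses Proposition~\ref{injplanhorizontal} / the Lagrange estimate \eqref{lagrange} to control $F$ on horizontal slices). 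Then for each fixed $z \in [-\delta,\delta]$ I consider the continuous map $g_z : \bar D(0,r_0) \to \r^2$, $g_z(x,y) := F(x,y,z)$. Since $g_z$ is a small continuous perturbation of the identity on the boundary circle $\|(x,y)\| = r_0$ — here "small" means $\|g_z(x,y) - (x,y)\| < r_0$ on the boundary, which holds by the uniform estimate above after shrinking $r_0$ and $\delta$ — the degree of $g_z$ with respect to $0$ equals the degree of the identity, namely $1 \ne 0$. Hence $0 \in g_z(\bar D(0,r_0))$: there exists $(x,y)$ with $\|(x,y)\| < r_0$ and $F(x,y,z) = 0$. Setting $\gamma(z) := (x,y)$ (a choice, possibly non-unique) gives a point $(\gamma(z),z) \in \f$ for every $z \in [-\delta,\delta]$.

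The main obstacle — and the only genuinely non-formal point — is establishing the boundary estimate $\|F(x,y,z) - (x,y)\| < r_0$ uniformly for $\|(x,y)\| = r_0$ and $|z| \le \delta$, with $r_0$ small but fixed and $\delta$ allowed to depend on $r_0$. This is where horizontal differentiability interacts with the scaling: on a horizontal disc the natural size of the perturbation $R$ is controlled by $r_0 \cdot \varepsilon(r_0, |z|)$ with $\varepsilon \to 0$, because $\rho_\infty$ of a horizontal increment of Euclidean length $\le r_0$ based at a point of height $|z| \le \delta$ is comparable to $\max\{r_0, C|z|^{1/2}, \dots\}$ — so I must choose $r_0$ first (small enough that the $o(\rho_\infty)$ term beats $c\|\pi(B)-\pi(A)\|$ as in Proposition~\ref{injplanhorizontal}), and only then $\delta$ small enough (depending on $r_0$) that the center of the disc is itself close enough to $\f$'s behaviour and the remainder stays below $r_0$ in sup-norm over the disc. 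One must be slightly careful that moving the base point from $0$ to $(0,0,z)$ does not spoil uniformity; this is handled by the continuity of $D_hF$ on the compact neighbourhood $U$ and the translation-invariance structure, invoking the Lagrange-type estimate \eqref{lagrange}. Once the boundary estimate is in hand, the degree computation and the extraction of $\gamma(z)$ are routine, and the radius bound $\|\gamma(z)\| < r_0$ is automatic from the construction.
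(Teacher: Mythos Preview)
Your approach is correct and is essentially the paper's: restrict $F$ to slices $g_z(x,y):=F(x,y,z)$ and conclude by a degree argument. The only difference is how the degree is computed. The paper does not normalise; it uses Proposition~\ref{injplanhorizontal} to get $\|\tilde F_0(B)\|\ge c\|B\|$ on $\bar D(0,r_0)$, then continuity in $z$ to keep $0\notin \tilde F_z(\partial D(0,r_0))$ for $|z|\le\delta$, and finally a mollification $F_\epsilon\in C^\infty$ to identify $\deg(\tilde F_0,D(0,r_0),0)=\operatorname{sign}\det d(\widetilde{F_\epsilon})_0(0)=\pm1$. Your route---normalise $d_hF(0)=\operatorname{Id}$ and show $g_z$ is a small perturbation of the identity on the boundary---is more elementary and avoids the smoothing step entirely.

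One simplification you should make: the ``main obstacle'' you identify, namely the boundary estimate $\|g_z(x,y)-(x,y)\|<r_0$, follows directly from horizontal differentiability \emph{at the single point $0$} and needs neither Proposition~\ref{injplanhorizontal} nor the Lagrange inequality nor any base-point shift to $(0,0,z)$. After normalising, for $B=(x,y,z)$ with $\|(x,y)\|=r_0$ and $|z|\le\delta\le r_0^2$ one has $\rho_\infty(B)=\max\{r_0,|z|^{1/2}\}=r_0$, so $\|F(B)-(x,y)\|=\|F(B)-D_hF(0)(B)\|=o(\rho_\infty(B))=o(r_0)<r_0$ once $r_0$ is small. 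Choose such an $r_0$ first, then $\delta=r_0^2$; that is the whole estimate.
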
 
\begin{proof}
Notons $ \tilde F_z(x,y)=F(x,y,z)$ --- la restriction de $F$ sur les plans $z=const$. Pour tout $z$ l'application $ \tilde F_z\in C^0(\r^2;\r^2)$, de plus,
$\tilde F_z$ est différentiable (au sens usuel) en $0\in \r^2$ et $d\tilde F_z(0) =D_hF(0,z)$ est inversible. 

La proposition \ref{injplanhorizontal} garantit l'existence du rayon $r_0>0$ tel que pour tout $B$ dans un disque fermé  $\bar D(0,r_0)\subset \r^2$ l'estimation $ \|\tilde F_0(B)\| \ge C\| B\|$, $C>0$, est vérifiée.  En particulier, l'image du cercle $\partial D(0,r_0)$ par $\tilde F_0$ ne contient pas l'origine. Compte tenu de la continuité de $F$, il existe $\delta>0$ suffisamment petit tel que $0\not \in\tilde F_z(\partial D(0, r_0))$ pour tout $z\in I=[-\delta,\delta]$.  Il est clair que chaque $\tilde F_z \in C^0\big( D(0,r_0),\r^2\big)$ est homotope à $\tilde F_0 \in C^0\big(D(0,r_0),\r^2\big)$ par l'application $F$. On rappelle que le degré $\deg\big(\tilde F_z, D(0,r_0), 0\big)$ est un invariant homotopique quel que soit $z\in I$, car  $0\not \in \tilde F_z\big(\partial D(0,r_0)\big)$ pour $z\in I$. Vérifions par la définition de degré d'application continue que le degré $\deg\big(\tilde F_0, D(0,r_0), 0\big)$ appartient à $\{-1;1\}$. On choisit une approximation standard $F_\epsilon=F\ast h_\epsilon -F\ast h_\epsilon(0) \in C^\infty$, $F_\epsilon(0)=0$, de $F$. La convergence uniforme  $F_\epsilon\to F$, $XF_\epsilon \to XF$ et $YF_\epsilon \to YF$, a lieu sur tout compact (voir \cite{Heishypesurfaces}, par exemple). Comme $d\tilde F_0(0)$ est injective, la différentielle $d(\widetilde F_\epsilon)_0(0)$ l'est aussi pour $\epsilon$ assez petit. De même, si $\epsilon<\epsilon_0$ alors $ \|(\widetilde F_\epsilon)_0(B)\|\ge \frac{C}{2}\|B\|$  pour tout $B\in \bar D(0,r_0)$.  Comme $F^{-1}_\epsilon(0)\cap \bar D(0,r_0)=0$  on a bien   $\deg\big(\tilde F_0, D(0,r_0), 0\big)= \operatorname{sign} \det d (\widetilde F_\epsilon)_0(0)\not=0$.
 Comme  pour $z\in I$ le degré $\deg\big(\tilde F_z, D(0,r_0), 0\big)$ est différent de zéro,  pour chaque $z\in I$ il existe un point $\gamma(z)\in D(0,r_0)$ tel que $\tilde F_z(\gamma(z))=F(\gamma(z),z)=0$ (voir \cite{degree}).
\end{proof}

\begin{exemple}[Non-unicité de l'intersection $\f$ avec $z=const$]\label{nonunique}
Con\-si\-dé\-rons l'ensemble compact $\mathcal{A}=\{0\} \cup \{A_n\}_{n=1}^\infty \cup \{B_k\}_{k=1}^\infty$, où  $A_n=(\frac{1}{n},0,\frac{1}{n})$ et $B_k=(\frac{1}{k},-\frac{1}{k^2},\frac{1}{k})$. On vérifiera par des calculs élémentaires les hypothèses de la remarque \ref{Whit}, ce qui donnera l'existence d'une application $F\in C^1_H(\h,\r^2)$ avec $D_hF(0)$ surjective telle que $\mathcal{A}\subset \f$ . 
\end{exemple}
\begin{proof} 
Comme $\mathcal{A}$ ne possède que le point limite $0$,  on n'a à vérifier que ce qui suit lorsque $n,k\to \infty$ 
$$ \|\pi(B_k)-\pi(A_n)\|=o\bigl(|z(A_n^{-1}B_k)|^{\frac{1}{2}}\bigr) \Longleftrightarrow  \bigl(\mfrac{1}{k}-\mfrac{1}{n}\bigr)^2+\mfrac{1}{k^4}  =o\left(|\mfrac{1}{n}-\mfrac{1}{k}+2\mfrac{1}{k^2}\mfrac{1}{n}|\right).$$
Pour $N$ fixé assez grand, et on cherche à estimer en fonction $k\ge N$
$$\sup\limits_{n\ge N}\frac{(\frac{1}{k}-\frac{1}{n})^2+\frac{1}{k^4}}{|\frac{1}{n}-\frac{1}{k}+2\frac{1}{k^2}\frac{1}{n}|}= \sup\limits_{n\ge N}( b(k,n)+a(k,n)),$$ où on note 
$a(k,n):=nk^{-2}| k(k-n)+2 |^{-1}$ et $b(k,n):=(k-n)^2n^{-1}| k(k-n)+2 |^{-1}$.

Il est facile de voir que $ a(k,n)\le a(k,k)=\frac{k}{2k^2}=\frac{1}{2k}\le \frac{1}{2N}$. 
Pour estimer $b(k,n)$ on pose $g(x)=\frac{(x-k)^2}{x}$ et $h(x)=k(k-x)+2$; on a 
$$\left(\mfrac{g(x)}{h(x)} \right)'= -\mfrac{(x-k)(x(k^2-2)-(k^3+2k))}{x^2(kx-2-k^2)}.$$ 
Les points critiques de $\frac{g(x)}{h(x)}$ sont les suivants (dans l'ordre croissant):
\[
\left\{\begin{aligned}   & x_1=0<N, & x_2=k\ge N,\\
& x_3=k+\mfrac{2}{k}, & x_4=\mfrac{(k^2+2)}{k^2-2}<k+1. 
 \end{aligned}\right. 
\]
En particulier, on voit que $\frac{g(x)}{h(x)}$ est décroissante sur $(0,k)$ et croissante sur $(k+1,\infty)$. On en déduit que $\sup\limits_{n\ge N} b(k,n)\le \max\{b(k,N), b(k, \infty)\}= \max\{b(k,N), \frac{1}{k}\}$. Il suffit donc d'estimer $\sup\limits_{k\ge N} b(k,N)$. Des calculs du même type montrent que la fonction $\frac{(x-N)^2}{N(x(x-N)+2)}$ est croissante pour $x\ge N$ et, donc, $\sup\limits_{k\ge N}b(k,N)=b(\infty,N)=N^{-1}$.
\end{proof}

\begin{Def*} On définit  la distance de Hausdorff $\dist_H$ entre deux sous-ensembles $E_1,E_2\subset \h$  par
\begin{equation*}
\dist_H(E_1,E_2)=\max\big\{\sup\limits_{A\in E_2} d_\infty(A,E_1),\sup\limits_{B\in E_1} d_\infty(B,E_2) \big\}.
\end{equation*}
\end{Def*}
\begin{notation*}
On note  $E_{A,r}:=E\cap \bar B_\infty(A,r)$ pour $E\subset \h$; notons également  $Z_{A}:=\{B\in \h \mid \pi(B)=\pi(A)\}$ et $Z_{A,r}:=Z_{A}\cap \bar B_\infty(A,r)$.  
\end{notation*}

\begin{remark*}
$d_\infty(B,Z_A)=d_\infty(B, \Pi_A(B))= \|\pi(B)-\pi(A)\|$, où  $\Pi_A(B)=A\big(0, z(A^{-1}B)\big)$.
\end{remark*}

\begin{lemma}\label{l: reifenberg}
Il existe un voisinage $\tilde U$ de $0\in \h$ tel que $E=\f$ vérifie 
\begin{equation}\label{platequation}
\dist_H\big(E_{A,r}, Z_{A,r}\big)\le r \varepsilon(r) \text{ pour tout } A\in E\cap \tilde U,
\end{equation}
et $\varepsilon(r)\to 0$ quand $r\to 0$.
\end{lemma}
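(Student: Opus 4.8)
\section*{Proof plan}

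The strategy is to reduce \eqref{platequation}, by a blow-up at $A$, to a stability statement for continuous maps uniformly close to the linear model $L_A:=D_hF(A)$ (whose zero set is exactly $Oz$), and then to prove that statement in its two halves: the ``$E$ is close to $Z_A$'' half is the Whitney estimate \eqref{whitneycond} read at scale $r$, while the ``$Z_A$ is close to $E$'' half is a quantitative, scale-invariant version of the topological degree argument already used in Lemma \ref{sujOz}.

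First I would pick $\tilde U=\bar B_\infty(0,R')$ with $R'$ small enough that all balls $\bar B_\infty(A,r)$, $A\in\tilde U$ and $r$ below a threshold, lie in the compact $U$, on which $D_hF$ is surjective with $\|d_hF(\cdot)^{-1}\|\le C_0$ and $XF,YF$ uniformly continuous. For such $A,r$ set $F_{A,r}(B):=\tfrac1r F(A\,\delta_r B)$; then $F_{A,r}\in C^1_H$, $F_{A,r}(0)=0$ and $F_{A,r}^{-1}(0)=\delta_{1/r}(A^{-1}E)$. The map $\psi_{A,r}(B):=\delta_{1/r}(A^{-1}B)$ is a $d_\infty$-isometry followed by $\delta_{1/r}$, hence multiplies $d_\infty$ by $1/r$; one checks it sends $E_{A,r}$ onto $F_{A,r}^{-1}(0)\cap\bar B_\infty(0,1)$ and $Z_{A,r}$ onto $Oz\cap\bar B_\infty(0,1)$ (using $A^{-1}Z_A=Oz$), so \eqref{platequation} is equivalent to
\begin{equation*}
\dist_H\bigl(F_{A,r}^{-1}(0)\cap\bar B_\infty(0,1),\ Oz\cap\bar B_\infty(0,1)\bigr)\le\varepsilon(r),\qquad \varepsilon(r)\to 0,
\end{equation*}
uniformly in $A\in E\cap\tilde U$. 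Applying the Lagrange inequality (Theorem \ref{lagrange}) to the two components of $F$, with base point $A$ and increment $\delta_r B$, and using the uniform modulus of continuity of $XF,YF$ on $U$, produces a function $\eta(r)\to 0$ with $\sup_{\bar B_\infty(0,1)}\|F_{A,r}-L_A\|\le\eta(r)$ for all $A\in\tilde U$; here $L_A(B)=d_hF(A)(\pi(B))$, so $L_A^{-1}(0)=Oz$ and $\|L_A(B)\|\ge C_0^{-1}d_\infty(B,Oz)$.

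It then remains to show: for each small $\eta_0>0$ there is $\eta_1>0$, depending only on $C_0$, such that whenever $G\in C^0(\bar B_\infty(0,1),\r^2)$ with $G(0)=0$ and $\sup\|G-L_A\|<\eta_1$ for some $A\in\tilde U$, one has $\dist_H(G^{-1}(0)\cap\bar B_\infty(0,1),\ Oz\cap\bar B_\infty(0,1))<\eta_0$. One inclusion is immediate: if $G(B)=0$ then $\|L_A(B)\|=\|L_A(B)-G(B)\|<\eta_1$, whence $d_\infty(B,Oz)=\|\pi(B)\|\le C_0\eta_1<\eta_0$. For the other, fix $P=(0,0,s)$ with $|s|\le1$ and restrict $G$ to the Euclidean disc $D=\{(x,y,s):\|(x,y)\|\le\eta_0/2\}$; on $\partial D$ one has $\|L_A(x,y,s)\|=\|d_hF(A)(x,y)\|\ge C_0^{-1}\eta_0/2$, so for $\eta_1$ small the map $g:=G(\cdot,\cdot,s)$ does not vanish on $\partial D$ and the straight-line homotopy from $g$ to the linear isomorphism $(x,y)\mapsto d_hF(A)(x,y)$ stays non-vanishing there; by homotopy invariance of the degree $\deg(g,D,0)=\pm1\ne 0$, so $g$ has a zero $(x^*,y^*)$ in $D$, and $B^*=(x^*,y^*,s)\in G^{-1}(0)\cap\bar B_\infty(0,1)$ lies within $\|(x^*,y^*)\|<\eta_0$ of $P$. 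Applying this with $G=F_{A,r}$, $\eta_1=\eta(r)$ and $\eta_0$ a suitable fixed multiple of $\eta(r)$ gives the displayed estimate with $\varepsilon(r)$ proportional to $\eta(r)$, hence $\varepsilon(r)\to0$, uniformly in $A$.

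The main obstacle is precisely this second inclusion: uniform closeness $F_{A,r}\to L_A$ is soft and does not by itself force $F_{A,r}^{-1}(0)$ to meet every horizontal slice near the axis; this must be extracted topologically, exactly as in Lemma \ref{sujOz}. What is genuinely new here is only the \emph{uniformity}---over the compact set of base points $A$ and over all small scales $r$, with a transverse radius $\eta_0(r)\to0$---and this is delivered by the rescaling together with the uniform Lagrange estimate. The ``$E$ near $Z_A$'' half, by contrast, costs nothing: for $B\in E_{A,r}$ the nearest point of $Z_A$ is $\Pi_A(B)\in Z_{A,r}$, and $d_\infty(B,Z_{A,r})=\|\pi(B)-\pi(A)\|=o(|z(A^{-1}B)|^{1/2})=o(r)$ by \eqref{whitneycond}, uniformly on $E\cap U$.
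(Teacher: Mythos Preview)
Your proof is correct and follows essentially the same route as the paper: the Whitney estimate \eqref{whitneycond} gives the ``$E$ near $Z_A$'' half, and a topological degree argument on horizontal slices (the content of Lemma~\ref{sujOz}) gives the ``$Z_A$ near $E$'' half, with uniformity in $A$ coming from the uniform continuity of $D_hF$ on the compact $U$. Your rescaling $F_{A,r}(B)=\tfrac1r F(A\,\delta_r B)$ and the comparison to the linear model $L_A$ via Lagrange is a clean way to make that uniformity explicit, whereas the paper argues more tersely by translating and citing Lemma~\ref{sujOz}; the substance is the same.
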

\begin{proof}
Considérons d'abord $0\in \f$. Si $A\in \f\cap B_\infty(0,r)$, alors $d_\infty(A,Z_{0,r})=\|\pi(A)\|=o(|z(A)|^\frac{1}{2})=o(r)$, lorsque $r\to 0$ d'après la condition \eqref{whitneycond}. Si $A=(0,0,z)\in Z_{0,r}$, alors pour $r\le r_0$ suffisamment petit d'après le lemme \ref{sujOz} on peut trouver $\tilde A=(\gamma(z),z)\in \f$, et donc d'après \eqref{whitneycond} $d_\infty(A,\tilde A)=\|\gamma(z)\|=o(r)$ Ainsi, $\dist_H\big(E_{0,r}, Z_{0,r}\big)=o(r)$.

En suivant la démonstration du lemme \ref{sujOz} on démontre que grâce à la continuité de $D_hF$ on peut choisir un voisinage fermé $\tilde U\subset B_\infty(0,R)$ de $0$ qui vérifie ce qui suit : pour tout $A\in \f \cap \tilde U$ le lemme \ref{sujOz} appliqué à la fonction translatée $F\circ \tau_{A^{-1}}$ a lieu avec $r_0>0$ et $\delta>0$ qui ne dépendent pas de $A$. On remarque maintenant que l'argument ci-dessus s'applique à tout 
$A\in \f\cap\tilde U$ ce qui donne $\dist_H\big(E_{A,r}, Z_{A,r}\big)=o(r)$, où le petit-$o$ est uniforme exactement comme celui de \eqref{whitneycond}.
\end{proof}

\begin{Def*} \emph{Le cône tangent homogène} d'un ensemble $E\subset \h$ en $0$ est un ensemble 
$$ \operatorname{Tan}(E,0)=\big\{A=\lim\limits_{n\to \infty}\delta_{r_n}(A_n) \in \h, \text{ où } A_n\in E \text{ et }r_n \to \infty \big\}.$$
En un point $a\in \h$ quelconque le cône homogène est donné par $\operatorname{Tan}(E,a)=\tau_a\operatorname{Tan}(\tau_{a^{-1}}E,0)$. 
\end{Def*}
Une conséquence facile de la définition et du lemme \ref{l: reifenberg} est
\begin{proposition} Le cône tangent homogène de $\f$ coïncide avec le noyau de la différentielle horizontale :
$$ \operatorname{Tan}\big(\f,0\big)=\Ker D_hF(0)=Oz.$$
\end{proposition}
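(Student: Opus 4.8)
Both sides are $\delta_t$-invariant subsets of $\h$, and the identity is essentially a rescaled reformulation of the flatness estimate \eqref{platequation} (equivalently, of the Whitney condition \eqref{whitneycond}); I would prove the two inclusions separately. Note first that $0\in\f$ forces $\delta_t(0)=0$, so $0\in\operatorname{Tan}(\f,0)$; and if a sequence $\delta_{r_n}(A_n)$ with $A_n\in\f$ and $r_n\to\infty$ converges in $\h$, then $r_n\,d_\infty(A_n,0)=d_\infty(\delta_{r_n}(A_n),0)$ stays bounded, so $A_n\to 0$ and $A_n\in\f\cap\tilde U$ for $n$ large, $\tilde U$ being the neighbourhood of Lemma~\ref{l: reifenberg}; thus only the local behaviour of $\f$ near $0$ is relevant.

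\textbf{Step 1 (rescaled flatness).} Applying the dilation $\delta_{1/r}$ to the inclusion of Lemma~\ref{l: reifenberg} taken at $A=0$ and at radius $Mr$ (legitimate for $r$ small, $M>0$ fixed), and using that $d_\infty$, hence $\dist_H$, is homogeneous of degree $1$, that $Oz=Z_0$ is $\delta_t$-invariant, and that $\delta_{1/r}(\bar B_\infty(0,Mr))=\bar B_\infty(0,M)$, one gets
\[
  \dist_H\big(\delta_{1/r}(\f)\cap\bar B_\infty(0,M),\ Oz\cap\bar B_\infty(0,M)\big)\ \le\ M\,\varepsilon(Mr)\ \longrightarrow\ 0\quad (r\to 0),
\]
i.e. the blow-ups $\delta_{1/r}(\f)$ converge to the vertical axis in the local Hausdorff sense.

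\textbf{Step 2: $\operatorname{Tan}(\f,0)\subseteq Oz$.} Let $A=\lim_n\delta_{r_n}(A_n)$ with $A_n\in\f$, $r_n\to\infty$, so $A_n\to 0$; put $s_n:=d_\infty(A_n,0)$. Since $\pi(\delta_{r_n}(A_n))=r_n\,\pi(A_n)$, and, by the remark $d_\infty(B,Z_A)=\|\pi(B)-\pi(A)\|$ together with Lemma~\ref{l: reifenberg} at $A=0$ and radius $s_n$,
\[
  \|\pi(A_n)\|=d_\infty(A_n,Z_0)\le\dist_H\big(\f\cap\bar B_\infty(0,s_n),\,Z_{0,s_n}\big)\le s_n\,\varepsilon(s_n),
\]
we obtain $\|\pi(\delta_{r_n}(A_n))\|=r_n\|\pi(A_n)\|\le (r_n s_n)\,\varepsilon(s_n)$, which tends to $0$ because $r_n s_n\to d_\infty(A,0)<\infty$ is bounded while $\varepsilon(s_n)\to 0$. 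By continuity of $\pi$, $\pi(A)=0$, i.e. $A\in Oz$.

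\textbf{Step 3: $Oz\subseteq\operatorname{Tan}(\f,0)$.} Fix $P\in Oz$ and $M>\rho_\infty(P)$, so that $P\in Oz\cap\bar B_\infty(0,M)$. For any $r_n\to 0$, Step~1 furnishes a point $Q_n\in\delta_{1/r_n}(\f)\cap\bar B_\infty(0,M)$ — the relevant infimum is attained since $\f$ is closed, hence $\delta_{1/r_n}(\f)\cap\bar B_\infty(0,M)$ is compact — with $d_\infty(P,Q_n)\le M\,\varepsilon(Mr_n)\to 0$. Writing $Q_n=\delta_{1/r_n}(A_n)$ with $A_n\in\f$ and $1/r_n\to\infty$ exhibits $P=\lim_n\delta_{1/r_n}(A_n)\in\operatorname{Tan}(\f,0)$. (Alternatively, one obtains explicit approximating points from Lemma~\ref{sujOz}: for $P=(0,0,\zeta)$ take $A_n=(\gamma(z_n),z_n)\in\f$ with $z_n:=\zeta r_n^2\to 0$; then $\delta_{1/r_n}(A_n)=(r_n^{-1}\gamma(z_n),\zeta)$ and $r_n^{-1}\|\gamma(z_n)\|=o(|\zeta|^{1/2})\to 0$ by \eqref{whitneycond}.)

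\textbf{Main difficulty.} There is no genuine conceptual obstacle here: the proposition is just a scaled restatement of \eqref{platequation}, and the only care required is in tracking the dilation weights ($\pi$ has weight $1$, the $z$-coordinate weight $2$) and in keeping the product $r_n\,d_\infty(A_n,0)$ bounded so that the vanishing factor $\varepsilon(\cdot)$ can be put to use.
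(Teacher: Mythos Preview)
Your proof is correct and follows exactly the route the paper indicates: the proposition is stated there as an easy consequence of the definition of the homogeneous tangent cone together with Lemma~\ref{l: reifenberg}, and you carry this out in detail by rescaling \eqref{platequation} and using the Whitney condition \eqref{whitneycond} (with Lemma~\ref{sujOz} as an alternative for the inclusion $Oz\subset\operatorname{Tan}(\f,0)$). There is nothing to add.
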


\begin{proposition}\label{p: reifWhitneysuff }
Supposons qu'un ensemble compact $E\subset \h$ satisfait
\begin{equation*}
d_\infty(B,Z_{A,r})\le\varepsilon(r) r \text{ pour tout } B\in E_{A,r} \text{ et tout } A\in E\cap U,
\end{equation*}
où $\varepsilon(r)\to 0$ quand $r\to 0$ et $U$ est compact.  Alors il existe une fonction $F\in C^1_H(\h, \r^2)$ telle que $F\corn (E\cap U) =0$ et $d_hF\corn (E\cap U)=\operatorname{Id}$. 
\end{proposition}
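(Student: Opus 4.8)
The plan is to show that the flatness hypothesis on $E$ is strong enough to force the Whitney estimate \eqref{whitneycond}, and then invoke Remark \ref{Whit} (i.e.\ the Whitney extension Theorem \ref{Whitneytheorem}) with the prescribed horizontal differential $d_hF \equiv \operatorname{Id}$ on $E \cap U$. So the real content is a purely metric implication: \emph{$\varepsilon$-Reifenberg flatness with respect to the vertical axis, with $\varepsilon(r)\to 0$, implies the Whitney condition}. First I would fix $A,B \in E\cap U$ with $B$ close to $A$, and set $r := d_\infty(A,B)$, so that $B \in E_{A,r}$. The hypothesis gives $d_\infty(B, Z_{A,r}) \le \varepsilon(r)\,r$. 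Using the remark recorded just before Lemma \ref{l: reifenberg}, namely $d_\infty(B,Z_A) = \|\pi(B)-\pi(A)\|$, this says exactly
\[
\|\pi(B)-\pi(A)\| \le \varepsilon(r)\, r = \varepsilon\big(d_\infty(A,B)\big)\, d_\infty(A,B).
\]
Now I need to convert the right-hand side into the form $o\big(|z(A^{-1}B)|^{1/2}\big)$ appearing in \eqref{whitneycond}. Since $d_\infty(A,B) = \max\{\|\pi(B)-\pi(A)\|,\ |z(A^{-1}B)|^{1/2}\}$, there are two cases. If $d_\infty(A,B) = |z(A^{-1}B)|^{1/2}$, we are done immediately: $\|\pi(B)-\pi(A)\| \le \varepsilon(r)\,|z(A^{-1}B)|^{1/2}$ with $\varepsilon(r)\to 0$. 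If instead $d_\infty(A,B) = \|\pi(B)-\pi(A)\|$, then the inequality reads $\|\pi(B)-\pi(A)\| \le \varepsilon(r)\,\|\pi(B)-\pi(A)\|$, which for $r$ small enough (so that $\varepsilon(r) < 1$) forces $\pi(B) = \pi(A)$, hence $\|\pi(B)-\pi(A)\| = 0$ and \eqref{whitneycond} holds trivially. Thus for all $A,B \in E\cap U$ with $d_\infty(A,B)$ below a fixed threshold, $\|\pi(B)-\pi(A)\| = o\big(|z(A^{-1}B)|^{1/2}\big)$, uniformly, since $\varepsilon$ is a fixed modulus not depending on the points.

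With the Whitney estimate in hand, I would apply Remark \ref{Whit} essentially verbatim: define $f := 0$ on $E\cap U$ and $k(A)(x,y,0) := (x,y)$ (i.e.\ $k(A) = \operatorname{Id}\circ\pi$) for every $A \in E\cap U$, and check that the remainder $R(A,B) = f(A) - f(B) - k(A)(B^{-1}A) = -\pi(B^{-1}A)$ satisfies $|R(A,B)| = \|\pi(B)-\pi(A)\| = o(d_\infty(A,B))$ — indeed it is $o\big(|z(A^{-1}B)|^{1/2}\big) \le o(d_\infty(A,B))$ by the previous paragraph — so that $\rho_K(\varepsilon)\to 0$ for every compact $K \subset E\cap U$ (here $E\cap U$ is itself compact). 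Theorem \ref{Whitneytheorem}, applied coordinatewise to the two $\mathbb R$-valued components, then yields $F \in C^1_H(\h,\r^2)$ with $F\corn(E\cap U) = 0$ and $D_h F\corn(E\cap U) = k$, whose horizontal part is $d_hF\corn(E\cap U) = \operatorname{Id}$, as required.

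The only subtlety — and the step I would be most careful about — is the dichotomy above: one must be sure that in the ``$d_\infty(A,B) = \|\pi(B)-\pi(A)\|$'' branch the conclusion $\pi(B)=\pi(A)$ is genuinely available, which needs $\varepsilon(r) < 1$ and hence a possible shrinking of $U$ to a smaller compact neighbourhood on which $r$ stays below the corresponding threshold; this is harmless since the statement only claims the existence of \emph{some} such $F$. One should also note that the uniformity of the little-$o$ in \eqref{whitneycond} is automatic here because $\varepsilon$ is a single modulus of continuity governing \emph{all} pairs in $E\cap U$ simultaneously, which is exactly the hypothesis of Theorem \ref{Whitneytheorem}. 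No growth or regularity beyond continuity is needed on $F$ itself, consistent with the fact that $C^1_H$ is all that is asserted.
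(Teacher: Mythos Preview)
Your proposal is correct and follows essentially the same route as the paper's proof: reduce the flatness hypothesis to the Whitney estimate \eqref{whitneycond} via the identity $d_\infty(B,Z_A)=\|\pi(B)-\pi(A)\|$, then invoke Remark~\ref{Whit}. The paper compresses your two-case dichotomy into a single line (from $\|\pi(A)-\pi(B)\|\le\varepsilon(r)r$ with $\varepsilon(r)<1$ it concludes directly $d_\infty(A,B)=|z(A^{-1}B)|^{1/2}$), but the logic is identical. One small remark: your aside about ``possibly shrinking $U$'' is unnecessary --- the Whitney condition only concerns pairs with $d_\infty(A,B)$ small, so the threshold $\varepsilon(r)<1$ is automatically in force where it matters, without touching $U$.
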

\begin{proof}
Il suffit de vérifier la condition de Whitney \eqref{whitneycond} pour un compact $E\cap U$ (voir la remarque \ref{Whit}). On prend $A,B\in E\cap U$ assez proche de sorte que  $\varepsilon(r)\le 1$ pour $r:=d_\infty(A,B)$. Il est immédiat que $\|\pi(A)-\pi(B)\|=d_\infty(B,Z_{A,r})\le \varepsilon(r)r$, et donc $d_\infty(A,B)=|z(A^{-1}B)|^{\frac{1}{2}}$, d'où la conclusion. 
\end{proof}

\begin{Def*}\emph{Le cône vertical} pointé en $A\in \h$ d'ouverture $\varepsilon\in (0,1)$ et de rayon $r>0$ est l'ensemble $\mathcal{C}_{r,\varepsilon}(A)=\{ B\in \h \mid d_\infty(B, Z_{A})\le \varepsilon d_\infty(A,B) \}\cap B_\infty(A,r)$. 

Le cône vertical se décompose naturellement en $\mathcal{C}_{r,\varepsilon}(A)=\mathcal{C}^+_{r, \varepsilon}(A)\cup \mathcal{C}^-_{r, \varepsilon}(A)$, où $\mathcal{C}^{\pm}_{r, \varepsilon}(A)=\mathcal{C}_{r,\varepsilon}(A)\cap \{B\in \h \mid z(A^{-1}B)\gtreqqless 0\}$.
\end{Def*}

\begin{figure}[h]\centering
\begin{tikzpicture}[scale=2, >=stealth]
\draw (-1,-0.4) -- ( 1,-0.4)--(1,0.4)--(-1,0.4)--(-1,-0.4);
\draw[fill=gray!30] (0,0) parabola (0.8,0.4)--(1,0.4)--(-1,0.4)--(-0.8,0.4) parabola bend (0,0) (0,0);
\draw[fill=gray!30] (0,0) parabola (0.8,-0.4)--(1,-0.4)--(-1,-0.4)--(-0.8,-0.4) parabola bend (0,0) (0,0);
\draw[->] (-1.4,0) -- (1.4,0) node[right] {$x,y$};
\draw[->] (0,-1) -- (0,1) node[above] {$z$};
\end{tikzpicture}
\caption{Cône vertical $\mathcal{C}_{r,\varepsilon}(0)$  (la zone grise)}
\end{figure}
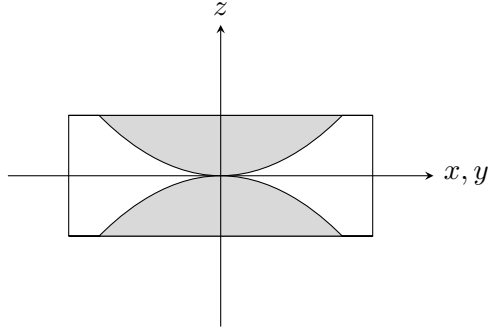

\begin{proposition}\label{diamcone} Soit $O\in \mathcal{C}^{-}_{R, \varepsilon}(A)$ ou de façon équivalente $A\in \mathcal{C}^{+}_{R, \varepsilon}(O)$, $R>0$ quelconque. Alors $\diam_\infty \mathcal{C}^{-}_{R, \varepsilon}(A)\cap  \mathcal{C}^{+}_{R, \varepsilon}(O)\le 2 d_\infty(O,A)(\varepsilon^2+\sqrt{1+\varepsilon^4})$.
\end{proposition}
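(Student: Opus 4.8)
The strategy is to work in the translated and dilated coordinates so that $A$ sits at the origin, to parametrise the two vertical cones by the elementary inequalities defining $\mathcal C^{\pm}_{R,\varepsilon}$, and to bound the Heisenberg distance of two arbitrary points in the intersection by controlling separately the horizontal part $\|\pi(\cdot)\|$ and the vertical part $|z(\cdot)|$. Write $r:=d_\infty(O,A)$. Since $O\in\mathcal C^{-}_{R,\varepsilon}(A)$ we have $d_\infty(O,Z_A)=\|\pi(O)-\pi(A)\|\le\varepsilon\, d_\infty(A,O)=\varepsilon r$ and $z(A^{-1}O)\le 0$; symmetrically, $A\in\mathcal C^{+}_{R,\varepsilon}(O)$ gives the same horizontal bound and $z(O^{-1}A)\ge 0$. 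The first step is therefore to record that any point $B$ lying in $\mathcal C^{-}_{R,\varepsilon}(A)\cap\mathcal C^{+}_{R,\varepsilon}(O)$ satisfies $\|\pi(B)-\pi(A)\|\le\varepsilon\, d_\infty(A,B)$ and $\|\pi(B)-\pi(O)\|\le\varepsilon\, d_\infty(O,B)$, together with the sign conditions $z(A^{-1}B)\ge 0$ and $z(O^{-1}B)\le 0$, i.e.\ $B$ lies vertically ``between'' $A$ and $O$ in the contact sense.

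The second step is to convert these into an a priori bound on $d_\infty(A,B)$ (and on $d_\infty(O,B)$). From $\rho_\infty(A^{-1}B)=\max\{\|\pi(B)-\pi(A)\|,|z(A^{-1}B)|^{1/2}\}$ and the horizontal estimate $\|\pi(B)-\pi(A)\|\le\varepsilon\,d_\infty(A,B)$, with $\varepsilon<1$, we must have $d_\infty(A,B)=|z(A^{-1}B)|^{1/2}$ as soon as the horizontal part does not dominate; so it suffices to estimate $|z(A^{-1}B)|$. Using the group law, $z(A^{-1}B)=z(A^{-1}O)+z(O^{-1}B)+2\det\big(\pi(O)-\pi(A),\pi(B)-\pi(O)\big)$ up to the usual Heisenberg cross term, and because $z(A^{-1}O)$ and $z(O^{-1}B)$ have \emph{opposite} signs (by the contact sign conditions above), the two vertical contributions do not add but rather the total is squeezed: $|z(A^{-1}B)|\le \max\{|z(A^{-1}O)|,|z(O^{-1}B)|\}+2\|\pi(O)-\pi(A)\|\,\|\pi(B)-\pi(O)\|$. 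Bounding $|z(A^{-1}O)|\le r^2$, $\|\pi(O)-\pi(A)\|\le\varepsilon r$, and feeding in $\|\pi(B)-\pi(O)\|\le\varepsilon\,d_\infty(O,B)$, one obtains a self-improving inequality for $d_\infty(O,B)^2$ (resp.\ $d_\infty(A,B)^2$) of the shape $t^2\le r^2+2\varepsilon^2 r\,t$, whose positive root gives $d_\infty(O,B)\le r(\varepsilon^2+\sqrt{1+\varepsilon^4})$, and likewise $d_\infty(A,B)\le r(\varepsilon^2+\sqrt{1+\varepsilon^4})$.

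The third and final step is the triangle inequality on the intersection: for $B,B'\in\mathcal C^{-}_{R,\varepsilon}(A)\cap\mathcal C^{+}_{R,\varepsilon}(O)$, $d_\infty(B,B')\le d_\infty(B,A)+d_\infty(A,B')\le 2r(\varepsilon^2+\sqrt{1+\varepsilon^4})$, which is exactly the claimed bound $\diam_\infty\big(\mathcal C^{-}_{R,\varepsilon}(A)\cap\mathcal C^{+}_{R,\varepsilon}(O)\big)\le 2 d_\infty(O,A)(\varepsilon^2+\sqrt{1+\varepsilon^4})$. I expect the main obstacle to be step two: keeping careful track of the signs in the expansion of $z(A^{-1}B)$ via the group law so that the vertical terms genuinely cancel rather than accumulate (this is where the decomposition $\mathcal C=\mathcal C^+\cup\mathcal C^-$ is essential — without the opposite-sign hypothesis one would only get an additive, not a squeezed, bound), and then solving the resulting quadratic inequality cleanly to produce precisely the constant $\varepsilon^2+\sqrt{1+\varepsilon^4}$. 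The radius $R$ plays no role beyond ensuring the intersection is nonempty and contained in a fixed ball, so it can be carried along trivially.
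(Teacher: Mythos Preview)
Your approach is essentially identical to the paper's: reduce to the quadratic inequality $t^2\le r^2+2\varepsilon^2 r\,t$ for $t=d_\infty(O,B)$ via the cone conditions and the group law, solve it, then conclude by the triangle inequality. Two small fixes: the sign conditions should read $z(A^{-1}B)\le 0$ and $z(O^{-1}B)\ge 0$ (you have them swapped), and in step two you should rearrange your identity to bound $z(O^{-1}B)$ directly (using $z(A^{-1}B)\le 0$ to drop that term) rather than $|z(A^{-1}B)|$, so that the estimate $\|\pi(B)-\pi(O)\|\le\varepsilon\,d_\infty(O,B)$ on the cross term gives a genuinely self-referential inequality in $d_\infty(O,B)$ rather than one coupled with $d_\infty(A,B)$.
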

\begin{proof} Quitte à faire une translation, on peut supposer que $O=0$ et donc $\varepsilon^2 z(A)\ge \|\pi(A)\|^2$. Soit $B\in \mathcal{C}^{-}_{R, \varepsilon}(A)\cap  \mathcal{C}^{+}_{R, \varepsilon}(O)$. Cela veut dire en particulier que 
$ \varepsilon^2 z(B)\ge \|\pi(B)\|^2$ et $\varepsilon^2 \big(z(A)-z(B)+2\det\big(\pi(A),\pi(B)\big)\ge \|\pi(B)-\pi(A)\|^2$. On estime donc
$$
d_\infty(O,B)^2=z(B)\le z(A)+2\det\big(\pi(A),\pi(B)\big)\le z(A)+2\varepsilon^2 \sqrt{z(A)z(B)},
$$
d'où en résolvant une inégalité quadratique, on obtient $d_\infty(O,B)\le z(A)(\varepsilon^2+\sqrt{1+\varepsilon^4})=d_\infty(O,A)(\varepsilon^2+\sqrt{1+\varepsilon^4}).$ L'inégalité triangulaire donne l'énoncé.
\end{proof}
On peut vérifier également que $\diam_\infty \mathcal{C}^{\pm}_{r, \varepsilon}(A)=r\sqrt{1+\varepsilon^4}$.

\begin{Def}\label{d: reifenbegplat}
Un ensemble fermé $E\subset \h$ est dit\textit{ $\varepsilon$-Reifenberg plat} par rapport au sous-groupe vertical $Oz$ dans $U\subset \h$ (à partir de l'échelle $r_0>0$)  si
\begin{equation}\label{parReif}
\dist_H\big(E_{A,r}, Z_{A,r}\big)\le r \varepsilon \text{ pour tout } A\in E\cap U \text{ et tout } 0<r\le r_0.  
\end{equation}
\end{Def}

\begin{theorem}[du paramétrage des ensembles verticaux Reifenberg plats]\label{t: parametrtheorem}
Supposons qu'un ensemble compact $E\subset \h$ est $\varepsilon$-Reifenberg plat par rapport à $Oz$ dans $\bar B_\infty(0,R)$ avec $0\le \varepsilon\le \varepsilon_0$ assez petit. Alors (du point de vue topologique) l'ensemble $E\cap  B_\infty(0,R)$ est localement l'image d'une courbe simple. 
\end{theorem}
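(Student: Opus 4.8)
Pour \'etablir cet \'enonc\'e, le plan est de reconstruire un param\'etrage \`a la Reifenberg au moyen d'un \emph{algorithme dyadique de bissection}, en tirant parti de ce que la direction transverse $Oz$ est de dimension topologique~$1$. Quitte \`a translater, je me place au voisinage de $0\in E$ dans $\bar B_\infty(0,r_0)$, $r_0\le R$, avec $\varepsilon\le\varepsilon_0$ petit. La premi\`ere \'etape, purement g\'eom\'etrique, consiste \`a r\'e\'ecrire \eqref{parReif} sous forme d'une \emph{propri\'et\'e de c\^one} : pour $A\in E$ voisin de $0$ et $B\in E$ avec $d:=d_\infty(A,B)$ assez petit, l'application de \eqref{parReif} \`a l'\'echelle $r=d$ donne $\|\pi(B)-\pi(A)\|=d_\infty(B,Z_{A,d})\le\varepsilon d$, donc $B\in\mathcal{C}_{r_0,\varepsilon}(A)$ et, comme $\varepsilon<1$, $d_\infty(A,B)=|z(A^{-1}B)|^{1/2}$. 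Je d\'efinirais alors $A\prec B$ lorsque $z(A^{-1}B)>0$ : cette relation est antisym\'etrique (car $z(g^{-1})=-z(g)$), totale sur les points distincts et voisins de $E$ (si $z(A^{-1}B)=0$ avec $B\in\mathcal{C}_{r_0,\varepsilon}(A)$, alors $\|\pi(B)-\pi(A)\|\le\varepsilon\|\pi(B)-\pi(A)\|$, d'o\`u $A=B$), transitive pour $\varepsilon_0$ petit via $z(A^{-1}C)=z(A^{-1}B)+z(B^{-1}C)+2\Delta$ avec $|\Delta|\le\|\pi(A^{-1}B)\|\,\|\pi(B^{-1}C)\|$, et elle passe \`a la limite parmi les points distincts (continuit\'e de $z(\cdot)$).

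L'\'etape cl\'e, et le principal obstacle, est un \emph{lemme de bissection} : pour $\varepsilon_0$ assez petit il existe $\theta<1$ tel que, pour tous $A,B\in E$ voisins de $0$ avec $A\prec B$, on trouve $C\in E$ v\'erifiant $A\prec C\prec B$ et $\max\{d_\infty(A,C),d_\infty(C,B)\}\le\theta\,d_\infty(A,B)$. Je le prouverais ainsi : on prend sur $Z_A$ le milieu vertical $M:=A\cdot(0,0,\tfrac12 z(A^{-1}B))$ (qui v\'erifie $d_\infty(A,M)=2^{-1/2}d$ et $M\in Z_{A,d}$), puis on invoque \eqref{parReif} \`a l'\'echelle $d$ pour obtenir $C\in E$ avec $d_\infty(M,C)\le\varepsilon d$ ; comme $\pi(A^{-1}M)=0$, on a \emph{exactement} $z(A^{-1}C)=\tfrac12 d^2+z(M^{-1}C)$ avec $|z(M^{-1}C)|\le\varepsilon^2 d^2$, d'o\`u $A\prec C$ et, par la propri\'et\'e de c\^one, $d_\infty(A,C)=|z(A^{-1}C)|^{1/2}\le(\tfrac12+\varepsilon^2)^{1/2}d$ ; de m\^eme $z(C^{-1}B)=z(C^{-1}A)+z(A^{-1}B)+2\Delta'$ avec $|\Delta'|\le\|\pi(A^{-1}C)\|\,\|\pi(A^{-1}B)\|\le\varepsilon^2 d^2$ donne $C\prec B$ et $d_\infty(C,B)\le(\tfrac12+3\varepsilon^2)^{1/2}d$, et l'on pose $\theta:=(\tfrac12+3\varepsilon_0^2)^{1/2}<1$. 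La vraie difficult\'e ici est de dominer les termes d'erreur en $\varepsilon^2$ issus de la non-commutativit\'e de mani\`ere \`a garantir \emph{simultan\'ement} la conservation de l'ordre et la contraction stricte des distances. Le m\^eme argument appliqu\'e autour de $0$ fournira deux ``bouts'' $P_-\prec 0\prec P_+$ dans $E$, voisins de $0$, avec $P_-\prec P_+$.

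On construira ensuite par dichotomie l'application $g$ sur les dyadiques de $[0,1]$ : $g(0)=P_-$, $g(1)=P_+$, et $g\big(\tfrac{2k+1}{2^{n+1}}\big)$ est le point $C$ donn\'e par le lemme de bissection appliqu\'e \`a $g(k/2^n)$ et $g((k{+}1)/2^n)$. Une r\'ecurrence --- au prix d'un contr\^ole \'el\'ementaire des rayons (la proposition~\ref{diamcone} assurant que tous les sommets demeurent dans un petit voisinage de $0$) --- montre que deux sommets voisins du niveau~$n$ sont \`a distance $\le\theta^{\,n}d_\infty(P_-,P_+)$ et que la liste des sommets est $\prec$-croissante. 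Donc $g$ est uniform\'ement continue, et m\^eme $\log_2(1/\theta)$-h\"old\'erienne (exposant proche de $\tfrac12$, conform\'ement \`a l'homog\'en\'eit\'e parabolique de $\h$), sur les dyadiques ; elle se prolonge en $g:[0,1]\to E$ continue. La monotonie stricte ($g(s)\prec g(t)$ pour $s<t$, obtenue en intercalant des dyadiques puis en passant \`a la limite) donne l'injectivit\'e ; une injection continue d'un compact \'etant un hom\'eomorphisme sur son image, $g([0,1])$ est un arc simple inclus dans $E$.

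Il reste \`a voir que $g([0,1])$ contient un voisinage de $0$ dans $E$. Pour $Q\in E$ voisin de $0$, on v\'erifie d'abord (loi de groupe, $P_\pm$ \'etant \`a distance de $0$ grande devant $d_\infty(0,Q)$) que $P_-\prec Q\prec P_+$ ; par totalit\'e de $\prec$, $Q$ est encadr\'e \`a chaque niveau par deux sommets cons\'ecutifs $g(k_n/2^n)\preceq Q\preceq g((k_n{+}1)/2^n)$. La proposition~\ref{diamcone}, appliqu\'ee \`a la paire $\big(g(k_n/2^n),g((k_n{+}1)/2^n)\big)$, majore le diam\`etre de l'ensemble $\mathcal{C}^{+}_{r_0,\varepsilon}(g(k_n/2^n))\cap\mathcal{C}^{-}_{r_0,\varepsilon}(g((k_n{+}1)/2^n))$ --- lequel contient \`a la fois $g(k_n/2^n)$ et $Q$ --- par $C\,\theta^{\,n}d_\infty(P_-,P_+)\to 0$ ; les intervalles dyadiques $[k_n/2^n,(k_n{+}1)/2^n]$ \'etant imbriqu\'es, $k_n/2^n$ converge vers un $t^\ast$ et $g(t^\ast)=\lim g(k_n/2^n)=Q$. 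Ainsi $E\cap B_\infty(0,\rho)\subset g([0,1])$ pour $\rho$ assez petit, et un argument de sandwich analogue (``intercal\'e dans l'ordre $\Rightarrow$ proche des bornes'') montre que, au voisinage de $0$, $E$ co\"incide avec un sous-arc ouvert de $g([0,1])$ --- ce qui est bien la conclusion cherch\'ee.
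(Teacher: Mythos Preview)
Your proposal is correct and follows essentially the same strategy as the paper: a dyadic bisection algorithm exploiting that $\Ker D_hF = Oz$ has topological dimension~$1$, with the Reifenberg condition providing a ``midpoint'' in $E$ at each step and a contraction factor $\theta<1$ (your $(\tfrac12+3\varepsilon_0^2)^{1/2}$ plays the role of the paper's $c(\varepsilon)=2^{-1/2}(\varepsilon^2+\sqrt{1+\varepsilon^2+\varepsilon^4})$), together with Proposition~\ref{diamcone} to show the construction exhausts $E$ locally. Your framing via the linear order $A\prec B \Leftrightarrow z(A^{-1}B)>0$ --- using monotonicity for injectivity and order-sandwiching for surjectivity --- is precisely the alternative viewpoint the paper records in the remark immediately following its proof; the paper's own write-up instead tracks piecewise-linear approximating curves $\Gamma_n$ and a cone decomposition $E\cap U=\bigcup_k U_{n,k}$, but the underlying mechanism is the same.
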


\begin{remark*}
On ne cherche pas ici la valeur optimale de $\varepsilon_0$. Dans cette démonstration, on peut prendre tout $\varepsilon_0 < (1+2\sqrt{2})^{-\frac{1}{2}}\approx 0.511\ldots$ (on aura bien $\varepsilon_0^2+\sqrt{1+\varepsilon_0^2+\varepsilon_0^4}< \sqrt{2}$).
\end{remark*}

\begin{proof} Prenons un point $O\in E\cap B_\infty(0,R)$. On pose $r=\frac{1}{4}\min\{ r_0,\, d_\infty(O,\partial B_\infty(0,R))\}$.

En faisant une translation, on se ramène au cas où $O=0$ est l'élément neutre. 
D'après la condition \eqref{parReif}, il existe un point $A\in \mathcal{C}^+_{r,\varepsilon}(O)\cap E$ tel que $d_\infty\big(A, (0, 0, r^2)\big)\le \varepsilon r$. On définit $\Gamma_0(t)=tA$, $t\in [0,1]$,  le segment de droite reliant $O$ et $A$.

On note $r_1=\frac{d_\infty(O,A)}{\sqrt{2}}$. On trouve maintenant un point $B\in E$ ("au milieu" entre $O$ et $A$) tel que 
\begin{equation*}
\left\{\begin{aligned} & B\in \mathcal{C}^+_{r_1, \varepsilon}(O)\cap E, \\
& d_\infty\big(B, (0, 0, r_1^2)\big)\le \varepsilon r_1 \end{aligned}\right. \iff
\left\{\begin{aligned} &  \|\pi(B)\|\le \varepsilon r_1,\\ 
&  r_1^2\ge z(B) \ge (1-\varepsilon^2)r_1^2. \end{aligned}\right.
\end{equation*}
On a aussi $\|\pi(B)-\pi(A)\|\le \varepsilon d_\infty(A,B)$, d'où on déduit que 
\begin{equation*}
d_\infty(A,B)^2=|z(A^{-1}B)|\le |z(B)-z(A)|+2\|\pi(B)-\pi(A)\|\|\pi(B)\|\le r_1^2(1+\varepsilon^2)+2\varepsilon^2 r_1d_\infty(A,B),
\end{equation*}
ce qui donne l'estimée $d_\infty(A,B)\le r_1(\varepsilon^2+\sqrt{1+\varepsilon^2+\varepsilon^4})$. Par conséquent, $$\max\{d_\infty(O,B),d_\infty(B,A)\} \le c(\varepsilon) d_\infty(O,A),$$ où $c(\varepsilon):=2^{-\frac{1}{2}}(\varepsilon^2+\sqrt{1+\varepsilon^2+\varepsilon^4})<1$ pour un choix de $\varepsilon\le  \varepsilon_0$.

 On définit alors
\begin{equation*}
\Gamma_1(t)=\begin{cases} 2tB, & t\in [0, \frac{1}{2}],  \\
\hfill (2t-1)(A-B)+B, & t\in [\frac{1}{2},1], \end{cases} 
\end{equation*}
où il s'agit des opérations linéaires usuelles dans $\r^3$. On obtient facilement 
$$ \max\limits_{t\in [0,1]} d_\infty\big(\Gamma_0(t),\Gamma_1(t)\big)\le C\varepsilon d_\infty(O,A),$$
où $C<\infty$ est une contante absolue qui ne dépend que du choix de la norme homogène $\rho_\infty$.

Maintenant nous construisons récursivement une suite de courbes $\Gamma_n: [0,1]\to \h$, $n\ge 0$, en itérant de façon dyadique la procédure de construction de $\Gamma_1$  à partir de $\Gamma_0$. En particulier, la courbe $\Gamma_n$ est linéaire (au sens de $\r^3$) sur tout intervalle $[\frac{k}{2^n},\frac{k+1}{2^n}]$ pour $k=0,\ldots, 2^n-1$; en plus $\Gamma_n\big(\frac{k}{2^{n}}\big)=\Gamma_{n+1}\big(\frac{k}{2^{n}}\big)\in E$ pour $k=0,\ldots 2^n$. On démontre par récurrence que 
\begin{multline*}
\max_{k=0,\ldots, 2^n-1} d_\infty\left(\Gamma_n\big(\frac{k}{2^{n}}\big),\Gamma_n\big(\frac{k+1}{2^{n}}\big)\right)\le \\ 
c(\varepsilon) \max_{k=0,\ldots, 2^{n-1}-1} d_\infty\left(\Gamma_{n-1}\big(\frac{k}{2^{n-1}}\big),\Gamma_{n-1}\big(\frac{k+1}{2^{n-1}}\big)\right)\le c(\varepsilon)^n d_\infty(O,A).
\end{multline*}  
On obtient alors  
$$
\max\limits_{t\in [0,1]} d_\infty\big(\Gamma_n(t),\Gamma_{n+1}(t)\big)\le C\varepsilon c(\varepsilon)^n d_\infty(O,A).
$$
Par conséquent, la suite $\Gamma_n$ converge uniformément sur $[0,1]$ vers sa limite notée $\Gamma:[0,1]\to \h$.

 Montrons maintenant que $\Gamma([0,1])=E\cap U$, où $U=\mathcal{C}^+_{r_0 ,\varepsilon}(O)\cap \mathcal{C}^-_{r_0,\varepsilon}(A)$;  $O$ et $A$ sont les extrémités de $\Gamma$. Premièrement, on voit que toute courbe $\Gamma_n\subset U$ car, par exemple, $U$ est convexe et pour construire des courbes $\Gamma_n$ on ne prend des points que dans $U$. Remarquons qu'à chaque fois qu'on prend un point $B\in E\cap U$ on obtient (grâce comme toujours à \eqref{parReif}) une décomposition $E\cap U =E\cap (U_1\cup U_2)$, où $U_1= \mathcal{C}^+_{r_0 ,\varepsilon}(O)\cap \mathcal{C}^-_{r_0 ,\varepsilon}(B)$ et $U_2=\mathcal{C}^+_{r_0 ,\varepsilon}(B)\cap \mathcal{C}^-_{r_0 ,\varepsilon}(A)$. On montre donc par récurrence que pour tout $n\ge 0$
$$ E\cap U = E\cap \bigcup\limits_{k=0}^{2^n-1} U_{n,k}, \text { où } U_{n,k}=\mathcal{C}^+_{r_0 ,\varepsilon}\bigg(\Gamma_n\big(\frac{k}{2^n}\big)\bigg)\cap \mathcal{C}^-_{r_0 ,\varepsilon}\bigg(\Gamma_n\big(\frac{k+1}{2^n}\big)\bigg).$$
Or, d'après la proposition  \ref{diamcone}, $\diam_\infty U_{n,k} \le \tilde{C} c(\varepsilon)^n d_\infty(O,A)$, $C<\infty$.  Comme $E$ est compact, on en déduit que $\Gamma_n([0,1]) \xrightarrow[n\to \infty]{} E\cap U$ au sens de la convergence de Hausdorff.
Notons ici que la courbe $\Gamma$ est invective.  En effet, si $0\le t_1<t_2\le 1$, alors pour tout $n$ on peut trouver $k_1$ et $k_2$ tels que $\Gamma(t_1)\in U_{n,k_1}$ et $\Gamma(t_2)\in U_{n,k_2}$, or, $U_{n,k_1}\cap U_{n,k_2}=\varnothing$ dès que $|k_1-k_2|\ge 2$ ce qui est toujours vrai pour $n$ assez grand.
 Ainsi, $E\cap U=\Gamma([0,1])$ est l'image d'une courbe simple. 

En appliquant le même raisonnement que ci-dessus on montre qu'il existe un point $\tilde A\in E\cap \mathcal{C}^-_{r, \varepsilon}(O)$, $-r^2\le z(\tilde A) \le -(1-\varepsilon^2)r^2$, tel que $E\cap  \mathcal{C}^-_{r_0, \varepsilon}(O)\cap  \mathcal{C}^+_{r_0, \varepsilon}(\tilde A)$ est aussi un arc simple. Vu que $E\cap \bar B_\infty(O,r)=E\cap \mathcal{C}_{r,\varepsilon}(O)$ pour $0<r\le r_0$, l'énoncé s'obtient facilement.
\end{proof}

\begin{remark}
On peut également voir la démonstration du théorème  \ref{t: parametrtheorem} sous un angle légèrement différent. On déduit d'abord que l'ensemble compact $E$ est localement connexe (car les points dyadiques choisis à l'étape $n$ sur $E$ forment une $\epsilon_n$-chaîne avec $\epsilon_n\to 0$). Puis, on observe que pour tout couple de points $A,B\in E$ assez proches, l'ordre linéaire $A\le B \Longleftrightarrow z(A^{-1}B)\le 0$ est bien défini. Cet ordre linéaire étant compatible avec la topologie, l'ensemble connexe linéairement ordonné $E\cap U$ est homéomorphe à un intervalle. Voir une réalisation de la même idée dans la sous-section \ref{s: considtop}.
\end{remark}

\begin{remark} On peut vérifier que le paramétrage obtenu $\{t\to \Gamma(t)\}$ de l'ensemble $E\cap U$ est bi-Hölder continu, \ie $ c|t_2-t_1|^{\alpha(\varepsilon)}\le d_\infty(\Gamma(t_1),\Gamma(t_2)) \le \tilde c |t_2-t_1|^{\beta(\varepsilon)}$, $0<c\le \tilde c <\infty$, avec $\beta(\varepsilon)\le\frac{1}{2}\le \alpha(\varepsilon)$. Ici nous ne le démontrons pas (et ne l'utiliserons pas par la suite) premièrement parce que les exposants $\alpha$ et $\beta$ obtenus par l'algorithme ci-dessus sont loin des optimaux, et surtout parce qu'il est toujours facile de reparamétrer une courbe $E\cap U$ pour retrouver des meilleurs exposants (à l'image de ce qu'on fait dans la section \ref{s: proprietemetrique}). 
\end{remark}

\begin{corollaire}\label{reifenbergcurve}
Soit $F\in C_H^1(\h,\r^2)$, $F(0)=0$, et la différentielle $D_hf(0)$ est surjective. Il existe alors un voisinage $U$ de $0\in \h$ tel que $U\cap \f$ est l'image d'une courbe simple.
\end{corollaire}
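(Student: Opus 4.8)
The plan is to obtain this corollary as a direct consequence of the parametrization theorem~\ref{t: parametrtheorem}, feeding it the Reifenberg flatness provided by Lemma~\ref{l: reifenberg}; no new idea is required, only a careful choice of scales and of the final neighborhood.

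First I would apply Lemma~\ref{l: reifenberg}: it yields a neighborhood $\tilde U$ of $0$ and a function $\varepsilon(r)\to 0$ (as $r\to 0^+$) such that $E:=\f$ satisfies the flatness estimate~\eqref{platequation} at every $A\in E\cap\tilde U$ and every $0<r\le r_0$. Since $\varepsilon(r)\to 0$, I fix $r_0>0$ with $\varepsilon(r)\le\varepsilon_0$ for all $0<r\le r_0$, where $\varepsilon_0<(1+2\sqrt{2})^{-1/2}$ is the threshold appearing in the remark following Theorem~\ref{t: parametrtheorem}, and small enough that $\bar B_\infty(0,r_0)\subset\tilde U$. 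Because Theorem~\ref{t: parametrtheorem} is stated for \emph{compact} sets whereas the full level set $\f$ need not be compact, I would then pass to the compact truncation $E':=\f\cap\bar B_\infty(0,r_0)$. The elementary but necessary observation is that for $A\in E'\cap\bar B_\infty(0,r_0/2)$ and $0<r\le r_0/2$ one has $\bar B_\infty(A,r)\subset\bar B_\infty(0,r_0)$ — here one uses that $d_\infty$ is a genuine metric, so the triangle inequality holds — whence $E'_{A,r}=E_{A,r}$ while $Z_{A,r}$ is unchanged; consequently~\eqref{platequation} for $E'$ follows from the one for $E$, and $E'$ is $\varepsilon_0$-Reifenberg plat with respect to $Oz$ in $\bar B_\infty(0,r_0/2)$ for all scales $0<r\le r_0/2$, in the sense of Definition~\ref{d: reifenbegplat}.

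Now Theorem~\ref{t: parametrtheorem} applies to $E'$ inside $B_\infty(0,r_0/2)$, and, reading its proof at the base point $O=0\in E'$, there is a radius $0<r\le r_0/2$ for which $E'\cap\bar B_\infty(0,r)=E'\cap\mathcal{C}_{r,\varepsilon_0}(0)$ coincides with the image $\Gamma([0,1])$ of a single simple curve. Setting $U:=\bar B_\infty(0,r)$ and noting $U\subset\bar B_\infty(0,r_0)$, so that $U\cap\f=U\cap E'=E'\cap\bar B_\infty(0,r)$, we conclude that $U\cap\f$ is the image of a simple curve, which is the assertion. The only genuine work is the bookkeeping just described — converting the scale-dependent $\varepsilon(r)$ of Lemma~\ref{l: reifenberg} together with the merely \emph{local} conclusion of Theorem~\ref{t: parametrtheorem} into one fixed neighborhood of the neutral element, and accommodating the non-compactness of $\f$ by truncation; beyond this there is no obstacle, the two cited results doing all the substantial work.
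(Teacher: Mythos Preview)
Your proposal is correct and follows exactly the approach the paper intends: the corollary is stated without proof immediately after Theorem~\ref{t: parametrtheorem}, as a direct consequence of combining that parametrization theorem with the Reifenberg flatness provided by Lemma~\ref{l: reifenberg}. Your additional bookkeeping (truncating $\f$ to a compact set and adjusting scales so that $\varepsilon(r)\le\varepsilon_0$) merely makes explicit the routine localization the paper leaves implicit.
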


Ainsi, localement l'ensemble de niveau $\f$ est un arc simple tangent (par exemple, au sens de dilatations homogènes) à l'axe vertical, ce qui justifie 
\begin{Def}\label{d: courbevertic} On appellera \emph{courbe verticale} l'ensemble $U\cap \f$ dans le corollaire \ref{reifenbergcurve}, autrement dit, une partie connexe de $\f$ localisée au voisinage de $0$. 
\end{Def}

\subsection{Considérations topologiques}\label{s: considtop}
Ici nous démontrons le corollaire \ref{reifenbergcurve} par une autre méthode qui présente un intérêt in\-dé\-pen\-dant. L'argument topologique que nous utiliserons ci-dessous est proche de celui dans \cite{funnel}.

\subsubsection{Sélection du flot continu en dimension $2$}
\paragraph{Problème de Cauchy.}
On fixe une fonction scalaire continue  $\psi$ définie sur un ouvert $\Omega \supset [-r,r]^2$ de $\r^2$ telle que $\max | \psi(y,z)| < 1$ pour $(y,z)\in [-r,r]^2$. (La valeur $1$ n'a aucune importance pour les résultats qui suivent; il suffirait de redimensionner les carrés en question).
 
On note  $\delta:=\mfrac{r}{2}$ et
on considère l'ensemble de fonctions suivant :
\begin{equation*}
\Z:=\Bigl\{z \in C^1\big([-\delta,\delta]; \r\big) \mid  \forall\, y\in[-\delta ,\delta] : z'(y)=\psi(y,z(y)) \text{ et }   z(0)\in [- \delta, \delta] \Bigr\}.
\end{equation*}

\begin{remark}
D'après le théorème de Peano, pour tout $z_0 \in [-\delta,\delta]$ il existe un élément $z\in \Z$ tel que $z(0)=z_0$.
De même, pour tout couple $(y_0,z_0)\in I_0:=[-\frac{r}{8}, \frac{r}{8}]^2$ il existe un élément de $z\in\Z$ tel que $z(y_0)=z_0$.
\end{remark}

\subparagraph{Topologie uniforme.}

On munit $\Z$ de la distance uniforme sur $[-\delta,\delta]$:
$$ d_u(z_1,z_2)=\max\limits_{y\in[-\delta,\delta] }|z_1(y)-z_2(y) |.$$ 

Une conséquence directe du théorème généralisé de Kneser \cite{generalKneser} est 
\begin{proposition*}
L'espace $(\Z, d_u)$ est compact et connexe.
\end{proposition*}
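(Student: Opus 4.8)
La strat\'egie est de traiter s\'epar\'ement la compacit\'e, qui est \'el\'ementaire, et la connexit\'e, qui est une forme du th\'eor\`eme de Kneser. Pour la \emph{compacit\'e}, je v\'erifierais d'abord que toute $z\in\Z$ reste dans le carr\'e o\`u $\psi$ est contr\^ol\'ee : comme $\max_{[-r,r]^2}|\psi|<1$, $z(0)\in[-\delta,\delta]$ et $\delta=r/2$, un argument de continuation donne $|z(y)|\le|z(0)|+|y|\le r$ pour tout $y\in[-\delta,\delta]$, donc $(y,z(y))$ reste dans le compact $Q:=[-\delta,\delta]\times[-r,r]$ et $\|z'\|_\infty\le M:=\max_{[-r,r]^2}|\psi|<1$. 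Ainsi $\Z$ est uniform\'ement born\'ee et $M$-\'equi-lipschitzienne, donc relativement compacte dans $\bigl(C^0([-\delta,\delta]),d_u\bigr)$ par le th\'eor\`eme d'Arzel\`a--Ascoli. Pour la fermeture, je passerais \`a la limite dans la forme int\'egrale $z_n(y)=z_n(0)+\int_0^y\psi(s,z_n(s))\,ds$ : si $z_n\to z$ uniform\'ement, alors $(s,z_n(s))\to(s,z(s))$ uniform\'ement dans $Q$ et $\psi$ \'etant uniform\'ement continue sur $Q$, on obtient $z(y)=z(0)+\int_0^y\psi(s,z(s))\,ds$, c.-\`a-d.\ $z\in C^1$, $z'=\psi(\cdot,z)$, et $z(0)=\lim_n z_n(0)\in[-\delta,\delta]$, donc $z\in\Z$. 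Le m\^eme argument de continuit\'e uniforme montre que la topologie $d_u$ co\"incide avec la topologie $C^1$ sur $\Z$. Par cons\'equent $(\Z,d_u)$ est compact.

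Pour la \emph{connexit\'e}, l'id\'ee est de r\'ealiser $\Z$ comme une limite de Hausdorff d'ensembles connexes. Par r\'egularisation, je choisirais $\psi_k\in C^1$ avec $\psi_k\to\psi$ uniform\'ement sur $[-r,r]^2$ et $\max_{[-r,r]^2}|\psi_k|<1$. Chaque $\psi_k$ \'etant localement lipschitzienne, le probl\`eme de Cauchy $z'=\psi_k(y,z)$, $z(0)=z_0$, admet sur $[-\delta,\delta]$ une unique solution $\zeta_k(\cdot\,;z_0)$ (la m\^eme estim\'ee a priori la maintient dans $Q$), et $z_0\mapsto\zeta_k(\cdot\,;z_0)$ est continue de $[-\delta,\delta]$ vers $(C^0([-\delta,\delta]),d_u)$ par d\'ependance continue aux conditions initiales. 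Donc $\Z_k:=\{\zeta_k(\cdot\,;z_0)\mid z_0\in[-\delta,\delta]\}$ est un compact connexe de $C^0$, et la m\^eme borne d'Arzel\`a--Ascoli place tous les $\Z_k$ ainsi que $\Z$ dans un compact fixe $\mathcal K\subset C^0$. Je montrerais ensuite que $\dist_H(\Z_k,\Z)\to0$ : pour la moiti\'e facile, toute suite $w_k\in\Z_k$ qui sous-converge dans $\mathcal K$ a sa limite dans $\Z$ (passage \`a la limite dans $w_k(y)=w_k(0)+\int_0^y\psi_k(s,w_k(s))\,ds$, gr\^ace \`a $\psi_k\to\psi$ uniform\'ement), d'o\`u $\sup_{w\in\Z_k}\dist(w,\Z)\to0$ par compacit\'e ; pour l'autre moiti\'e, il faut que chaque $z\in\Z$ soit approch\'ee par des \'el\'ements de $\Z_k$. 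Une fois la convergence acquise, la conclusion r\'esulte du fait classique qu'une limite de Hausdorff de parties compactes connexes d'un espace m\'etrique compact est compacte et connexe.

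\emph{L'obstacle principal} est la semi-continuit\'e inf\'erieure $\Z\subset\liminf_k\Z_k$ : il s'agit d'exclure que les flots r\'egularis\'es $\Z_k$ "manquent" une solution de $\psi$, ce qui est pr\'ecis\'ement le ph\'enom\`ene de Kneser. On peut le g\'erer par comparaison unilat\'erale : en prenant $\psi_k\downarrow\psi$ (resp.\ $\psi_k\uparrow\psi$), la solution $\zeta_k(\cdot\,;z_0)$ domine (resp.\ est domin\'ee par) toute solution de $\psi$ issue de $z_0$ pour $y\ge0$, ce qui encadre un $z\in\Z$ donn\'e entre deux suites d'\'el\'ements de $\Z_k$ ; combin\'e \`a la connexit\'e de chaque $\Z_k$ et \`a un argument de valeurs interm\'ediaires sur l'application continue monotone $z_0\mapsto\zeta_k(\delta;z_0)$, on en extrait un \'el\'ement de $\Z_k$ \`a distance $o(1)$ de $z$. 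Alternativement --- et c'est ce que j'\'ecrirais en pratique --- ceci est exactement le contenu du th\'eor\`eme g\'en\'eralis\'e de Kneser \cite{generalKneser}, qu'il suffit d'invoquer : l'ensemble des solutions d'un probl\`eme de Cauchy scalaire continu, la donn\'ee initiale d\'ecrivant un intervalle, est un continuum ; la compacit\'e ayant \'et\'e v\'erifi\'ee ci-dessus, cela ach\`eve l'argument.
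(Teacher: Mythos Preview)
Ton raisonnement est correct et aboutit \`a la m\^eme conclusion que l'article : celui-ci ne donne en fait aucune d\'emonstration et se contente d'affirmer que la proposition est une \emph{cons\'equence directe du th\'eor\`eme g\'en\'eralis\'e de Kneser} \cite{generalKneser}. Ta preuve de la compacit\'e par Arzel\`a--Ascoli et passage \`a la limite dans la forme int\'egrale est standard et compl\`ete ; l'article ne la d\'etaille pas. Pour la connexit\'e, ton approche par r\'egularisation et limite de Hausdorff est naturelle, mais comme tu l'identifies toi-m\^eme, le point dur --- la semi-continuit\'e inf\'erieure $\Z\subset\liminf_k\Z_k$ --- est pr\'ecis\'ement le contenu du th\'eor\`eme de Kneser (ton esquisse par comparaison monotone et valeurs interm\'ediaires reste un peu vague, notamment parce que les encadrements par $\psi_k\downarrow\psi$ et $\psi_k\uparrow\psi$ vivent dans des $\Z_k$ diff\'erents et que le sens des in\'egalit\'es s'inverse pour $y<0$). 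Ton choix final d'invoquer directement \cite{generalKneser} est donc le bon, et c'est exactement ce que fait l'article.
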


\subparagraph{Structure d'EPO.}
On munit naturellement $\Z$ d'une structure d'ensemble partiellement ordonné (EPO) en disant :  
$z_1\le z_2$  si et seulement si  $z_1(t)\le z_2(t)\ \forall\, t\in [-\delta,\delta]$. 

\begin{remark}
L'EPO  $(\Z,\le)$ est un treillis. Si $z_1,z_2\in \Z$ alors il est facile de voir que les fonctions
 $$(z_1\lor z_2)(t):=\max\{z_1(t),z_2(t)\}   \text{ et } (z_1\land z_2)(t):=\min\{z_1(t),z_2(t)\}$$  appartiennent également à $\Z$.
\end{remark}
\begin{remark*}
La compacité entraîne que toute suite généralisée monotone (au sens large) d'éléments de $\Z$ converge uniformément vers un élément de $\Z$. 
\end{remark*}
\begin{remark*}
l'EPO  $(\Z,\le)$ est un treillis complet: quel que soit le sous-ensemble non-vide $\Z'\subset \Z$ il existe $\sup \Z'$ et $\inf \Z'$, de plus ils sont dans la fermeture $\overline{\Z'}^{d_u}$. En effet, en prenant un ensemble $\{z_n\}$ au plus dénombrable partout dense dans $\Z'$ on vérifie que
$$ \sup \Z'=\lim\limits_{n\to \infty} (z_1\lor \ldots \lor z_n), \quad \inf \Z'=\lim\limits_{n\to \infty} (z_1\land \ldots \land z_n).$$
\end{remark*}

\paragraph{Flot sans pénétration.}
\begin{Def}\label{d: flotpent}
En vertu du théorème de Hausdorff, on peut choisir un élément maximal $\a$ (par rapport à l'inclusion) parmi les sous-ensembles de $\Z$ linéairement ordonnés. On appellera une telle famille de fonctions \emph{"flot sans pénétration local"}. 
\end{Def}

\begin{lemma}[Continuité du flot sans pénétration]\label{flotcontinu}
L'espace $(\a,d_u)$ est compact non-vide sans points isolés.  En outre, tout point $\a\ni z= \sup \{ \tilde z \in \a\mid \tilde z < z\}= \inf \{ \tilde z \in \a\mid \tilde z > z\}$ (avec la convention $\sup \varnothing=\inf \a$ et $\inf \varnothing=\sup \a$), et, par conséquent,  $(\a,d_u)$ est connexe. 
\end{lemma}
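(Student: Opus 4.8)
The plan is to exploit systematically the lattice structure of $(\Z,\le)$ and the compactness of $\Z$ (both recalled in the preceding remarks), transferring these properties to the maximal chain $\a$. First I would establish that $\a$ is closed in $(\Z,d_u)$, hence compact: if $z_n\in\a$ converges uniformly to $z\in\Z$, then $z$ is comparable to every element of $\a$ (comparability passes to uniform limits since the inequalities $z_1(t)\le z_2(t)$ are closed conditions), so by maximality of $\a$ we must have $z\in\a$. Non-emptiness is immediate from the Peano remark ($\Z\neq\varnothing$), and maximality applied to the singleton chain $\{z_0\}$ shows $\a\neq\varnothing$.

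Next I would prove the self-density formula $z=\sup\{\tilde z\in\a\mid \tilde z<z\}=\inf\{\tilde z\in\a\mid\tilde z>z\}$, which simultaneously yields that $\a$ has no isolated points. Fix $z\in\a$ and set $z_-:=\sup\{\tilde z\in\a\mid\tilde z<z\}$; since $\a$ is a chain inside the complete lattice $\Z$, this supremum exists and lies in $\overline{\a}^{d_u}=\a$ by the previous step. Clearly $z_-\le z$. Suppose $z_-<z$ strictly, i.e. $z_-(y_0)<z(y_0)$ for some $y_0\in[-\delta,\delta]$. I would then produce an element $w\in\Z$ with $z_-<w<z$ that is still comparable to every element of $\a$, contradicting maximality (the chain $\a\cup\{w\}$ would strictly contain $\a$): take $w$ to be a solution of the Cauchy problem $w'=\psi(\cdot,w)$ through a point $(y_0,c)$ with $z_-(y_0)<c<z(y_0)$, using the lattice operations $w\lor z_-$ and $w\land z$ (which stay in $\Z$ by the treillis remark) to clamp $w$ strictly between $z_-$ and $z$ on all of $[-\delta,\delta]$; comparability of this clamped function with an arbitrary $\tilde z\in\a$ is checked by splitting into the cases $\tilde z\ge z$, $\tilde z\le z_-$, and $z_-<\tilde z<z$, the last case being vacuous by definition of $z_-$ as the supremum of the elements below $z$. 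The symmetric argument handles $\inf\{\tilde z>z\}$. The absence of isolated points follows at once: either $z_-<z$ is impossible so $z=\sup\{\tilde z<z\}$ with the sup approached by a sequence of distinct elements, or $\{\tilde z\in\a\mid\tilde z<z\}=\varnothing$, in which case the $\inf$ formula gives $z$ as a limit of elements strictly above it (and at least one such element exists, again by a Cauchy-problem construction starting just above $z$, unless $z=\sup\a$, handled by the other side).

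Finally, connectedness of $(\a,d_u)$ follows from the standard fact that a compact linearly ordered set (with the order topology) is connected if and only if it has no "jumps", i.e. no pair $z_1<z_2$ with nothing strictly in between, and no isolated endpoints — precisely what the self-density formula provides. I would verify that the uniform topology $d_u$ on $\a$ coincides with the order topology: monotone uniformly-bounded sequences in $\Z$ converge uniformly (the remark on monotone generalized sequences), so order-intervals are closed and the two topologies agree on the compact set $\a$. Then a Dedekind-cut argument shows any clopen proper nonempty subset would force a jump, contradicting self-density. The main obstacle I anticipate is the strict-betweenness construction in the density step: one must genuinely exhibit a new solution curve lying strictly between $z_-$ and $z$ \emph{everywhere} on $[-\delta,\delta]$ while remaining comparable to the entire chain — the clamping via $\lor$ and $\land$ is what makes this work, but one has to be careful that after clamping the function is still a solution of the ODE (it is, precisely because at any point where the clamp is active the function locally agrees with $z_-$ or $z$, which are solutions), and that strictness is preserved on a neighborhood of $y_0$ by continuity before the lattice operations can only improve separation elsewhere.
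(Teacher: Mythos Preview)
Your proposal is correct and follows essentially the same route as the paper: closedness of $\a$ in the compact $\Z$, then the ``no-gap'' argument by clamping a Peano solution through an intermediate value via the lattice operations $\lor,\land$ to manufacture an element strictly between $z_-$ and $z$ comparable to all of $\a$, contradicting maximality. Your closedness argument (comparability is a closed condition, so limits of chain elements remain comparable to the whole chain) is a direct phrasing of what the paper does by contradiction, and you make explicit the comparability check for the clamped solution and the order-topology identification for connectedness, which the paper leaves implicit; the only point to tidy up is ruling out $\a$ being a singleton (your ``Cauchy-problem construction starting just above $z$'' combined with $\lor$ handles this, but it deserves one sentence rather than a parenthetical).
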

\begin{proof}
Comme $\Z$ est non-vide, $\a$  l'est aussi.

Montrons que $(\a,d_u)$ est fermé. Par l'absurde, soit $\a\ni z_n\to z\in \Z\setminus \a$ lorsque $n\to \infty$. Vu la maximalité de $\a$, il existe un élément $z_0\in \a$ qui est non-comparable avec $z$, \ie qu'ils existent $t_+,t_-\in  [-\delta,\delta]$ tels que $z(t_+)>z_0(t_+)$ et $z(t_-)<z_0(t_-)$. Ainsi,
soit $z_n\le z_0$ et, donc, $d_u(z_n,z)\ge z(t_+)-z_0(t_+)$, soit  $z_n\ge z_0$ et, donc, $d_u(z_n,z)\ge z_0(t_-)-z(t_-)$, d'où la contradiction au fait que $z_n\to z$.

Supposons, par l'absurde, qu'il existe une fonction $\a\ni  z_+>\inf \a$ telle que  $z_+ >z_-:=\sup\{ \tilde z \mid \tilde z <z_+\}\in \a$. On peut donc trouver  $y_0\in [-\delta, \delta]$ et  $z_0 \in[-r,r]$ tels que $ z_+(y_0)>z_0>z_-(y_0)$. D'après le théorème d'existence, on peut trouver une  solution maximale $z_\pm\in C^1(I_{z_\pm}, \r)$ qui vérifie $z_\pm'(y)=\psi(y,z(y))$ et  $z_\pm(y_0)=z_0$. Vu que $z_\pm(y)\to \partial \Omega$ lorsque $y\to \partial I_{z_\pm}$, l'élément $z^\star_\pm:=z_- \lor(z_+\land z_\pm)\in \Z$ est bien défini. Comme $z_+>z^\star_\pm>z_-$, on obtient la contradiction à la maximalité de $\a$. Le même raisonnement s'applique pour démontrer l'égalité avec $\inf$. En particulier, $\a$ n'a pas de points isolés.
 \end{proof}

Du lemme \ref{flotcontinu} et des résultats de la topologie générale 
(un compact connexe muni d'un ordre linéaire compatible avec la topologie est homéomorphe à un intervalle; voir, par exemple, \cite{genertopol}) découle
\begin{theorem}\label{t: flotintervalle}
L'espace $(\a,d_u)$ est homéomorphe à l'intervalle $([0,1],|\cdot|)$ par un ho\-méo\-mor\-phisme préservant l'ordre.
\end{theorem}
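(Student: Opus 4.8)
The plan is to read off the theorem from Lemme~\ref{flotcontinu} combined with the classical characterisation of the interval among linearly ordered topological spaces, recalled in the Remarque just above. Lemme~\ref{flotcontinu} already supplies the three structural facts we shall use: the restriction to $\a$ of the pointwise order $\le$ of $\Z$ is a \emph{total} order (since $\a$ is a maximal chain in $\Z$), the space $(\a,d_u)$ is compact, connected, and without isolated points, and $\le$ has no jumps on $\a$, i.e. every $z\in\a$ equals both $\sup\{\tilde z\in\a\mid \tilde z<z\}$ and $\inf\{\tilde z\in\a\mid \tilde z>z\}$ with the stated conventions. The only thing that really needs to be checked is that the metric topology $\tau_{d_u}$ coincides with the order topology $\mathcal{O}$ of $(\a,\le)$; once that is done the cited result applies directly.

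\textbf{Step 1: the order rays are $d_u$-open, so $\mathcal{O}\subseteq\tau_{d_u}$.} Fix $a\in\a$ and a point $z_0$ in the ray $R^{+}_{a}:=\{z\in\a\mid z>a\}$, and suppose for contradiction that there are $z_n\in\a$ with $d_u(z_n,z_0)\to 0$ and $z_n\notin R^{+}_{a}$. Because $\le$ is total on $\a$, $z_n\notin R^{+}_{a}$ means $z_n\le a$, i.e. $z_n(t)\le a(t)$ for every $t\in[-\delta,\delta]$; letting $n\to\infty$ (the convergence being uniform, hence pointwise) gives $z_0(t)\le a(t)$ for all $t$, that is $z_0\le a$. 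Together with $z_0\ge a$ and antisymmetry of the pointwise order this forces $z_0=a$, contradicting $z_0\in R^{+}_{a}$. The symmetric argument applies to the rays $R^{-}_{a}:=\{z\in\a\mid z<a\}$, so every subbasic open set of $\mathcal{O}$ is $\tau_{d_u}$-open.

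\textbf{Step 2: equality of the two topologies.} The identity map $(\a,\tau_{d_u})\to(\a,\mathcal{O})$ is a continuous bijection by Step~1; its source is compact by Lemme~\ref{flotcontinu} and its target is Hausdorff, since the order topology of any totally ordered set is Hausdorff. A continuous bijection from a compact space onto a Hausdorff space is a homeomorphism, hence $\mathcal{O}=\tau_{d_u}$; in particular the order $\le$ is compatible with the topology of $\a$ in the strongest sense.

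\textbf{Step 3: conclusion and the point requiring care.} Thus $(\a,d_u)=(\a,\mathcal{O})$ is a compact, connected, metrisable, linearly ordered topological space which, being nonempty without isolated points, is infinite. By the classical characterisation recalled in the Remarque above (see \cite{genertopol}), such a space is order-isomorphic, possibly after reversing the order, to $([0,1],\le)$; composing if necessary with $t\mapsto 1-t$ yields an \emph{order-preserving} homeomorphism $(\a,d_u)\to([0,1],|\cdot|)$, which is the assertion. The one subtlety worth flagging is that this characterisation does use the metrisability (equivalently here, the separability) of $\a$ in an essential way — a merely compact connected linearly ordered space need not be homeomorphic to an interval, as the lexicographic square shows — and this metrisability is precisely what is built into $\Z$ as a set of $C^1$-functions equipped with the uniform distance. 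Beyond that observation the proof is entirely soft.
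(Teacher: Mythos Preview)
Your proof is correct and follows the same route as the paper: deduce the theorem from Lemme~\ref{flotcontinu} together with the classical order-theoretic characterisation of $[0,1]$ cited there. You are in fact more careful than the paper, which simply invokes that characterisation without explicitly checking that the order topology coincides with $\tau_{d_u}$ (your Steps~1--2) and without flagging the separability hypothesis (your Step~3).
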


\begin{remark}
Il est facile d'en déduire que l'ensemble $(\Z,d_u)$ lui-même est connexe par arcs.
Or, l'ensemble de solutions $$\mathcal{F}=\big\{\lambda\in  C^1([-\delta,\delta],\r^n) \mid \lambda'(t)=\vec{V}\circ \lambda (t),\, t\in [-\delta,\delta], \text{ et } \lambda(0)=0 \big\},$$ où $\vec{V}$ est un camps de vecteurs continu et borné sur $\r^n$, n'est pas connexe par arcs en général  \cite{cross-section}. Notons aussi que $(\mathcal{F},d_u)$ est toujours un compact connexe qui est acyclique en homologie de \v{C}ech \cite{aronszajn}.
\end{remark}

\subsubsection{Application aux lignes de niveau}\label{apptopol}
\paragraph{Deux fonctions scalaires.}   
On considère maintenant de nouveau une application $F=(f,g)\in C^1_H(\h,\r^2)$ telle que $F(0)=0$ et $\det(d_hF)\not = 0$.  On voit cette fois-ci l'ensemble $F^{-1}(0)$ comme l'intersection des deux surfaces $\h$-régulières, $f^{-1}(0)$ et $g^{-1}(0)$.  Sans perte de généralité on peut supposer $Xf\not=0$. Soit $\phi$ une fonction donnée par le théorème des fonctions implicites appliqué à $f$ comme cela a été fait dans la sous-section  \ref{regulsurf} (on garde les notations de la section \ref{hsurfaces}). Compte tenu du caractère local de notre étude, on cherche à décrire $\Phi(\Omega)\cap g^{-1}(0)$;  on note donc $\mathcal{A}^\phi:=\Phi^{-1}(g^{-1}(0))$.  

Voyons comment la condition de \emph{Whitney} \eqref{whitneycond} s'écrit sur $\mathcal{A}^\phi$. On se rappelle que pour tout $\Omega'\Subset \Omega$  l'estimation uniforme pour $A=(y_1,z_1), B=(y_2,z_2)\in\Omega'\cap \mathcal{A}^\phi$ a lieu
$$
|\pi(\Phi(A)) -\pi(\Phi(B))|=o(|z\big(\Phi(A)^{-1}\Phi(B)\big)|^{1/2}),
$$
ou explicitement  
$$
| \phi(B)-\phi(A) |^2 +|y_2-y_1|^2=o( |z_2-  z_1 +2(y_2-y_1)(\phi(A)+\phi(B))|).
 $$
La distance $d_\phi\corn\mathcal{A}^\phi$ est égale à $$d_\phi(A,B)=| z_2-  z_1 +2(y_2-y_1)(\phi(A)+\phi(B))|^{1/2}$$ dès que les points $A,B\in \mathcal{A}^\phi$ sont suffisamment proches. Compte tenu de  \eqref{Whytney}, la condition de Whitney \eqref{whitneycond} est équivalente à
$$ |y_2-y_1|^2=o(|z_2-  z_1 +2(y_2-y_1)(\phi(A)+\phi(B))|),$$
où le petit-$o$ est uniforme pour $A,B\in\Omega'\cap\mathcal{A}^\phi$. 

\paragraph{Application.}
Nous allons appliquer maintenant le théorème \ref{t: flotintervalle} à  $\psi:=-4\phi$. On choisit $r>0$ de sorte que $[-r,r]^2 \subset \Omega$ et $| \phi|<1/4$ sur $[-r,r]^2$. On note également $\a$ un flot sans pénétration obtenu à partir de $\psi$. 

 Montrons qu'au voisinage de l'origine le graphe de tout élément de $\a$ (qui est une courbe intégrale de $W^\phi$) contient exactement un seul point de $\mathcal{A}^\phi$. 
 Soit $\gamma(t)$ une courbe intégrale de $W^\phi$, d'après le lemme \ref{l: reguler} la composition $g\circ \Phi\circ \gamma:[-\delta,\delta]\to \r$ est de classe $C^1$, sa dérivée vaut $$\left(-\frac{Yf}{Xf}Xg+Yg\right)\circ \Phi \circ\gamma=\frac{\det d_h F }{Xf}\circ \Phi \circ\gamma.$$ Par la continuité de $D_h F$ quitte à diminuer $r>0$, on peut supposer  $$\min_{[-r,r]^2} \left|\frac{\det d_h F }{Xf}\circ \Phi\right|\ge v> 0. $$
 Par la continuité de $g$, il existe $r_1\in (0,\delta]$ tel que $\max\limits_{z\in[-r_1,r_1]} |g\circ \Phi(0,z)|< v\delta$. 
 On considère deux fonctions $z_\alpha, z_\beta\in \a$ telles que $z_\alpha(0)=-r_1$ et $z_\beta(0)=r_1$. 
 On note $\a'=\{z\in \a\mid z_\alpha\le z\le z_\beta\}$ et $U=\{(y,z(y)) \mid y\in[-\delta,\delta],  z \in \a' \}$ un voisinage compact de $0$.
 Il est facile de voir (car on a tout fait pour) que 
 si  $z\in\a'$ alors le graphe de $z$ rencontre toujours un seul point de $\mathcal{A}^\phi$.
 
On définit maintenant l'application $P:\a' \to \mathcal{A}^\phi \cap U$ qui associe à chaque élément de $\a'$  l'unique point de $\mathcal{A}^\phi \cap U$ que son graphe contient.  La condition de Whitney sur $\mathcal{A}^\phi$ et la description géométrique de $d_\infty$ (le lemme \ref{metriqueR}) nous donnent finalement 
\begin{proposition} L'application $P:(\a',d_u) \to \big(\mathcal{A}^\phi \cap U, d_\infty)$ est $1/2$-Hölder continue et surjective. On observe qu'en particulier
$$ \H^2_\infty(\mathcal{A}^\phi \cap U)\le C\H^1_{d_u}(\a'), C<\infty.$$
\end{proposition}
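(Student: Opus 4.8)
Le plan est d'établir, dans cet ordre : la surjectivité de $P$ (immédiate) ; la continuité de $P$ ; une estimée de Hölder \emph{locale} tirée de la description géométrique de la métrique induite (lemme \ref{metriqueR}) ; la globalisation de cette estimée par compacité ; et enfin l'inégalité entre mesures de Hausdorff par le comportement usuel de $\H^k$ sous une application höldérienne. On rappelle que $U=\{(y,z(y))\mid y\in[-\delta,\delta],\ z\in\a'\}$ est compact et contenu dans un $\Omega'\Subset\Omega$ (car $|z(y)|\le|z(0)|+|y|\le 2\delta=r$ pour $z\in\Z$, d'où $U\subset[-r,r]^2$), et que le graphe de tout $z\in\a'$ rencontre $\mathcal{A}^\phi$ en exactement un point. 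La surjectivité est alors claire : si $Q\in\mathcal{A}^\phi\cap U$, il existe $z\in\a'$ dont le graphe passe par $Q$, et l'unique point de $\mathcal{A}^\phi$ sur ce graphe est nécessairement $Q$, donc $P(z)=Q$. La continuité s'obtient par compacité : si $z_n\to z$ dans $(\a',d_u)$, les graphes convergent uniformément, tout point d'accumulation de $(P(z_n))$ appartient au graphe de $z$ et à l'ensemble fermé $g^{-1}(0)$, donc à $\mathcal{A}^\phi$, et vaut ainsi $P(z)$ ; comme $\mathcal{A}^\phi\cap U$ est compact, $P(z_n)\to P(z)$.

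Pour l'estimée de Hölder locale, fixons $z_a,z_b\in\a'$ et posons $A=P(z_a)=(y_a,z_a(y_a))$, $B=P(z_b)=(y_b,z_b(y_b))$. Le graphe de $z_a$ est une courbe intégrale de $W^\phi$ passant par $A$, définie sur tout $[-\delta,\delta]\supset[y_a,y_b]$ ; c'est donc un choix admissible de la courbe $\gamma_1$ dans la définition de $d_{g,\infty}$, pour lequel $\hat z_1=z_a(y_b)$ tandis que la coordonnée verticale de $B$ est $z_b(y_b)$, d'où
$$d_{g,\infty}(A,B)=\max\big\{\,|y_b-y_a|\sqrt{1+w(A)^2}\,,\ |z_b(y_b)-z_a(y_b)|^{1/2}\,\big\}.$$
Le terme vertical est majoré par $d_u(z_a,z_b)^{1/2}$. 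Pour le terme horizontal, la condition de Whitney sur $\mathcal{A}^\phi$ (établie juste avant l'énoncé), jointe à l'équivalence locale entre $d_\phi$, la métrique induite $d_\infty$ et $d_{g,\infty}$ (lemme sur les propriétés de $\phi$ et lemme \ref{metriqueR}), donne $|y_b-y_a|=o\big(d_{g,\infty}(A,B)\big)$, uniformément pour $A,B\in\mathcal{A}^\phi\cap U$ ; comme $w$ est bornée sur le compact $U$, ceci force l'égalité $d_{g,\infty}(A,B)=|z_b(y_b)-z_a(y_b)|^{1/2}$ dès que $A\neq B$ et $d_\infty(A,B)<\eta$, le seuil $\eta>0$ étant uniforme. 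Le lemme \ref{metriqueR} fournit alors $d_\infty(A,B)\le 2\,d_{g,\infty}(A,B)\le 2\,d_u(z_a,z_b)^{1/2}$.

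Il reste à globaliser. Si $d_\infty(P(z_a),P(z_b))\ge\eta$, la compacité de $\a'$ et la continuité de $P$ entraînent une minoration uniforme $d_u(z_a,z_b)\ge\mu>0$ (sinon on produirait un couple avec $z_a=z_b$ et $d_\infty(P(z_a),P(z_b))=0<\eta$), d'où $d_\infty(P(z_a),P(z_b))\le\diam_\infty(\mathcal{A}^\phi\cap U)\le\mu^{-1/2}\diam_\infty(\mathcal{A}^\phi\cap U)\,d_u(z_a,z_b)^{1/2}$. Avec l'estimée locale, ceci donne $P\in H^{1/2}(\a',\mathcal{A}^\phi\cap U)$. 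L'inégalité sur les mesures s'en déduit de manière standard : recouvrant $\a'$ par des ensembles $F_j$ de $d_u$-diamètre arbitrairement petit, les $P(F_j)$ recouvrent $\mathcal{A}^\phi\cap U$ avec $\diam_\infty P(F_j)\le\|P\|_{H^{1/2}}(\diam_{d_u}F_j)^{1/2}$, donc $\sum_j(\diam_\infty P(F_j))^2\le\|P\|_{H^{1/2}}^2\sum_j\diam_{d_u}F_j$ ; en passant à l'infimum puis en faisant tendre l'échelle vers zéro on obtient $\H^2_\infty(\mathcal{A}^\phi\cap U)\le C\,\H^1_{d_u}(\a')$ avec $C=\|P\|_{H^{1/2}}^2$.

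Le point délicat est l'estimée locale : il faut vérifier que le seuil $\eta$ en-dessous duquel le terme vertical domine dans $d_{g,\infty}$ est uniforme sur $\mathcal{A}^\phi\cap U$, ce qui repose sur l'inclusion $U\subset\Omega'\Subset\Omega$ et sur l'uniformité, sur les parties compactes de $\Omega$, de tous les petits-$o$ invoqués (condition de Whitney sur $\mathcal{A}^\phi$, lemme \ref{metriqueR}, équivalence $d_\phi\sim d_\infty$) ; et il est essentiel de choisir précisément la courbe intégrale passant par $A$ (et non une courbe arbitraire) afin que la composante verticale de $d_{g,\infty}(A,B)$ soit directement contrôlée par $d_u(z_a,z_b)$.
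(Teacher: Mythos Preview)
Votre d\'emonstration est correcte et suit exactement l'approche que le papier esquisse : celui-ci se contente d'indiquer que la condition de Whitney sur $\mathcal{A}^\phi$ et la description g\'eom\'etrique de $d_\infty$ (lemme~\ref{metriqueR}) donnent le r\'esultat, sans d\'etailler davantage. Vous avez simplement explicit\'e ces d\'etails --- le choix du graphe de $z_a$ comme courbe int\'egrale $\gamma_1$ dans la d\'efinition de $d_{g,\infty}$, l'absorption du terme horizontal par la condition de Whitney, et la globalisation par compacit\'e --- qui sont pr\'ecis\'ement ceux que le papier sous-entend.
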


Il est évident que $(\a', d_u)$ est homéomorphe à l'intervalle $([0,1], |\cdot|)$. Soit $s:([0,1], |\cdot|,\le)\to (\a', d_u, \le)$ un homéomorphisme monotone. On considère la relation d'équivalence  $\mathcal{R}$ sur $[0,1]$ définie par: $\alpha \mathcal{R} \beta  \Leftrightarrow P\circ s(\alpha) =P\circ s(\beta)$.  Chaque classe d'équivalence contient soit un intervalle fermé soit un point. L'espace topologique quotient $([0,1]/\mathcal{R}, \tau_\mathcal{R})$ est aussi, à son tour, homéomorphe à $([0,1], |\cdot|)$. L'application $P\circ s$ passe au quotient et définit ainsi une application continue bijective $\tilde P: ([0,1], |\cdot|) \to \big(\mathcal{A}^\phi \cap U, d_\infty)$. Ainsi, nous arrivons à la conclusion du corollaire \ref{reifenbergcurve}.

\paragraph{Ensemble transverse à un "entonnoir".} 
A la fin de cette section on voudrait soulever un problème topologique qui, à notre connaissance,  reste ouvert dans sa généralité. Le cas le plus simple a été traité ci-dessus ($n=1$); voir aussi \cite{funnel} pour la généralisation de ce raisonnement et une riche littérature sur la théorie générale des "entonnoirs" (ang. \emph{funnels}) \cite{aronszajn, funnelsections, cross-section, quasimonotone}.

\begin{question}[Problème topologique]\label{probtop} Soit $\vec{V}=(1,V_1,\ldots,V_n)$ un champ de vecteurs dans $\r^{n+1}$ où les fonctions $V_i: \r^{n+1}\to \r$ sont seulement bornées continues. Considérons l'ensemble $\mathcal{A}$ ("entonnoir") des lignes intégrales du champ $\vec{V}$ qui se trouvent en temps $t=0$ sur l'hyperplan $\{x_1=0\}$.   Supposons que l'ensemble fermé $E\subset \{ -\delta \le x_1 \le \delta \}$ est transverse à $\mathcal{A}$ au sens suivant. Toute ligne de $\mathcal{A}$ coupe l'ensemble $E$ en exactement un seul point et de plus l'application qui associe à une ligne de $\mathcal{A}$ l'unique point de $E$ lui appartenant est continue, où $\mathcal{A}$ est muni de la topologie uniforme (sur l'intervalle de temps $x_1\in [-\delta,\delta]$). 

 L'ensemble $E$ est-il localement homéomorphe à une boule $B^n$ de $\r^n$?
\end{question}

\section{Propriétés métriques des courbes verticales}\label{s: proprietemetrique}
Après avoir établi que l'ensemble de niveau $\Gamma:=\f\cap U$ est un arc simple, nous revenons à des considérations dans $\h$ (et plus sur $\mathcal{S}$). Les coordonnées $x$, $y$ et $z$  d'un point $A\in \h$ seront désignées par $x(A)$, $y(A)$ et $z(A)$ respectivement. Etant homéomorphe à un intervalle $\Gamma$ hérite d'une structure d'ensemble linéairement ordonné. Pour tout $A,B\in \Gamma,A\le B,$ on notera par l'intervalle $[A,B]=\{C\in\Gamma \mid A\le C\le B\}$. Nous choisissons l'ordre croissant sur $\Gamma$ ce qui veut dire que 
\begin{equation}\label{e: contactpositif}
\text{ si }A,B\in \Gamma \text{ et } A\le B \Longrightarrow d_\infty(A,B)^2=z(B)-z(A)-2\big(x(B)y(A)-x(A)y(B)\big).
\end{equation}
On peut toujours faire ce choix car si $A,B\in \Gamma$ et $A\not=B$, alors $z(B)-z(A)-2\big(x(B)y(A)-x(A)y(B)\not =0$.

L'élément essentiel de cette section est  
\begin{remark}\label{r: dcarreplat}
On considère trois points $A, B,C\in \Gamma$ tels que $A\le B\le C$. D'après \eqref{whitneycond} on a avec un petit-$o$ uniforme lorsque $A\to C$
\begin{multline}\label{explicitedcarreplate}
d_\infty(A,B)^2+d_\infty(B,C)^2-d_\infty(A,C)^2=2\det\big( \pi(B)-\pi(A), \pi(C)-\pi(B) \big)  \\
=2\Big((y(C)-y(B))(x(B)-x(A))-(y(B)-y(A))(x(C)-x(B))\Big) \\
=o(d_\infty(A,B))o(d_\infty(B,C))=o(d_\infty(A,B)^2+d_\infty(B,C)^2)=o(d_\infty(A,C)^2).
\end{multline}
Nous réécrivons la dernière relation sous la forme 
\begin{equation}\label{dcarreplate}
 |d_\infty(A,B)^2+d_\infty(B,C)^2-d_\infty(A,C)^2|\le m(d_\infty(A,C)^2)d_\infty(A,C)^2
\end{equation}
pour tous $A\le B\le C$ sur $\Gamma$, où $m(t)\searrow 0$ lorsque $t\searrow 0$.
\end{remark}

\begin{remark*} La condition \eqref{dcarreplate} pour une courbe simple $\Gamma \subset \h$ est  en général plus faible que la condition de Whitney \eqref{whitneycond}. Prenons, par exemple, $\Gamma(t)=(t,0, 2t|t|)$, $t\in [-1,1]$. En effet, pour $t_2\ge t_1$ on a $d_\infty\big(\Gamma(t_1),\Gamma(t_2)\big)^2=\max\{(t_2-t_1)^2, 2(t_2|t_2|-t_1|t_1|)\}= 2(t_2|t_2|-t_1|t_1|)$. Vu que $t\to t|t|$ est croissante, 
$ d_\infty\big(\Gamma(t_1),\Gamma(t_2)\big)^2+d_\infty\big(\Gamma(t_2),\Gamma(t_3)\big)^2-d_\infty\big(\Gamma(t_3),\Gamma(t_1)\big)^2\equiv 0$ pour tous $t_1\le t_2 \le t_3$. Néanmoins, $\|\pi(\Gamma(t))-\pi(\Gamma(0))\|=|t|\not=o(d_\infty\big(\Gamma(t),\Gamma(0)\big)=\sqrt{2}|t|$.
\end{remark*}

\subsection{Quasi-métriques plates} \label{ss: quasiplate}
Pour comprendre les propriétés de $\Gamma$, qui découlent de \eqref{dcarreplate}, nous nous plaçons dans un cadre légèrement plus abstrait. Notamment, tout au long de cette sous-section nous supposons que $\kappa$ est une quasi-métrique continue (par rapport à la topologie usuelle) sur $[0,1]$ qui est \emph{plate} dans le sens où elle satisfait la condition suivante :
\begin{equation}\label{e: kappaplate}
|\kappa(A,B)+\kappa(B,C)-\kappa(A,C)|\le m(\kappa(A,C))\kappa(A,C),
\end{equation}
pour tous $0\le A\le B\le C\le 1$, où $m(t)\searrow 0$ lorsque $t\searrow 0$. L'espace quasi-métrique $\lambda:=([0,1],\kappa)$, étant homéomorphe à un intervalle, sera appelé "courbe plate". Bien sûr, nous pensons avant tout au cas où $\lambda= (\Gamma, d^2_\infty\corn \Gamma)$.
\begin{remark*}
Toute courbe dans $\r^n$  qui est $\epsilon$-plate au sens de Reifenberg  avec $\epsilon\to 0$ lorsque l'échelle se raffine, satisfait la condition \eqref{e: kappaplate} avec $\kappa$ une métrique euclidienne induite (voir \cite{vanishingreifenberg,reifenbergflatmetric}). De bons exemples sont donnés par des flocons de neige plats (\ie dont les angles décroissent avec l'échelle); ils peuvent en particulier avoir une mesure $\H^1_\kappa$ infinie. 
\end{remark*}
La première conséquence facile de \eqref{e: kappaplate} est
\begin{proposition}\label{diametre} Si $[A,B]\subset [0,1]$, alors uniformément lorsque $\kappa(A,B)\to 0$,
$$ \diam_\kappa([A,B])=\kappa(A,B)(1+o(1)).$$
\end{proposition}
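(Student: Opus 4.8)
The plan is to establish the two inequalities $\diam_\kappa([A,B])\ge\kappa(A,B)$ and $\diam_\kappa([A,B])\le\kappa(A,B)(1+o(1))$ separately, the first being immediate: since $A,B\in[A,B]$, the definition of the diameter already gives $\diam_\kappa([A,B])\ge\kappa(A,B)$. Since $\kappa$ is symmetric, $\diam_\kappa([A,B])=\sup\{\kappa(C,D)\mid A\le C\le D\le B\}$, so the whole content is the uniform upper bound $\kappa(C,D)\le\kappa(A,B)(1+o(1))$ valid for every such pair. The point is that this bound will come only from the flatness condition \eqref{e: kappaplate} together with the non-negativity of $\kappa$; the relaxed triangle inequality plays no role here.

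To prove it I would fix $A\le C\le D\le B$ and apply \eqref{e: kappaplate} twice. First, to the triple $A\le C\le D$: discarding the non-negative term $\kappa(A,C)$ yields $\kappa(C,D)\le\kappa(A,D)\bigl(1+m(\kappa(A,D))\bigr)$. Second, to the triple $A\le D\le B$: discarding $\kappa(D,B)\ge 0$ yields $\kappa(A,D)\le\kappa(A,B)\bigl(1+m(\kappa(A,B))\bigr)$. Write $s:=\kappa(A,B)$; we may assume $m$ is non-decreasing (otherwise replace it by its non-decreasing envelope $\tilde m(t):=\sup_{0<u\le t}m(u)$, which still tends to $0$). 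For $s$ small enough that $m(s)\le 1$ we get $\kappa(A,D)\le s(1+m(s))\le 2s$, hence $m(\kappa(A,D))\le m(2s)$, and the two estimates combine into
$$ \kappa(C,D)\le s\,\bigl(1+m(s)\bigr)\,\bigl(1+m(2s)\bigr). $$

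Since the right-hand side depends only on $s=\kappa(A,B)$ and not on the chosen points $C,D$, taking the supremum over all admissible pairs gives $\diam_\kappa([A,B])\le\kappa(A,B)\bigl(1+m(\kappa(A,B))\bigr)\bigl(1+m(2\kappa(A,B))\bigr)$, which is $\kappa(A,B)(1+o(1))$ uniformly as $\kappa(A,B)\to 0$ because $m(t)\to 0$ as $t\to 0$. Combined with the trivial lower bound, this is the assertion. There is no serious obstacle; the only step that requires a little care is to chain the two applications of \eqref{e: kappaplate} in the right order — keeping the sharp factor $s(1+m(s))$ in the bound for $\kappa(A,D)$ rather than replacing it by the crude $2s$ inside the \emph{first} factor — so that the resulting constant is $1+o(1)$ and not $2+o(1)$.
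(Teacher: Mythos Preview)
Your argument is correct and is essentially the same as the paper's: both apply the flatness condition \eqref{e: kappaplate} to the two triples $A\le D\le B$ and $A\le C\le D$ and chain the resulting inequalities after discarding the non-negative terms $\kappa(D,B)$ and $\kappa(A,C)$. The paper first invokes compactness to pick a maximizing pair $C,D$ and writes the error as $o(\kappa(C,D))$, whereas you bound every pair uniformly by $\kappa(A,B)(1+m(\kappa(A,B)))(1+m(2\kappa(A,B)))$; these are cosmetic differences, not a different route.
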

\begin{proof}
Par compacité pour tous $0\le A\le B\le 1$ on peut trouver $C,D\in [A,B]$, $C\le D$, tels que $\diam_\kappa([A,B])=\kappa(C,D)$. D'après \eqref{e: kappaplate},
\begin{align*}
\kappa(A,B)-\kappa(A,D)-\kappa(D,B)=o(\kappa(C,D)),\\
\kappa(A,D)-\kappa(A,C)-\kappa(C,D)=o(\kappa(C,D)),
\end{align*}
et en additionnant on conclut que
$$ k(A,B)\ge \kappa(A,B)-\kappa(A,C)-\kappa(D,B)=\kappa(C,D)(1-o(1)). \qedhere$$
\end{proof}

\begin{proposition}\label{dimensionestime}
La dimension de Hausdorff de la courbe plate $\lambda$ est égale à $1$.
\end{proposition}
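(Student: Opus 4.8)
The plan is to establish $\dim_\kappa\lambda\le 1$ and $\dim_\kappa\lambda\ge 1$ separately, after a common reduction to small scales. Using the continuity of $\kappa$ and the compactness of $[0,1]$, cut $[0,1]$ into finitely many consecutive intervals $J_p=[\alpha_{p-1},\alpha_p]$, $p=1,\dots,K$, with $\diam_\kappa J_p\le\eta_0$, where $\eta_0>0$ is so small that $m(\eta_0)$ is as small as we please and that Proposition~\ref{diametre} already yields $\kappa(A,B)\le\diam_\kappa([A,B])\le 2\kappa(A,B)$ on every subinterval of $\kappa$-size $\le\eta_0$. Since Hausdorff dimension is monotone and stable under finite unions, it suffices to treat one $J_p$ (for the upper bound) and one $J_p$ with $L_p:=\kappa(\alpha_{p-1},\alpha_p)>0$ (for the lower bound). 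The gain is that on $J_p$ every distance entering \eqref{e: kappaplate} is $\le\eta_0$, so flatness becomes a two-sided near-triangle inequality with a fixed small error $m(\eta_0)$; in particular, merging a chain of $K$ consecutive points of $J_p$ by successive pairings ($\lceil\log_2 K\rceil$ stages) changes its $\kappa$-length by a factor in $[K^{-\varepsilon_0},K^{\varepsilon_0}]$, where $\varepsilon_0:=\log_2\tfrac1{1-m(\eta_0)}\to 0$ as $\eta_0\to 0$.

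For the upper bound, fix small $\theta>0$ and tile $J_p$ by intervals $[u_{i-1},u_i]$, $i=1,\dots,N$, with $u_0=\alpha_{p-1}$ and $u_i:=\min\{t>u_{i-1}:\diam_\kappa([u_{i-1},t])=\theta\}$ (the last one of $\diam_\kappa\le\theta$); this finite tiling exists since $t\mapsto\diam_\kappa([u_{i-1},t])$ is continuous and nondecreasing. Each piece has $\diam_\kappa\le\theta$, and $\kappa(u_{i-1},u_i)\ge\theta/2$ for $i<N$ by Proposition~\ref{diametre}, so $\sum_i\kappa(u_{i-1},u_i)\ge(N-1)\theta/2$; the merging estimate gives $\sum_i\kappa(u_{i-1},u_i)\le C N^{\varepsilon_0}L_p$. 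Hence $N=N(\theta)\le(CL_p/\theta)^{1/(1-\varepsilon_0)}$, so $\mathcal H^s_\kappa(J_p)\le\liminf_{\theta\to0}N(\theta)\theta^s=0$ whenever $s>1/(1-\varepsilon_0)$. Choosing $\eta_0$ small so that $1/(1-\varepsilon_0)<s$, we get $\mathcal H^s_\kappa(\lambda)=0$ for every $s>1$, that is, $\dim_\kappa\lambda\le 1$.

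For the lower bound, fix $s<1$ and take $\eta_0$ small (depending on $s$). After the standard reduction to a cover of $J_p$ by intervals of $\kappa$-diameter comparable to the mesh $\delta$---replacing each set by its interval hull costs only a factor $2$ by Proposition~\ref{diametre}, and members much smaller than $\delta$ only raise $\sum(\cdot)^s$ by concavity---pass to a minimal subcover $I_1,\dots,I_M$ (consecutive $I_q$ overlapping, $I_q\cap I_{q+2}=\varnothing$), and let $r_q=\diam_\kappa I_q\le\delta$. Then $\sum_q r_q^{\,s}\ge\delta^{\,s-1}\sum_q r_q$ by concavity and $r_q\le\delta$; the chain of right endpoints of the $I_q$ lies in $J_p$, so the merging estimate gives $\sum_q r_q\gtrsim M^{-\varepsilon_0}L_p$; and the $\lceil M/2\rceil$ pairwise disjoint intervals $I_1,I_3,\dots$ give, again via the merging estimate, a packing bound $M\le(CL_p/\delta)^{1/(1-\varepsilon_0)}$. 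Combining, $\sum_q r_q^{\,s}\gtrsim\delta^{\,s-1}\cdot\delta^{\varepsilon_0/(1-\varepsilon_0)}L_p=L_p\,\delta^{\,s-1+\varepsilon_0/(1-\varepsilon_0)}\to\infty$ once $\eta_0$ is chosen so that $1-\varepsilon_0/(1-\varepsilon_0)>s$. Thus $\mathcal H^s_\kappa(\lambda)=\infty$ for every $s<1$, so $\dim_\kappa\lambda\ge 1$.

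The delicate point, shared by both halves, is that \eqref{e: kappaplate} bounds the triangle defect only by a \emph{modulus} $m$ with no a priori rate---and this is essential, since (cf. the remark on flat snowflakes) $\mathcal H^1_\kappa(\lambda)$ may be $0$ or $\infty$, so the dimension cannot be detected through $\mathcal H^1_\kappa$ itself. What rescues the estimates is the combination of the reduction to scale $\le\eta_0$, which turns the merging error into a harmless factor $N^{\varepsilon_0}$ with $\varepsilon_0$ arbitrarily close to $0$, with the concavity of $x\mapsto x^s$ for $s<1$ in the lower bound, which makes finer covers better and converts the $N^{\varepsilon_0}$-loss into the genuine gain $\delta^{s-1+\varepsilon_0/(1-\varepsilon_0)}\to\infty$. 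I expect the honest bookkeeping of the minimal subcover and of the packing bound on $M$ to be the most laborious part to write out in full.
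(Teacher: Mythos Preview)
Your upper bound is correct and is a single-scale cousin of the paper's argument: the paper covers by a dyadic $\kappa$-midpoint decomposition and tracks the product $\prod_k(1\pm m_k)$, while you tile $J_p$ greedily at one scale $\theta$ and bound the count $N(\theta)$ through one application of the merging estimate. For the majoration this is enough.

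The lower bound has a real gap at the ``standard reduction to a cover by intervals of $\kappa$-diameter comparable to $\delta$''. What you presumably have in mind is that, in a \emph{metric} space, merging consecutive small members of an interval cover can only decrease $\sum r_q^{\,s}$ (subadditivity of $x\mapsto x^s$, $s<1$), so one may assume all $r_q\sim\delta$. Here, however, merging $k$ consecutive members incurs a loss: by your own merging estimate $\diam(\text{union})\le C\,k^{\varepsilon_0}\sum_{i\le k} r_i$, hence $\diam(\text{union})^s\le C^s k^{s\varepsilon_0}\sum_{i\le k} r_i^{\,s}$, and $k$ is unbounded. Without this reduction your packing bound $M\le(CL_p/\delta)^{1/(1-\varepsilon_0)}$ is false. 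Take the model $\kappa(u,v)=|u-v|^{1+\varepsilon_0}$, which satisfies \eqref{e: kappaplate} with exactly the constant modulus $m(\eta_0)$ you use on $J_p$: a minimal subcover by $M$ equal intervals of Euclidean length $|J_p|/M$ has $r_q\sim M^{-(1+\varepsilon_0)}\le\delta$ with $M$ as large as you wish, and your chain $\sum r_q^{\,s}\ge\delta^{s-1}M^{-\varepsilon_0}L_p$ then gives nothing.

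The underlying issue is that after localising to $J_p$ you freeze the modulus at $m(\eta_0)$ and never again use $m(t)\to 0$; but a single-scale flatness bound cannot by itself force $\dim_\kappa\ge 1$ (the snowflake above has dimension $\tfrac1{1+\varepsilon_0}<1$). The paper's proof is genuinely multi-scale: it builds a dyadic mass distribution $\mu(I^k_i)=2^{-k}$ by splitting at $\kappa$-midpoints and controls $\mu([A,B])/\kappa(A,B)^\alpha$ through $\prod_{j\le k}(1-m_j)$ with $m_j=m(L_j)$; since $L_j\to 0$ one has $m_j\to 0$, hence $\tfrac1k\sum_{j\le k} m_j\to 0$ (Ces\`aro), which is precisely what makes $\prod(1-m_j)$ decay slower than any $2^{-k(1-\alpha)}$, $\alpha<1$, and yields the minoration via the mass-distribution principle. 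To salvage your direct covering argument you would have to reintroduce this scale-by-scale bookkeeping; the cleanest way to do so is the mass distribution, as in the paper.
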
 
\begin{proof}
On construit par récurrence une suite de familles d'intervalles fermés $J_k=\{I^k_1,\ldots,I^k_{2^k}\}, k=0,1,2, \dotsc$. On pose $I^0_1=[0,1]$; supposant construit $J_k$ chaque intervalle $I^k_i, i=1,\ldots,2^k,$ donne naissance à deux intervalles de $J_{k+1}$ de la manière  suivante. Si $I^k_i=[a,c]$ alors $I^{k+1}_{2i-1}=[a,b]$ et $I^{k+1}_{2i}=[b,c]$ où le point $c \in [a,b]$ est tel que 
$\kappa(a,b)=\kappa(b,c)$. 
Pour  $I_i^k=[a,b]$ on note $L_{k,i}:=\kappa(a,b)$. On introduit également une distribution de masse $\mu$ sur $[0,1]$, \ie une mesure de probabilité borélienne, en posant  $\mu(I^k_i)=2^{-k}$ pour tout $I^k_i \in J_k, k=0,1,2,\dotsc$.

D'après \eqref{e: kappaplate} pour tout $k\ge 0$ et $i=1, \ldots, 2^k$,
$$ |L_{k,i}- 2L_{k+1,2i-1}|=|L_{k,i}- 2L_{k+1,2i}|\le m_{k,i} L_{k,i}, \quad m_{k,i}=m(L_{k,i}).$$ 
Pour $L_k:=\max\limits_i L_{k,i}$ et $l_k:=\min\limits_{i} L_{k,i}$ on démontre par récurrence sur $k$ que
 \begin{equation}\label{estimation}
 \frac{L_0}{2^n} \prod\limits_{k=1}^{n}(1-m_k)\le l_n\le L_n\le \frac{L_0}{2^n} \prod\limits_{k=1}^{n}(1+m_k), \quad m_k:=\max\limits_i |m_{k,i}|.
\end{equation}
Quitte à considérer un nombre fini d'intervalles plus petits nous pouvons supposer dès le début que $m_k\le m_0<1$, $k=0,1,\dotsc$. On voit qu'alors  
\begin{equation}\label{estimation1}
L_0\left(\mfrac{1-m_0}{2}\right)^n \le l_n\le L_n \le L_0\left(\mfrac{1+m_0}{2}\right)^n,
\end{equation}
 et donc $m_k\to 0$  lorsque $k\to \infty$.  
 
\textbf{Majoration.}
On fixe $\alpha >1$. Quelque soit $n$ entier les intervalles de famille $J_n$ recouvrent $[0,1]$ et leurs diamètres tendent vers $0$. Estimons donc
\begin{multline*}
\H^\alpha_{\kappa,\delta_n}(\lambda)\le\sum\limits_{i=1}^{2^n} \diam_\kappa(I^n_i)^\alpha\le  c_n \sum\limits_{i=1}^{2^n} L_{n,i}^\alpha \le c_n 2^n  L_n L_n^{\alpha-1} \\ \le
L_0 c_n L_n^{\alpha-1}\prod\limits_{k=1}^{n}(1+m_k)\le c_nL_0^{\alpha}\prod\limits_{k=1}^{n}(1+m_k)\bigg(\frac{1+m_0}{2}\bigg)^{\alpha-1},
\end{multline*}
 où $c_n\to 1$ et $\delta_n\to 0$ quand $n\to \infty$.  A partir d'un certain $n_0$ on a bien $(1+m_k)\big(\frac{1+m_0}{2}\big)^{\alpha-1}<1$, $k\ge n_0$. Par conséquent, $\H^\alpha_\kappa(\lambda)=0$ et donc $\dim\lambda \le 1$.
 
\textbf{Minoration.}
On utilisera le principe de distribution de masse \cite{flaconer1, federer}.
Soit $\alpha<1$ quelconque. Pour avoir $\dim\lambda \ge 1$ il suffit donc de vérifier que pour tout $[A,B]\subset[0,1]$ on a   $$\lim\limits_{\kappa(A,B)\to 0} \mu([A,B])\kappa(A,B)^{-\alpha}=0.$$
 Avec les estimations \eqref{estimation} et  \eqref{estimation1} on démontre sans peine que 
\begin{equation*}
 \limsup\limits_{[a,b]\in J_k,k\to \infty}\mu([a,b])\kappa(a,b)^{-\alpha}=0.
\end{equation*}
Pour finir il reste à remarquer que tout intervalle de $[A,B]\subset [0,1]$ contient un certain intervalle $I^k_i\in J_k$ suffisamment large avec $4\mu(I^k_i)\ge\mu([A,B])$ (voir fig. \ref{largeintervalle}).
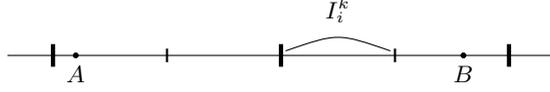
\begin{figure}[h]\centering
\begin{tikzpicture}[scale=3] 
\draw (-1.2,0) -- (1.2,0);
\draw [ultra thick] (-1,-0.05)--(-1,0.05);
\draw [ultra thick] (0,-0.05)--(0,0.05);
\draw [ultra thick] (1,-0.05)--(1,0.05);
\draw [thick] (-0.5,-0.03)--(-0.5,0.03);
\draw [thick] (0.5,-0.03)--(0.5,0.03);
\draw (0.02,0.02).. controls (0.25, 0.1) .. (0.48,0.02);
\draw (0.25,0.1) node[above]{\footnotesize $I_i^k$};
\filldraw (0.8,0)  circle (0.3pt)  node[below] {\footnotesize $B$};
\filldraw (-0.9,0) circle (0.3pt)  node[below] {\footnotesize $A$};
\end{tikzpicture}
\caption{$J_k\ni I\subset [A,B]$ et $4\mu(I^k_i)\ge\mu([A,B])$ }
\label{largeintervalle}
\end{figure}
\end{proof}
\begin{Def}
La courbe $\lambda$ est appelée $p$-\emph{Ahlfors} régulière s'il existe $0<C<\infty$ tel que tout $[A,B]\subset [0,1]$ vérifie  $ C^{-1}\kappa(A,B)^p\le \H^p_\kappa([A,B])\le C\kappa(A,B)^p$.
\end{Def}
\begin{remark}\label{ahlfors}
Suivant le raisonnement de la proposition \ref{dimensionestime} on observe que l'hypothèse $\sum_{k=0}^{\infty} m_k < \infty$ entraîne la $1$-Ahlfors régularité de $\lambda$. En effet, il suffit de remarquer que dans ce cas-là les deux produits dans l'estimation \eqref{estimation} convergent, et par conséquent $0<\tilde C^{-1}<\mu([A,B])\kappa([A,B])^{-1}<\tilde C<\infty$ pour tout intervalle non-vide $[A,B]\subset \lambda$.
\end{remark}

\begin{Def}
Soit $(X,d)$ un espace quasi-métrique et $\lambda:[0,T]\to (X,d)$ une courbe.  Pour $p>0$ on définit $p$-variation de $\lambda$ comme 
$$\operatorname{Var}^p(\lambda)=\sup\left\{\sum\limits_{i=0}^{n-1}d(\lambda(t_i),\lambda(t_{i+1}))^p \,\mid\, 0\le t_0\le \ldots \le t_n\le T\right\}.$$
\end{Def}

Une conséquence immédiate des propriétés métriques de la mesure $\mu$ construite ci-dessus sur $\lambda$ est
\begin{corollaire}
La courbe plate $\lambda$ est de $p$-variation finie $\operatorname{Var}^p\lambda< \infty$ pour tout $p>1$ et de $p$-variation infinie $\operatorname{Var}^p\lambda=\infty$ si $p<1$. De façon équivalente, la courbe plate $\lambda$ admet toujours un paramétrage hölderien $([0,1],|\cdot|)\to ([0,1],\kappa)$ avec tout exposant $\alpha< 1$ et jamais avec $\alpha>1$.  
\end{corollaire}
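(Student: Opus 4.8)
The plan is to read off both claims from the mass distribution $\mu$ and the length bound \eqref{estimation} already produced in the proof of Proposition~\ref{dimensionestime}. The object I would establish first is a two‑sided comparison between $\mu$ and $\kappa$: \emph{for every $\eta>0$ there is $C_\eta<\infty$ with}
$$C_\eta^{-1}\,\mu([A,B])^{1+\eta}\ \le\ \kappa(A,B)\ \le\ C_\eta\,\mu([A,B])^{1-\eta}\qquad\text{pour tout }[A,B]\subset[0,1].$$
The exponents $1\pm\eta$ (rather than $1$) are forced because the products $\prod_k(1\pm m_k)$ in \eqref{estimation} need not converge; but since $m_k\to0$ one has $\tfrac1k\sum_{j\le k}\log(1+m_j)\to0$, hence $\big(\prod_{j=1}^{k}(1\pm m_j)\big)^{1/k}\to1$, i.e. $L_k^{1/k}\to\tfrac12$ and $l_k^{1/k}\to\tfrac12$. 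Combined with $\mu(I^k_i)=2^{-k}$ this already gives, on a dyadic interval $I\in J_k$, the bounds $C_\eta^{-1}L_0\,\mu(I)^{1+\eta}\le\kappa\text{-long}(I)\le C_\eta L_0\,\mu(I)^{1-\eta}$.

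To pass from dyadic intervals to an arbitrary $[A,B]$ I would argue as follows. For the lower estimate on $\kappa(A,B)$: by the large–subinterval observation used at the end of the proof of Proposition~\ref{dimensionestime} (fig.~\ref{largeintervalle}), $[A,B]$ contains some $I\in J_k$ with $4\mu(I)\ge\mu([A,B])$, and by the flatness \eqref{e: kappaplate}, exactly as in the proof of Proposition~\ref{diametre}, $\kappa(A,B)\ge(1-o(1))\,\kappa\text{-long}(I)$; chaining the two inequalities yields $\kappa(A,B)\ge c_\eta\,\mu([A,B])^{1+\eta}$. For the upper estimate: choosing $k$ minimal with $2^{-k}\le\mu([A,B])$, the interval $[A,B]$ meets only a bounded number of consecutive dyadic intervals of $J_k$ (the ones strictly inside contribute mass $\le\mu([A,B])$), hence by iterating the quasi‑triangle inequality through their endpoints $\kappa(A,B)\le C\,L_k\le C_\eta\,\mu([A,B])^{1-\eta}$. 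Both estimates are first proved for $\mu([A,B])$ small and then extended to all subintervals by compactness, using that $\kappa$ is a continuous quasi‑metric (so $\kappa(A,B)>0$ for $A\neq B$).

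Granting the comparison, the two assertions are immediate. For $p>1$ pick $\eta$ with $(1-\eta)p>1$; for any subdivision $0\le t_0\le\cdots\le t_n\le1$ the intervals $[t_i,t_{i+1}]$ have disjoint interiors and $\mu$ is non‑atomic, so $\sum_i\mu([t_i,t_{i+1}])\le1$, whence $\sum_i\kappa(\lambda(t_i),\lambda(t_{i+1}))^p\le C_\eta^p\sum_i\mu([t_i,t_{i+1}])^{(1-\eta)p}\le C_\eta^p\sum_i\mu([t_i,t_{i+1}])\le C_\eta^p$, i.e. $\operatorname{Var}^p\lambda<\infty$. For $p<1$ pick $\eta$ with $(1+\eta)p<1$ and test $\operatorname{Var}^p$ on the dyadic subdivision $\pi_k$ of level $k$: each of its $2^k$ increments has $\kappa$‑length $\ge C_\eta^{-1}2^{-k(1+\eta)}$, so the corresponding sum is $\ge C_\eta^{-p}2^{k(1-(1+\eta)p)}\to\infty$. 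Finally, the reformulation in terms of Hölder parametrisations is the standard equivalence between $\operatorname{Var}^p\lambda<\infty$ and the existence of a $\tfrac1p$‑Hölder homeomorphism $([0,1],|\cdot|)\to([0,1],\kappa)$: in one direction reparametrise by the continuous strictly increasing function $v(t)=\operatorname{Var}^p(\kappa\corn[0,t])$ (normalised), which satisfies $\kappa(x,y)^p\le v(y)-v(x)$ for $x\le y$; conversely an $\alpha$‑Hölder homeomorphism with $\alpha>1$ would give $\operatorname{Var}^{1/\alpha}\lambda<\infty$ with $1/\alpha<1$, contradicting the previous computation.

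The main obstacle is the comparison above, precisely its \emph{upper} half $\kappa(A,B)\le C_\eta\,\mu([A,B])^{1-\eta}$: a general subinterval need not sit inside a small dyadic interval (it may straddle a bisection point at every level), so one genuinely has to cover it by a bounded number of consecutive dyadic pieces of the appropriate level and keep the quasi‑triangle constant along the resulting chain under control; the rest is bookkeeping with the sub‑geometric factors coming from $m_k\to0$.
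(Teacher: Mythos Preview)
Your argument is correct and follows exactly the route the paper indicates: the paper gives no detailed proof, stating only that the corollary is ``une cons\'equence imm\'ediate des propri\'et\'es m\'etriques de la mesure $\mu$ construite ci-dessus'', and your two-sided comparison $C_\eta^{-1}\mu([A,B])^{1+\eta}\le\kappa(A,B)\le C_\eta\mu([A,B])^{1-\eta}$ is precisely the property of $\mu$ implicit in the dyadic estimates \eqref{estimation}--\eqref{estimation1}. Your passage from dyadic to general intervals, the $p$-variation bounds, and the standard equivalence with H\"older reparametrisation are all sound.
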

  
 \begin{Def}\label{infvar}
Pour la courbe $\lambda$ on introduit la variation inférieure
\begin{equation*}
\Var(\lambda):=\liminf\limits_{\delta \to 0} \sum\limits_{i=0}^n \diam_\kappa([A_i,A_{i+1}])=\liminf\limits_{\delta \to 0} \sum\limits_{i=0}^n \kappa(A_i,A_{i+1}), 
\end{equation*}
où l'infimum est pris sur toutes les subdivisions $\sigma=\{0=A_0< A_1<\ldots<A_n< A_{n+1}=1\}$ de $[0,1]$ avec
 $\|\sigma\|:=\max\limits_{i=0,\ldots,n} |A_i-A_{i+1}|<\delta$ (la deuxième égalité est vraie à cause de la proposition \ref{diametre}).
\end{Def}

\begin{proposition}[formule de l'aire] \label{generaire}
Pour la courbe plate $\lambda$, il est vérifié $ \H^1_\kappa(\lambda)=\Var(\lambda)$.
\end{proposition}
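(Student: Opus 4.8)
The plan is to establish the two inequalities $\H^1_\kappa(\lambda)\le\Var(\lambda)$ and $\H^1_\kappa(\lambda)\ge\Var(\lambda)$ separately, the first being essentially formal and the second carrying the real content. Throughout I write $V(\delta):=\inf\{\sum_i\diam_\kappa([A_i,A_{i+1}])\mid\|\sigma\|<\delta\}$, so that $V$ is nonincreasing in $\delta$ and $\Var(\lambda)=\lim_{\delta\to0}V(\delta)=\sup_{\delta>0}V(\delta)$; note that each $V(\delta)$ is finite (the cover by subdivision intervals is admissible), so the claimed identity is to be read in $[0,+\infty]$, covering the degenerate cases $\H^1_\kappa(\lambda)\in\{0,\infty\}$. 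I will use two elementary continuity facts, both consequences of $\kappa$ being a continuous quasi-metric on the compact $[0,1]^2$: (i) $\diam_\kappa([s,t])\le\omega(|t-s|)$ with $\omega(0^+)=0$; and (ii) $\eta(\varepsilon):=\sup\{|s-t|\mid\kappa(s,t)\le\varepsilon\}\to0$ as $\varepsilon\to0$ (otherwise compactness and continuity of $\kappa$ would produce two distinct points at $\kappa$-distance zero). The one structural input is that iterating the flatness bound \eqref{e: kappaplate} along an ordered chain $A=u_0\le\cdots\le u_q=C$ gives $\prod_k(1-m_k)\,\kappa(A,C)\le\sum_k\kappa(u_{k-1},u_k)\le\prod_k(1+m_k)\,\kappa(A,C)$, all $m_k$ vanishing uniformly as $\kappa(A,C)\to0$; in words, at small scales $\kappa$ is almost additive along chains.

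For $\H^1_\kappa(\lambda)\le\Var(\lambda)$: given any subdivision $\sigma$ with $\|\sigma\|<\delta$, the intervals $\{[A_i,A_{i+1}]\}$ cover $\lambda$ and each has $\diam_\kappa\le\omega(\delta)$, so $\H^1_{\kappa,\omega(\delta)}(\lambda)\le\sum_i\diam_\kappa([A_i,A_{i+1}])$. Taking the infimum over such $\sigma$ yields $\H^1_{\kappa,\omega(\delta)}(\lambda)\le V(\delta)$, and letting $\delta\to0$ gives the inequality (this already forces $\H^1_\kappa(\lambda)=0$ when $\Var(\lambda)=0$).

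For $\H^1_\kappa(\lambda)\ge\Var(\lambda)$ it suffices to prove $\H^1_\kappa(\lambda)\ge V(\delta)$ for each fixed $\delta>0$, i.e. that every countable cover $\{E_i\}$ of $\lambda$ with $\diam_\kappa E_i\le\varepsilon$ obeys $\sum_i\diam_\kappa E_i\ge V(\delta)-o_\varepsilon(1)$. I would proceed as follows. Replacing each $E_i$ by its closure alters neither $\diam_\kappa E_i$ nor the covering property, so assume the $E_i$ closed and put $a_i=\min E_i$, $b_i=\max E_i$, whence $\kappa(a_i,b_i)\le\diam_\kappa E_i$ and the $[a_i,b_i]$ cover $[0,1]$ with Euclidean length $\le\eta(\varepsilon)$. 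Enlarge them slightly to \emph{open} intervals $(a_i-\beta_i,b_i+\beta_i)$ with $\beta_i<\varepsilon$ and $\kappa(a_i-\beta_i,a_i)+\kappa(b_i,b_i+\beta_i)<2^{-i}\varepsilon$ (possible by continuity of $\kappa$); these cover the compact $[0,1]$, so a \emph{finite} subfamily already does. On this finite family run a greedy chaining algorithm: pick an interval containing $0$ reaching farthest right, from its right endpoint pick the next interval reaching farthest, and so on. Finiteness precludes accumulation, so the procedure terminates at $1$ after finitely many steps and produces a subdivision $\sigma=\{0=t_0<\cdots<t_k=1\}$ with distinct indices $i_1,\dots,i_k$ such that $[t_{j-1},t_j]\subset(a_{i_j}-\beta_{i_j},b_{i_j}+\beta_{i_j})$; in particular $\|\sigma\|\le\eta(\varepsilon)+2\varepsilon<\delta$ for $\varepsilon$ small, so $\sigma$ is admissible for $V(\delta)$. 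Now by the chain form of \eqref{e: kappaplate} (used first to pass from $[t_{j-1},t_j]$ to its containing enlarged interval, then to pass from the enlarged interval back to $[a_i,b_i]$ via the three subpoints $a_i-\beta_i\le a_i\le b_i\le b_i+\beta_i$) together with Proposition \ref{diametre},
\[
 \sum_j\diam_\kappa\bigl([t_{j-1},t_j]\bigr)=(1+o(1))\sum_j\kappa(t_{j-1},t_j)\le(1+o_\varepsilon(1))\sum_i\kappa(a_i,b_i)+\varepsilon\le(1+o_\varepsilon(1))\sum_i\diam_\kappa E_i+\varepsilon .
\]
Hence $V(\delta)\le(1+o_\varepsilon(1))\sum_i\diam_\kappa E_i+\varepsilon$; taking the infimum over covers gives $\H^1_{\kappa,\varepsilon}(\lambda)\ge(1+o_\varepsilon(1))^{-1}(V(\delta)-\varepsilon)$, and letting $\varepsilon\to0$ then $\delta\to0$ concludes.

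The numerous $o_\varepsilon(1)$ bookkeeping steps are routine, all reducing to the iterated form of \eqref{e: kappaplate} and to Proposition \ref{diametre}. The step that genuinely needs care — and the only non-formal device — is the passage from an arbitrary, possibly infinite and possibly \emph{accumulating}, interval cover of $[0,1]$ to a finite subdivision whose $\kappa$-length is still controlled by the cover: a naive greedy chain on the infinite family can stall at an accumulation point, and a closed-set cover of $[0,1]$ need not admit a finite subcover. The remedy is to enlarge the hull intervals to open ones \emph{before} invoking compactness, which makes the greedy chaining legitimate; one then only has to verify that this enlargement costs a summable $o_\varepsilon(1)$ in $\kappa$-length, which is exactly what the choice of the $\beta_i$ guarantees. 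I expect that to be the crux of the write-up.
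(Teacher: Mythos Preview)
Your proof is correct and follows essentially the same route as the paper: the easy inequality $\H^1_\kappa\le\Var$ is formal, and for the reverse you reduce an arbitrary cover to a finite chain of intervals producing an admissible subdivision, then compare via Proposition~\ref{diametre}. Two minor differences are worth noting. First, you are more careful than the paper about the passage from countable covers to finite ones: the paper simply writes ``un recouvrement fini (vu la compacit\'e)'' with open $E_i$, tacitly using that open covers suffice for the Hausdorff measure, whereas you explicitly enlarge the hull intervals $[a_i,b_i]$ to open ones at controlled $\kappa$-cost before invoking compactness --- your version is the cleaner write-up of that step. Second, for the key inequality you invoke the iterated flatness bound to pass from $\kappa(t_{j-1},t_j)$ to $\kappa(a_{i_j},b_{i_j})$, but this is more than needed: since $t_{j-1},t_j\in[a_{i_j}-\beta_{i_j},b_{i_j}+\beta_{i_j}]$ you have directly $\kappa(t_{j-1},t_j)\le\diam_\kappa([a_{i_j}-\beta_{i_j},b_{i_j}+\beta_{i_j}])$, and Proposition~\ref{diametre} then gives $(1+o_\varepsilon(1))\kappa(a_{i_j}-\beta_{i_j},b_{i_j}+\beta_{i_j})$; this is how the paper proceeds and it spares you some of the $o_\varepsilon(1)$ bookkeeping you flag at the end.
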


\begin{proof}
Il est clair que $\H^1_\kappa(\lambda)\le \Var(\lambda)$ car l'infimum pour $\H^1_\kappa$ est pris sur un ensemble plus large.
Supposons donc que $\H^1_\kappa(\lambda)<\infty$ et montrons l'inégalité inverse. 

Soit $[0,1]\subset \bigcup\limits_{i=0}^N E_i$, $0<\diam_\kappa(E_i)<\delta$, $E_i$ est ouvert, un recouvrement fini (vu la compacité) de $\lambda$. Pour $E_i\not=\varnothing$ on définit $A_i=\inf\{E_i\}$ et $B_i=\sup\{E_i\}$ et on voit que $\Gamma\subset \bigcup\limits_i [A_i,B_i]$. On peut alors trouver une suite de points $0=C_0< C_1<\ldots<C_n< C_{n+1}=1$ tel que tout l'intervalle $[C_i,C_{i+1}]$ soit contenu dans un certain $[A_k,A_{k+1}]$ et que deux intervalles successifs n'appartiennent pas à un même $[A_k,A_{k+1}]$. 
D'après la proposition \ref{diametre} on a que $\diam_\kappa([A_i,B_i])\le \kappa(A_i,B_i)(1+\varepsilon(\delta)) \le \diam_\kappa(E_i)(1+\varepsilon(\delta))$, où $\varepsilon(\delta)\to 0$ lorsque $\delta\to 0$.
Finalement on obtient que
\begin{equation*}
 \sum\limits_{i=0}^n \kappa(C_i,C_{i+1})\le  \sum\limits_k \diam_\kappa([A_k,A_{k+1}]) \le (1+\varepsilon(\delta)) \sum_{k} \diam_\kappa(E_k),
\end{equation*}
d'où on déduit que $\Var(\lambda)\le \H^1_\kappa(\lambda)$. 
\end{proof}

\begin{lemma}[formule de l'aire: cas régulier]\label{aireregulier}
Supposons que $\sum_{i=0}^\infty m(2^{-i})< \infty$. Alors la courbe plate $\lambda$ est $1$-Ahlfors régulière, et en outre
\begin{align}\label{youngestime}
&\nonumber \H^1_\kappa(\lambda)=\lim\limits_{\|\sigma\|\to 0} \sum_{k=0}^n \kappa(t_k, t_{k+1}), \quad \sigma=\{0=t_0<t_1<\ldots<t_{n+1}=1\}, \\
& | \H^1_\kappa([s,t])-\kappa(s,t)|\le C\kappa(s,t) M(\kappa(s,t)),\quad M(l):=\sum_{i=0}^\infty m(Cl2^{-i}),  [s,t]\subset \lambda, C<\infty.
\end{align}
\end{lemma}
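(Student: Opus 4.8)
The plan is to revisit the dyadic construction from the proof of Proposition \ref{dimensionestime} and exploit the convergence of the products $\prod_k(1\pm m_k)$ under the hypothesis $\sum_i m(2^{-i})<\infty$. First I would observe that this hypothesis indeed forces $\sum_k m_k<\infty$: by the estimate \eqref{estimation1} one has $L_{k,i}\le L_0((1+m_0)/2)^k$, hence $L_{k,i}\le C2^{-k}$ up to fixing a finite initial generation so that $m_0<1$, and the monotonicity $m(t)\searrow 0$ gives $m_k=\max_i m(L_{k,i})\le m(c2^{-k}\cdot\text{something})$ — more precisely, after reindexing one bounds $m_k$ by $m$ evaluated at a geometrically decreasing argument comparable to $2^{-k}$, so $\sum_k m_k\le C\sum_i m(C2^{-i})<\infty$. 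By Remark \ref{ahlfors} this already yields $1$-Ahlfors regularity of $\lambda$, which is the first assertion.

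Next I would prove the quantitative two-sided bound for $\H^1_\kappa([s,t])$. Fix an interval $[s,t]\subset\lambda$; rescaling the construction to start from $[s,t]$ instead of $[0,1]$, run the same dyadic subdivision, getting generations $J_k([s,t])$ with lengths $L_{k,i}$ satisfying $L_{0,1}=\kappa(s,t)$ and the recursive near-halving $|L_{k,i}-2L_{k+1,2i-1}|\le m_{k,i}L_{k,i}$. The key point is that the endpoints of the dyadic intervals form, in the limit, a dense subset of $[s,t]$, so by continuity of $\kappa$ and Proposition \ref{diametre},
\begin{equation*}
\H^1_\kappa([s,t])=\lim_{k\to\infty}\sum_{i=1}^{2^k}\diam_\kappa(I^k_i)=\lim_{k\to\infty}\sum_{i=1}^{2^k}L_{k,i},
\end{equation*}
using that $\sum_i\diam_\kappa(I^k_i)$ is monotone-ish in $k$ up to factors $(1+o(1))$ (the superadditivity built into \eqref{e: kappaplate}). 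Now by \eqref{estimation} one controls $\sum_i L_{k,i}$ between $\kappa(s,t)\prod_{j=1}^k(1-m_j)$ and $\kappa(s,t)\prod_{j=1}^k(1+m_j)$; taking $k\to\infty$ and using $|\prod(1\pm m_j)-1|\le C\sum m_j\le C\,M(\kappa(s,t))$ (where $M(l)=\sum_i m(Cl2^{-i})$ accounts for the argument of $m$ being of size $\sim\kappa(s,t)2^{-i}$ in the $[s,t]$-rescaled construction) gives exactly the estimate $|\H^1_\kappa([s,t])-\kappa(s,t)|\le C\kappa(s,t)M(\kappa(s,t))$.

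Finally, for the first displayed formula in \eqref{youngestime} — that the full limit over subdivisions (not just $\liminf$, and not restricted to dyadic ones) of $\sum_k\kappa(t_k,t_{k+1})$ equals $\H^1_\kappa(\lambda)$ — I would argue as follows. Given any subdivision $\sigma=\{0=t_0<\dots<t_{n+1}=1\}$ with $\|\sigma\|$ small, additivity of $\H^1_\kappa$ over the intervals $[t_k,t_{k+1}]$ gives $\H^1_\kappa(\lambda)=\sum_k\H^1_\kappa([t_k,t_{k+1}])$, and applying the just-proved estimate to each piece yields
\begin{equation*}
\Big|\sum_k\kappa(t_k,t_{k+1})-\H^1_\kappa(\lambda)\Big|\le C\sum_k\kappa(t_k,t_{k+1})\,M(\kappa(t_k,t_{k+1}))\le C\,M(\|\kappa\|_\sigma)\sum_k\kappa(t_k,t_{k+1}),
\end{equation*}
where $\|\kappa\|_\sigma=\max_k\kappa(t_k,t_{k+1})\to0$ as $\|\sigma\|\to0$ by continuity of $\kappa$. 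Since $\sum_k\kappa(t_k,t_{k+1})$ stays bounded (it is $\le \Var^1(\lambda)<\infty$, or one bootstraps from the estimate itself), the right side tends to $0$, proving the limit exists and equals $\H^1_\kappa(\lambda)$. The main obstacle I anticipate is the bookkeeping in passing from the hypothesis $\sum_i m(2^{-i})<\infty$ to a clean bound on the products $\prod(1\pm m_{k,i})$ that is uniform in the starting interval $[s,t]$ and correctly produces the weight $M(\kappa(s,t))=\sum_i m(C\kappa(s,t)2^{-i})$ — i.e., tracking that in the rescaled construction the relevant values of $m$ are sampled at arguments comparable to $\kappa(s,t)2^{-i}$ rather than $2^{-i}$, which is where the monotonicity of $m$ and the Ahlfors-regularity bounds \eqref{estimation1} must be combined carefully.
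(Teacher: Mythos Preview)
Your proposal is correct, but it takes a different route from the paper's proof. The paper argues as follows: once $1$-Ahlfors regularity is obtained from Remark~\ref{ahlfors}, the curve $\lambda$ admits a bi-Lipschitz parametrization by $([0,T],|\cdot|)$, after which the flatness condition~\eqref{e: kappaplate} transfers to an estimate of the form $|\kappa(s,t)+\kappa(h,t)-\kappa(s,h)|\le Cm(|s-t|)\,|s-t|$ in the Euclidean parameter. The paper then invokes the \emph{sewing lemma} (Lemma~\ref{couture}, from \cite{feyel}) as a black box with $\mu=\kappa$ and $\omega(t)=Ct\,m(t)$; this directly yields both the existence of the full limit of Stieltjes sums and the quantitative bound $|\nu(t)-\nu(s)-\kappa(s,t)|\le\sum_i 2^i\omega(|t-s|2^{-i})=C|t-s|\sum_i m(C|t-s|2^{-i})$, which becomes~\eqref{youngestime} after undoing the bi-Lipschitz change of parameter.

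Your approach, by contrast, stays entirely within the dyadic construction of Proposition~\ref{dimensionestime} and effectively re-derives the content of the sewing lemma by hand in this specific setting. This is more self-contained but requires more bookkeeping. One minor organizational point: in your second step you assert $\H^1_\kappa([s,t])=\lim_k\sum_i L_{k,i}$ before it is fully justified (the ``monotone-ish'' remark is not a proof). The clean fix is to reorder: define $\nu([s,t]):=\lim_k\sum_i L_{k,i}$, establish $|\nu([s,t])-\kappa(s,t)|\le C\kappa(s,t)M(\kappa(s,t))$ from the product bounds~\eqref{estimation}, and only then identify $\nu$ with $\H^1_\kappa$ via your third step combined with Proposition~\ref{generaire} (the boundedness of $\sum_k\kappa(t_k,t_{k+1})$ that you need there follows, non-circularly, from the Ahlfors regularity already proved). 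With that adjustment your argument goes through. The paper's route is shorter because the sewing lemma packages exactly this dyadic telescoping once and for all; yours has the advantage of not leaving the framework already set up in Proposition~\ref{dimensionestime}.
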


\begin{remark*}
L'hypothèse $\sum_{i=0}^\infty m(2^{-i})< \infty$ est un analogue (uniformisé) de la sommabilité de nombres de Jones dans le problème de voyageur de commerce \cite{jones}. En général, en l'absence de cette hypothèse on peut avoir $\H^1_\kappa(\lambda)=0$ ou $\H^1_\kappa(\lambda)=\infty$.
\end{remark*}

\begin{proof}
Comme on a supposé $m$ monotone, $\sum_k m(2^{-k})<\infty$ entraîne que $\sum_k m(r^{-k})<\infty$ pour tout $1<r<\infty$.
D'après la remarque \ref{ahlfors}, $\lambda$ est $1$-Ahlfors régulière et admet donc un paramétrage bi-lipschitzien $([0,T],|\cdot|)\longrightarrow ([0,1],\kappa)$. De plus, on peut toujours choisir ce paramétrage de sorte que \eqref{e: kappaplate} se réécrive 
\begin{equation*}
|\kappa(s,t)+\kappa(h,t)-\kappa(s,h)|\le C m(|s-t|)|s-t|, \quad h\in [s,t]\subset [0,T],\ C<\infty. 
\end{equation*}
Compte tenue de la formule de l'aire (proposition \ref{generaire}) pour obtenir le résultat énoncé il suffit d'appliquer maintenant 
\begin{lemma}[de la couture \cite{feyel}] \label{couture}
Soit $\mu:[0,1]^2\to \r$ une fonction continue telle que 
\begin{equation*}
|\mu(a,b)+\mu(b,c)-\mu(a,c)|\le \omega(|a-c|), \quad b\in [a,c],
\end{equation*}
pour une fonction $\omega(t)\searrow 0$ quand $t\searrow0$ et $\sum_{i=0}^\infty \omega(2^{-i})< \infty$. Alors il existe une unique (à une constante additive près) fonction $\nu:[0,1]\to \r$ telle que 
\begin{equation*}
|\nu(b)-\nu(a)-\mu(a,b)|\le \sum_{i=0}^\infty 2^i\omega(|b-a|2^{-i}).
\end{equation*}
 En outre, les sommes de Stieltjes $\sum_{i=0}^N \mu(t_i,t_{i+1})$, où $\sigma=\{a=t_0<t_1<\ldots<t_{N+1}=b\}$ est une subdivision de $[a,b]$, convergent vers $\nu(b)-\nu(a)$ lorsque $\|\sigma\|\to 0$.
\end{lemma}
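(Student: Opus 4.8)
The plan is to realise $\nu$ as the limit of the Stieltjes sums of $\mu$ themselves — first along dyadic subdivisions, where the telescoping is transparent, and then along all subdivisions by a pigeonhole point-removal argument — and to deduce continuity and uniqueness formally. From the cocycle inequality one first reads off $\mu(a,a)=0$. For $a\le b$, let $\Phi_n(a,b)$ be the Stieltjes sum of $\mu$ over the uniform subdivision of $[a,b]$ into $2^n$ equal pieces. Passing from level $n$ to level $n+1$ inserts the midpoint of each of the $2^n$ intervals of length $2^{-n}(b-a)$, and the cocycle bound controls the change on each of them by $\omega(2^{-n}(b-a))$; hence $|\Phi_{n+1}(a,b)-\Phi_n(a,b)|\le 2^n\omega(2^{-n}(b-a))$, the sequence $(\Phi_n(a,b))_n$ is Cauchy under the summability hypothesis, and its limit $\Phi(a,b)$ satisfies
$$|\Phi(a,b)-\mu(a,b)|=|\Phi(a,b)-\Phi_0(a,b)|\le\sum_{n=0}^{\infty}2^n\omega\big(2^{-n}(b-a)\big).$$

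Next I would pass to arbitrary subdivisions. For $\pi:\,a=t_0<\dots<t_N=b$ write $J_\pi:=\sum_{i}\mu(t_i,t_{i+1})$. If $N\ge 2$, some interior node $t_j$ has $t_{j+1}-t_{j-1}\le 2(b-a)/(N-1)$ (each of the $N$ gaps is counted at most twice in $\sum_j(t_{j+1}-t_{j-1})$); deleting it changes $J_\pi$ by $\mu(t_{j-1},t_j)+\mu(t_j,t_{j+1})-\mu(t_{j-1},t_{j+1})$, of modulus at most $\omega(2(b-a)/(N-1))$. Iterating down to $\{a,b\}$ gives $|J_\pi-\mu(a,b)|\le\sum_{k=1}^{N-1}\omega(2(b-a)/k)\le\Theta(b-a)$, where $\Theta(s):=C\sum_{n\ge 0}2^n\omega(2^{-n}s)$ (group $k\in[2^n,2^{n+1})$ and use monotonicity of $\omega$), and $\Theta(s)=o(s)$ as $s\to 0$. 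Applying this on each subinterval of $\pi$ relative to a refinement $\widetilde\pi\supseteq\pi$ yields $|J_\pi-J_{\widetilde\pi}|\le\sum_i\Theta(t_{i+1}-t_i)\to 0$ as $\|\pi\|\to 0$; so the Stieltjes sums $J_\pi$ form a Cauchy net, their limit coincides with $\Phi(a,b)$ (the dyadic subdivisions have vanishing mesh), and $\Phi$ is additive, $\Phi(a,c)=\Phi(a,b)+\Phi(b,c)$, since $\Phi(a,c)$ may be computed along subdivisions that contain $b$. Setting $\nu(t):=\Phi(0,t)$ gives $\nu(b)-\nu(a)=\Phi(a,b)$, which turns the displayed bound into the asserted estimate and yields the convergence $J_\pi\to\nu(b)-\nu(a)$.

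Finally I would note that continuity of $\nu$ follows from $|\nu(t)-\nu(t_0)|\le|\mu(t_0,t)|+\Theta(|t-t_0|)$ together with the continuity of $\mu$ and $\mu(t_0,t_0)=0$; and if $\nu_1,\nu_2$ both satisfy the estimate, then $\psi:=\nu_1-\nu_2$ obeys $|\psi(b)-\psi(a)|\le 2\Theta(|b-a|)=o(|b-a|)$, which summed over a fine subdivision of any $[a,b]$ forces $\psi(b)=\psi(a)$, hence uniqueness up to an additive constant.

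I expect the main obstacle to be the second paragraph: upgrading the easy dyadic convergence to convergence along all subdivisions, together with the additivity of $\Phi$ it entails. This rests on the pigeonhole deletion estimate and, crucially, on the comparison modulus $\Theta(s)$ being $o(s)$ — i.e. on the Jones-type summability encoded in the hypothesis on $\omega$. Everything else is routine bookkeeping.
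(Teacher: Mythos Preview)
The paper does not supply its own proof of this lemma: immediately after stating it, the author writes that it is proved in \cite{feyel} for $\omega(t)=Kt^{1+\alpha}$ and that ``sa d\'emonstration se modifie facilement'' to the present hypotheses, referring also to \cite{younggeneral,burkill}. Your argument---dyadic limit plus Young's point-removal to pass to arbitrary subdivisions, then additivity, continuity and uniqueness---is precisely the standard proof one finds in those references, and it is correct.

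One point is worth making explicit. Your key step ``$\Theta(s)=o(s)$'' (equivalently $\sum_i\Theta(t_{i+1}-t_i)\to 0$), on which both the Cauchy property of general Stieltjes sums and the uniqueness argument rest, amounts to
\[
\Theta(2^{-m})/2^{-m}=\sum_{k\ge m}2^{k}\omega(2^{-k})\longrightarrow 0,
\]
which requires $\sum_k 2^{k}\omega(2^{-k})<\infty$. This is strictly stronger than the hypothesis $\sum_k\omega(2^{-k})<\infty$ as literally written in the statement; indeed already the dyadic Cauchy estimate $|\Phi_{n+1}-\Phi_n|\le 2^{n}\omega(2^{-n})$ needs it. This is not a defect of your proof but an imprecision in the statement: the displayed conclusion bound $\sum_i 2^{i}\omega(|b-a|2^{-i})$ is vacuous unless the stronger summability holds, and in the paper's sole application (Lemma~\ref{aireregulier}) one has $\omega(t)=C\,m(t)\,t$ with $\sum_i m(2^{-i})<\infty$, i.e.\ exactly $\sum_i 2^{i}\omega(2^{-i})<\infty$. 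It would be worth saying this explicitly when you invoke $\Theta(s)=o(s)$.
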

Remarquons que ce lemme a été démontré dans \cite{feyel} dans le cas où $\omega(t)=Kt^{1+\alpha}$, $\alpha>0$, mais sa démonstration se modifie facilement pour couvrir les hypothèses que nous avons énoncées ci-dessus (voir également \cite{younggeneral, burkill}). 
\end{proof}
\subsubsection{Application aux courbes verticales}
Il est temps de déduire quelques propriétés des lignes de niveau résultant des con\-si\-dé\-ra\-tions sur les quasi-métriques plates. 

Soit, comme au début de la section, $\Gamma=\f\cap U$  une courbe verticale paramétrée de façon croissante pour une application $F\in C_H^1(\h,\r^2)$ avec la différentielle $D_h F$ surjective. On note $x_i, y_i,z_i$ les coordonnées $x,y,z$ respectivement du point $A_i\in \Gamma\Subset \h$. Soit $\sigma=\{A=A_0<A_1<\ldots<A_{N}<A_{N+1}=B\}$ une subdivision sur $\Gamma=[A,B]$. On voit que
\begin{multline*}
\sum_{i=0}^N d_\infty(A_i,A_{i+1})^2=\sum_{i=0}^{N} z_{i+1}-z_i +2 (x_{i}y_{i+1}-y_{i}x_{i+1})=\\
= z(B)-z(A) +2\sum_{i=0}^{N} x_{i}(y_{i+1}-y_{i})- y_{i}(x_{i+1}-x_{i}). 
\end{multline*}
D'après \eqref{dcarreplate}, $(\Gamma, d_\infty^2\corn \Gamma)$ est une courbe plate, et comme la mesure de Hausdorff est invariante par un plongement isométrique, on déduit des propositions \ref{dimensionestime} et \ref{generaire} 
\begin{corollaire}\label{airegamma}
La courbe $\Gamma$ est de dimension sous-riemannienne égale à $2$, $\dim_h \Gamma=2$, et   
\begin{multline}\label{airedcarre} 
\H^2_\infty(\Gamma)=z(B)-z(A)+2\liminf\limits_{\|\sigma\| \to 0} \sum_{i=0}^{N} (x_{i}y_{i+1}-y_{i}x_{i+1})\\ =:\int\limits_\Gamma \,dz+2\liminf\limits_{\|\sigma\| \to 0}\int_\sigma (x\,dy-y\,dx) .
\end{multline}
\end{corollaire}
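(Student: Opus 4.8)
The plan is to obtain the statement by transporting the results on abstract flat quasi-metric curves from Subsection \ref{ss: quasiplate} to $\Gamma$, through the quasi-metric $\kappa:=d_\infty^2\corn\Gamma$. The key preliminary observation, already recorded in Remark \ref{r: dcarreplat} via inequality \eqref{dcarreplate}, is that $\kappa$ is a flat quasi-metric, i.e.\ it satisfies \eqref{e: kappaplate}; moreover $\kappa$ is continuous (as a composition of the continuous parametrization of $\Gamma$ with $d_\infty$). Since $\Gamma$ is homeomorphic to an interval (Corollary \ref{reifenbergcurve}), this homeomorphism is tautologically an isometry of the quasi-metric space $(\Gamma,\kappa)$ onto a flat curve $\lambda=([0,1],\kappa)$, and Hausdorff measures are invariant under isometric embeddings, so Propositions \ref{dimensionestime}, \ref{diametre} and \ref{generaire} apply verbatim to $(\Gamma,\kappa)$.

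For the dimension statement, I would first record the elementary identity $\diam_{d^2}E=(\diam_d E)^2$, valid for any metric $d$: a covering with $d$-diameters $\le\varepsilon$ is exactly a covering with $d^2$-diameters $\le\varepsilon^2$, and $(\diam_{d^2}E_i)^k=(\diam_d E_i)^{2k}$, hence $\H^\alpha_d=\H^{\alpha/2}_{d^2}$ and therefore $\dim_d=2\dim_{d^2}$. Applying this with $d=d_\infty$ on $\Gamma$ and invoking Proposition \ref{dimensionestime} (which gives $\dim_\kappa\lambda=1$) yields $\dim_h\Gamma=2$.

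For the area formula, Proposition \ref{generaire} gives $\H^1_\kappa(\lambda)=\Var(\lambda)$, and the same identity $\H^\alpha_d=\H^{\alpha/2}_{d^2}$ with $\alpha=2$ identifies $\H^1_\kappa(\lambda)$ with $\H^2_\infty(\Gamma)$. By Definition \ref{infvar} one has $\Var(\lambda)=\liminf_{\|\sigma\|\to0}\sum_i\kappa(A_i,A_{i+1})=\liminf_{\|\sigma\|\to0}\sum_i d_\infty(A_i,A_{i+1})^2$. It then remains to expand each summand: the subdivision being increasing, \eqref{e: contactpositif} gives $d_\infty(A_i,A_{i+1})^2=z_{i+1}-z_i+2(x_iy_{i+1}-y_ix_{i+1})$, so the $z$-part telescopes to $z(B)-z(A)$, while, using $x_iy_{i+1}-y_ix_{i+1}=x_i(y_{i+1}-y_i)-y_i(x_{i+1}-x_i)$, the remaining term becomes the Riemann--Stieltjes sum $\int_\sigma(x\,dy-y\,dx)$. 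This is precisely the computation displayed just before the statement, and passing to the $\liminf$ gives \eqref{airedcarre}.

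The only point that deserves genuine care — though it is not a real obstacle — is the matching of the two filters of subdivisions: the mesh $\|\sigma\|$ in Definition \ref{infvar} is measured in the $[0,1]$-parameter, whereas in \eqref{airedcarre} it is $\max_i d_\infty(A_i,A_{i+1})$. These filters are cofinal in one another because $\Gamma$ is compact, so the homeomorphism $([0,1],|\cdot|)\to(\Gamma,d_\infty)$ and its inverse are uniformly continuous; hence $\max_i|A_i-A_{i+1}|\to0$ if and only if $\max_i d_\infty(A_i,A_{i+1})\to0$, and the two $\liminf$'s coincide. Everything else has already been prepared in Subsection \ref{ss: quasiplate} and in Remark \ref{r: dcarreplat}.
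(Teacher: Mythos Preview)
Your proposal is correct and follows exactly the paper's own approach: invoke Remark \ref{r: dcarreplat} to see that $(\Gamma,d_\infty^2)$ is a flat curve, then apply Propositions \ref{dimensionestime} and \ref{generaire} together with the explicit computation of $\sum_i d_\infty(A_i,A_{i+1})^2$ displayed just before the corollary. You spell out two details the paper leaves implicit --- the identity $\H^\alpha_d=\H^{\alpha/2}_{d^2}$ linking $\dim_\kappa=1$ to $\dim_h=2$, and the cofinality of the two subdivision filters --- but these are genuine small gaps you are right to fill, not a different method.
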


En revanche, la mesure de Hausdorff sphérique d'un sous-ensemble  dépend en général de l'espace ambiant.
\begin{proposition} Toute courbe verticale $\Gamma$ vérifie  $\H^2_\infty\corn \Gamma=\frac{1}{2}\S^2_\infty \corn \Gamma$.
\end{proposition}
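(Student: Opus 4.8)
Proof proposal for $\H^2_\infty\corn\Gamma = \tfrac12 \S^2_\infty\corn\Gamma$.

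The plan is to establish the two inequalities $\S^2_\infty(\Gamma)\le 2\,\H^2_\infty(\Gamma)$ and $2\,\H^2_\infty(\Gamma)\le \S^2_\infty(\Gamma)$ separately; the factor $2$ comes from one geometric fact. An ambient metric ball $B_\infty(P,r)\subset\h$ has diameter $2r$, but its trace $\Gamma\cap B_\infty(P,r)$ on a vertical curve is ``thin'': since $\Gamma$ is tangent to the vertical axis (Whitney condition \eqref{whitneycond}), one has $\diam_\infty(\Gamma\cap B_\infty(P,r))=\sqrt2\,r\,(1+o(1))$ as $r\to0$, and squaring turns the ratio $2/\sqrt2$ into $2$. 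Below I use that $\Gamma$ is compact, so all the $o(\cdot)$'s are uniform, together with the area formula $\H^2_\infty(\Gamma)=\Var(\Gamma)=\liminf_{\|\sigma\|\to0}\sum_i d_\infty(A_i,A_{i+1})^2$ (Proposition \ref{generaire}, Corollaire \ref{airegamma}) and the flatness estimate \eqref{dcarreplate} of Remark \ref{r: dcarreplat}.

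For $\S^2_\infty(\Gamma)\le 2\,\H^2_\infty(\Gamma)$ I start from an ordered subdivision $\sigma=\{A_0<\dots<A_{n+1}\}$ of $\Gamma$ of small mesh. On each non-degenerate subarc $[A_i,A_{i+1}]$ the continuous function $C\mapsto d_\infty(A_i,C)^2-d_\infty(C,A_{i+1})^2$ is negative at $A_i$, positive at $A_{i+1}$, hence vanishes at some $M_i\in[A_i,A_{i+1}]$; applying \eqref{dcarreplate} to the triples $A_i\le M_i\le A_{i+1}$ and then $A_i\le C\le M_i$ (resp.~$M_i\le C\le A_{i+1}$) yields $d_\infty(A_i,M_i)^2=d_\infty(M_i,A_{i+1})^2=\tfrac12 d_\infty(A_i,A_{i+1})^2(1+o(1))$ and $d_\infty(C,M_i)^2\le\tfrac12 d_\infty(A_i,A_{i+1})^2(1+o(1))$ for all $C\in[A_i,A_{i+1}]$, uniformly as $\|\sigma\|\to0$. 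Hence $[A_i,A_{i+1}]$ lies in the ambient ball $\bar B_\infty\bigl(M_i,\tfrac{1}{\sqrt2}d_\infty(A_i,A_{i+1})(1+o(1))\bigr)$, whose diameter is $\sqrt2\,d_\infty(A_i,A_{i+1})(1+o(1))$. These $n+1$ balls cover $\Gamma$, so $\S^2_{\infty,\varepsilon}(\Gamma)\le\sum_i 2\,d_\infty(A_i,A_{i+1})^2(1+o(1))$ once $\|\sigma\|$ is small enough relative to $\varepsilon$; letting $\|\sigma\|\to0$, then $\varepsilon\to0$, and invoking the area formula gives the bound.

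For $2\,\H^2_\infty(\Gamma)\le \S^2_\infty(\Gamma)$ everything reduces to the estimate $\diam_\infty(\Gamma\cap B_\infty(P,r))\le\sqrt2\,r\,(1+o(1))$ as $r\to0$, uniformly in $P$ with $\Gamma\cap B_\infty(P,r)\neq\varnothing$: from any cover of $\Gamma$ by ambient balls of diameter $\le\varepsilon$ one then gets a cover of $\Gamma$ by the traces $\Gamma\cap B_\infty(P_i,r_i)$, of diameters $\le\sqrt2\,r_i(1+o(1))$, whence $\H^2_{\infty,\varepsilon'}(\Gamma)\le\sum_i 2r_i^2(1+o(1))=\tfrac12\sum_i(\diam_\infty B_\infty(P_i,r_i))^2(1+o(1))$, and a routine limiting argument gives $\H^2_\infty(\Gamma)\le\tfrac12\,\S^2_\infty(\Gamma)$. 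To prove the estimate, take $Q_1\le Q_2$ in $\Gamma\cap B_\infty(P,r)$; by the triangle inequality $d_\infty(Q_1,Q_2)<2r$, so for $r$ small the Whitney condition \eqref{whitneycond} gives both $\|\pi(Q_1)-\pi(Q_2)\|=o(r)$ and $d_\infty(Q_1,Q_2)^2=|z(Q_1^{-1}Q_2)|$ (the horizontal part of $Q_1^{-1}Q_2$ being negligible against $d_\infty(Q_1,Q_2)$). Writing $Q_1^{-1}Q_2=(Q_1^{-1}P)(P^{-1}Q_2)$ and using the Heisenberg group law for the $z$-coordinate,
$$z(Q_1^{-1}Q_2)=z(Q_1^{-1}P)+z(P^{-1}Q_2)+2\bigl(x(P^{-1}Q_2)\,y(Q_1^{-1}P)-x(Q_1^{-1}P)\,y(P^{-1}Q_2)\bigr),$$
where $|z(Q_1^{-1}P)|\le d_\infty(Q_1,P)^2<r^2$ and $|z(P^{-1}Q_2)|<r^2$. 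The last term is antisymmetric and bilinear in $\pi(Q_1^{-1}P)=\pi(P)-\pi(Q_1)$ and $\pi(P^{-1}Q_2)=\pi(Q_2)-\pi(P)$; since $\pi(P)-\pi(Q_1)=-\bigl(\pi(Q_2)-\pi(P)\bigr)+\bigl(\pi(Q_2)-\pi(Q_1)\bigr)$ with $\|\pi(Q_2)-\pi(Q_1)\|=o(r)$ and $\|\pi(Q_2)-\pi(P)\|<r$, antisymmetry makes it $o(r)\cdot O(r)=o(r^2)$. Therefore $d_\infty(Q_1,Q_2)^2\le 2r^2+o(r^2)$, which is the desired estimate.

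Putting the two inequalities together yields $\S^2_\infty(\Gamma)=2\,\H^2_\infty(\Gamma)$, that is $\H^2_\infty\corn\Gamma=\tfrac12\,\S^2_\infty\corn\Gamma$. The main obstacle is the thinness estimate of the second part: one must improve the trivial bound $\diam_\infty(\Gamma\cap B_\infty(P,r))\le2r$ to $\sqrt2\,r(1+o(1))$, and this succeeds only because the verticality of $\Gamma$ — nearby points of $\Gamma$ having horizontal projections infinitesimally closer than their $d_\infty$-distance — annihilates the twisting term of the group law.
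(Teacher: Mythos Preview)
Your proof is correct and follows essentially the same route as the paper's: both rest on the pair of geometric estimates
\[
\diam_\infty\bigl(\Gamma\cap B_\infty(P,r)\bigr)\le\sqrt2\,r\,(1+o(1))
\qquad\text{and}\qquad
[A,B]\subset\bar B_\infty\!\Bigl(M,\tfrac{1}{\sqrt2}\,d_\infty(A,B)(1+o(1))\Bigr)
\]
for a midpoint $M\in[A,B]$ with $d_\infty(A,M)=d_\infty(M,B)$, and both derive these from the Whitney condition \eqref{whitneycond} together with the flatness relation \eqref{dcarreplate}. Your computation of the cross term via antisymmetry is exactly the determinant identity $\det(\pi(B),\pi(A))-\det(\pi(B),\pi(C))+\det(\pi(A),\pi(C))=\det(\pi(B)-\pi(A),\pi(A)-\pi(C))$ used in the paper.

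One small remark on the final step. The assertion ``$\S^2_\infty(\Gamma)=2\H^2_\infty(\Gamma)$, that is $\H^2_\infty\corn\Gamma=\tfrac12\S^2_\infty\corn\Gamma$'' is not an equivalence: equality of the total masses does not give equality of the restricted \emph{measures}, and in the irregular case both sides can be infinite on every subinterval. Your Part~2 argument already yields $\H^2_\infty(E)\le\tfrac12\S^2_\infty(E)$ for every $E\subset\Gamma$. For the reverse inequality you routed through the area formula, which only treats $\Gamma$ itself; but your midpoint-ball construction gives it for arbitrary $E\subset\Gamma$ directly: in any covering $\{E_i\}$ of $E$, replace each $E_i$ by the interval $[\inf(E_i\cap\Gamma),\sup(E_i\cap\Gamma)]$ (whose $d_\infty$-endpoints are at distance $\le\diam_\infty E_i$), then by the ball of radius $\tfrac{1}{\sqrt2}\diam_\infty E_i(1+o(1))$ that you produced. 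This yields $\S^2_\infty(E)\le2\H^2_\infty(E)$ without invoking the area formula, and the equality of restricted measures follows.
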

\begin{proof}
 Il suffit de démontrer deux inégalités. D'une part,  $\diam_\infty(B_\infty(C,r)\cap \Gamma)\le \sqrt{2}r(1+o(1))$, où le petit-$o$ lorsque $r\to 0$ est uniforme en $C\in \h$. En effet, pour des points $A,B\in \Gamma\cap B_\infty(C,r)$, $A\le B$, on utilise la condition de Whitney sur  $\Gamma$ pour déduire
\begin{multline*}
d_\infty(A,B)^2= z(B)-z(A)-2\det(\pi(B),\pi(A))=\big(z(B)-z(C)-2\det(\pi(B),\pi(C))\big)\\
-\big(z(A)-z(C)-2\det(\pi(A),\pi(C))\big)- 2 \big(\det(\pi(B),\pi(A))-\det(\pi(B),\pi(C))+\det(\pi(A),\pi(C))\big)\\
\le d_\infty(B,C)^2+d_\infty(A,C)^2+2|\det(\pi(B)-\pi(A), \pi(A)-\pi(C))|\le 2r^2+o(r^2).
\end{multline*}
 D'autre part, tout intervalle $[A,B]\subset \Gamma$ est contenu dans une boule de rayon $r=\mfrac{d_\infty(A,B)}{\sqrt{2}}(1+o(1))$, le petit-$o$ lorsque $A\to B$ est uniforme en $A,B\in \Gamma$. En effet, étant donnés deux points $A,B\in \Gamma$ on prend un point $C\in \Gamma$ tel que $d_\infty(A,C)=d_\infty(B,C):=\tilde r$. On constate que $[A,B]\subset B_\infty(C,r)$, où $r=\max\{\diam_\infty([A,C]), \diam_\infty([B,C])\}$.
Or, $2\tilde r^2=d(A,B)^2(A+o(1))$ d'après \eqref{dcarreplate}, et  $\frac{\tilde r}{r}\to 1$ d'après la proposition \ref{diametre} (uniformément lorsque $A\to B$).
   \end{proof}

\begin{remark}\label{rapportdeuxdist}
Si au lieu de $d_\infty$ on considère une autre quasi-métrique $d_\rho$ engendrée par une norme homogène $\rho$ sur $\h$, on aura  $\H^2_\rho(\Gamma)=c_\rho\H^2_\infty(\Gamma)$, où $c_\rho=\rho(0,0,1)$ est le coefficient de dilatation de $\rho$ par rapport à $\rho_\infty$ sur l'axe vertical. En effet, à cause de la condition de Whitney sur $\Gamma$, $\diam_\rho E=c_\rho\diam_\infty E(1+o(1))$ lorsque $\diam_\infty  E \to 0$ pour $E\subset \Gamma$, d'où la remarque découle facilement. 
\end{remark}
\begin{remark*}
Pour obtenir la constante $\frac{1}{2}$ comme le rapport $\mfrac{\H^2_\infty}{\S^2_\infty}\corn \Gamma$ il suffit également de considérer les deux mesures sur l'axe vertical $Oz$.
\end{remark*}

\subsection{Cas régulier} 

\begin{Def}
On dira que la courbe verticale $\Gamma$ est fortement régulière au sens d'Ahlfors (ou $f\alpha$-régulière), si uniformément pour les intervalles $[C,D]\subset \Gamma$ on a une estimation 
\begin{equation*}
 \H^2_\infty([C,D])= d_\infty(C,D)^2+o(d_\infty(C,D)^2) \text{ lorsque  $C\to D$}.
\end{equation*}
\end{Def}
On  rappelle 
\begin{Def}\label{d: stieltjesinteg}
Soient  $x,y\in C^0([0,T],\r)$ deux fonctions continues.  L'intégrale de Stieltjes $ \int_0^T x\, dy$ est définie comme la limite (lorsqu'elle existe et finie) des sommes $$ \int_\sigma x\,dy:=\sum \limits_{i=0}^{N} x(t_i)\big(y(t_{i+1})-y(t_i)\big),$$
comptées sur toutes les subdivisions $\sigma:=\{0=t_0<t_1<\ldots<t_N<t_{N+1}=T\}$, quand $\|\sigma\|:=\max\limits_i  |t_{i+1}-t_i|\to 0$.
\end{Def}
\begin{notation}\label{n: airelevy} 
Pour $x,y \in C^0([0,T], \r)$ et $t\in [0,T]$ on notera  $\Lift_{x,y}(t):=2\int_0^t (x\,dy -y\,dx)$, où la dernière intégrale Stieltjes "mixte"  est définie comme la limite finie de 
$$ \int_0^t (x\,dy -y\,dx)= \lim_{\|\sigma\|\to 0} \sum_{i=0}^{N} (x_{i}y_{i+1}-y_{i}x_{i+1}).$$
Remarquons que l'existence de $\Lift_{x,y}(t)$ n'entraîne pas en général l'existence de $\int_0^t x\,dy$ ni de $\int_0^t y\,dx$.
\end{notation}

\begin{proposition}\label{fortregular}
Si la courbe $\Gamma\Subset \h$ est $f\alpha$-régulière, alors la formule de l'aire a lieu 
\begin{equation*}
\H^2_\infty(\Gamma)=\int\limits_{\Gamma} \,dz+2\int\limits_{\Gamma}x\,dy-2\int\limits_{\Gamma} y\,dx.
\end{equation*}
La courbe $\Gamma$ admet un paramétrage naturel par $t\rightarrow \Gamma(t)\in \h$ tel que 
\begin{equation*}
 \H^2_\infty\big(\Gamma([0,t])\big)=t \text{ pour tout } t\in [0,\H^2_\infty(\Gamma)].
\end{equation*}
L'application $t\rightarrow \Gamma(t)$ est bi-hölderienne avec l'exposant $1/2$:
$$ C(|t-s|)|t-s|^\frac{1}{2}\ge d_\infty(\Gamma(t),\Gamma(s))\ge C(|t-s|)^{-1}|t-s|^\frac{1}{2},$$
où $C(\delta)\to 1$ quand $\delta\to 0$.
L'application $t\rightarrow \Gamma(t)$ est de la forme
\begin{equation}\label{formecanonique}
t\longrightarrow \Big(x(t),y(t), z(0)+t-2\int\limits_0^t x\,dy+2\int\limits_0^t y\,dx \Big).
\end{equation}
\end{proposition}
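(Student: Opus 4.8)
Proposition \ref{fortregular} asserts three things about an $f\alpha$-regular vertical curve $\Gamma$: the area formula with genuine (not merely liminf) Stieltjes integrals, the existence of a natural arc-length parametrization, and its explicit canonical form. Let me plan a proof.

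The plan is to bootstrap from the ``liminf'' area formula of Corollary \ref{airegamma} to a genuine-limit formula, using $f\alpha$-regularity to kill the defect, and then to read off the natural parametrization and its canonical form directly from the measure $\H^2_\infty\corn\Gamma$ itself. Throughout I would write $\kappa:=d_\infty^2\corn\Gamma$, which is a flat quasi-metric on $\Gamma$ by \eqref{dcarreplate}, and $\nu:=\H^2_\infty\corn\Gamma=\H^1_\kappa\corn\Gamma$, which is a finite Borel measure, hence additive on adjacent subintervals of $\Gamma$; the $f\alpha$-regularity hypothesis says precisely that $\nu([C,D])=\kappa(C,D)(1+o(1))$ uniformly as $\kappa(C,D)\to0$.

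\textbf{Step 1 (from liminf to limit).} Let $\sigma=\{A=A_0<\dots<A_{N+1}=B\}$ be a subdivision of an interval $[A,B]\subset\Gamma$; since $\Gamma$ is compact, $\|\sigma\|\to0$ forces $\max_i\kappa(A_i,A_{i+1})\to0$. Taking $\|\sigma\|$ small enough that $\nu([A_i,A_{i+1}])\ge\tfrac12\kappa(A_i,A_{i+1})$ for every $i$ (allowed by $f\alpha$-regularity), additivity of $\nu$ gives $\sum_i\kappa(A_i,A_{i+1})\le2\sum_i\nu([A_i,A_{i+1}])=2\,\nu([A,B])$, so the Stieltjes sums are uniformly bounded. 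Then for $\varepsilon>0$, taking $\|\sigma\|$ still smaller so that $|\nu([A_i,A_{i+1}])-\kappa(A_i,A_{i+1})|\le\varepsilon\,\kappa(A_i,A_{i+1})$ for all $i$, one obtains $\bigl|\sum_i\kappa(A_i,A_{i+1})-\nu([A,B])\bigr|\le\varepsilon\sum_i\kappa(A_i,A_{i+1})\le2\varepsilon\,\nu([A,B])$. Hence $\lim_{\|\sigma\|\to0}\sum_i d_\infty(A_i,A_{i+1})^2$ exists and equals $\H^2_\infty([A,B])$. Expanding each $d_\infty(A_i,A_{i+1})^2$ via \eqref{e: contactpositif} and telescoping the $z$-part (which needs no limit) shows that the mixed Stieltjes integral $\int_\Gamma(x\,dy-y\,dx)$ exists as a genuine limit in the sense of Notation \ref{n: airelevy}, and $\H^2_\infty(\Gamma)=\int_\Gamma dz+2\int_\Gamma(x\,dy-y\,dx)$.

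\textbf{Step 2 (natural parametrization) and Step 3 (bi-H\"older).} Fix the left endpoint $A$ of $\Gamma$ and set $s(C):=\H^2_\infty([A,C])$. The measure $\H^2_\infty\corn\Gamma$ is non-atomic and $\H^2_\infty([C,D])>0$ whenever $C<D$ (by $f\alpha$-regularity), so $s$ is a strictly increasing homeomorphism of $\Gamma$ onto $[0,\H^2_\infty(\Gamma)]$; its inverse $t\mapsto\Gamma(t)$ satisfies $\H^2_\infty(\Gamma([0,t]))=t$ by construction. For $0\le s\le t$ one has $\H^2_\infty([\Gamma(s),\Gamma(t)])=t-s$, whence $f\alpha$-regularity gives $d_\infty(\Gamma(s),\Gamma(t))^2=(t-s)(1+o(1))$ uniformly as $t-s\to0$ --- which is exactly the claimed two-sided estimate $d_\infty(\Gamma(t),\Gamma(s))=|t-s|^{1/2}(1+o(1))$ with $C(\delta)\to1$.

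\textbf{Step 4 (canonical form) and the main obstacle.} Writing \eqref{e: contactpositif} for $\Gamma(s),\Gamma(t)$ gives $z(t)-z(s)=d_\infty(\Gamma(s),\Gamma(t))^2-2\bigl(x(s)(y(t)-y(s))-y(s)(x(t)-x(s))\bigr)$; inserting $d_\infty(\Gamma(s),\Gamma(t))^2=(t-s)(1+o(1))$, summing over a subdivision of $[0,t]$, and letting the mesh go to $0$ (the $o(t-s)$ errors sum to $o(1)$ since $\sum(t_{i+1}-t_i)=t$ is bounded, and the remaining sum converges by Step 1) yields $z(t)=z(0)+t-2\int_0^t(x\,dy-y\,dx)$, i.e.\ \eqref{formecanonique}. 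The one delicate point is that \eqref{formecanonique} displays the two Stieltjes integrals $\int_0^t x\,dy$ and $\int_0^t y\,dx$ separately, whereas Steps 1 and 4 only produce the antisymmetric combination $\Lift_{x,y}(t)$. To split them I would use the extra regularity of the natural parametrization: the Whitney condition \eqref{whitneycond} on $\Gamma$ forces $\|\pi\Gamma(t)-\pi\Gamma(s)\|=o(d_\infty(\Gamma(s),\Gamma(t)))=o(|t-s|^{1/2})$, so $x(\cdot),y(\cdot)\in h_{1/2}([0,\H^2_\infty(\Gamma)],\r)$, and then invoke the Young-type estimates at the critical H\"older exponent (Theorem \ref{young}, Proposition \ref{p: youngconseque}); this borderline convergence is the genuine difficulty, and barring it one reads \eqref{formecanonique} as shorthand for $z(t)=z(0)+t-\Lift_{x,y}(t)$, for which Step 1 already suffices.
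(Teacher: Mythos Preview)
Your Steps 1--3 are essentially the paper's argument, carried out with a bit more care: $f\alpha$-regularity upgrades the $\liminf$ in Corollary~\ref{airegamma} to a genuine limit, and the natural parametrization with its bi-H\"older bound is then a direct consequence of additivity of $\H^2_\infty\corn\Gamma$ and the defining estimate $\H^2_\infty([C,D])=d_\infty(C,D)^2(1+o(1))$.

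The gap is in Step~4, where you try to split the mixed integral $\int(x\,dy-y\,dx)$ into $\int x\,dy$ and $\int y\,dx$ separately. Invoking Theorem~\ref{young} or Proposition~\ref{p: youngconseque} at the critical exponent does not work: Young's theorem requires $\alpha+\beta>1$ strictly, and Proposition~\ref{p: youngconseque} requires $\alpha>\tfrac12$; with $x,y\in h_{1/2}$ you are exactly at the borderline, where these results give nothing. You sense this (``this borderline convergence is the genuine difficulty''), but the fallback you propose --- reading \eqref{formecanonique} as shorthand for the mixed integral --- does not prove the statement as written.

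The paper's fix is elementary and bypasses Young entirely: use the Abel summation identity of Remark~\ref{integrationpartie},
\[
\int_\sigma x\,dy+\int_\sigma y\,dx-xy\Big|_0^t=\sum_i\bigl(x(t_{i+1})-x(t_i)\bigr)\bigl(y(t_{i+1})-y(t_i)\bigr).
\]
Since $x,y\in h_{1/2}$, the right-hand side is $o(1)\sum_i|t_{i+1}-t_i|\to0$ as $\|\sigma\|\to0$, so the \emph{sum} $\int_\sigma x\,dy+\int_\sigma y\,dx$ converges to $xy\big|_0^t$. You already know from Step~1 that the \emph{difference} $\int_\sigma x\,dy-\int_\sigma y\,dx$ converges. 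Adding and subtracting, both $\int_0^t x\,dy$ and $\int_0^t y\,dx$ exist separately, with $2\int_0^t x\,dy=xy\big|_0^t+\int_0^t(x\,dy-y\,dx)$. This is exactly the missing ingredient; with it, \eqref{formecanonique} follows as you outlined.
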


\begin{proof}
Le fait que $\Gamma$ soit $f\alpha$-régulière permet de remplacer "$\liminf$" par "$\lim$" dans l'expression de $\Var$ (voir définition \ref{infvar} et proposition \ref{generaire}) et suivant la formule \eqref{airedcarre} on obtient $$ \H^2_\infty(\Gamma)=\int\limits_{\Gamma} \,dz+2\int\limits_{\Gamma}(x\,dy - y\,dx).$$
Il reste à voir que les intégrales $\int_{\Gamma}x\,dy$ et $\int_{\Gamma} y\,dx$ existent séparément. Avec le paramétrage naturel les coordonnées $x$ et $y$ le long de $\Gamma$ appartiennent à $h_{1/2}$ vu la condition de Whitney sur $\Gamma$. D'après la remarque \ref{integrationpartie} l'intégration par parties est donc valable, ce qui implique l'existence de 
$$2\int\limits_{\Gamma} x\,dy = xy\big|_\Gamma + \int\limits_{\Gamma}(x\,dy - y\,dx). $$ 
 Le reste de l'énoncé est une conséquence directe de la $f\alpha$-régularité et de la formule de l'aire.
\end{proof}

En général, la courbe $\Gamma$ n'est pas $f\alpha$-régulière (voir sous-section \ref{exempleirregulier}). Par contre, la régularité supplémentaire  de l'une des deux applications scalaires de $F=(f,g)$ garantit que la courbe $\Gamma$ est $f\alpha$-régulière. 

\begin{lemma} \label{regularderivees}
Supposons que pour $F=(f,g)$ les dérivées horizontales de $f$ satisfont  
\begin{align}
&\label{deriveestimate}\max\big\{\|Xf(A)-Xf(B)\|, \|Yf(A)-Yf(B)\| \big\}\le m(d_\infty(A,B)^2),\\
&\label{derivesommability}\text{où } m:\r_+\to \r_+ \text{ est monotone et }
\sum_{k=0}^\infty m(r^{-k})<\infty,\text{ pour } r>1. 
\end{align}
Alors la courbe verticale $\Gamma\subset \f$ est $f\alpha$-régulière.
\end{lemma}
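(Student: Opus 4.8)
The plan is to deduce $f\alpha$-regularity of $\Gamma$ from the abstract result Lemma \ref{aireregulier}, applied to the flat quasi-metric $\kappa:=d_\infty^2\corn\Gamma$ on the simple arc $\Gamma$. Recall (Corollaire \ref{airegamma}, Proposition \ref{generaire}) that under the order-preserving homeomorphism $[0,1]\to\Gamma$ one has $\H^1_\kappa(E)=\H^2_\infty(E)$ for every $E\subset\Gamma$; so it is enough to show that, for the present $F$, the flatness defect of $\kappa$ in \eqref{e: kappaplate} is controlled by a \emph{summable} modulus built out of $m$, and then to quote \eqref{youngestime}.

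The heart of the matter is a pointwise estimate. Fix $A\le B\le C$ on $\Gamma$ sufficiently close and put $u=\pi(B)-\pi(A)$, $v=\pi(C)-\pi(B)$. Since $f(A)=f(B)=f(C)=0$, I would apply the Lagrange estimate \eqref{lagrange} to the scalar component $f$ \emph{always expanded at the middle point $B$}, once for the pair $(A,B)$ and once for the pair $(C,B)$; bounding the oscillation of $Xf,Yf$ over the balls $B_\infty\big(B,c\,d_\infty(A,B)\big)$ and $B_\infty\big(B,c\,d_\infty(B,C)\big)$ respectively by \eqref{deriveestimate} and the monotonicity of $m$ gives
\begin{align*}
|\nabla_{\h}f(B)\cdot u| &\le C'\,d_\infty(A,B)\,m\big(c^2 d_\infty(A,B)^2\big),\\
|\nabla_{\h}f(B)\cdot v| &\le C'\,d_\infty(B,C)\,m\big(c^2 d_\infty(B,C)^2\big).
\end{align*}
Thus $u$ and $v$ are nearly orthogonal to the \emph{same} vector $\nabla_{\h}f(B)$. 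Surjectivity of $D_hF(B)$ forces the rows of $d_hF(B)$ to be independent, hence $\nabla_{\h}f(B)\neq0$, and by continuity on the compact $U$ one has $\|\nabla_{\h}f\|\ge\nu_0>0$ there. Decomposing $u,v$ in an orthonormal frame adapted to $\nabla_{\h}f(B)$, the component of $u$ along $\nabla_{\h}f(B)$ is at most $\nu_0^{-1}C'd_\infty(A,B)m(c^2d_\infty(A,B)^2)$ (and similarly for $v$), while the transverse component is at most $\|\pi(B)-\pi(A)\|\le d_\infty(A,B)$ (resp. $\le d_\infty(B,C)$). Hence
\[
|\det(u,v)|\le \nu_0^{-1}C'\,d_\infty(A,B)\,d_\infty(B,C)\Big(m\big(c^2d_\infty(A,B)^2\big)+m\big(c^2d_\infty(B,C)^2\big)\Big).
\]
Since $\Gamma$ is flat, \eqref{dcarreplate} gives $d_\infty(A,B),d_\infty(B,C)\le K\,d_\infty(A,C)$ at small scales, so by monotonicity of $m$ the right-hand side collapses to $|\det(u,v)|\le C''\,d_\infty(A,C)^2\,m\big(c''\,d_\infty(A,C)^2\big)$.

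To finish: by Remark \ref{r: dcarreplat} the left side of \eqref{e: kappaplate} for $\kappa$ equals $2\,|\det(\pi(B)-\pi(A),\pi(C)-\pi(B))|=2\,|\det(u,v)|$, so $(\Gamma,\kappa)$ satisfies \eqref{e: kappaplate} with the modulus $\tilde m(t):=2C''\,m(c''t)$. The summability \eqref{derivesommability} of $m$ (together with its monotonicity, which absorbs the constant $c''$ and lets one pass from the ratio $r$ to the ratio $2$) yields $\sum_{i\ge0}\tilde m(2^{-i})<\infty$, so Lemma \ref{aireregulier} applies: $(\Gamma,\kappa)$ is $1$-Ahlfors regular and, for $[C,D]\subset\Gamma$, $|\H^1_\kappa([C,D])-\kappa(C,D)|\le \hat C\,\kappa(C,D)\,M(\kappa(C,D))$ with $M(l)=\sum_{i\ge0}\tilde m(\hat Cl2^{-i})\to0$ as $l\to0$ by dominated convergence. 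Translating through $\H^1_\kappa=\H^2_\infty\corn\Gamma$ and $\kappa=d_\infty^2$ gives $\H^2_\infty([C,D])=d_\infty(C,D)^2\,(1+o(1))$ uniformly as $C\to D$, which is precisely $f\alpha$-regularity of $\Gamma$.

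The main obstacle is the key estimate above: the trick is to expand $f$ at \emph{both} pairs around the \emph{same} midpoint $B$, so that the single gradient $\nabla_{\h}f(B)$ occurs twice and $u,v$ become nearly collinear, which drives $\det(u,v)$ --- and hence the flatness defect of $d_\infty^2$ --- down to order $d_\infty^2\cdot m(d_\infty^2)$. Note that the extra regularity of only \emph{one} of the two scalar components of $F=(f,g)$ is used: a single non-vanishing horizontal gradient already constrains the determinant. Everything else is bookkeeping and the invocation of Lemma \ref{aireregulier}.
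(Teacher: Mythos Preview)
Your proof is correct and follows essentially the same route as the paper: expand $f$ via the Lagrange estimate at the common midpoint $B$ for both pairs, deduce that $u=\pi(B)-\pi(A)$ and $v=\pi(C)-\pi(B)$ are nearly aligned (the paper phrases this through $w(B)=-Yf/Xf(B)$ rather than $\nabla_{\h}f(B)$, which is the same thing in coordinates), bound $|\det(u,v)|$, and feed the resulting summable flatness modulus into Lemma~\ref{aireregulier}. One small refinement in the paper's version: instead of bounding the transverse components of $u,v$ crudely by $d_\infty$, it uses the Whitney condition $\|\pi(B)-\pi(A)\|=o(d_\infty(A,B))$ to gain an extra factor $\varepsilon(d_\infty(A,C))\to0$ in the flatness modulus; this does not affect the present lemma but is what gives the sharper $o(d_\infty^\alpha)$ (rather than $O(d_\infty^\alpha)$) in the first part of Corollaire~\ref{regularitealpha}.
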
  
\begin{remark*}
Si \eqref{derivesommability} est vérifié pour un $r>1$, alors vu la monotonie de $m$ cela reste vrai pour tout $r>1$. On supposera donc que $r=2$.
\end{remark*}
\begin{proof}
D'après le théorème \ref{lagrange} de Lagrange,  pour $A,B\in \h$,
\begin{equation*}
 |f(A)-f(B)-D_h f(B)(B^{-1}A)|\le c_1d_\infty(A,B)m\big(c_2 d_\infty(A,B)^2\big),\quad 0<c_1,c_2<\infty.
\end{equation*}
 Sans perte de généralité on peut supposer que $Xf\ge c^{-1}>0$ sur $\Gamma$, et donc
pour $A,B\in \Gamma$, cela se traduit par
$$|(x(A)-x(B))+w(B)(y(A)-y(B))|\le cc_1  d_\infty(A,B) m\big(c_2d_\infty(A,B)^2\big), \quad w(B)=-\mfrac{Yf}{Xf}(B).$$
Par conséquent, on obtient que pour tous $A\le B\le C$ sur $\Gamma$
\begin{multline*}
|d_\infty(A,B)^2+d_\infty(B,C)^2-d_\infty(A,C)^2|\\
\le 2|y(C)-y(B)|cc_1d_\infty(A,B) m\big(c_2d_\infty(A,B)^2\big)+2|y(B)-y(A)|cc_1d_\infty(C,B) m\big(c_2d_\infty(C,B)^2\big)\\
 \le \varepsilon\big(d(A,C)\big) m\big(c_3 d_\infty(A,C)^2\big) d_\infty(A,C)^2,\quad 0<c_3<\infty, 
\end{multline*} 
où $\varepsilon(\delta)\searrow 0$ quand $\delta\searrow 0$, d'après la condition de Whitney sur $\Gamma$ et la proposition \eqref{diametre}. 
Maintenant nous sommes en mesure d'appliquer \eqref{youngestime} du lemme \ref{aireregulier} et en déduire que pour tous $A,B\in \Gamma$, 
\begin{equation}\label{regulardcarreestime}
|\H^2_\infty([A,B])-d_\infty(A,B)^2|\le c_4 \varepsilon(d(A,B)) d_\infty(A,B)^2\? \sum_{i=0}^\infty m\big( c_4 d_\infty(A,B)^2 2^{-i}\big),\quad 0<c_4<\infty,
\end{equation}
d'où la régularité forte d'Ahlfors de $\Gamma$.
\end{proof}

\begin{corollaire} \label{regularitealpha}
Si $f\in C^{1,\alpha}_H(\h,\r)$, $\alpha>0$, alors l'estimation suivante a lieu uniformément pour $[A,B]\subset \Gamma$
\begin{equation*}
 \H^2_\infty([A,B])=d_\infty(A,B)^2\big(1+o(d_\infty(A,B)^\alpha)\big).
\end{equation*}
Si $F\in C^{1,\alpha}_H(\h,\r^2)$, $\alpha>0$, alors l'estimation suivante a lieu uniformément pour $[A,B]\subset \Gamma$
\begin{equation*}
 \H^2_\infty([A,B])=d_\infty(A,B)^2\big(1+O(d_\infty(A,B)^{2\alpha})\big).
\end{equation*}
\end{corollaire}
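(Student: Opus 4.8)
Le plan consiste \`a d\'eduire les deux estimations du lemme \ref{regularderivees}, la seconde \'etant de plus renforc\'ee par une version quantitative de la condition de Whitney sur $\Gamma$. Pour $f\in C^{1,\alpha}_H(\h,\r)$, je commencerais par remarquer que la d\'efinition m\^eme de la classe $C^{1,\alpha}_H$ fournit $\max\{|Xf(A)-Xf(B)|,|Yf(A)-Yf(B)|\}\le\|D_hf(A)^{-1}D_hf(B)\|\le M\,d_\infty(A,B)^\alpha$, de sorte que les hypoth\`eses \eqref{deriveestimate}--\eqref{derivesommability} du lemme \ref{regularderivees} sont v\'erifi\'ees avec $m(t)=Mt^{\alpha/2}$ (cette fonction $m$ est monotone, et $\sum_k m(r^{-k})=M\sum_k r^{-k\alpha/2}<\infty$ pour $r>1$, $\alpha>0$). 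Il suffit ensuite d'injecter $m(t)=Mt^{\alpha/2}$ dans l'estimation \eqref{regulardcarreestime} issue de la preuve de ce lemme : la s\'erie g\'eom\'etrique $\sum_i m\big(c_4 d_\infty(A,B)^2 2^{-i}\big)$ vaut $C'd_\infty(A,B)^\alpha$, ce qui transforme \eqref{regulardcarreestime} en $|\H^2_\infty([A,B])-d_\infty(A,B)^2|\le c_4C'\,\varepsilon\big(d_\infty(A,B)\big)\,d_\infty(A,B)^{2+\alpha}$ ; en divisant par $d_\infty(A,B)^2$ et en utilisant $\varepsilon(\delta)\to0$ on obtient la premi\`ere formule. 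Le seul point \`a surveiller ici est l'uniformit\'e du facteur $\varepsilon$ sur $\Gamma$, mais elle est d\'ej\`a incorpor\'ee dans \eqref{regulardcarreestime}.

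Pour la seconde estimation, avec $F=(f,g)\in C^{1,\alpha}_H(\h,\r^2)$, une seule application du lemme \ref{regularderivees} ne suffit pas : elle n'utilise qu'une composante scalaire et le facteur $\varepsilon$ r\'esiduel dans \eqref{regulardcarreestime} ne donne qu'un $o(\cdot)$ au lieu d'une vraie puissance. L'id\'ee est plut\^ot de promouvoir la condition de Whitney en une loi de puissance. En appliquant le th\'eor\`eme \ref{lagrange} de Lagrange \`a $f$ et \`a $g$ sur une boule de rayon $\sim d_\infty(A,B)$, et en tenant compte de la $\alpha$-h\"old\'erianit\'e des d\'eriv\'ees horizontales, on obtient $|F(A)-F(B)-D_hF(B)(B^{-1}A)|\le C\,d_\infty(A,B)^{1+\alpha}$. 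Sur $\Gamma$ on a $F(A)=F(B)=0$ et $D_hF(B)(B^{-1}A)=d_hF(B)\big(\pi(A)-\pi(B)\big)$, donc l'inversibilit\'e uniforme de $d_hF$ sur $U$ donne $\|\pi(A)-\pi(B)\|\le C''d_\infty(A,B)^{1+\alpha}$ uniform\'ement sur $\Gamma$. En combinant ceci avec l'identit\'e \eqref{explicitedcarreplate} et la comparaison $d_\infty(A,B),d_\infty(B,C)\le(1+o(1))d_\infty(A,C)$ de la proposition \ref{diametre}, je d\'eduirais, pour tous $A\le B\le C$ sur $\Gamma$,
\[
 \big|d_\infty(A,B)^2+d_\infty(B,C)^2-d_\infty(A,C)^2\big|=2\big|\det\big(\pi(B)-\pi(A),\pi(C)-\pi(B)\big)\big|\le C_3\,d_\infty(A,C)^{2+2\alpha},
\]
autrement dit la quasi-m\'etrique plate $\kappa=d_\infty^2\corn\Gamma$ satisfait \eqref{e: kappaplate} avec un module $\omega(l)=C_3 l^{1+\alpha}$, ce qui est exactement le cas sommable couvert par le lemme de la couture \ref{couture}.

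Enfin, en reportant $\omega(l)=C_3 l^{1+\alpha}$ dans le lemme \ref{couture} et en utilisant la formule de l'aire (propositions \ref{generaire} et \ref{diametre}, qui transforment le $\liminf$ de $\Var$ en une vraie limite puisque les sommes de Stieltjes convergent d\'esormais), on obtient $\H^2_\infty([A,B])=\H^1_\kappa([A,B])=\nu(B)-\nu(A)$ avec $|\nu(B)-\nu(A)-\kappa(A,B)|\le\sum_i 2^i\omega\big(\kappa(A,B)2^{-i}\big)=C_4\,\kappa(A,B)^{1+\alpha}$, c'est-\`a-dire $\H^2_\infty([A,B])=d_\infty(A,B)^2\big(1+O(d_\infty(A,B)^{2\alpha})\big)$. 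Le seul obstacle un peu subtil que j'anticipe est le passage du contr\^ole unilat\'eral fourni par $f$ seul au contr\^ole bilat\'eral, en loi de puissance, de $\pi(A)-\pi(B)$ n\'ecessaire \`a l'estimation en $O$ : il faut r\'ealiser que la r\'egularit\'e des \emph{deux} composantes de $F$ doit intervenir. Une fois cette condition de Whitney quantifi\'ee \'etablie, tout se ram\`ene aux lemmes \ref{regularderivees}, \ref{aireregulier} et \ref{couture} d\'ej\`a d\'emontr\'es.
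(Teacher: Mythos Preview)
Your proposal is correct and follows essentially the same route as the paper: for the first estimate you feed $m(t)=Mt^{\alpha/2}$ into \eqref{regulardcarreestime} exactly as the paper does, and for the second you first upgrade Whitney to $\|\pi(A)-\pi(B)\|\le c\,d_\infty(A,B)^{1+\alpha}$ via Lagrange and the invertibility of $d_hF$ (this is the paper's \eqref{deuxalpha}), then plug the resulting power-law module into \eqref{youngestime}/lemma~\ref{couture}. The only cosmetic difference is that you invoke lemma~\ref{couture} directly while the paper cites \eqref{youngestime}, but the latter is obtained from the former after the bi-Lipschitz reparametrization of lemma~\ref{aireregulier}, so the arguments coincide.
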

\begin{proof}
Pour $f\in C^{1,\alpha}_H(\h,\r)$, $\alpha>0$, le module de continuité des dérivées horizontales est donné par $m(\delta)\lesssim \delta^\alpha$.  Vu que $\sum_{i=0}^\infty (\delta 2^{-i})^\alpha\le c_\alpha \delta^\alpha$, il suffit d'appliquer l'estimée \eqref{regulardcarreestime}.

Dans le cas où $F\in C^{1,\alpha}_H(\h,\r^2)$, $\alpha>0$, pour tous $A,B\in \Gamma$ d'après le théorème \ref{lagrange} de Lagrange il est vérifié  
\begin{equation}\label{deuxalpha}
\|\pi(A)-\pi(B)\|\le c d_\infty(A,B)^{1+\alpha}, \ c<\infty.
\end{equation}
Cela entraîne l'estimation suivante : pour tous $A\le B\le C$ sur $\Gamma$,
\begin{equation*}
|d_\infty(A,B)^2+d_\infty(B,C)^2-d_\infty(A,C)^2|\le \tilde c d_\infty(A,C)^{2(1+\alpha)},
\end{equation*}
et donc une utilisation usuelle de \eqref{youngestime} conduit à la conclusion. 
\end{proof}

\subsubsection{Relèvement vertical}
\paragraph{Relèvement horizontal des courbes hölderiennes.} 
Une conséquence directe et importante du théorème \ref{young} est
\begin{proposition}\label{p: youngconseque}
Soit $\gamma_x,\gamma_y\in H^\alpha([0,T],\r)$ avec $\alpha>\frac{1}{2}$. Alors la courbe $\gamma=(\gamma_x,\gamma_y,-\Lift_{\gamma_x,\gamma_y}) \in H^\alpha([0,T],\h)\cap H^\alpha([0,T],\r^3)$ est hölderienne à la fois en métrique d'Heisenberg et en métrique euclidienne. 

Réciproquement, soit $\gamma=(\gamma_x,\gamma_y,\gamma_z)\in H^\alpha([0,T],\h)$ une courbe hölderienne en métrique d'Heisenberg, $\alpha>\frac{1}{2}$. Alors il existe une constante $C$ telle que pour tout $t\in [0,T]$,
$$\gamma_z(t)=C-\Lift_{\gamma_x,\gamma_y}(t).$$
\end{proposition}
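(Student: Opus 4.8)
Le plan est de d�duire les deux directions du th�or�me de Young sur les int�grales de Stieltjes (th�or�me \ref{young}), qui garantit l'existence de l'int�grale $\int x\,dy$ pour des fonctions $x\in H^\alpha$, $y\in H^\beta$ avec $\alpha+\beta>1$, et fournit l'estim�e de Young $|\int_s^t x\,dy - x(s)(y(t)-y(s))|\le C\|x\|_{H^\alpha}\|y\|_{H^\beta}|t-s|^{\alpha+\beta}$. Dans notre cas $\alpha=\beta>\frac12$, donc $\alpha+\beta=2\alpha>1$ et toutes les int�grales en jeu existent. Pour la premi�re direction, je poserais $\gamma_z(t):=-\Lift_{\gamma_x,\gamma_y}(t)=-2\int_0^t(x\,dy-y\,dx)$, qui est bien d�finie par le th�or�me \ref{young}. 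Il faut alors v�rifier que $\gamma$ est $\alpha$-h�lderienne en m�trique d'Heisenberg, \ie que $d_\infty(\gamma(s),\gamma(t))\le C|t-s|^\alpha$. Comme $d_\infty(\gamma(s),\gamma(t))=\max\{\|\pi(\gamma(t))-\pi(\gamma(s))\|,\,|z(\gamma(s)^{-1}\gamma(t))|^{1/2}\}$, la premi�re composante est contr�l�e par $\|\gamma_x\|_{H^\alpha}+\|\gamma_y\|_{H^\alpha}$ directement. Pour la seconde, un calcul de la loi de groupe donne
\[
z(\gamma(s)^{-1}\gamma(t))=\gamma_z(t)-\gamma_z(s)+2\big(\gamma_x(t)\gamma_y(s)-\gamma_x(s)\gamma_y(t)\big),
\]
et en d�veloppant $\gamma_z(t)-\gamma_z(s)=-2\int_s^t(x\,dy-y\,dx)$ on reconna�t, apr�s r�arrangement, une expression du type $-2\big(\int_s^t (x-x(s))\,dy - \int_s^t(y-y(s))\,dx\big)$, � laquelle on applique deux fois l'estim�e de Young : chaque terme est $O(|t-s|^{2\alpha})$, donc $|z(\gamma(s)^{-1}\gamma(t))|^{1/2}=O(|t-s|^\alpha)$. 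L'appartenance � $H^\alpha([0,T],\r^3)$ (m�trique euclidienne) r�sulte du fait que $t\mapsto\Lift_{\gamma_x,\gamma_y}(t)$ est elle-m�me $2\alpha$-h�lderienne, donc a fortiori $\alpha$-h�lderienne sur un intervalle born�.

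Pour la r�ciproque, je partirais d'une courbe $\gamma=(\gamma_x,\gamma_y,\gamma_z)\in H^\alpha([0,T],\h)$, $\alpha>\frac12$. La projection $(\gamma_x,\gamma_y)$ est automatiquement $\alpha$-h�lderienne (la norme euclidienne de la partie horizontale est domin�e par $d_\infty$), donc $\Lift_{\gamma_x,\gamma_y}$ est bien d�finie. Je poserais $\delta(t):=\gamma_z(t)+\Lift_{\gamma_x,\gamma_y}(t)$ et montrerais que $\delta$ est constante. Pour cela, il suffit de prouver $\delta\in H^{2\alpha}$ localement : or $2\alpha>1$, donc une fonction $2\alpha$-h�lderienne sur un intervalle est constante. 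L'h�lderianit� de $\gamma_z$ seule ne suffit pas (elle n'est qu'en $H^{2\alpha}$ via $|z(\gamma(s)^{-1}\gamma(t))|\le d_\infty(\gamma(s),\gamma(t))^2$, combin� avec le terme correctif $2(\gamma_x(t)\gamma_y(s)-\gamma_x(s)\gamma_y(t))$ qui est en $|t-s|^{2\alpha}$), mais pr�cis�ment : $\gamma_z(t)-\gamma_z(s) = z(\gamma(s)^{-1}\gamma(t)) - 2(\gamma_x(t)\gamma_y(s)-\gamma_x(s)\gamma_y(t))$, et en ajoutant $\Lift_{\gamma_x,\gamma_y}(t)-\Lift_{\gamma_x,\gamma_y}(s)=2\int_s^t(x\,dy-y\,dx)$, les termes du premier ordre s'annulent exactement comme dans la direction directe, laissant $\delta(t)-\delta(s)$ �gal � une combinaison d'int�grales de Young r�siduelles, toutes $O(|t-s|^{2\alpha})$. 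Donc $\delta\in h_{2\alpha}$ localement, d'o� $\delta\equiv C$.

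L'obstacle principal sera la v�rification soigneuse de l'identit� alg�brique reliant $z(\gamma(s)^{-1}\gamma(t))$, l'accroissement $\gamma_z(t)-\gamma_z(s)$ et l'incr�ment de $\Lift_{\gamma_x,\gamma_y}$, et surtout le fait que les termes r�siduels apr�s annulation du premier ordre soient exactement de la forme $\int_s^t(x-x(s))\,dy$ (et son sym�trique), de sorte que l'estim�e de Young \eqref{young} s'applique avec l'exposant $2\alpha$ sans perte. C'est un calcul purement formel mais o� il faut �tre attentif aux signes (la loi de groupe d'Heisenberg comporte le facteur $2(x',y)-2(x,y')$) et au fait que l'int�grale de Young centr�e $\int_s^t(x-x(s))\,dy$ co�ncide bien avec $\int_s^t x\,dy - x(s)(y(t)-y(s))$, ce qui d�coule de la lin�arit� et de l'existence de $\int_s^t dy = y(t)-y(s)$. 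Une fois ce point acquis, tout le reste est routinier : l'unicit� de la constante dans la r�ciproque vient de l'�valuation en $t=0$, et l'appartenance simultan�e aux deux classes h�lderiennes dans la direction directe est imm�diate sur un intervalle compact.
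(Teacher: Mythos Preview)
Your approach is exactly what the paper intends: the proposition is stated there as ``une cons\'equence directe et importante du th\'eor\`eme \ref{young}'' with no further proof, and your plan fills in precisely those details via the Young estimate on $\int_s^t(x-x(s))\,dy$ and its symmetric. Two small slips to correct: first, with the paper's group law the vertical component reads $z(\gamma(s)^{-1}\gamma(t))=\gamma_z(t)-\gamma_z(s)-2\big(\gamma_x(t)\gamma_y(s)-\gamma_x(s)\gamma_y(t)\big)$, i.e.\ the opposite sign to what you wrote (the cancellation still goes through once the sign is fixed); second, $t\mapsto \Lift_{\gamma_x,\gamma_y}(t)$ is only $\alpha$-h\"olderienne in general, not $2\alpha$, because $\int_s^t x\,dy = x(s)(y(t)-y(s)) + O(|t-s|^{2\alpha})$ and the first term is merely $O(|t-s|^\alpha)$ --- but $\alpha$-H\"older is all you need for the Euclidean conclusion (compare the computation in remarque~\ref{holderriem}).
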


\begin{remark}\label{r: souscritiqueLevy}
Il existe également un relèvement hölderien (non-unique) de toute courbe dans le plan $(\gamma_x,\gamma_y)\in H^\alpha([0,T],\r^2)$ pour $\alpha<\frac{1}{2}$, \ie une courbe $(\gamma_x,\gamma_y,\gamma_z)\in H^\alpha([0,T],\h)$ (voir \cite[proposition 3]{lyonsextension}).
\end{remark}
	
\begin{question}
Le problème de caractériser la projection $\pi(\lambda)$  pour $\lambda\in H^\frac{1}{2}([0,1],\h)$ reste ouvert à notre connaissance. Soulignons qu'il existe une contrainte supplémentaire par rapport au relèvement hölderien pour $\alpha\not=\frac{1}{2}$. 

Soit une courbe $\lambda\in H^\frac{1}{2}([0,1],\h)$, \ie $d_\infty(\lambda(h),\lambda(l))\le C|h-l|^\frac{1}{2}$ pour tous $h,l\in [0,1]$. Pour un intervalle $[s,t]\subset [0,1]$ on définit 
\begin{align*}
& V_+([t,s])=2\sup\limits_\sigma \int_\sigma (\lambda_y\,d\lambda_x-\lambda_x\,d\lambda_y), 
& V_-([t,s])=2\inf\limits_\sigma \int_\sigma (\lambda_y\,d\lambda_x-\lambda_x\,d\lambda_y), 
\end{align*}
où $\sigma=\{s=t_0<t_1<\ldots<t_{n+1}=t\}$ parcourt toutes les subdivisions de $[s,t]$. Pour tous $h>l$  
\begin{equation*}
-C(h-l)+2\det\big(\pi(\lambda(h)),\pi(\lambda(l))\big)\le \lambda_z(h)-\lambda_z(l)\le C(h-l)+2\det\big(\pi(\lambda(h)),\pi(\lambda(l))\big).
\end{equation*}
On pose $t_{i+1}=h$, $t_i=l$, on additionne suivant une subdivision $\sigma$ de $[s,t]$ et on prend $\inf$ et $\sup$ des sommes pour obtenir
\begin{equation*}
-C(t-s)+V_+([s,t])\le \lambda_z(t)-\lambda_z(s) \le C(t-s)+V_-([s,t]).
\end{equation*}
En particulier, pour tout intervalle $[s,t]\subset [0,1]$, 
\begin{equation}\label{variationaire}
0\le V_+([s,t])-V_-([s,t])\le 2C|t-s|.
\end{equation}
La condition nécessaire \eqref{variationaire} est-elle suffisante pour construire un relèvement de la classe $H^\frac{1}{2}([0,1],\h)$ de la courbe $(\lambda_x, \lambda_y)\in H^\frac{1}{2}([0,1],\r^2)$?
\end{question}

\begin{proposition}\label{relevevert}
Soit une courbe hölderienne $\{t\to (x(t),y(t),z(t))\}\in H^{\frac{1}{2}}([0,1],\h)$ telle que $x,y\in h_{1/2}([0,1],\r)$. Alors il existe une constante $C>0$ et une application $F\in C^1_H(\h,\r^2)$ avec $D_hF$ surjective telles que la courbe $2$-Ahlfors régulière $$\Gamma=\{t\to (x(t),y(t),z(t)+Ct)\}\subset \f.$$
\end{proposition}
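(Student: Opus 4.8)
Proposition \ref{relevevert} — a "vertical lift" statement.

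On pose $\gamma(t)=(x(t),y(t),z(t))$, on note $C_0$ la constante de H\"older de $\gamma$ en m\'etrique $d_\infty$ et $M_0=\sup_{s\neq t}\|\pi\gamma(s)-\pi\gamma(t)\|\,|s-t|^{-1/2}$, qui est fini puisque $h_{1/2}([0,1],\r)\subset H^{1/2}([0,1],\r)$. Le plan est de choisir n'importe quel $C>C_0^2+M_0^2$, de poser $\Gamma(t)=(x(t),y(t),z(t)+Ct)$ et $\Gamma=\Gamma([0,1])$, puis de v\'erifier successivement la condition de Whitney \eqref{whitneycond}, l'existence de $F$ via la remarque \ref{Whit}, et enfin la $2$-r\'egularit\'e d'Ahlfors gr\^ace \`a la th\'eorie des quasi-m\'etriques plates de la sous-section \ref{ss: quasiplate}. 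Un calcul direct avec la loi de groupe donne, pour $0\le s\le t\le 1$, $z\big(\Gamma(s)^{-1}\Gamma(t)\big)=\big(z(t)-z(s)\big)+C(t-s)+2\det\big(\pi\gamma(s),\pi\gamma(t)\big)=z\big(\gamma(s)^{-1}\gamma(t)\big)+C(t-s)$; comme $|z(\gamma(s)^{-1}\gamma(t))|\le d_\infty(\gamma(s),\gamma(t))^2\le C_0^2(t-s)$, on en tire $(C-C_0^2)(t-s)\le z\big(\Gamma(s)^{-1}\Gamma(t)\big)\le (C+C_0^2)(t-s)$. En particulier $z(\Gamma(s)^{-1}\Gamma(t))>0$ pour $s<t$, donc $\Gamma$ est injective, $\Gamma\colon[0,1]\to\Gamma$ est un hom\'eomorphisme sur un arc simple, l'ordre induit est $\Gamma(s)\le\Gamma(t)\Leftrightarrow s\le t$ (cf. \eqref{e: contactpositif}), et, comme $\|\pi\gamma(t)-\pi\gamma(s)\|^2\le M_0^2(t-s)<(C-C_0^2)(t-s)\le z(\Gamma(s)^{-1}\Gamma(t))$, on a l'identit\'e $d_\infty(\Gamma(s),\Gamma(t))^2=z\big(\Gamma(s)^{-1}\Gamma(t)\big)=:\kappa(s,t)$; ainsi $\kappa(s,t)$ est comparable \`a $t-s$, uniform\'ement sur $[0,1]$.

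Puisque $\pi\Gamma=\pi\gamma$ et $\|\pi\gamma(t)-\pi\gamma(s)\|=o(|t-s|^{1/2})$ (c'est l'hypoth\`ese $x,y\in h_{1/2}$), et que $|t-s|^{1/2}$ est comparable \`a $|z(\Gamma(s)^{-1}\Gamma(t))|^{1/2}$, il vient $\|\pi\Gamma(t)-\pi\Gamma(s)\|=o\big(|z(\Gamma(s)^{-1}\Gamma(t))|^{1/2}\big)$ uniform\'ement: c'est exactement la condition de Whitney \eqref{whitneycond} pour l'ensemble compact (donc ferm\'e) $E=\Gamma$. La remarque \ref{Whit} --- c'est-\`a-dire le th\'eor\`eme de prolongement de Whitney \ref{Whitneytheorem}, appliqu\'e \`a chacune des deux coordonn\'ees avec $D_hF(A)(x,y,0)=(x,y)$ sur $\Gamma$ --- fournit alors $F\in C^1_H(\h,\r^2)$ avec $D_hF$ surjective (au voisinage de $\Gamma$) et $\Gamma\subset\f$.

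Pour la $2$-r\'egularit\'e d'Ahlfors: la condition de Whitney fait de $\kappa=d_\infty^2\corn\Gamma$ une quasi-m\'etrique continue plate (remarque \ref{r: dcarreplat}), car pour $\Gamma(r)\le\Gamma(s)\le\Gamma(t)$ on a $|\kappa(r,s)+\kappa(s,t)-\kappa(r,t)|=2|\det(\pi\gamma(s)-\pi\gamma(r),\pi\gamma(t)-\pi\gamma(s))|\le 2\|\pi\gamma(s)-\pi\gamma(r)\|\,\|\pi\gamma(t)-\pi\gamma(s)\|=o(\kappa(r,t))$, chacun des facteurs \'etant $o((t-r)^{1/2})$ et $t-r\asymp\kappa(r,t)$. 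Ainsi $(\Gamma,\kappa)$ est une courbe plate, de m\^eme que chacun de ses sous-intervalles. Fixons $[\Gamma(s),\Gamma(t)]$. D'apr\`es la proposition \ref{generaire}, $\H^2_\infty([\Gamma(s),\Gamma(t)])=\H^1_\kappa([\Gamma(s),\Gamma(t)])=\Var([\Gamma(s),\Gamma(t)])=\liminf_{\|\sigma\|\to0}\sum_i\kappa(r_i,r_{i+1})$, l'inf portant sur les subdivisions $s=r_0<\dots<r_{N+1}=t$. Or, pour toute telle subdivision, les encadrements $(C-C_0^2)(r_{i+1}-r_i)\le\kappa(r_i,r_{i+1})\le(C+C_0^2)(r_{i+1}-r_i)$ donnent par sommation $(C-C_0^2)(t-s)\le\sum_i\kappa(r_i,r_{i+1})\le(C+C_0^2)(t-s)$; d'o\`u $\H^2_\infty([\Gamma(s),\Gamma(t)])\in[(C-C_0^2)(t-s),(C+C_0^2)(t-s)]$. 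Comme $d_\infty(\Gamma(s),\Gamma(t))^2=\kappa(s,t)$ appartient au m\^eme intervalle, on conclut $\tfrac{C-C_0^2}{C+C_0^2}\le\H^2_\infty([\Gamma(s),\Gamma(t)])\,d_\infty(\Gamma(s),\Gamma(t))^{-2}\le\tfrac{C+C_0^2}{C-C_0^2}$: $\Gamma$ est $2$-Ahlfors r\'eguli\`ere, avec constante uniforme.

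Le seul point r\'eellement d\'elicat est qu'on ne peut \emph{pas} obtenir cette $2$-r\'egularit\'e d'Ahlfors par l'analyse plus fine du lemme \ref{aireregulier} (sommes de type Jones): celle-ci exigerait $\sum_i m(2^{-i})<\infty$ pour le module de platitude $m$, ce que la seule condition ``petit-$o$'' $x,y\in h_{1/2}$ ne garantit pas en g\'en\'eral (la sommabilit\'e du carr\'e des modules $\tfrac12$-h\"olderiens peut \'echouer). L'astuce est que le d\'ecalage vertical par $Ct$ avec $C>C_0^2$ rend $\kappa$ bi-lipschitzienne au param\`etre lin\'eaire $t$, de sorte que $\Var$ --- donc $\H^2_\infty$ --- de tout sous-intervalle est coinc\'ee entre deux multiples fixes de sa longueur param\'etrique, ce qui suffit pour la $2$-r\'egularit\'e d'Ahlfors (mais non pour la r\'egularit\'e $f\alpha$). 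Le reste (le calcul dans le groupe, la v\'erification de \eqref{whitneycond}, l'application de la remarque \ref{Whit}) est routinier.
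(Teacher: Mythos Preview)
Your proof is correct and follows essentially the same route as the paper's own (very terse) argument: choose $C$ larger than the squared $H^{1/2}$-constant so that the vertical component of $\Gamma(s)^{-1}\Gamma(t)$ is comparable to $|t-s|$, verify the Whitney condition \eqref{whitneycond} from $x,y\in h_{1/2}$ and invoke remark~\ref{Whit}, then deduce $2$-Ahlfors regularity from the bi-Lipschitz equivalence $(\Gamma,d_\infty)\simeq([0,1],|\cdot|^{1/2})$. The only difference is cosmetic: where the paper simply writes ``la r\'egularit\'e d'Ahlfors r\'esulte de l'\'equivalence bi-lipschitzienne'', you spell this out via proposition~\ref{generaire} by sandwiching $\sum_i\kappa(r_i,r_{i+1})$ between $(C\pm C_0^2)(t-s)$ for every subdivision --- which is exactly what the bi-Lipschitz statement encodes. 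Your extra requirement $C>C_0^2+M_0^2$ (versus the paper's $C>C_0^2$) is harmless and makes the identity $d_\infty(\Gamma(s),\Gamma(t))^2=z(\Gamma(s)^{-1}\Gamma(t))$ hold globally rather than just for small $|t-s|$; your closing remark that lemma~\ref{aireregulier} would \emph{not} suffice here (no summability of $m(2^{-i})$ from $h_{1/2}$ alone) is a correct and useful observation.
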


\begin{proof} Comme $|z(t)-z(s)-2x(t)y(s)+2x(s)y(t)|\le c|t-s|$ pour $t,s\in [0,1]$, alors en prenant $C>c$ 
$$(C-s)|t-s|\le |z(t)-z(s)-2x(t)y(s)+2x(s)y(t)+C(t-s)|\le (C+c)|t-s|.$$
Vu que $x,y\in h_{1/2}$, on a uniformément $\|\pi(\Gamma(t))-\pi(\Gamma(s))\|^2=o(|t-s|)=o(|\big(\Gamma(t)^{-1}\Gamma(s)\big)_z|)$, et donc la courbe $\Gamma$ remplit la condition de Whitney (voir remarque \ref{Whit}). La régularité d'Ahlfors de $\Gamma$ résulte de l'equivalence bi-lipschitzienne de $(\Gamma,d_\infty)$ à $([0,1],|\cdot|^\frac{1}{2})$. 
\end{proof}

\begin{Def}\label{d: relevementvertic}
Soit $\{t\to (x(t),y(t))\}\in C^0([0,T],\r^2)$ une courbe telle que $\Lift_{x,y}(t)$ existe pour tout $t\in [0,T]$. On appelle \emph{relèvement vertical} de $\{t\to (x(t),y(t))\}$ une courbe de la forme
\begin{equation*}
[0,T]\ni t\to \big(x(t),y(t),z_0-\Lift_{x,y}(t)\big)+\big(0,0,t\big), \quad z_0\in \r. 
\end{equation*}
\end{Def}

\begin{figure}[h]\centering
\includegraphics[width=65mm]{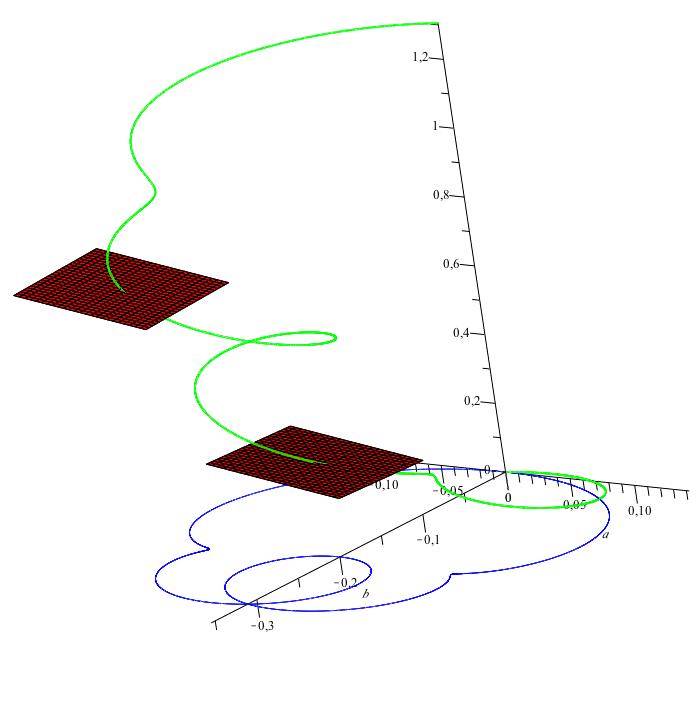}
\includegraphics[width=57mm]{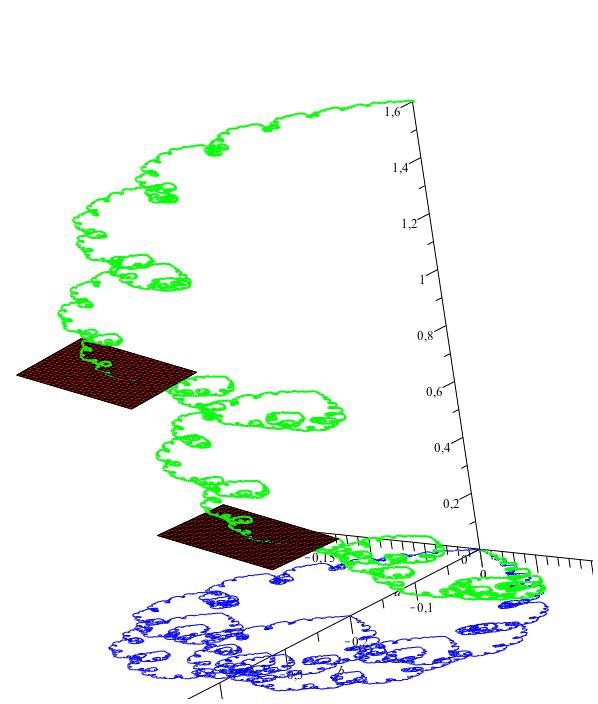}
\caption{\footnotesize Relèvement vertical de la courbe de Weierstrass $t\to \big(\sum\limits_{i=0}^n 2^{-\frac{i}{2}}\sin (2\pi 2^i t),  \sum\limits_{i=0}^n 2^{-\frac{i}{2}}(1-\cos (2\pi 2^i t))\big)$ pour $n=3$ (à gauche) et $n=20$ (à droite). On voit aussi l'intersection du relèvement avec deux plans horizontaux. (Les images sont faites à l'aide de \textbf{Maple}.)}  
\end{figure}

\begin{remark}\label{holderriem}
Si $\gamma\in H^{\alpha}([0,T],\r^2)$, $1\ge\alpha>\frac{1}{2}$, alors son relèvement vertical  appartient également à $H^{\alpha}([0,T],\r^3)$ muni de la norme euclidienne.  En effet, d'après le théorème \ref{young}
\begin{multline*}
 |Z_{\gamma_x,\gamma_y}(t)-Z_{\gamma_x,\gamma_y}(s)+t-s|\le |t-s|+|Z_{\gamma_x,\gamma_y}(t)-Z_{\gamma_x,\gamma_y}(s)-2\big(\gamma_x(t)\gamma_y(s)-\gamma_y(t)\gamma_x(s)\big)|\\
 +2|\gamma_x(t)||\gamma_y(s)-\gamma_y(t)| +2|\gamma_y(t)||\gamma_x(s)-\gamma_x(t)|\\ \le |t-s|+C_\alpha\|\gamma\|^2_\alpha|t-s|^{2\alpha}
 + 4\|\gamma\|_\infty\|\gamma\|_\alpha |t-s|^\alpha\le C(\gamma)|t-s|^\alpha,\quad C(\gamma)\le \infty.
\end{multline*}
\end{remark}
\begin{lemma}\label{l: vertcaract}
Vues localement, les courbes de niveau $\Gamma$ d'une application $F\in C^{1,\alpha}_H(\h,\r^2)$, $\alpha>0$, $D_hF$ surjective, sont exactement les relèvements verticaux des courbes $\gamma\in H^{\frac{1+\alpha}{2}}([0,T],\r^2)$.  
\end{lemma}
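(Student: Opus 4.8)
The plan is to prove both inclusions. Let me think about what is being claimed. We have $F \in C^{1,\alpha}_H(\h, \r^2)$ with $D_hF$ surjective, $F(0)=0$. The level curve $\Gamma = F^{-1}(0) \cap U$ is (by Corollary \ref{reifenbergcurve}) a simple arc. We want to show: such $\Gamma$, viewed locally, coincides exactly with the set of vertical lifts of curves $\gamma \in H^{(1+\alpha)/2}([0,T], \r^2)$.

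Let me sketch both directions.

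Forward direction ("level curves are vertical lifts"): Take $\Gamma$ a vertical curve of $F \in C^{1,\alpha}_H$. By Corollary \ref{regularitealpha}, $\H^2_\infty([A,B]) = d_\infty(A,B)^2(1 + O(d_\infty(A,B)^{2\alpha}))$ uniformly, so $\Gamma$ is $f\alpha$-regular. By Proposition \ref{fortregular}, $\Gamma$ admits the natural parametrization $t \to \Gamma(t)$ with $\H^2_\infty(\Gamma([0,t])) = t$, which is bi-Hölder of exponent $1/2$, and has the canonical form \eqref{formecanonique}:
$$t \longrightarrow \Big(x(t), y(t), z(0) + t - 2\int_0^t x\,dy + 2\int_0^t y\,dx\Big) = \big(x(t), y(t), z(0) - \Lift_{x,y}(t)\big) + (0,0,t),$$
which is precisely a vertical lift of $(x,y)$ in the sense of Definition \ref{d: relevementvertic}. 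It remains to check that the planar curve $\gamma = (x,y)$ lies in $H^{(1+\alpha)/2}([0,T], \r^2)$. This is where one uses the $C^{1,\alpha}_H$ hypothesis on \emph{both} components: by the Lagrange theorem \ref{lagrange} applied to $F \in C^{1,\alpha}_H(\h,\r^2)$, one gets $\|\pi(A) - \pi(B)\| \le c\, d_\infty(A,B)^{1+\alpha}$ (this is estimate \eqref{deuxalpha}). Since the natural parametrization is bi-Hölder $1/2$, i.e. $d_\infty(\Gamma(t),\Gamma(s)) \asymp |t-s|^{1/2}$, we get $\|\gamma(t) - \gamma(s)\| = \|\pi(\Gamma(t)) - \pi(\Gamma(s))\| \le c\, d_\infty(\Gamma(t),\Gamma(s))^{1+\alpha} \lesssim |t-s|^{(1+\alpha)/2}$, so $\gamma \in H^{(1+\alpha)/2}$.

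Reverse direction ("vertical lifts of $H^{(1+\alpha)/2}$ curves are level curves of some $C^{1,\alpha}_H$ map"): Start with $\gamma = (x,y) \in H^{(1+\alpha)/2}([0,T], \r^2)$. Since $(1+\alpha)/2 > 1/2$, the Lévy area $\Lift_{x,y}(t)$ exists by the Young integration theorem \ref{young}, so the vertical lift $\Gamma(t) = (x(t), y(t), z_0 - \Lift_{x,y}(t) + t)$ is well-defined. One checks $\Gamma$ is a simple arc: the $z$-coordinate along $\Gamma$ is strictly increasing because of the $+t$ term dominating (as in the proof of Proposition \ref{relevevert}), giving $z(\Gamma(t)) - z(\Gamma(s)) - 2\det(\pi(\Gamma(t)), \pi(\Gamma(s))) = (t-s)(1 + o(1))$ by the Young estimate on the Lévy area increment, hence $d_\infty(\Gamma(t), \Gamma(s))^2 = |t-s|(1+o(1))$, so the parametrization is bi-Hölder $1/2$. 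Now I want to produce $F \in C^{1,\alpha}_H(\h, \r^2)$ with $D_hF$ surjective and $\Gamma \subset F^{-1}(0)$. The strategy is to invoke the Hölder Whitney extension theorem \ref{Whitneyholder} componentwise: set $F\corn\Gamma = 0$ and prescribe $D_hF(A)(x,y,0) = (x,y)$ (i.e. $d_hF \corn \Gamma = \operatorname{Id}$) on the compact set $\Gamma$. We must verify the two Whitney-Hölder conditions: $\|k(A)^{-1}k(B)\| \le M d_\infty(A,B)^\alpha$ is trivial since $k \equiv \operatorname{Id}$ on the horizontal plane; and $|R(A,B)| = |F(A) - F(B) - k(A)(B^{-1}A)| = |\pi(B) - \pi(A)| \le M d_\infty(A,B)^{1+\alpha}$, which follows from $\|\gamma(t)-\gamma(s)\| \lesssim |t-s|^{(1+\alpha)/2} \asymp d_\infty(\Gamma(t),\Gamma(s))^{1+\alpha}$. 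The extension theorem then produces $\tilde f_1, \tilde f_2 \in C^{1,\alpha}_H(\h,\r)$; their differentials $D_h\tilde f_i$ are close to the prescribed values near $\Gamma$, so $D_hF = (D_h\tilde f_1, D_h\tilde f_2)$ is surjective on a neighbourhood (its horizontal part being near $\operatorname{Id}$), and $\Gamma \subset F^{-1}(0)$ locally.

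The main obstacle — and the point requiring the most care — is the precise bookkeeping of exponents showing the equivalence of the Hölder conditions under the two metrics ($d_\infty$ versus euclidean on the base and $|\cdot|^{1/2}$ on the parameter), i.e. that $\gamma \in H^{(1+\alpha)/2}$ as a euclidean-valued map is exactly equivalent to the estimate $\|\pi(\Gamma(t)) - \pi(\Gamma(s))\| \le c\, d_\infty(\Gamma(t),\Gamma(s))^{1+\alpha}$ needed to feed Theorem \ref{Whitneyholder}, together with checking that the bi-Hölder-$1/2$ property of the lift holds for \emph{any} $H^{(1+\alpha)/2}$ planar curve (using the Young estimate $|\Lift_{x,y}(t) - \Lift_{x,y}(s) - 2\det(\gamma(t),\gamma(s))| \lesssim |t-s|^{1+\alpha}$ from Theorem \ref{young}), so that the $+t$ term indeed controls the homogeneous distance. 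The regularity claims ($f\alpha$-regularity in one direction, $2$-Ahlfors regularity is not literally asserted here but is implicit) then follow from Corollary \ref{regularitealpha} and the bi-Lipschitz equivalence $(\Gamma, d_\infty) \simeq ([0,T], |\cdot|^{1/2})$ exactly as in Proposition \ref{relevevert}.
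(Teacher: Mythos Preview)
Your proposal is correct and follows essentially the same route as the paper: forward direction via Corollary~\ref{regularitealpha}, Proposition~\ref{fortregular} and estimate~\eqref{deuxalpha}; reverse direction via the Young estimate (Theorem~\ref{young}) to get $d_\infty(\Gamma(t),\Gamma(s))^2 \ge c|t-s|$ for $T$ small, then the Whitney--H\"older extension (Theorem~\ref{Whitneyholder}) with $d_hF\corn\Gamma=\operatorname{Id}$. One small caveat: your appeal to Proposition~\ref{relevevert} for the ``$+t$ term dominating'' is slightly off, since there a large constant $C$ is added whereas here the coefficient is $1$; the domination therefore comes from restricting to $T$ small (as the paper does explicitly), which is consistent with the ``locally'' in the statement.
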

\begin{proof}
D'après le corollaire \ref{regularitealpha} une courbe verticale $\Gamma\subset \f$ est fortement Ahlfors régulière. On choisit le paramétrage naturel (par $\H^2_\infty\corn \Gamma$) sur $\Gamma$; selon l'inégalité \eqref{deuxalpha} $\pi(\Gamma)\in H^{\frac{1+\alpha}{2}}$. La courbe $\Gamma$ est donc un relèvement vertical de $\pi(\Gamma)$ d'après  \eqref{formecanonique}.

Soit maintenant $\gamma\in H^{\frac{1+\alpha}{2}}([0,T],\r^2)$ et $\Gamma$ son relèvement vertical. Pour $t,s\in [0,T]$ d'après le théorème \ref{young}
\begin{multline*}
d_\infty(\Gamma(t),\Gamma(s))^2\ge |-\Lift_{\gamma_x,\gamma_y}(t)+\Lift_{\gamma_x,\gamma_y}(s) -2\det(\gamma(t),\gamma(s)) +t-s|\\ \ge |t-s|-C_\alpha\|\gamma\|_\frac{1+\alpha}{2}^2|t-s|^{1+\alpha}\ge c |t-s|\ge c\|\gamma\|_\frac{1+\alpha}{2}^{-1} \|\gamma(t)-\gamma(s)\|^\frac{2}{1+\alpha}, \quad c>0, 
\end{multline*}
pourvu que $T$ soit suffisamment petit (à $\|\gamma\|_\frac{1}{2}$ fixée). Ainsi, pour conclure il suffit de poser $D_hF(A)(x,y,z)=(x,y,0)$ et $F(A)=0$ pour $A\in \Gamma$ et d'appliquer le théorème  \ref{Whitneyholder} de prolongement de Whitney pour les fonctions de $C_H^{1,\alpha}$. \end{proof}

\subsubsection{Dimension euclidienne.}
Considérons comme avant une courbe verticale $\Gamma=\f\cap U(0)$, $F\in C^1_H(\h,\r^2)$ avec $D_hF$ surjective. 
\begin{remark}D'après \cite{comdim} on a $1\le \dim_E\Gamma\le \dim_h\Gamma=2$.
Comme la projection $\pi$ est $1$-lipschitzienne au sens euclidien, $\H^d_E(\Gamma)\ge\H^d_E(\pi(\Gamma))$ et $\dim_E\Gamma \ge \dim_E \pi(\Gamma)$. Si $\Gamma$ est $2$-Ahlfors régulière, alors quitte à changer le paramétrage  $\pi(\Gamma)\in h_{1/2}$ et, par conséquent, $\pi(\Gamma)$ est d'aire nulle dans le plan : $\mathcal{L}^2(\pi(\Gamma))=0$. 
\end{remark}
\begin{lemma} \label{dimriem} La dimension euclidienne des courbes verticales $\Gamma$ peut prendre toute valeur dans l'intervalle $[1,2]$. 
\end{lemma}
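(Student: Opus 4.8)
The plan is to realize the extreme cases $\dim_E\Gamma=1$ and $\dim_E\Gamma=2$ by explicit examples, and to interpolate for $\beta\in(1,2)$ by taking vertical lifts of suitable self-affine planar curves. For the upper extreme, first I would take a smooth simple curve never tangent to the horizontal distribution, e.g.\ $\Gamma(t)=(0,0,t)$, or more simply the image of a $C^1$-arc transverse to $H\h$; by the discussion following \eqref{lisserepresantaion} this is a vertical curve arising from some $F\in C^1(\r^3,\r^2)$, and it has $\dim_E\Gamma=1$. For the lower-dimension-$2$ extreme one must instead exhibit a vertical curve whose Euclidean Hausdorff dimension is as large as $2$; the natural device is the \emph{vertical lift} of definition \ref{d: relevementvertic}: by remark \ref{holderriem}, if $\gamma\in H^\alpha([0,T],\r^2)$ with $\alpha>\tfrac12$ then its vertical lift lies in $H^\alpha([0,T],\r^3)$, hence has Euclidean dimension $\le 1/\alpha$, while proposition \ref{relevevert} (applied through lemma \ref{l: vertcaract} or directly) shows it \emph{is} a vertical curve of some $F\in C^1_H$. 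So the whole construction reduces to choosing, for each target $\beta\in[1,2]$, a planar H\"older curve $\gamma$ with $\dim_E\big(\text{vertical lift of }\gamma\big)=\beta$.

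The concrete choice I would make is a Weierstrass-type (lacunary) planar curve
$$
\gamma(t)=\Big(\sum_{i\ge 0} a^{-i}\sin(2\pi b^i t),\ \sum_{i\ge 0} a^{-i}(1-\cos(2\pi b^i t))\Big),
$$
with $b$ a large integer and $a=b^{\,s}$, $s\in(\tfrac12,1]$, so that $\gamma\in H^{s}$ but not better; these are exactly the curves whose Stieltjes (L\'evy-area) sums were estimated in the lacunary-Fourier computations referenced earlier in the paper (formulas \eqref{ondelettedecomp}\eqref{ondelettedecomp1}), so $\Lift_{\gamma_x,\gamma_y}$ exists when $s>\tfrac12$ and the vertical lift is well defined. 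For $s>\tfrac12$ the lift is $H^s$ in $\r^3$, so $\dim_E\le 1/s$; a matching lower bound for the box/Hausdorff dimension of the lift comes from the standard argument for graphs of Weierstrass functions (the lacunarity forces oscillation at every dyadic scale, so the curve cannot be covered more efficiently than the H\"older exponent allows), giving $\dim_E=1/s$, which ranges over $[1,2)$ as $s$ ranges over $(\tfrac12,1]$. The value $\beta=2$ is obtained in the limit $s\to\tfrac12^+$; at $s=\tfrac12$ exactly one should instead invoke the $\tfrac12$-H\"older lift of remark \ref{r: souscritiqueLevy} together with the precise critical-exponent estimates \eqref{totalformuleStiel}\eqref{totalformuleStiel1} (this is the case where the L\'evy area is borderline and one must be careful), producing a vertical curve of Euclidean dimension exactly $2$.

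The main obstacle is the dimension \emph{lower} bound for the lift: upper bounds follow for free from H\"older regularity, but showing that the vertical lift of the lacunary curve is genuinely $1/s$-dimensional (and not smaller, e.g.\ because of cancellations in the $z$-coordinate) requires the oscillation estimates for the partial Stieltjes sums of the L\'evy area of a $\tfrac12$-critical curve — precisely the delicate estimates the paper develops for its irregular examples in section \ref{exempleirregulier}. I would therefore structure the proof as: (i) reduce to vertical lifts via proposition \ref{relevevert}/lemma \ref{l: vertcaract}; (ii) for $\beta\in[1,2)$ pick $\gamma$ lacunary with $s=1/\beta$, check $\gamma,\Lift_{\gamma_x,\gamma_y}\in H^{1/\beta}$ by the lacunary estimates, deduce $\dim_E\Gamma\le\beta$ from remark \ref{holderriem}; (iii) prove $\dim_E\Gamma\ge\beta$ by the classical scale-by-scale covering argument for lacunary series, using that at dyadic scale $b^{-k}$ the $k$-th term of $\gamma$ (hence of the lift, since the $z$-drift $+t$ and the H\"older control on $\Lift$ do not kill this oscillation) forces the curve to spread over a $b^{-ks}$-neighborhood; (iv) handle $\beta=2$ separately with the $\tfrac12$-H\"older sub-critical lift of remark \ref{r: souscritiqueLevy}; and (v) note that all these curves are vertical curves of some $F\in C^1_H(\h,\r^2)$ with $D_hF$ surjective by the Whitney-extension remark \ref{Whit}.
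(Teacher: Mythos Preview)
Your overall strategy --- build $\Gamma$ as the vertical lift of a planar H\"older curve and read off $\dim_E\Gamma$ from the H\"older exponent --- is exactly the right one, and is what the paper does. But your choice of planar curve creates a real difficulty that the paper avoids entirely.

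The gap is in step (iii), the lower bound $\dim_E\Gamma\ge\beta$. For a lacunary Weierstrass-type curve, the ``scale-by-scale oscillation'' argument you sketch yields the \emph{box} dimension, not the \emph{Hausdorff} dimension; passing from one to the other for Weierstrass graphs was a long-standing open problem and is far from a classical covering argument. The paper sidesteps this completely by choosing $\gamma$ to be a \emph{quasi-helix} (a self-similar Von Koch curve) satisfying the two-sided estimate
\[
c^{-1}|t-s|^{1/\beta}\le \|\gamma(t)-\gamma(s)\|\le c\,|t-s|^{1/\beta}.
\]
With this choice $\dim_E\gamma=\beta$ is immediate (bi-Lipschitz invariance), and then the lower bound for the lift is free: $\pi$ is Euclidean $1$-Lipschitz, so $\dim_E\Gamma\ge\dim_E\pi(\Gamma)=\dim_E\gamma=\beta$. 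The upper bound comes, as you say, from remark~\ref{holderriem}. No oscillation estimates, no Stieltjes-sum analysis, no appeal to section~\ref{exempleirregulier} are needed for $\beta\in[1,2)$.

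Your treatment of $\beta=2$ is also too vague. Taking a limit $s\to\tfrac12^+$ does not produce a curve of dimension exactly $2$, and remark~\ref{r: souscritiqueLevy} concerns $\alpha<\tfrac12$, not the critical exponent. The paper handles $\beta=2$ by a separate explicit construction: a Von Koch curve whose scaling ratios $l_n=2^{-n(\frac12+h_n)}$ have $h_n\searrow 0$ slowly enough that $\dim_E\gamma=2$, yet fast enough that $\sum_n m(2^{-n})<\infty$ so that lemma~\ref{couture} still furnishes $\Lift_{\gamma_x,\gamma_y}$ and proposition~\ref{relevevert} applies.
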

\begin{proof}
Pour $1\le\beta<2$ on peut toujours trouver une courbe "quasi-hélix" d'exposant $\beta^{-1}$, \ie $\gamma:[0,1]\to \r^2$ telle que $c^{-1}|t-s|^\frac{1}{\beta}\le \|\gamma(t)-\gamma(s)\|\le c|t-s|^\frac{1}{\beta}$, $0<c<\infty$, $s,t\in [0,1]$. De telles courbes peuvent être constuites comme des courbes de Von Koch autosimilaires, voir  \cite{tricot}, par exemple. On a bien $\dim_E\gamma= \beta$. D'après le théorème \ref{young} la courbe $\{t\to (\gamma_x(t),\gamma_y(t), Z_{\gamma_x,\gamma_y}(t))\}$ appartient à $H^{\beta^{-1}}([0,1],\h)$. En prenant son relèvement vertical (proposition \ref{relevevert}) on obtient une courbe de niveau $\Gamma$ avec $\dim_E\Gamma=\beta$ (remarque \ref{holderriem}). 

Pour la dimension $\beta=2$ voir l'exemple ci-dessous. 
\end{proof}
 
\begin{exemple}
Il existe une courbe de niveau $\Gamma$ $2$-Ahlfors régulière pour laquelle $$\dim_h\Gamma=\dim_E\Gamma=\dim_E\pi(\Gamma)=2.$$
\end{exemple}
\begin{proof}

L'idée est de construire d'abord une courbe $\gamma:[0,1]\to \r^2$ dans le plan de dimension euclidienne $2$ dont le module de continuité $\|\gamma(t)-\gamma(s)\|^2\le |t-s| m(|t-s|)$ satisfait $\sum\limits_{n=0}^\infty m(2^{-n})< \infty$ et d'utiliser par la suite le relèvement vertical pour construire la courbe $\Gamma$ avec les propriétés voulues.  La courbe $\gamma$ dans notre exemple sera donnée comme une courbe de Von Koch, où à chaque étape d'itération le facteur de similitude croît. 

On fixe une suite monotone  de nombres $\{h_n\}\in (0,\frac{1}{2})$ telle que $h_n \searrow 0$ quand $n\to \infty$. On se donne deux points $A_0^0$ et $A^0_1$ avec $l_0=\|A^0_1-A^0_0\|=1$ et on définit par récurrence une suite de courbes $\gamma_n:[0,1]\to \r^2$, $n=1,\ldots,\infty$.   
La courbe $\gamma_n$ est linéaire sur tout intervalle dyadique $I^n_i:=[\frac{i}{2^n},\frac{i+1}{2^n}], i=1,\ldots, 2^n-1,$  et $\gamma(I_i^n)=[A^n_i,A^n_{i+1}]$, où les segments $[A^n_i,A^n_{i+1}]$ sont tous de longueur euclidienne $l_n=2^{-n(\frac{1}{2}+h_n)}$. Les sommets satisfont $A_{2i}^{n+1}=A_i^n, i=0,\ldots, 2^n$ (voir fig. \ref{koch}). 
\begin{figure}[h]\centering
\begin{tikzpicture}[scale=2.5] 
\draw [red,dashed] (0,0)-- (2,0);
\draw[red] (0,0)--(1,0.8)--(2,0);
\draw (0,0) node[left]{\footnotesize $A^1_0$};
\draw (1,0.8) node[above]{\footnotesize $A^1_1$};
\draw (2,0) node[right]{\footnotesize $A^1_2$};
\draw (0.5,0.4) node[above]{\footnotesize $l_1$};
\draw (1.5,0.4) node[above]{\footnotesize $l_1$};
\draw (1,0) node[below]{\footnotesize $l_0$};
\end{tikzpicture}
\begin{tikzpicture}[scale=2.5] 
\draw[red,dashed] (0,0)--(1,0.8)--(2,0);
\draw[red] (0,0)--(0.85, 0.1)--(1,0.8)--(1.15,0.1)--(2,0);
\draw (0,0) node[left]{\footnotesize $A^2_0$};
\draw (0.8,0.1) node[below]{\footnotesize $A^2_1$};
\draw (1,0.8) node[above]{\footnotesize $A^2_2$};
\draw (1.2,0.1) node[below]{\footnotesize $A^2_3$};
\draw (2,0) node[right]{\footnotesize $A^2_4$};
\draw (0.5,0.4) node[above]{\footnotesize $l_1$};
\draw (1.5,0.4) node[above]{\footnotesize $l_1$};
\draw (1.5,0.05) node[above]{\footnotesize $l_2$};
\draw (0.5,0.05) node[above]{\footnotesize $l_2$};
\draw (0.85,0.33) node[above]{\footnotesize $l_2$};
\draw (1.17,0.33) node[above]{\footnotesize $l_2$};
\end{tikzpicture}
\caption{\label{koch} \footnotesize La courbe $\gamma_1$ (à gauche) et $\gamma_2$ (à droite)} 
\end{figure}
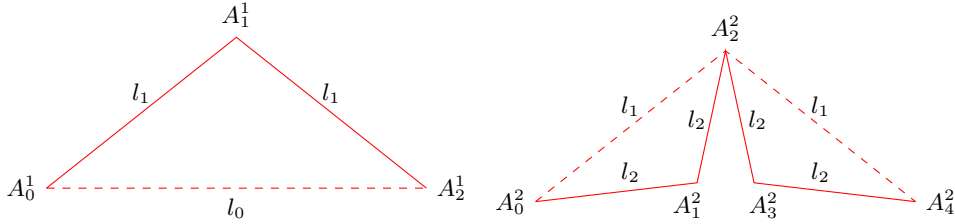

Pour tous points $A^n_i$ et $A^n_j$ tels que $0<|j-i|<2^{n-r}$ on démontre par récurrence sur $n=r+1,\cdots$, que 
$$\|A^n_i-A^n_j\|\le 2 \sum\limits_{k=r+1}^n l_k.$$  
On montre facilement que la suite $\gamma_n$ converge uniformément sur $[0,1]$ vers une courbe limite $\gamma$, et que pour $|t-s|\le 2^{-r}$ on a l'estimation suivante :
\begin{equation*}
\|\gamma(t)-\gamma(s)\|\le 2 \sum\limits_{k=r+1}^\infty l_k.
\end{equation*}
 On en déduit que le module de continuité $m$ de $\gamma$ satisfait
 $m(2^{-r})\le 2^{r}\big(2\sum\limits_{k=r+1}^\infty l_k \big)^2$.
 On remarque qu'en particulier $\gamma\in h_{1/2}$ si $kh_k\to 0$ lorsque $k\to \infty$.
 La série $\sum\limits_{n=0}^\infty m(2^{-n})$ est majorée (à des constantes multiplicatives et additives près) par 
\begin{multline}\label{kochserie}
\sum\limits_{r=0}^\infty m(2^{-r}) \lesssim \sum\limits_{r=0}^\infty2^r\big(\sum\limits_{k=r+1}^\infty l_k \big)^2 \le \sum\limits_{r=0;\, j,k=1}^\infty 2^rl_kl_j \mathbf{1}_{(k\ge r+1,j\ge r+1)} \lesssim \\
\sum\limits_{j,k=1}^\infty 2^{\min\{k,j\}} l_kl_j\le \big(\sum\limits_{k=1}^\infty 2^{\frac{k}{2}}l_k\big)^2.
\end{multline}
 On conclut donc que si la somme $\sum\limits_{n} 2^{-nh_n}<\infty$, alors $\sum\limits_r m(2^{-r})<0$ converge. Or, en utilisant la distribution de masse naturelle sur $\gamma$ il est facile de voir que $\dim_E \gamma=2$ pourvu que $h_n\searrow 0$.
 
 Prenons par exemple $h_n=(n\ln(n+1)^2+2)^{-1}$ et définissons la courbe $\gamma:[0,1]\to \r^2$ comme ci-dessus. Compte tenu de l'estimation \eqref{kochserie}, le lemme \ref{couture}  (avec $ \mu(t,s)=2\big(x(t)y(s)-y(t)x(s)\big)$) assure que la courbe $t\to (\gamma_x(t),\gamma_y(t),Z_{\gamma_x,\gamma_y}(t))$ est bien définie et appartient à $h_{1/2}([0,1],\h)$. La construction de la courbe $\Gamma$ est alors faite par la proposition \ref{relevevert}.
\end{proof}

\subsection{Exemples irréguliers\label{exempleirregulier}}
Dans cette sous-section nous construisons des courbes verticales "rugueuses" $\Gamma_{1,2}$ pour lesquelles $\H^2_\infty(\Gamma_1)=\infty$ ou $\H_\infty^2(\Gamma_2)=0$. 
\begin{remark}
Compte tenu de l'inégalité de la coaire  de \cite{magncoareaineq} pour $\mathcal{L}^2$-presque tout $a\in \r^2$  la mesure $\H^2_\infty(F^{-1}(a)\cap K)<\infty$ est finie pour tout compact $K\in \h$.
\end{remark}

\subsubsection{Séries lacunaires.}\label{ondelettes} Le but de cette sous-section est d'étudier plus en détail l'existence et des estimations de l'intégrale de Stieltjes $\int x\,dy$ au delà de la régularité du lemme \ref{couture}. Pour ce faire on utilisera des fonctions hölderiennes construites comme des séries de Fourier lacunaires. 

Etant donné $\phi_n(t):=(2\pi)^{-1}\exp(-2\pi I 2^n t)$ et $\psi_n(t):=(2\pi)^{-1}\exp(2\pi I 2^n t)$ on considère la série de fonctions: 
\begin{equation}\label{ondelettedecomp}
 f(t)=\sum\limits_{n=0}^\infty 2^{-\frac{n}{2}}\big(a_n\phi_n(t)+b_n\psi_n(t)\big), \quad a_n, b_n\in \mathbb{C}.
\end{equation} Dans tous les cas qui nous intéressent nous supposons que $f\in \bigcap\limits_{\epsilon>0} H^\frac{1-\epsilon}{2}([0,1])$.
Pour $h>0$ on note par $k(h)$ l'unique entier tel que $ 2^{-k(h)-1}\le h< 2^{-k(h)}$.
Pour $h>0$,
\begin{multline*}
 \big| \sum\limits_{n=0}^\infty 2^{-\frac{n}{2}}a_n\big(\phi_n(t+h)-\phi_n(t)\big)\big|\le  (2\pi)^{-1}\sum\limits_{n=0}^{k(h)} 2^{-\frac{n}{2}}|a_n| |\exp(2\pi I2^nh)-1|+  \sum\limits_{n=k(h)+1}^\infty 2^{-\frac{n}{2}}|a_n|\\
 \le \sum\limits_{n=0}^{k(h)} 2^{-\frac{n}{2}}|a_n| h2^n+ 2^{-\frac{k(h)}{2}}\sum\limits_{n=k(h)+1}^\infty 2^{-\frac{n-k(h)}{2}}|a_n|\le h^\frac{1}{2} L_{k(h)}(\{a_n\}),
\end{multline*}
où $L_k(\{a_n\}):=2\big(\sum\limits_{n=0}^{k}|a_n| 2^{-\frac{k-n}{2}}+\sum\limits_{n=k+1}^\infty 2^{-\frac{n-k}{2}}|a_n|\big)$, et donc 
\begin{equation}
|f(t+h)-f(t)|^2\le 2h\big(L_{k(h)}(\{a_n\})^2+L_{k(h)}(\{b_n\})^2\big).
\end{equation}
On voit alors que $L_k(\{a_n\})\le 8\|a_n\|_\infty$, où $\|\cdot\|_\infty$ désigne la norme usuelle de l'espace $l_\infty$. Par conséquent, si $\max\limits_{n\ge 0} (|a_n|+|b_n|)\le C<\infty$ alors $f\in H^{\frac{1}{2}}$ et $\|f\|_{H^\frac{1}{2}}\le 8C$. On remarque également que si  $\lim a_n=\lim b_n=0$, alors $L_k(\{a_n\})+L_k(\{b_n\})\underrel\longrightarrow{k\to\infty}0$ et donc  $f \in h_{1/2}$. 
Avec $f$ considérons encore une fonction $g$ dont la décomposition est donnée par 
\begin{equation}\label{ondelettedecomp1}
g(t)=\sum\limits_{n=0}^\infty 2^{-\frac{n}{2}}\big(c_n\phi_n(t)+d_n\psi_n(t)\big).
\end{equation}

Soit $\sigma=\{0=t_0<t_1<\ldots<t_{N}<t_{N+1}=1\}$ une subdivision de $[0,1]$; on note  $\Delta t_i:=t_{i+1}-t_i$ et $\|\sigma\|:=\max\limits_i |\Delta t_i|$. Notre but est d'estimer la somme de Stieltjes 
$ \int_\sigma f\,dg := \sum\limits_{i=0}^N f(t_i)(g(t_{i+1})-g(t_i))$. 
On décompose $\int_\sigma f\,dg =D_\sigma+S_\sigma$ en deux termes, appelés diagonal ($D$) et secondaire ($S$):
\begin{align*}
&D_\sigma=\sum\limits_{n=0}^\infty 2^{-n}\big(a_n d_n  \int_\sigma \phi_n\,d\psi_n+b_n c_n  \int_\sigma \psi_n\,d\phi_n\big); \\
& S_\sigma=\sum\limits_{n\not=m}2^{-\frac{n+m}{2}}\big(a_n d_m  \int_\sigma \phi_n\,d\psi_m+b_n c_m  \int_\sigma \psi_n\,d\phi_m\big)\\
&+\sum\limits_{n,m=0}^\infty a_n c_m 2^{-\frac{n+m}{2}} \int_\sigma \phi_n\,d\phi_m +b_n d_m 2^{-\frac{n+m}{2}} \int_\sigma \psi_n\,d\psi_m.
\end{align*}

\textbf{Estimation du terme secondaire.}
 Remarquons que  $\int\limits_0^1 \phi_n\,d\psi_m=\int\limits_0^1 \psi_n\,d\phi_m=0$ pour $n\not=m$ non-négatifs, ainsi que $\int\limits_0^1 \psi_n\,d\psi_m=\int\limits_0^1 \phi_n\,d\phi_m=0$ pour tous $n,m\ge 0$. Estimons, par exemple, le terme 
\begin{multline*}
S_{a,c}:=\sum\limits_{n,m=0}^\infty a_n c_m 2^{-\frac{n+m}{2}} \big(\int_\sigma \phi_n\,d\phi_m \pm \int\limits_0^1\phi_n\,d\phi_m\big)=  \\
=\sum\limits_{i=0}^N\sum_{n,m=0}^\infty a_n c_m 2^{-\frac{n+m}{2}} \int\limits_{t_i}^{t_i+\Delta t_i}\big(\phi_n(t_i)-\phi_n(l)\big)\phi_m'(l)\,dl.
\end{multline*}
En fonction de la taille de $\Delta t_i$ on utilisera l'une des quatre bornes possibles qui sont faciles à retrouver:
\begin{equation}\label{quatrestim} \big|\int\limits_{t_i}^{t_i+\Delta t_i}\big(\phi_n(t_i)-\phi_n(l)\big)\phi_m'(l)\,dl\big|\le 
\min \left\{\begin{array}{lcc} 2^{n+m}(\Delta t_i)^2;\\ 2^{n+1}\Delta t_i;\\ 2^m\Delta t_i;\\ 1.  
\end{array}\right.
\end{equation}
 Pour  $0\le i\le N$ fixé, on pose $k_i=k(\Delta t_i)$ et suivant \eqref{quatrestim} on obtient  
\begin{multline*}
\sum_{n,m=0}^\infty |a_n c_m|\, 2^{-\frac{n+m}{2}} \big|\int\limits_{t_i}^{t_i+\Delta t_i}\big(\phi_n(t_i)-\phi_n(l)\big)\phi_m'(l)\,dl\big| \le
 \sum_{n,m\le k_i}|a_nc_m| 2^\frac{n+m}{2}2^{-2k_i}\\
+\sum_{m>k_i\ge n}2|a_nc_m|2^\frac{n-m}{2}2^{-k_i} +\sum_{n>k_i\ge m} |a_nc_m|2^\frac{m-n}{2}2^{-k_i} +\sum_{n,m> k_i}|a_nc_m| \, 2^{-\frac{n+m}{2}}\\
\le 2^{-k_i}\bigg( \sum_{n=0}^{k_i}\frac{|a_{n}|}{2^{(k_i-n)/2}}\sum_{m=0}^{k_i}\frac{|c_{m}|}{2^{(k_i-m)/2}}+2\sum_{n=0}^{k_i}\frac{|a_{n}|}{2^{(k_i-n)/2}}\sum_{m=k_i+1}^{\infty}\frac{|c_{m}|}{2^{(m-k_i)/2}}\\
+\sum_{m=0}^{k_i}\frac{|c_m|}{2^{(k_i-m)/2}}\sum_{n=k_i+1}^{\infty}\frac{|a_{n}|}{2^{(n-k_i)/2}}
+ \sum_{n=k_i+1}^{\infty}\frac{|a_n|}{2^{(n-k_i)/2}}\sum_{m=k_i+1}^{\infty}\frac{|c_m|}{2^{(n-k_i)/2}}\bigg).
\end{multline*} 
 On en déduit que 
$$ |S_{a,c}|\le 2 \sum_{i=0}^N 2^{-k_i}L_{k_i}(\{a_n\}) L_{k_i}(\{c_m\}).$$ 
 Exactement de la même façon nous pouvons majorer les autres termes de $S_\sigma$, et avoir l'estimée suivante : 
\begin{equation*}
|S_\sigma|\le 2\sum_{i=0}^N 2^{-k_i} \big(L_{k_i}(\{a_n\})+L_{k_i}(\{b_n\})\big)\big(L_{k_i}(\{c_m\})+L_{k_i}(\{d_m\})\big).
\end{equation*}

\textbf{Estimation du terme diagonal.}
Pour ce choix de $\phi$ et $\psi$ on calcule
\begin{align*}
&(2\pi)^2 \phi_n(t_i)\big(\psi_n(t_{i+1})-\psi_n(t_i)\big)= \exp(2\pi I 2^n \Delta t_i)-1;
\\& (2\pi)^2 \psi_n(t_i)\big(\phi_n(t_{i+1})-\phi_n(t_i)\big)=\exp(-2\pi I 2^n\Delta t_i)-1.
 \end{align*}
 L'estimée $|\exp(xI)-1-Ix|\le x^2/2$ justifie l'apparition de la partie principale dans $D_\sigma$ :
\begin{multline*}
(2\pi)^2D_\sigma=\sum_{i=0}^N\sum_{n=0}^\infty 2^{-n}\big(a_n d_n( \exp(2\pi I 2^n \Delta t_i)-1)+b_n c_n(\exp(-2\pi I 2^n\Delta t_i)-1) \big)
\\= 2\pi I\sum_{i=0}^N\sum_{n=0}^{k_i}  \Delta t_i(a_n d_n -b_n c_n)+\tilde S_\sigma.
\end{multline*}
Le terme $\tilde S_\sigma$ satisfait l'inégalité 
\begin{multline*}
|\tilde S_\sigma|\le \sum_{i=0}^N\sum_{n=k_i+1}^\infty 2^{-n+1}(|a_nd_n|+|b_nc_n|) + \sum_{i=0}^N\sum_{n=0}^{k_i} 2^{n-1} 2^{-2k_i} (|a_nd_n|+|b_nc_n|) \\
\le \sum_{i=0}^N 2^{-k_i} \Big(\sum_{n= k_i+1}^\infty 2^{-n+k_i+1} (|a_nd_n|+|b_nc_n|)+\sum_{n=0}^{k_i} 2^{n-1-k_i} (|a_nd_n|+|b_nc_n|) \Big) \\  \le \sum_{i=0}^N 2^{-k_i} \big(L_{k_i}(\{a_n\})L_{k_i}(\{d_m\})+L_{k_i}(\{c_m\})L_{k_i}(\{b_n\})\big).
\end{multline*}
Au final nous obtenons que (avec $C\le 10$)
\begin{align}\label{totalformuleStiel}
&\int_\sigma f\,dg = (2\pi)^{-1}I \sum_{i=0}^N\Delta t_i \sum_{n=0}^{k_i}  (a_n d_n -b_n c_n) + R_\sigma(a_n,b_n,c_n,d_n),\\ \label{totalformuleStiel1}
& |R_\sigma(a_n,b_n,c_n,d_n)|\le C\sum_{i=0}^N\Delta t_i \big(L_{k_i}(\{a_n\})+L_{k_i}(\{b_n\})\big)\big(L_{k_i}(\{c_m\})+L_{k_i}(\{d_m\})\big).
\end{align}
 Vu que $\sum_{i=0}^N\Delta t_i=1$, on arrive à l'estimée
\begin{equation}
 |R_\sigma(a_n,b_n,c_n,d_n)|\le C\max\limits_{k\ge k(\|\sigma\|)} \big(L_k(\{a_n\})+L_k(\{b_n\})\big)\big(L_k(\{c_m\})+L_k(\{d_m\})\big).
\end{equation}
En particulier, le reste vérifie $|R_\sigma|\le \tilde C(\|a_n\|_\infty+\|b_n\|_\infty)(\|c_m\|_\infty+\|d_m\|_\infty)$, et si en plus $|a_n|+|b_n|\to 0$ ou $|c_m|+|d_m|\to 0$, alors 
$|R_\sigma|\to 0$ quand $\|\sigma\|\to 0$.   
\begin{proposition}\label{p: stieltjesondel}
Soit $\|a_n\|_\infty+\|b_n\|_\infty<\infty$ et $|c_m|+|d_m|\underrel\longrightarrow{m\to \infty} 0$. Alors, pour $f$ et $g$ dont les décompositions sont données par \eqref{ondelettedecomp} et \eqref{ondelettedecomp1}, 
\begin{equation}\label{ondeconverge}
\lim\limits_{\|\sigma\|\to 0}\int_\sigma f\,dg =(2\pi)^{-1}I \sum_{n=0}^{\infty}(a_n d_n -b_n c_n),
\end{equation}
si et seulement si la dernière série converge (pas forcement absolument).\end{proposition}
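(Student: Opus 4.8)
Proposition p: stieltjesondel — The plan is to leverage the decomposition $\int_\sigma f\,dg = (2\pi)^{-1}I\sum_{i=0}^N \Delta t_i \sum_{n=0}^{k_i}(a_nd_n - b_nc_n) + R_\sigma$ already established in \eqref{totalformuleStiel}, together with the remainder bound \eqref{totalformuleStiel1}. Under the hypothesis $\|a_n\|_\infty + \|b_n\|_\infty < \infty$ and $|c_m|+|d_m|\to 0$, the preceding discussion already gives $|R_\sigma|\to 0$ as $\|\sigma\|\to 0$ (since then $L_k(\{c_m\})+L_k(\{d_m\})\to 0$ while $L_k(\{a_n\})+L_k(\{b_n\})$ stays bounded). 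So the entire content of the proposition reduces to analyzing the "principal part"
\[
P_\sigma := \sum_{i=0}^N \Delta t_i \sum_{n=0}^{k_i}(a_nd_n - b_nc_n),
\]
and showing that $\lim_{\|\sigma\|\to 0} P_\sigma$ exists (and equals $\sum_{n=0}^\infty(a_nd_n-b_nc_n)$) if and only if the series $\sum_n(a_nd_n-b_nc_n)$ converges.

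First I would set $e_n := a_nd_n - b_nc_n$ and $E_n := \sum_{j=0}^n e_j$ (partial sums), and rewrite $P_\sigma = \sum_{i=0}^N \Delta t_i\, E_{k_i}$. Since $\sum_i \Delta t_i = 1$, this exhibits $P_\sigma$ as a weighted average of the values $E_{k_i}$, with weights $\Delta t_i \geq 0$ summing to $1$. Now observe that as $\|\sigma\|\to 0$, each $k_i = k(\Delta t_i)\to\infty$; more precisely $k_i \geq k(\|\sigma\|) =: K$ for all $i$, and $K\to\infty$. So $P_\sigma$ is a convex combination of the tail values $\{E_m : m \geq K\}$. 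The "if" direction is then immediate: if $E_n\to E := \sum_n e_n$, then for $\|\sigma\|$ small enough all the $E_{k_i}$ lie within $\varepsilon$ of $E$, hence $|P_\sigma - E|\leq \varepsilon$, giving $P_\sigma\to E$ and therefore $\int_\sigma f\,dg\to E$.

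The "only if" direction is the main obstacle and requires a genuine argument, not just averaging. Suppose $\lim_{\|\sigma\|\to 0} P_\sigma =: \ell$ exists; I must deduce $E_n\to \ell$. The idea is to choose, for each target index $n$, a subdivision that is "as concentrated as possible at scale $2^{-n}$", so that $P_\sigma$ is forced to be close to $E_n$. Concretely, fix a large $n$ and build a subdivision $\sigma$ almost all of whose intervals have length in $[2^{-n-1}, 2^{-n})$ — for instance a uniform dyadic partition into $2^n$ intervals of length $2^{-n}$ (then every $k_i = n$ and $P_\sigma = E_n$ exactly, modulo the endpoint convention on $k(\cdot)$; a minor adjustment handles $h = 2^{-n}$). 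This already shows that along the sequence of uniform dyadic partitions $P_\sigma = E_n$, so the hypothesis $P_\sigma\to\ell$ forces $E_n\to\ell$, i.e. $\sum_n e_n$ converges with sum $\ell$. The subtle point to check is that the limit $\lim_{\|\sigma\|\to 0}\int_\sigma f\,dg$ in Definition \ref{d: stieltjesinteg} is taken over \emph{all} subdivisions with small mesh, so the particular sequence of dyadic partitions is admissible and must produce the same limit $\ell$; and conversely, once $\sum e_n$ is shown to converge, the "if" direction above guarantees convergence over the full net of subdivisions, not merely the dyadic one — so the two notions of convergence are genuinely equivalent here. I would also remark that the non-absolute nature of the convergence is essential and unavoidable: nothing in the averaging forces absolute convergence, exactly as one sees with conditionally convergent Fourier-type series.

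One technical care point worth flagging: when $\Delta t_i = 2^{-n}$ exactly, the definition $2^{-k(h)-1}\leq h < 2^{-k(h)}$ gives $k_i = n-1$ rather than $n$, so the uniform dyadic partition into $2^n$ pieces yields $P_\sigma = E_{n-1}$, not $E_n$; this is a harmless shift. To be fully rigorous I would instead take $2^n$ intervals of a length $h$ slightly less than $2^{-n}$ together with one short "correction" interval, whose contribution to $P_\sigma$ is $O(2^{-n}\sup_m|E_m|)\to 0$, or simply note that $E_{n-1}\to\ell \iff E_n\to\ell$. With this, the equivalence \eqref{ondeconverge} follows.
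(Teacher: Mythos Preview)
Your argument is correct and follows the same route as the paper: use the decomposition \eqref{totalformuleStiel}--\eqref{totalformuleStiel1}, observe that $R_\sigma\to 0$ under the hypotheses, and reduce everything to the behaviour of the principal part $P_\sigma=\sum_i\Delta t_i\,E_{k_i}$. Your convex-combination viewpoint on $P_\sigma$ is marginally cleaner than the paper's explicit rewriting of $P_\sigma-\sum_n e_n$ as an Abel-type tail sum with monotone bounded weights, and you additionally supply the ``only if'' direction (by specializing to uniform dyadic subdivisions so that $P_\sigma=E_{n-1}$), which the paper's proof leaves implicit.
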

\begin{proof}
En supposant que la série converge et vu que $\sum_{i=0}^N \Delta t_i =1$ on a l'égalité suivante 
$$
 \sum_{i=0}^N\Delta t_i \sum_{n=0}^{k_i}  (a_n d_n -b_n c_n)- \sum_{n=0}^{\infty}(a_n d_n -b_n c_n)= \sum_{n=k(\|\sigma\|)}^{\infty}(a_n d_n -b_n c_n)\sum_{i=0}^N\Delta t_i (\mathbf{1}_{\{n\le k_i\}}-1).
$$
La dernier terme tend vers $0$ lorsque $\|\sigma\|\to 0$ car la fonction $n\longrightarrow \sum_{i=0}^N\Delta t_i (\mathbf{1}_{\{n\le k_i\}}-1)$ est monotone et bornée.
\end{proof}
En répartissant bien les poids convexes $\Delta t_i$ on obtient 
\begin{proposition}\label{ondeladherence} Soit $\|a_n\|_\infty+\|b_n\|_\infty<\infty$ et $|c_m|+|d_m|\underrel\longrightarrow{m\to \infty} 0$. Pour la simplicité supposons que les fonctions correspondantes $f\in H^\frac{1}{2}$ et $g\in h_{1/2}$ sont à valeurs réelles, \ie $\bar a_n =b_n$ et $\bar c_n=d_n$ pour tout $n\ge 0$. Alors les valeurs d'adhérence de $\int_\sigma f\,dg$ remplissent lorsque $\|\sigma\| \to 0$ l'intervalle fermé
$$ \overline{ \int_\sigma f\,dg}_{\|\sigma\|\to 0}=\big[ \liminf\limits_{n\to \infty} A_n ;\limsup_{n\to \infty}  A_n\big], \quad A_n:=\pi^{-1}\sum_{k=0}^{n}\Im(a_kd_k).$$
Dans le cas, où $\|a_n\|_\infty+\|b_n\|_\infty<\infty$ et $\|c_m\|_\infty+\|d_m\|_\infty<\infty$
$$\overline{ \int_\sigma f\,dg}_{\|\sigma\|\to 0}\subset \big[ \liminf\limits_{n\to \infty} A_n -l(f,g);\limsup_{n\to \infty}  A_n+l(f,g)\big],$$
où $l(f,g)=C(\|a_n\|_\infty+\|b_n\|_\infty)(\|c_m\|_\infty+\|d_m\|_\infty)$, $0<C<\infty$ est une constante.
\end{proposition}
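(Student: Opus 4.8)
Proposition \ref{ondeladherence} — proof plan.

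The strategy is to read off everything from the master formula \eqref{totalformuleStiel}--\eqref{totalformuleStiel1}. Write $\int_\sigma f\,dg = P_\sigma + R_\sigma$, where $P_\sigma = (2\pi)^{-1}I\sum_{i=0}^N \Delta t_i\sum_{n=0}^{k_i}(a_nd_n - b_nc_n)$ is the principal (convex-combination) term and $R_\sigma$ is the remainder. Under the reality assumption $\bar a_n = b_n$, $\bar c_n = d_n$ one has $a_nd_n - b_nc_n = a_nd_n - \overline{a_nd_n} = 2i\,\Im(a_nd_n)$, so $(2\pi)^{-1}I(a_nd_n-b_nc_n) = -\pi^{-1}\Im(a_nd_n)$; hence $P_\sigma = -\sum_{i=0}^N \Delta t_i\, A_{k_i}$ up to sign conventions, i.e. $P_\sigma$ is a convex combination (weights $\Delta t_i$ summing to $1$) of the partial sums $A_{k_i}$ — careful sign bookkeeping against the definition $A_n = \pi^{-1}\sum_{k=0}^n \Im(a_kd_k)$ is the only thing to watch here, but it does not affect the interval being closed and symmetric in the two endpoints. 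Since $|c_m|+|d_m|\to 0$, the bound on $R_\sigma$ from \eqref{totalformuleStiel1} together with $\sum\Delta t_i = 1$ gives $|R_\sigma|\le C\max_{k\ge k(\|\sigma\|)}(L_k(\{a_n\})+L_k(\{b_n\}))(L_k(\{c_m\})+L_k(\{d_m\}))$, and the second factor tends to $0$ as $\|\sigma\|\to 0$ (as already observed in the text, $L_k(\{c_m\})\to 0$ when $c_m\to 0$), while the first stays bounded. So $R_\sigma\to 0$ and the adherence values of $\int_\sigma f\,dg$ coincide with those of $P_\sigma$.

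Next I reduce to a purely real statement about weighted averages of the sequence $(A_n)$. Claim: as $\|\sigma\|\to 0$, the set of adherence values of $\bigl\{\sum_i \Delta t_i A_{k_i}\bigr\}$ is exactly $[\liminf_n A_n,\ \limsup_n A_n]$. The inclusion "$\subseteq$'' is immediate: each $P_\sigma$ is a convex combination of values $A_{k_i}$ with $k_i \ge k(\|\sigma\|)\to\infty$, hence lies between $\inf_{k\ge k(\|\sigma\|)}A_k$ and $\sup_{k\ge k(\|\sigma\|)}A_k$, and these bounds converge to $\liminf A_n$ and $\limsup A_n$ respectively. For "$\supseteq$'', fix any target $\ell\in[\liminf A_n,\limsup A_n]$; I must exhibit, for every $\delta>0$ and every mesh bound $\eta>0$, a subdivision $\sigma$ with $\|\sigma\|<\eta$ and $|P_\sigma - \ell|<\delta$. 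This is the "répartir bien les poids convexes" step: choose two indices $p,q$ with $k(\eta)\le p<q$, $A_p$ close to $\liminf A_n$ (or to $\ell$ from below) and $A_q$ close to $\limsup A_n$ (or above $\ell$) — possible by definition of liminf/limsup; then build $\sigma$ so that essentially all the mass $\Delta t_i$ is carried by intervals $I_i$ with $k_i\in\{p,q\}$, in proportions $\mu$ and $1-\mu$ tuned so that $\mu A_p + (1-\mu)A_q = \ell$ exactly. Concretely: on a grid of dyadic intervals of generation $q$ (which have $k_i=q$) one can merge adjacent blocks of them into intervals of "effective level $p$'' while keeping $\|\sigma\|\le 2^{-p}<\eta$ and controlling the fraction of total length of each type; the $O(1)$ correction coming from the finitely many boundary intervals is absorbed into $\delta$ by taking $q$ large. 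One then verifies $P_\sigma = \mu A_p + (1-\mu)A_q + o(1) = \ell + o(1)$.

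Finally, for the second (non-real, both sequences only bounded) case, one cannot diagonalize $a_nd_n - b_nc_n$ into a pure imaginary multiple of a real number, so split $a_nd_n - b_nc_n = (a_nd_n - \overline{a_nd_n}) + (\overline{a_nd_n} - b_nc_n) = 2i\Im(a_nd_n) + (\overline{a_n}\,\overline{d_n} - b_nc_n)$; the first piece reproduces the $A_n$'s as before, and the second piece contributes to $P_\sigma$ a convex combination of partial sums of the sequence $(\overline{a_n}\,\overline{d_n}-b_nc_n)_n$, each term of which is $O((\|a_n\|_\infty+\|b_n\|_\infty)(\|c_m\|_\infty+\|d_m\|_\infty))$ in modulus, so this whole contribution is bounded by $l(f,g)$ uniformly; likewise $|R_\sigma|$ is now only bounded by (a constant times) $l(f,g)$ rather than $o(1)$. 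Hence every adherence value lies within distance $l(f,g)$ of the interval $[\liminf A_n,\limsup A_n]$, which is the asserted inclusion. The main obstacle is the explicit combinatorial construction in the "$\supseteq$'' direction — arranging a subdivision of prescribed small mesh whose interval-lengths $\Delta t_i$ realize a prescribed split between two chosen scales $p,q$ while keeping the boundary error negligible — everything else is bookkeeping from \eqref{totalformuleStiel}--\eqref{totalformuleStiel1} and the already-noted decay $L_k(\{c_m\})\to 0$.
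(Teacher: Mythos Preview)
Your argument for the first part is essentially the paper's own (the paper merely says ``en r\'epartissant bien les poids convexes $\Delta t_i$'' and leaves the reader to fill in exactly the convex-combination analysis you give). The reduction $\int_\sigma f\,dg = P_\sigma + R_\sigma$ with $R_\sigma\to 0$, the observation that $P_\sigma$ is a convex combination of the tail values $A_{k_i}$, and the two-scale construction for the ``$\supseteq$'' direction are all correct and are precisely what is intended.

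There is a genuine misreading in your treatment of the second case. The reality hypothesis $\bar a_n=b_n$, $\bar c_n=d_n$ is stated once (``pour la simplicit\'e'') and is meant to remain in force; what changes in the second case is only that $|c_m|+|d_m|\to 0$ is weakened to $\|c_m\|_\infty+\|d_m\|_\infty<\infty$. Under the correct reading, $P_\sigma$ is still exactly a convex combination of the $A_{k_i}$, and the only difference from the first case is that $R_\sigma$ is merely bounded by $l(f,g)$ (via \eqref{totalformuleStiel1} and $\sum_i\Delta t_i=1$) rather than $o(1)$; the asserted inclusion follows immediately from the ``$\subseteq$'' half of your first argument. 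Your proposed splitting $a_nd_n-b_nc_n = 2i\,\Im(a_nd_n) + (\overline{a_n}\,\overline{d_n}-b_nc_n)$ is therefore unnecessary, and in fact your bound on the second piece is wrong: each term $\overline{a_n}\,\overline{d_n}-b_nc_n$ being $O(1)$ does \emph{not} make a convex combination of its partial sums $O(1)$ --- those partial sums can grow linearly in $n$. So as written your second-case argument has a gap; once you restore the reality assumption the gap disappears and the proof becomes a one-line modification of the first case.
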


\subsubsection{Relèvement des courbes rugueuses}

On fixe une fonction $\epsilon:\r_+\to (0,1]$ croissante telle que $\epsilon(t)\searrow 0$ lorsque $t\searrow 0$. On suppose aussi que la fonction définie par $h(t)=\epsilon(t)^{-1}t$ pour $t>0$ et $h(0)=0$ est continue sur $[0,1]$. 
   Soit $\gamma:[0,T]\to \r^2$ une courbe continue dans le plan. Etant donnés $\epsilon$ et $\gamma$ on définit une fonction continue $\Delta z:[0,T]^2\to \r$ : pour $0\le t, s\le T$ on pose
\begin{equation*}
\Delta z(t,s)=h\big(\|\gamma(t)-\gamma(s)\|^2\big)+2\det(\gamma(t),\gamma(s)).
\end{equation*}
 Pour $t\in[0,T] $ on définit un nombre, fini ou infini,
 \begin{equation*}
z(t)=\operatorname{Var}(\Delta z)_{[0,t]}=\sup\left\{\sum\limits_{i=0}^{N}\Delta z(t_{i+1},t_i)\, \mid\, 0=t_0< t_1< \ldots< t_{N+1}=t \right\}.
\end{equation*}
\begin{lemma}[relèvement rugueux]\label{verticalgros} Soient $\epsilon$, $\gamma$, $\Delta z$ et $z$ comme ci-dessus et supposons $z(T)<\infty$. Alors $z:[0,T]\to \r$ est continue et la courbe $\Gamma([0,T])\subset \h$, $\Gamma(t)=(\gamma(t),z(t))$, est une courbe verticale pour une certaine application $F\in C^1_H(\h,\r^2)$, avec $D_hF$ surjective sur $\Gamma([0,T])\subset \f$.   
\end{lemma}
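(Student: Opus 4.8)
The goal is to verify that the curve $\Gamma(t)=(\gamma(t),z(t))$ is a vertical curve, which by Remark \ref{Whit} amounts to two things: first, that $\Gamma$ is a simple arc (which follows once we know the Whitney condition \eqref{whitneycond} holds locally, via Corollary \ref{reifenbergcurve}), and second, that for every compact subset of $\Gamma$ the estimate \eqref{whitneycond} holds uniformly; then one invokes the Whitney extension theorem \ref{Whitneytheorem} with $D_hF$ prescribed to be the identity on the horizontal plane to produce $F\in C^1_H(\h,\r^2)$ with $D_hF$ surjective and $\Gamma\subset F^{-1}(0)$. So the real content is: (a) $z$ is continuous and finite on $[0,T]$, and (b) the Whitney estimate $\|\gamma(t)-\gamma(s)\|^2 = o(|z(\Gamma(s)^{-1}\Gamma(t))|)$ holds uniformly as $t\to s$.

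**Step 1: continuity of $z$.** The function $z$ is defined as the total variation of the continuous two-variable function $\Delta z$ over $[0,t]$. Since $z(T)<\infty$ by hypothesis, $z$ is a nondecreasing bounded function. I would first observe that $\Delta z(t,s)$ is a continuous \emph{additive up to a controlled defect} quantity: writing $\Delta z(t,s)=h(\|\gamma(t)-\gamma(s)\|^2)+2\det(\gamma(t),\gamma(s))$, the bilinear term $2\det(\gamma(\cdot),\gamma(\cdot))$ is exactly additive up to the cocycle term $2\det(\gamma(t)-\gamma(u),\gamma(u)-\gamma(s))$ for $s\le u\le t$, and the term $h(\|\gamma(t)-\gamma(s)\|^2)$ is small (it is $\epsilon(\cdot)^{-1}$ times a quantity that is itself $O(\|\gamma(t)-\gamma(s)\|^2)$, hence $o(1)$ uniformly as $|t-s|\to 0$ by uniform continuity of $\gamma$ and of $h$ at $0$). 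Continuity of $z$ at a point $t_0$ then follows: a jump of $z$ at $t_0$ would force a subinterval with $\Delta z$-sum bounded below, contradicting the fact that $\Delta z(t,s)\to 0$ uniformly as $|t-s|\to 0$ (because both $h(\|\gamma(t)-\gamma(s)\|^2)\to 0$ and $\det(\gamma(t),\gamma(s))-\det(\gamma(t_0),\gamma(t_0))\to 0$). This is the standard argument that a finite-variation function of a continuous integrand has continuous variation.

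**Step 2: the Whitney estimate.** This is the heart of the matter. Fix $s\le t$ close together. By definition of $z$ as a supremum over subdivisions and by its additivity properties, $z(t)-z(s)=\operatorname{Var}(\Delta z)_{[s,t]}\ge \Delta z(t,s) = h(\|\gamma(t)-\gamma(s)\|^2)+2\det(\gamma(t),\gamma(s))$. Now compute the vertical component of $\Gamma(s)^{-1}\Gamma(t)$ in $\h$: it equals $z(t)-z(s)+2\det(\gamma(s),\gamma(t)) = z(t)-z(s)-2\det(\gamma(t),\gamma(s))$. Plugging in the lower bound gives
\[
z(\Gamma(s)^{-1}\Gamma(t)) \ge h\big(\|\gamma(t)-\gamma(s)\|^2\big) = \epsilon\big(\|\gamma(t)-\gamma(s)\|^2\big)^{-1}\,\|\gamma(t)-\gamma(s)\|^2,
\]
so that $\|\gamma(t)-\gamma(s)\|^2 \le \epsilon\big(\|\gamma(t)-\gamma(s)\|^2\big)\, z(\Gamma(s)^{-1}\Gamma(t))$; since $\epsilon$ is increasing with $\epsilon(0^+)=0$ and $\|\gamma(t)-\gamma(s)\|\to 0$ uniformly, this is precisely the uniform Whitney estimate \eqref{whitneycond}, with the same $o(\cdot)$ on every compact subset (here all of $[0,T]$). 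One also needs the sign/nondegeneracy remark: along $\Gamma$ the vertical increment $z(\Gamma(s)^{-1}\Gamma(t))$ is strictly positive for $s<t$ (it dominates $h(\|\gamma(t)-\gamma(s)\|^2)>0$ unless $\gamma(t)=\gamma(s)$, and even then it is $\ge$ the variation of the bilinear part which is also controlled), which guarantees $\Gamma$ is injective and that $d_\infty(\Gamma(s),\Gamma(t))^2=z(\Gamma(s)^{-1}\Gamma(t))$ once $s,t$ are close.

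**Step 3: conclusion.** With \eqref{whitneycond} verified uniformly on the compact $\Gamma([0,T])$, Remark \ref{Whit} yields $F\in C^1_H(\h,\r^2)$ with $D_hF(A)(x,y,0)=(x,y)$ for $A\in\Gamma$ — hence $D_hF$ surjective on $\Gamma$ — and $\Gamma\subset F^{-1}(0)$ locally; shrinking to a neighbourhood $U$ of a point of $\Gamma$, Corollary \ref{reifenbergcurve} identifies $U\cap F^{-1}(0)$ with a simple arc, so $\Gamma$ (restricted suitably) is a vertical curve in the sense of Definition \ref{d: courbevertic}. The main obstacle I anticipate is Step 2's bookkeeping: one must be careful that the defect in additivity of $\Delta z$ (the $2\det(\gamma(t)-\gamma(u),\gamma(u)-\gamma(s))$ cocycle term and the subadditivity slack of $h$) does not swallow the lower bound $h(\|\gamma(t)-\gamma(s)\|^2)$; this is fine because that defect is controlled by $\|\gamma(t)-\gamma(s)\|\cdot(\text{oscillation of }\gamma)$, which is $o(\|\gamma(t)-\gamma(s)\|^2/\epsilon)$ is \emph{not} automatic — rather one uses that $z(t)-z(s)$ is the \emph{full} variation, so it is $\ge \Delta z(t,s)$ directly, no cocycle correction needed, which is exactly why the definition uses $\operatorname{Var}$ rather than a single difference. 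A secondary point to check is that $z$ genuinely takes finite values on $[0,t]$ for all $t<T$, not just at $T$, which is immediate from monotonicity.
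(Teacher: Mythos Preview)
Your proposal is correct and follows essentially the same route as the paper: both use the super-additivity $z(t)-z(s)\ge \operatorname{Var}(\Delta z)_{[s,t]}\ge \Delta z(t,s)$ to obtain $\|\gamma(t)-\gamma(s)\|^2\le \epsilon(\|\gamma(t)-\gamma(s)\|^2)\cdot z(\Gamma(s)^{-1}\Gamma(t))$, which is the Whitney condition, and then invoke Remark~\ref{Whit}. One small slip: you write $z(t)-z(s)=\operatorname{Var}(\Delta z)_{[s,t]}$, but only the inequality $\ge$ is justified (and is all that is needed), since $\Delta z$ is not superadditive in general; the paper states only $\ge$ and that suffices, as you yourself recognize in your final paragraph.
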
 
\begin{proof} La fonction $\Delta z$ étant continue, si sa variation $z(t)$ est bornée, alors elle est aussi continue. Il est clair que $\operatorname{Var}(\Delta z)_{[0,t]}\ge \operatorname{Var}(\Delta z)_{[0,s]}+\operatorname{Var}(\Delta z)_{[s,t]}$ pour tous $0\le s\le t\le T$. On voit donc que 
\begin{equation*}
z(t)-z(s)\ge \operatorname{Var}(\Delta z)_{[s,t]}\ge \Delta z(t,s),
\end{equation*} ce qui donne
$$\epsilon\big(\|\gamma(t)-\gamma(s)\|^2\big)\big(z(t)-z(s)-2\det(\gamma(t),\gamma(s))\big)\ge \|\gamma(t)-\gamma(s)\|^2.$$ On note $A=\Gamma(t)$, $B=\Gamma(s)$; vu que $\epsilon\le1$ est croissante, on arrive à  
$$\epsilon(d_\infty(A,B)^2)d_\infty(A,B)^2\ge \|\pi(A)-\pi(B)\|^2,$$
ce qui est exactement la condition de Whitney pour l'ensemble $\Gamma([0,T])$.
\end{proof}

Soit comme avant $\Gamma=\f\cap U(0)$ une courbe verticale de niveau de l'application $F\in C^1_H(\h,\r^2)$ avec $D_hF$ surjective, paramétrée par $[0,1]$ de façon croissante. 
\begin{remark}\label{relevepetitholder}
On voit que les sommes de Stieltjes $2\sum_{i=0}^N \det\big(\pi(\Gamma(t_{i+1})),\pi(\Gamma(t_i))\big)\le z(\Gamma(1))-z(\Gamma(0))$ sont majorées uniformément (quelle que soit une subdivision $\sigma=\{0=t_0< t_1< \ldots< t_{N+1}=1\}$).
Si maintenant $\gamma\in h_{1/2}([0,1], \r^2)$ et les sommes de Stieltjes $\sum_{i=0}^N \det(\gamma(t_{i+1}),\gamma(t_i))\le C<\infty$ sont majorées uniformément, alors $\gamma$ admet un relevé rugueux comme dans la proposition \ref{verticalgros}. En effet, si $\gamma$ n'est pas réduite à un point il suffit de poser
   $$\epsilon(\delta)=c^{-1}\sup\limits_{\delta_0\le \delta}\big\{ \max\limits_{\|\gamma(t)-\gamma(s)\|^2\ge \delta_0} \delta_0|t-s|^{-1}\big\}, \ \delta\in (0,l];\quad \epsilon(\delta)=\epsilon(l), \  \delta>l; $$
avec $l=\diam(\gamma([0,1]))>0$, et $c>0$  est une constante assez grande.
\end{remark}

\begin{exemple}\label{mesureinfinie}
Il existe une courbe verticale $\Gamma$ pour laquelle $\H^2_\infty(\Gamma)=\infty$.
\end{exemple}
\begin{proof}
On prend deux fonctions $f$ et $g$ à valeurs réelles de la forme de \eqref{ondelettedecomp} et \eqref{ondelettedecomp1} respectivement. On choisit également $\{a_n\}$ et $\{d_m\}$ de sorte que $\lim a_n=\lim d_m=0$ et que la suite $\sum_{n=0}^{N}\Im(a_nd_n)$ tende vers $\infty$ lorsque $N\to \infty$. La courbe $\gamma:=(f,g) \in h_{1/2}([0,1],\r^2)$) et les sommes de Stieltjes $\sum_\sigma \det(\gamma(t_{i+1}),\gamma(t_i))=\int_\sigma g\,df-\int_\sigma f\,dg$ sont majorées uniformément ce qui découle des formules \eqref{totalformuleStiel}\eqref{totalformuleStiel1} et de l'égalité \eqref{integrationparpartie}. D'après la remarque \ref{relevepetitholder}, $\gamma$ admet un relevé rugueux $\Gamma$. En utilisant la formule de l'aire \eqref{airedcarre}, la proposition \ref{ondeladherence} et la remarque \ref{integrationpartie} on obtient que $$\H^2_\infty(\Gamma)=z(\Gamma(1))-z(\Gamma(0))+2\liminf\limits_{\|\sigma\|\to 0} \big(\int_\sigma f\,dg -\int_\sigma g\,df\big)=\infty. \qedhere$$
\end{proof}

\begin{exemple}\label{mesurenulle}
Il existe une courbe verticale $\Gamma$ pour laquelle $\H^2_\infty(\Gamma)=0$.
\end{exemple}
\begin{proof}
On prend deux fonctions $f$ et $g$ à valeurs réelles de la forme de \eqref{ondelettedecomp} et \eqref{ondelettedecomp1} respectivement avec $\lim b_n=\lim c_m=0$. On pose $\gamma:=(f,g) \in h_{1/2}([0,1],\r^2)$. Remarquons que si 
\begin{equation*}
\sum_{i=0}^N h(\|\gamma(t_{i+1})-\gamma(t_i)\|^2)+ 2\det(\gamma(t_{i+1}),\gamma(t_i))\le 2\limsup\limits_{\|\tilde \sigma\|\to 0} \sum_{\tilde \sigma}\det(\gamma(t_{k+1}),\gamma(t_k))<\infty,
\end{equation*}
quelle que soit une subdivision $\sigma=\{0=t_0< t_1< \ldots< t_{N+1}=1\}$  (avec $h(t)=t\epsilon(t)^{-1}$ comme au début de la sous-section), alors $\gamma$ admet un relevé rugueux $\Gamma$ et, d'après la formule de l'aire \eqref{airedcarre}, on aura que $\H^2_\infty(\Gamma)=0$. La remarque \ref{integrationpartie} ($fg\big|_0^1=0$) et la proposition \ref{ondeladherence} entraînent que  
$$ \limsup\limits_{\| \tilde \sigma\|\to 0} \sum_{\tilde \sigma}\det(\gamma(t_{k+1}),\gamma(t_k))= 2\limsup\limits_{\|\tilde\sigma\|\to 0} \int_{\tilde\sigma} g\,df=2\pi^{-1}\limsup\limits_{n\to \infty} \sum_{k=0}^{n}\Im(b_kc_k).$$
Tout en gardant les notations de \ref{ondelettes},
\begin{align*}
&\sum_{i=0}^N\det(\gamma(t_{i+1}),\gamma(t_i))= \int_{\sigma} g\,df-f\,dg =  2\int_{\sigma} g\,df+r_\sigma=2\pi^{-1}\sum_{i=0}^N \Delta t_i \sum_{k=0}^{k_i} \Im(b_kc_k)+r_\sigma+R_\sigma,\\
& \text{ où }|r_\sigma|\le \sum_{i=0}^N \|\gamma(t_{i+1})-\gamma(t_i)\|^2,\quad |R_\sigma|\le C \sum_{i=0}^N \Delta t_i L_{k_i}(\{b_n\})L_{k_i}(\{c_m\}).
\end{align*}
Ainsi, pour construire l'exemple voulu, il suffit d'exhiber une courbe $\gamma=(f,g) \in h_{1/2}$ telle que  
\begin{multline*}
 \|\gamma(t_{i+1})-\gamma(t_i)\|^2 \big(1+ \epsilon(\|\gamma(t_{i+1})-\gamma(t_i)\|^2)^{-1}\big) +C \Delta t_i L_{k_i}(\{b_n\})L_{k_i}(\{c_m\}) \\
 \le 4\pi^{-1} \Delta t_i \limsup\limits_{n\to \infty} \sum_{k=k_i+1}^{n} \Im(b_kc_k)<\infty, \quad i=0,\ldots, N,
\end{multline*}
pour toute subdivision $\sigma$ de $[0,1]$. Compte tenu de l'inégalité  
$$  \|\gamma(t_{i+1})-\gamma(t_i)\|^2\le 4\Delta t_i( L_{k_i}(\{b_n\})^2 +L_{k_i}(\{c_m\})^2),$$
on prend des suites $\{b_n\}$ et $\{c_m\}$ pour lesquelles 
\begin{enumerate}
\item $ 0<\tilde C \big( L_{k}(\{b_n\})^2 +L_{k}(\{c_m\})^2\big)<\limsup\limits_{n\to \infty} \sum_{l=k}^{n} \Im(b_lc_l)<\infty$ pour tout $k\ge 0$; 
\item \begin{equation*}
\lim\limits_{k\to\infty}\dfrac{ L_{k}(\{b_n\})^2 +L_{k}(\{c_m\})^2}{\limsup\limits_{n\to \infty} \sum_{l=k}^{n} \Im(b_lc_l)}=0;
\end{equation*}
\end{enumerate}
où $\tilde C> C+8$ est une constante assez grande. On peut toujours trouver de telles suites (par exemple, en considérant $-Ib_k=c_k=k^{-1}$ pour $k$ assez grand, on aura  $L_{k}(\{b_n\})^2=L_{k}(\{c_m\})^2\lesssim k^{-2}$ et $\sum_{l=k}^{\infty} \Im(b_lc_l)\gtrsim k^{-1}$). Finalement, il n'est pas très difficile de se convaincre qu'avec des suites $\{b_n\}$ et $\{c_m\}$ vérifiant les conditions ci-dessus, on peut toujours construire $\epsilon$ comme au début de la sous-section. En effet, on pose 
$$ \tilde\epsilon(\delta)=2\delta\max\limits_{t,s}  (|t-s|S_{k(|t-s|)})^{-1}, \quad S_k=\limsup\limits_{n\to \infty} \sum_{l=k+1}^{n} \Im(b_lc_l), $$
où $\max $ est pris sur l'ensemble $\{s,t \in[0,1] \mid \|\gamma(t)-\gamma(s)\|^2\ge \delta\}$. Pour $\delta>0$ donné,
  $$\tilde \epsilon(\delta)= \frac{2\delta} {|\bar s-\bar t| S_{k(|\bar s-\bar t| )}}\le \frac{\|\gamma(\bar t)-\gamma(\bar s)\|^2}{ |\bar s-\bar t| S_{k(|\bar s-\bar t| )}}\le \frac{8( L_{k(|\bar s-\bar t|)}(\{b_n\})^2 +L_{k(|\bar s-\bar t|)}(\{c_m\})^2)}{S_{k(|\bar s-\bar t| )}},$$
et donc $\tilde \epsilon(\delta)\to 0$ quand $\delta\to 0$. Or,
\begin{multline*}
 \frac{ \|\gamma(t_{i+1})-\gamma(t_i)\|^2} {\tilde\epsilon(\|\gamma(t_{i+1})-\gamma(t_i)\|^2)}+ \|\gamma(t_{i+1})-\gamma(t_i)\|^2+ C \Delta t_i L_{k_i}(\{b_n\})L_{k_i}(\{c_m\}) \\ \le \frac{\Delta t_i}{2} S_{k_i}+ \Delta t_i \Big(4+\frac{C}{2}\Big)( L_{k_i}(\{b_n\})^2 +L_{k_i}(\{c_m\})^2) \le \Delta t_i S_{k_i}.
\end{multline*}
Il ne reste qu'à modifier légèrement $\tilde \epsilon$ pour finir la construction du $\epsilon$ désiré. 
\end{proof}

 \section{Formule de la coaire}\label{s :coaire}
\begin{theorem}[Formule de la coaire]\label{theoremcoaire}
Soit $F\in C^1_H(\h,\r^2)$ et partout $\det d_h F\not =0$. Supposons aussi que toute courbe de niveau de $F$ soit équi-$f\alpha$-régulière, \ie  quelque soit $R>0$ l'estimation suivante a lieu  
\begin{equation}\label{tancoarea}
\H^2_\infty([A,B])=d_\infty(A,B)^2+o(d_\infty(A,B)^2),
\end{equation}
uniformément pour $a\in \bar B(0,R)\subset \r^2$ et pour tout intervalle $[A,B]\subset B_\infty(0,R)\cap F^{-1}(a)$ de la  courbe verticale $F^{-1}(a)$.

Alors pour tout ensemble borélien $E\subset\h$ la formule de la coaire est vérifiée
\begin{equation}\label{coarea}
\int\limits_{\r^2} \H^2_\infty\big(F^{-1}(a)\cap E\big)\, d\mathcal{L}^2(a)=\mathbf{c}\int\limits_{E} |\det d_h F(A)| \, d\mathcal{H}_\infty^4(A),
\end{equation}
où $d_h F$ est la partie horizontale de la différentielle $D_hF$ et $\mathbf{c}$ une constante géométrique.
\end{theorem}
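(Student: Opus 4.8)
\noindent\emph{Plan de démonstration.}
The plan is to show that the two Borel measures on $\h$
\[
\mu(E):=\int_{\r^2}\H^2_\infty\big(F^{-1}(a)\cap E\big)\,d\mathcal{L}^2(a),\qquad
\nu(E):=\mathbf{c}\int_E|\det d_hF(A)|\,d\H^4_\infty(A)
\]
coincide. First one checks that $a\mapsto \H^2_\infty(F^{-1}(a)\cap E)$ is Borel and that $\mu$ is countably additive (monotone convergence), so $\mu$ is a Borel measure; its local finiteness will follow from the upper density bound below. Next, one shows $\mu\ll\H^4_\infty$: translating a base point to $0$ with $F(0)=0$ one proves, as in the blow-up argument of the next step, that $\limsup_{r\to0}r^{-4}\mu(B_\infty(0,r))<\infty$ at every point, and since $\H^4_\infty$ is a doubling measure on $(\h,d_\infty)$ this implies $\mu\le C\H^4_\infty$ locally (Federer's comparison of measures) and hence absolute continuity. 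By Lebesgue--Besicovitch differentiation it then suffices to prove that, for \emph{every} $A_0\in\h$,
\[
\lim_{r\to0}\frac{\mu\big(B_\infty(A_0,r)\big)}{\H^4_\infty\big(B_\infty(A_0,r)\big)}=\mathbf{c}\,|\det d_hF(A_0)|,
\]
the right-hand side being also the density of $\nu$ at $A_0$ by continuity of $d_hF$.

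The density is computed by dilation. Replacing $F$ by $A\mapsto F(A_0A)$ we assume $A_0=0$, $F(0)=0$, and set $L:=D_hF(0)$, so that $L(x,y,z)=M(x,y)$ with $M:=d_hF(0)$ invertible and $L^{-1}(\tilde a)$ the vertical line through $(M^{-1}\tilde a,0)$. Consider the rescaled maps $F_r(A):=r^{-1}F(\delta_rA)$. Since $L$ is homogeneous, horizontal differentiability of $F$ at $0$ gives $|F_r(A)-L(A)|=r^{-1}|F(\delta_rA)-L(\delta_rA)|=r^{-1}o(d_\infty(\delta_rA,0))\to0$ uniformly on compacts. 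Using $a=r\tilde a$ together with $F^{-1}(r\tilde a)\cap B_\infty(0,r)=\delta_r\big(F_r^{-1}(\tilde a)\cap B_\infty(0,1)\big)$, $\H^2_\infty(\delta_rE)=r^2\H^2_\infty(E)$ and $\H^4_\infty(B_\infty(0,r))=r^4\H^4_\infty(B_\infty(0,1))$ one gets
\[
\frac{\mu\big(B_\infty(0,r)\big)}{\H^4_\infty\big(B_\infty(0,r)\big)}
=\frac{1}{\H^4_\infty(B_\infty(0,1))}\int_{\r^2}\H^2_\infty\big(F_r^{-1}(\tilde a)\cap B_\infty(0,1)\big)\,d\mathcal{L}^2(\tilde a),
\]
so everything reduces to passing to the limit $r\to0$ inside this integral.

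Here the equi-$f\alpha$-regularity hypothesis \eqref{tancoarea} is used. The level curves of $F_r$ are the $\delta_{1/r}$-images of those of $F$, and since $\diam_\infty\big(F^{-1}(r\tilde a)\cap B_\infty(0,r)\big)<2r\to0$, the equi-regularity and the Whitney condition \eqref{whitneycond}, after rescaling, show that the level curves of $F_r$ inside $B_\infty(0,1)$ are $\varepsilon(r)$-Reifenberg flat with respect to $Oz$ and equi-$f\alpha$-regular with a modulus $\varepsilon(r)\to0$ \emph{uniformly} in $\tilde a$ (Lemme~\ref{sujOz}, Lemme~\ref{l: reifenberg}). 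Consequently, for $r$ small, $F_r^{-1}(\tilde a)\cap B_\infty(0,1)$ is either empty or a single vertical arc, with $\H^2_\infty\big(F_r^{-1}(\tilde a)\cap B_\infty(0,1)\big)\le8$ (upper Ahlfors bound applied to the small piece $F^{-1}(r\tilde a)\cap B_\infty(0,r)$), and this set is nonempty only for $\tilde a$ with $\|M^{-1}\tilde a\|<2$, a set of finite measure; this also yields the upper density bound of the first step. The area formula (Corollaire~\ref{airegamma}, Proposition~\ref{fortregular}) gives $\H^2_\infty(\mathrm{arc})=\Delta z+2\int_{\mathrm{arc}}(x\,dy-y\,dx)$, where $\Delta z$ is the increment of $z$ between the endpoints (on $\partial B_\infty(0,1)$). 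As $r\to0$ the horizontal projection of the arc shrinks to the point $M^{-1}\tilde a$, whence $\int_{\mathrm{arc}}(x\,dy-y\,dx)\to0$ and, for $\|M^{-1}\tilde a\|<1$, the arc crosses the cylinder from $z\approx-1$ to $z\approx1$ so $\Delta z\to2$, while for $\|M^{-1}\tilde a\|>1$ the arc is eventually empty; the transition region in $\tilde a$ has $\mathcal{L}^2$-measure tending to $0$. Thus $\H^2_\infty\big(F_r^{-1}(\tilde a)\cap B_\infty(0,1)\big)\to2\cdot\mathbf{1}_{\{\|M^{-1}\tilde a\|<1\}}$ a.e., and by dominated convergence
\[
\int_{\r^2}\H^2_\infty\big(F_r^{-1}(\tilde a)\cap B_\infty(0,1)\big)\,d\mathcal{L}^2(\tilde a)\;\longrightarrow\;2\,\mathcal{L}^2\big(\{\|M^{-1}\tilde a\|<1\}\big)=2\pi\,|\det d_hF(0)|.
\]
This gives the density with $\mathbf{c}=2\pi/\H^4_\infty(B_\infty(0,1))$, hence \eqref{coarea}.

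I expect the main obstacle to be the uniform (in $\tilde a$ and $r$) control of the area term $\int_{\mathrm{arc}}(x\,dy-y\,dx)$ and of the increment $\Delta z$ near the critical set $\{\|M^{-1}\tilde a\|=1\}$, together with the verification that the intersection reduces to a single vertical arc of controlled $\H^2_\infty$-mass; both rest precisely on the equi-$f\alpha$-regularity hypothesis (via Proposition~\ref{diametre} and the Whitney condition), which is why the theorem is stated under it. The remaining ingredients — measurability of the slice function, countable additivity of $\mu$, and the differentiation step — are routine.
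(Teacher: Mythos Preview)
Your blow-up/differentiation approach is genuinely different from the paper's and, modulo the technical points you yourself flag, looks sound. The paper proceeds by a two-step \emph{slicing}: writing $F=(f,g)$ with $Xf\neq0$, it first restricts to the $\h$-regular surfaces $\mathcal S_s=f^{-1}(s)$ (parametrised via $\Phi$, with the horizontal field $W^\phi$ and a flow-without-penetration $\a$), then establishes by an explicit Riemann-sum argument on curvilinear rectangles the coarea formula on each surface,
\[
c(1)\int_{\r}\H^2_\infty\big(F^{-1}(s,t)\cap\mathcal S_s\big)\,dt=\int_{\mathcal S_s}\Big(1+\big(\tfrac{Yf}{Xf}\big)^2\Big)^{-1/2}\big|Yg-\tfrac{Yf}{Xf}Xg\big|\,d\S^3_\infty,
\]
and finally integrates in $s$ using the known scalar coarea formula for $f$. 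The equi-$f\alpha$ hypothesis enters there through the continuity of $t\mapsto\H^2_\infty(\Gamma_t)$ and the match of $\sum_i r^2$ with $\H^2_\infty$ along each vertical curve.

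What each buys: the paper's route leverages directly the machinery built earlier (Lemmas~\ref{l: reguler}, \ref{divergence}, \ref{metriqueR}, \ref{aireformsurf}, the flow $\a$) and produces, as a by-product, the intrinsic coarea formula~\eqref{surfcoarea} on $\h$-regular surfaces, which is of independent interest. Your density argument is more in the spirit of classical GMT, avoids the flow construction entirely, and makes the role of the hypothesis~\eqref{tancoarea} very transparent (it is exactly what forces $\H^2_\infty(F_r^{-1}(\tilde a)\cap B_\infty(0,1))\to d_\infty(A_-,A_+)^2\to\Delta z$). Two points deserve care in your write-up: (i) the intersection $F_r^{-1}(\tilde a)\cap B_\infty(0,1)$ need not literally be a single arc, since $z$ is only ``almost monotone'' along the rescaled curve (oscillations of order $\varepsilon_r$ near $z=\pm1$ are possible); it suffices to show the symmetric difference with a single arc has $\H^2_\infty$-mass $o(1)$, which follows from the Whitney estimate; (ii) the implication ``horizontal projection shrinks $\Rightarrow\int(x\,dy-y\,dx)\to0$'' is delicate at the critical exponent $\tfrac12$ --- better to bypass it via $\H^2_\infty(\Gamma')=d_\infty(A_-,A_+)^2(1+o(1))=\Delta z+o(1)$, which you essentially already have from~\eqref{tancoarea} and $\det(\pi(A_-),\pi(A_+))=o(1)$.
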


\begin{remark}
La condition \eqref{tancoarea} est remplie, comme nous l'avons prouvé, pour $F\in C^{1,\alpha}_H$, $\alpha>0$. 
\end{remark}

\begin{remark}
A l'aide de l'inégalité de la coaire \cite{magncoareaineq} qui est générale pour les groupes de Carnot,  nous pouvons également prendre en considération l'ensemble de points singuliers (où $\det d_h F=0$)  dans  la formule de la coaire.  Ainsi la formule de la coaire \eqref{coarea} sera valable pour toute $F\in C^1_H(\h,\r^2)$ pourvu que la condition \eqref{tancoarea} soit remplie uniformément sur chaque partie compacte de l'ensemble non-singulier. C'est en particulier le cas de $F\in C^{1,\alpha}_H$,  $\alpha>0$. 
\end{remark}
 
\begin{remark}
Il sera intéressant d'examiner la validité de la formule de la coaire pour les applications $F\in C^1_H(\h,\r^2)$ (ou bien lipschitziennes) sans hypothèses de régularité supplémentaire.
\end{remark}

L'idée principale de la démonstration consiste à obtenir la formule de la coaire (uniforme) sur des surfaces $\h$-régulières   et à utiliser la formule de la coaire pour des applications scalaires de classe $C^1_H$ par la suite (réduction de dimension).

\begin{proof}
Soit $F=(f,g)$ et $F(0)=0$. Sans perdre de généralité nous supposons que la dérivée $Xf\not=0$ ne s'annule pas sur un certain voisinage de $0\in \h$. Considérons alors la surface $\h$-régulière $\mathcal{S}=f^{-1}(0)\cap U(0)$, paramétrée de la manière standard à l'aide de l'application $\Phi:\Omega\subset \r^2\to \mathcal{S}$, $\Phi(y,z)=\exp(\phi(y,z)X)(0,y,z)$, engendrée par une application scalaire $\phi$. Comme plus haut nous considérons le champ horizontal $W^\phi$ et un flot sans pénétration $\a$ de ce champ.  Du flot $\a$ on extrait un sous-flot $\a'=\big\{z\in \a : z_- \le z\le z_+\big\}$, $z_-,z_+\in \a$, $z_-<0<z_+$, tel que pour un certain $T>0$ toute courbe verticale $(g\circ \Phi)^{-1}(t)$, $t\in [-T,T]$, rencontre toute courbe horizontale de $\a'$ en un point exactement.
En coupant $\a'$, nous définissons $I$ un "rectangle curviligne", où le travail local va se réaliser
$$ I:=\big\{(y,z)\in \Omega \mid |g\circ \Phi(y,z)|\le T,\, z_-(y)\le z\le z_+(y)\big\}.$$
Notons $\Gamma_t=(g\circ \Phi)^{-1}(t)\cap I$ les courbes (de niveau) verticales.
Compte tenu de \eqref{tancoarea}, de la description géométrique de $d_\infty$ sur $\S$ (lemme \ref{metriqueR}) et de l'estimation de la divergence des courbes horizontales (lemme \ref{divergence}), on obtient 
\begin{proposition*}
La fonction $\H^2_\infty(\Gamma_t)$ est continue en $t$.
\end{proposition*}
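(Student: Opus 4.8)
The plan is to control $\H^2_\infty(\Gamma_t)$ by dyadic-type sums of squared distances along a \emph{fixed} subdivision carried by the leaves of the flow $\a'$, and to exploit the uniformity of hypothesis \eqref{tancoarea}. First I would set up the ``comb'': for each leaf $\lambda$ of $\a'$, with $\gamma_\lambda\subset\Omega$ the corresponding integral curve of $W^\phi$, Lemma \ref{l: reguler} gives that $g\circ\Phi\circ\gamma_\lambda$ is $C^1$, and the choice of $\a'$ makes its derivative bounded below in absolute value by some $v>0$; hence $\gamma_\lambda$ meets $\Gamma_t$ in a single point $P(t,\lambda):=\Phi\circ\gamma_\lambda(s_\lambda(t))$, and for each fixed $\lambda$ the map $t\mapsto P(t,\lambda)$ is continuous, being $\Phi\circ\gamma_\lambda$ applied to the inverse of a strictly monotone $C^1$ function. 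Lemma \ref{divergence} is invoked to guarantee that the curves $\Gamma_t$ stay inside the curvilinear rectangle $I$ for $t$ near a given $t_0$ and that their endpoints, carried by the boundary leaves $z_\pm$ of $\a'$, move continuously.

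Next, fix $t_0$ and $\varepsilon>0$. Let $\eta(\cdot)\searrow0$ be the \emph{uniform} modulus appearing in \eqref{tancoarea} (independent of the level $a$) and $m(\cdot)\searrow0$ the uniform modulus in \eqref{dcarreplate}, valid for every level set $F^{-1}(a)$ with $a$ near $0$ since $\det d_hF\neq0$ and $D_hF$ is continuous. I would pick on $\Gamma_{t_0}$ an ordered subdivision $\sigma=\{A_0<\dots<A_{N+1}\}$ with $A_0,A_{N+1}$ the endpoints of $\Gamma_{t_0}$, whose mesh $\delta_0:=\max_i d_\infty(A_i,A_{i+1})$ is small enough that $\eta(2\delta_0)<\varepsilon$, $m(2\delta_0)<\varepsilon$, and, using additivity of $\H^2_\infty$ together with \eqref{tancoarea}, $|\H^2_\infty(\Gamma_{t_0})-\sum_i d_\infty(A_i,A_{i+1})^2|<\varepsilon$.

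Then, transporting $\sigma$ along the flow, for $t$ near $t_0$ I set $A_i^t:=P(t,\lambda_i)$ where $\lambda_i$ is the leaf through $A_i$, so that $\sigma_t:=\{A_0^t<\dots<A_{N+1}^t\}$ is a subdivision of all of $\Gamma_t$, its endpoints remaining on $z_\pm$. By continuity of $t\mapsto A_i^t$ and of $d_\infty$, the finite sum $\sum_i d_\infty(A_i^t,A_{i+1}^t)^2$ converges to $\sum_i d_\infty(A_i,A_{i+1})^2$; hence for $t$ close to $t_0$ it is bounded by some $M<\infty$ and $\sigma_t$ has mesh $<2\delta_0$. Covering $\Gamma_t$ by the intervals $[A_i^t,A_{i+1}^t]$ and bounding their diameters via Proposition \ref{diametre} with the uniform modulus $m$ shows $\H^2_\infty(\Gamma_t)\le(1+m(2\delta_0))^2\sum_i d_\infty(A_i^t,A_{i+1}^t)^2$, so $\H^2_\infty(\Gamma_t)$ stays bounded; while additivity of $\H^2_\infty$ along $\Gamma_t$ and \eqref{tancoarea} applied to each interval of $\sigma_t$ give $|\H^2_\infty(\Gamma_t)-\sum_i d_\infty(A_i^t,A_{i+1}^t)^2|\le\eta(2\delta_0)\sum_i d_\infty(A_i^t,A_{i+1}^t)^2\le\varepsilon M$. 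Combining the three estimates yields $\limsup_{t\to t_0}|\H^2_\infty(\Gamma_t)-\H^2_\infty(\Gamma_{t_0})|\le(M+2)\varepsilon$, and letting $\varepsilon\to0$ gives continuity at $t_0$. Here Lemma \ref{metriqueR} is what ensures that $\H^2_\infty(\Gamma_t)$, the Hausdorff measure of $\Gamma_t$ inside $\h$, coincides with the one computed intrinsically on $\S$, so that the flat-curve formalism of Subsection \ref{ss: quasiplate} (additivity, Proposition \ref{diametre}, the area formula) applies uniformly to every $\Gamma_t$.

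The main obstacle is making the error terms \emph{uniform in $t$}: this is exactly what the equi-$f\alpha$-regularity hypothesis \eqref{tancoarea} buys (a modulus $\eta$ independent of $a$), together with the $C^1$-regularity of the curves $g\circ\Phi\circ\gamma_\lambda$ with derivative bounded away from $0$ (continuity of the comb $t\mapsto P(t,\lambda)$) and the uniform version of the flatness condition \eqref{dcarreplate} for nearby level sets, which feeds Proposition \ref{diametre} and provides the a priori bound $M$. A secondary point to watch is that $\sigma_t$ must remain a genuine subdivision of the whole of $\Gamma_t$, which is why one fixes the endpoints of $\sigma$ on the boundary leaves $z_\pm$ and uses Lemma \ref{divergence} to keep everything confined to $I$.
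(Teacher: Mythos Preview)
Your proposal is correct and follows essentially the same approach as the paper: the paper's own proof is a one-line sketch citing precisely the three ingredients you use, namely the equi-$f\alpha$-regularity hypothesis \eqref{tancoarea}, the geometric description of $d_\infty$ on $\S$ (Lemma \ref{metriqueR}), and the divergence estimate for horizontal curves (Lemma \ref{divergence}). Your write-up simply fleshes out that sketch, transporting a fine subdivision of $\Gamma_{t_0}$ along the leaves of $\a'$ and using the uniformity of \eqref{tancoarea} to compare $\H^2_\infty$ with the discrete sums on both sides.
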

Fixons $r>0$, qui va tendre vers $0$ par la suite. Soit $N$ entier tel que $Nr\le T <(N+1)r$. Pour tout entier $n\in [-N,N-1]$ considérons la courbe $\Gamma_{nr}$ dont les bouts on note $a_n$ et $b_n$. On constuit une suite de points $\{a_n=a_{n,0}< a_{n,1}<\ldots<a_{n,k_n}\}$ sur $\Gamma_{nr}$, vérifiant la propriété suivante : pour tout $i=0,\ldots, k_n-1$, $ d_\infty(a_{n,i},a_{n,i+1})=r$ et $d_\infty(a_{n,k_n},b_n)\le r$. Maintenant pour tout point $a_{n,i}$ on trouve une courbe horizontale du flot $\gamma_{n,i}$ qui le rencontre, et on note $b_{n,i}$ le point d'intersection de $\gamma_{n,i}$ et de la courbe verticale du niveau "suivant" $\Gamma_{r(n+1)}$.
Pour $n=-N,\ldots, N-1$ et $i=0,\ldots,k_n-1$ on note $I_{n,i}$ le "rectangle curviligne" de sommets aux points $a_{n,i}, b_{n,i},a_{n, i+1},b_{n, i+1}$, qui est délimité par les courbes verticales $\Gamma_{rn}$ et $\Gamma_{r(n+1)}$, et horizontales $\gamma_{n,i}$ et $\gamma_{n, i+1}$ (voir fig. \ref{dessincoaire}). 

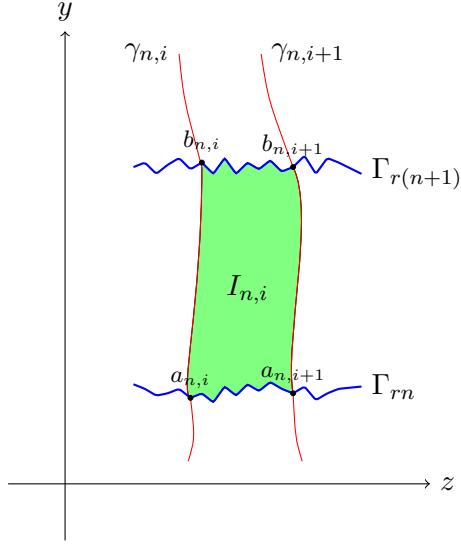
\begin{figure}\centering
\begin{tikzpicture}[scale=3]

\draw[->] (-0.25,0) -- (1.6,0) node[right] {$z$};
\draw[->] (0,-0.25) -- (0,2) node[above] {$y$};

\filldraw [fill=green!50] 
(0.6,1.42) --(0.65,1.37) --(0.7,1.44) -- (0.75,1.37)--(0.8,1.42) --(0.85,1.39)-- (0.9,1.43)--(0.95,1.38)--(1,1.4)    
.. controls (1.1,1.2) and  (0.96,0.62).. 
(1,0.4)--(0.95,0.42)-- (0.9,0.45)--(0.85,0.41)--(0.8,0.44)--(0.75,0.39)--(0.7,0.43)--(0.65,0.36)-- (0.6,0.4)-- (0.55,0.38)      
.. controls (0.5,0.4) and (0.6,0.8) .. (0.6,1.4);
\draw[red] (0.86,1.9)..controls (0.89, 1.7)..(1,1.4) .. controls (1.1,1.2) and  (0.96,0.62) .. (1.0,0.4).. controls (1.01,0.2) .. (1.04,0.1);
\draw[red] (0.5,1.9)..controls (0.52, 1.7)..(0.6,1.4)      
.. controls (0.6,0.8) and (0.5,0.4) ..(0.55,0.38).. controls (0.57,0.2) .. (0.54,0.1);
\pgfsetcornersarced{\pgfpoint{1pt}{1pt}}
\draw [blue,  thick ](0.3,1.4) --(0.35,1.42)-- (0.4,1.37) -- (0.45,1.41) -- (0.5,1.44) -- (0.55,1.39) -- (0.6,1.42) --(0.65,1.37) --(0.7,1.44) -- (0.75,1.37)--(0.8,1.42) --(0.85,1.39)-- (0.9,1.43)--(0.95,1.38)--(1,1.4)--(1.05,1.45)--(1.1,1.37)--(1.15,1.44)--(1.2,1.42)--(1.3,1.37);
\draw [blue,  thick ] (0.3,0.44) --(0.35,0.42)-- (0.4,0.39) -- (0.45,0.4) -- (0.5,0.42) -- (0.55,0.38) -- (0.6,0.4) --(0.65,0.36) --(0.7,0.43) -- (0.75,0.39)--(0.8,0.44) --(0.85,0.41)-- (0.9,0.45)--(0.95,0.42)--(1,0.4)--(1.05,0.43)--(1.1,0.37)--(1.15,0.4)--(1.2,0.42)--(1.3,0.43);
\pgfpathclose
\draw (0.86,1.9) node[right] {$\gamma_{n,i+1}$};
\draw (0.5,1.9) node[left] {$\gamma_{n,i}$};
\draw (0.8,0.75)  node[above] {$I_{n,i}$};
\draw (1.3,1.37) node[right] {$\Gamma_{r(n+1)}$};
\draw (1.3,0.43) node[right] {$\Gamma_{rn}$};
\filldraw (1,1.4) circle (0.3pt)  node[above] {\footnotesize $b_{n,i+1}$};
\filldraw (1,0.4) circle (0.3pt)  node[above] {\footnotesize $a_{n,i+1}$};
\filldraw (0.6,1.42)  circle (0.3pt)  node[above] {\footnotesize $b_{n,i}$};
\filldraw (0.55,0.38) circle (0.3pt)  node[above] {\footnotesize $a_{n,i}$};
\end{tikzpicture}
\caption{Rectangle élémentaire $I_{n,i}$ dans le plan du paramétrage d'une surface $\h$-régulière}
 \label{dessincoaire}
\end{figure}

Comme la composition $g\circ\Phi\circ \gamma_{n,i}$ est de classe $C^1$, l'accroissement de la coordonnée $y$ le long de $\gamma_{n,i}$ est égal à
 $$y(b_{ni})-y(a_{ni})=r\big[(g\circ\Phi\circ \gamma_{n,i})'(a_{n,i})\big]^{-1}+o(r), $$
où le petit-$o$ est uniforme sur $a_{n,i}\in I$.
 Puis,  en se servant de nouveau du lemme \ref{divergence} sur la divergence des courbes horizontales et de la condition de Whitney, nous obtenons
\begin{proposition*}
L'aire euclidienne du "parallélogramme" $I_{n,i}$ est égale à
$$ S(I_{n,i})=r^3 \big|(g\circ\Phi\circ \gamma_{n,i})'\big|^{-1}(a_{n,i})+o(r^3),$$
où le petit-$o$ est uniforme sur $I$.
\end{proposition*}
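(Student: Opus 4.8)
Le plan est d'�valuer $S(I_{n,i})$ par Fubini le long de l'axe des $y$: l'aire est l'int�grale, sur le support en $y$ de $I_{n,i}$, de la longueur de la tranche verticale $I_{n,i}\cap\{y=\text{cst}\}$. On montrera que cette longueur vaut $r^2(1+o(1))$ uniform�ment, de sorte que l'aire se factorise en (�paisseur verticale $\sim r^2$) fois ($y$-accroissement le long d'une courbe horizontale), ce dernier �tant pr�cis�ment la quantit� $r\big[(g\circ\Phi\circ\gamma_{n,i})'(a_{n,i})\big]^{-1}+o(r)$ fournie par la proposition pr�c�dente.

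D'abord, je rassemblerais les donn�es d�j� �tablies. Le rectangle curviligne $I_{n,i}$ est d�limit�, en bas et en haut, par les deux courbes int�grales $\gamma_{n,i}$, $\gamma_{n,i+1}$ de $W^\phi$, qui sont des graphes $z=\zeta_i(y)$, $z=\zeta_{i+1}(y)$ avec $|\zeta_j'|=4|\phi|<1$; et sur les deux autres c�t�s, par les arcs des courbes verticales $\Gamma_{rn}$, $\Gamma_{r(n+1)}$ joignant respectivement $a_{n,i}$ � $a_{n,i+1}$ et $b_{n,i}$ � $b_{n,i+1}$. Sur ces arcs, la condition de Whitney \eqref{whitneycond} donne $|y(a_{n,i+1})-y(a_{n,i})|=o(r)$ (de m�me au bout $b$), et la proposition \ref{diametre} (appliqu�e � ces arcs, qui sont des courbes verticales, donc plates) donne $\diam_\infty(\text{arc})=r(1+o(1))$. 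Puisque $(g\circ\Phi\circ\gamma_{n,i})'(a_{n,i})=\frac{\det d_h F}{Xf}\circ\Phi(a_{n,i})$ ne d�pend que du point $a_{n,i}$, de fa�on continue, et que $a_{n,i}$, $a_{n,i+1}$ sont $d_\infty$-proches, la proposition pr�c�dente donne le m�me terme principal pour $\gamma_{n,i}$ et $\gamma_{n,i+1}$: leurs $y$-accroissements valent tous deux $\Delta:=r\big[(g\circ\Phi\circ\gamma_{n,i})'(a_{n,i})\big]^{-1}+o(r)$, et les deux intervalles de $y$ correspondants diff�rent de $o(r)$ � chaque extr�mit�.

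Le point crucial, et le principal obstacle, est l'estimation uniforme de l'�paisseur verticale. Comme l'oscillation de la coordonn�e $z$ brute le long d'un arc vertical n'est contr�l�e qu'� $o(r)$ pr�s � cette �chelle, il faut mesurer l'�cart entre $\gamma_{n,i}$ et $\gamma_{n,i+1}$ dans la jauge adapt�e au flot $d_{g,\infty}$ du lemme \ref{metriqueR}, c'est-�-dire relativement aux trajectoires de $W^\phi$. En prenant $\gamma_{n,i}$ pour courbe de r�f�rence dans la d�finition de $d_{g,\infty}(a_{n,i},a_{n,i+1})$ et en utilisant $|y(a_{n,i+1})-y(a_{n,i})|=o(r)$, le lemme \ref{metriqueR} donne $d_{g,\infty}(a_{n,i},a_{n,i+1})=d_\infty(a_{n,i},a_{n,i+1})(1+o(1))=r(1+o(1))$; ce maximum �tant alors atteint par son terme vertical, on obtient $|\zeta_{i+1}(y(a_{n,i+1}))-\zeta_i(y(a_{n,i+1}))|=r^2(1+o(1))$. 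Ainsi $\gamma_{n,i}$ et $\gamma_{n,i+1}$ sont � �cart vertical $r^2(1+o(1))$ pour une valeur de $y$; le lemme \ref{divergence} sur la divergence des courbes int�grales de $W^\phi$, appliqu� vers l'avant et vers l'arri�re depuis cette valeur (les deux courbes ayant m�me coordonn�e $y$ � chaque instant), propage l'estimation en $|\zeta_{i+1}(y)-\zeta_i(y)|=r^2(1+o(1))$ sur tout l'intervalle commun des $y$, les ``$o(1)$'' restant uniformes puisque le facteur $C(t)$ du lemme \ref{divergence} est contr�l� par $\omega^\phi_{1/2}$, qui tend vers $0$ uniform�ment quand le diam�tre d�cro�t. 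Enfin, sur les deux zones de coin (de longueur $o(r)$ en $y$) o� une seule des deux courbes horizontales est pr�sente, la tranche verticale est comprise entre cette courbe et un arc de $\Gamma$, et le m�me argument de jauge avec $\diam_\infty(\text{arc})=r(1+o(1))$ borne sa longueur par $r^2(1+o(1))$.

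Il resterait � int�grer. Les $\Gamma_t$, $t\in[rn,r(n+1)]$, feuilletant $I$, les tranches verticales de $I_{n,i}$ sont des intervalles; par ce qui pr�c�de elles ont longueur $\le r^2(1+o(1))$ partout, et $=r^2(1+o(1))$ au-dessus de l'intervalle int�rieur des $y$ (de longueur $\Delta-o(r)$). Le support en $y$ �tant de longueur $\Delta+o(r)$, il vient
$$S(I_{n,i})=\big(\Delta-o(r)\big)\,r^2\big(1+o(1)\big)+o(r)\cdot O\big(r^2\big)=r^3\big|(g\circ\Phi\circ\gamma_{n,i})'\big|^{-1}(a_{n,i})+o(r^3),$$
uniform�ment sur $I$, l'uniformit� de tous les ``$o(\cdot)$'' provenant de celle des estim�es de \eqref{whitneycond}, du lemme \ref{metriqueR}, du lemme \ref{divergence} et de la continuit� uniforme de $\frac{\det d_h F}{Xf}\circ\Phi$ sur le compact $I$.
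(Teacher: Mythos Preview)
Your proposal is correct and follows essentially the same approach as the paper, which only sketches the argument by invoking the divergence lemma~\ref{divergence} and the Whitney condition; you supply the details (in particular the use of lemma~\ref{metriqueR} to convert $d_\infty(a_{n,i},a_{n,i+1})=r$ into a vertical gap $|\zeta_{i+1}-\zeta_i|=r^2(1+o(1))$, and the forward/backward application of lemma~\ref{divergence} to propagate this gap over the whole $y$-range).
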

La condition \eqref{tancoarea} implique l'estimation uniforme lorsque $r\to 0$ suivante
$$\H^2_\infty\big(F^{-1}(0,rn)\cap \Phi(I)\big)=k_nr^2+o(1).$$
La convergence des sommes de Riemann est évidente 
$$\sum\limits_{n=-N}^{N}r\H^2_\infty\big(F^{-1}(0,rn)\cap \Phi(I)\big) \xrightarrow[r\to 0]{} \int\limits_{[-T,T]}\H^2_\infty\big(F^{-1}(0,t)\cap \Phi(I)\big)\, dt.$$
Maintenant on se rappelle la relation suivante (\cite{Heishypesurfaces}, ou le lemme \ref{aireformsurf}): la dérivée de la mesure sphérique $\S^3_\infty$, induite sur $\mathcal{S}$,  par rapport la mesure de Lebesgue $\mathcal{L}^2$ sur $\Omega$ $via$ le paramétrage $\Phi$ est égale à ($c(1)=2$ avec notre normalisation de $\S^3_\infty$)
$$ \frac{d\S^3_\infty \corn \mathcal{S}}{d\mathcal{L}^2}(y,z)=c(1)\sqrt{1+\left(\mfrac{Yf}{Xf}\right)^2}\circ \Phi(y,z).$$

Montrons que les sommes de Riemann $\sum\limits_{n=-N}^{N-1}r k_n r^2 = \sum\limits_{n=-N}^{N}r\H^2_\infty\big(F^{-1}(0,rn)\cap \Phi(I)\big)+o(1)$ le sont également pour l'intégrale sur $\mathcal{S}$ suivante:
$$c(1)^{-1}\int\limits_{\Phi(I)} \left(1+\left(\mfrac{Yf}{Xf}\right)^2\right)^{-1/2}\big|Yg-\mfrac{Yf}{Xf}Xg\big|(a)\, d\S^3_\infty(a).$$
En effet,
\begin{multline*}
\sum\limits_{n=-N}^{N-1}k_n r^2 r = \sum\limits_{n=-N}^{N-1}\sum\limits_{i=0}^{k_n-1} r^3 |(g\circ\Phi\circ \gamma_{n,i})'|^{-1}(a_{n,i}) \, |(g\circ\Phi\circ \gamma_{n,i})'|(a_{n,i}) = \\
=  \sum\limits_{n=-N}^{N-1}\sum\limits_{i=0}^{k_n-1} \big(S(I_{n,i})+o(r^3)\big) \, |(g\circ\Phi\circ \gamma_{n,i})'|(a_{n,i}) =\\
=c(1)^{-1}\sum\limits_{n=-N}^{N-1}\sum\limits_{i=0}^{k_n-1} \big(\S^3_\infty(I_{n,i})+o(r^3)\big) \, \left(1+\left(\mfrac{Yf}{Xf}\right)^2(a_{n,i})\right)^{-1/2} \big|(g\circ\Phi\circ \gamma_{n,i})'\big|(a_{n,i}),
\end{multline*}
 alors que la dérivée $(g\circ\Phi\circ \gamma_{n,i})'=\big(Yg-\mfrac{Yf}{Xf}Xg\big)\circ \Phi\circ \gamma_{n,i}$. Au bout de compte nous avons établi une formule locale:
 \begin{equation*}
c(1)\int\limits_{[-T,T]}\H^2_\infty\big(F^{-1}(0,t)\cap \Phi(I)\big)\, dt=\int\limits_{\Phi(I)} \left(1+\left(\mfrac{Yf}{Xf}\right)^2\right)^{-1/2}\big|Yg-\mfrac{Yf}{Xf}Xg\big|(a)\, d\S^3_\infty(a).
\end{equation*}
 En subdivisant $\Omega$ en des parties par des flots sans pénétration avec les mêmes propriétés que $I$ ci-dessus, nous obtenons la formule de la coaire sur les surfaces $\h$-régulières:
 \begin{equation}\label{surfcoarea}
c(1)\int\limits_{\r}\H^2_\infty\big(F^{-1}(0,t)\cap \S\big)\, dt=\int\limits_{\Phi(\Omega)} \left(1+\left(\mfrac{Yf}{Xf}\right)^2\right)^{-1/2}\big|Yg-\mfrac{Yf}{Xf}Xg\big|(a)\, d\S^3_\infty(a).
\end{equation}
La même formule sera valable (uniformément) pour toutes les surfaces $\h$-régulières $S_s=f^{-1}(s)\cap U$, où $s$ assez petit et $U$ est un voisinage compact de $0$.

Il est temps d'appliquer la formule de la coaire pour des applications scalaires \cite{magncoarea} qu'on explicitera ici dans le cas du groupe d'Heisenberg. Soit $u:(\h,d_\infty)\to (\r,|\cdot|)$
une application lipschitzienne. Alors pour tout ensemble borélien $A\subset \h$ et toute fonction mesurable  $h:A\to \r$ la formule de la coaire suivante a lieu ($\tilde c(1)$ est une constante géométrique)
\begin{equation*}
\int\limits_{A}h(x)\sqrt{(Xu)^2+(Yu)^2}(x)\,d\mathcal{L}^3(x)= \tilde c(1)\int\limits_\r\int\limits_{u^{-1}(s)\cap A} h(x)\,d\S^3_\infty(x)\,ds
\end{equation*}
pourvu que la fonction $h(x)\sqrt{(Xu)^2+(Yu)^2}(x)$ soit sommable.
Dans le cas qui nous intéresse posons $h:=\left(1+\left(\mfrac{Yf}{Xf}\right)^2\right)^{-\frac{1}{2}}\big|Yg-\mfrac{Yf}{Xf}Xg\big|$, $A:=U$,  $u:=f$.  En integrant \eqref{surfcoarea} en $s$ on arrive à
\begin{multline}
\tilde c(1) c(1)\int\limits_{\r^2}\H^2_\infty\big(F^{-1}(b)\cap U\big)\, d\mathcal{L}^2(b)=\tilde c(1) c(1)\int\limits_\r\int\limits_{\r}\H^2_\infty\big(F^{-1}(s,t)\cap U\big)\, dt\,ds=\\
=\tilde c(1)\int\limits_{\r} \int\limits_{f^{-1}(s)\cap U} \left(1+\left(\mfrac{Yf}{Xf}\right)^2\right)^{-\frac{1}{2}}\big|Yg-\mfrac{Yf}{Xf}Xg\big|(a)\, d\S^3_\infty(a)\,ds=\\
=\int\limits_{U} |Xf| |Yg-\mfrac{Yf}{Xf}Xg|(x)\, d\mathcal{L}^3(x)=\int\limits_{U} |Xf Yg- Yf Xg|(x)\, d\mathcal{L}^3(x).
\end{multline}
On rappelle aussi, que la mesure de Lebesgue $\mathcal{L}^3$ sur $\r^3\cong \h$ coïncide à un facteur multiplicatif près avec $\mathcal{H}_\infty^4$.

A partir de là, on procède d'une façon relativement standard. La dernière formule se généralise facilement pour tout ensemble borélien $U\subset \h$, en rassemblant des morceaux locaux, où la formule de la coaire peut déjà être établie.
\end{proof}

\appendix 
\section{Remarques sur l'intégration de Stieltjes}
Ici nous rappelons certains résultats de la théorie de l'intégrale de Stieltjes. 
\begin{theorem*}[\cite{chernyatin}]
Soinet $x,y\in C^0([0,T],\r)$ telles que l'in\-té\-grale $\int_0^T x\, dy$ existe. Alors elle peut être représentée 
\begin{equation*}
\displaystyle \int\limits_0^T x\, dy =\lim\limits_{\tau \to 0} \tau^{-1} \int\limits_0^T \big(y(t+\tau)-y(t)\big)x(t)\, dt.
\end{equation*}
\end{theorem*} 

\begin{theorem*}[\cite{smith}] Si pour une courbe $\gamma=(\gamma_x,\gamma_y)\in C^0([0,1],\r^2)$ l'intégrale de Stieltjes $\int_0^1 \gamma_x\, d\gamma_y$ existe, alors
\begin{equation}\label{enveloppeconvexe}
\limsup\limits_{\delta\to 0}\sum\limits_{i=0}^{n} \mathcal{L}^2\big(\operatorname{EC}\{\gamma([t_i,t_{i+1}])\}\big) =0,
\end{equation}
 où le supremum est pris sur toutes les subdivisions $\{0=t_0<t_1<\ldots<t_n<t_{n+1}=1\}$ avec $\max\limits_i  |t_{i+1}-t_i|\le \delta$, et $\operatorname{EC}\{E\}$ désigne l'enveloppe convexe d'un ensemble $E\subset \r^2$. En particulier, la courbe $\gamma$ doit être d'aire nulle dans le plan : $\mathcal{L}^2\big(\gamma([0,1])\big)=0$. 
\end{theorem*}
 
 \begin{remark} La condition \eqref{enveloppeconvexe} n'est pas suffisante pour l'existence de $\int \gamma_x\, d\gamma_y$. Notons que si $\gamma\in h_{1/2}([0,1],\r^2)$ alors la condition \eqref{enveloppeconvexe} est remplie. Or, on peut trouver un exemple d'une courbe $\gamma\in h_{1/2}$ pour laquelle  $\int \gamma_x\, d\gamma_y$ n'existe pas (voir la sous-section \ref{ondelettes}). 
\end{remark}

\begin{theorem*}[\cite{smith1}] Soit $\gamma\in h_{1/2}([0,1],\r^2)$ une courbe de Jordan (simple et fermée) bordant un ouvert $D\subset \r^2$.  
Alors son intégrale de Stieltjes existe et égale ($\pm$ suivant l'orientation) 
\begin{equation*}
\pm \int\limits_0^1 \gamma_x\, d\gamma_y=\mathcal{L}^2(D).
\end{equation*}
\end{theorem*}

\begin{remark*}
En général, l'intégrale $\int\gamma_x\, d\gamma_y$ n'existe pas même pour les courbes de Jordan (voir \cite{besicovitch}, par exemple).
\end{remark*}

Une autre condition importante suffisante pour l'existence de l'intégrale de Stieltjes est donnée par
\begin{theorem}[\cite{kondurar, young}]\label{young} Soit $x\in H^\alpha([0,T],\r) $ et $y\in H^\beta([0,T],\r)$ avec $\alpha+\beta>1$. Alors l'intégrale
$\int_0^T x\, dy$ existe au sens de Stieltjes et, de plus, pour tout $t\in [0,T]$, 
$$ \big| \int\limits_0^T x\, dy- x(t)(y(T)-y(0))\big| \le C_{\alpha+\beta} \|x\|_{H^\alpha} \|y\|_{H^\beta}\, T^{\alpha+\beta}.$$
\end{theorem}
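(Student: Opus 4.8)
This is the Young–Kondurar theorem: for $x \in H^\alpha([0,T],\r)$ and $y \in H^\beta([0,T],\r)$ with $\alpha + \beta > 1$, the Stieltjes integral $\int_0^T x\,dy$ exists, and $|\int_0^T x\,dy - x(0)(y(T)-y(0))| \le C_{\alpha+\beta}\|x\|_{H^\alpha}\|y\|_{H^\beta}T^{\alpha+\beta}$ (the statement is phrased with $x(t)$ for arbitrary $t$, but by replacing $x$ by $x - x(t)$, which has the same Hölder seminorm, it suffices to treat $t=0$).

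Let me think about how to prove this. The classical approach is the Young / sewing argument: control how the Riemann–Stieltjes sums behave under refinement.
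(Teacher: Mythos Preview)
The paper does not give its own proof of this theorem: it is stated in the appendix as a classical result with citations to Kondurar and Young, and is used as a black box. So there is no in-paper argument to compare against.

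Your proposal is not a proof but a plan. The reduction to $t=0$ by replacing $x$ with $x-x(t)$ is correct and standard. The phrase ``Young / sewing argument: control how the Riemann--Stieltjes sums behave under refinement'' names the right mechanism but does not carry it out. If you want to actually prove the statement, you need to execute that mechanism: for a partition $\sigma$, set $I_\sigma = \sum_i x(t_i)(y(t_{i+1})-y(t_i))$, show that removing a single point $t_j$ from $\sigma$ changes $I_\sigma$ by at most $\|x\|_{H^\alpha}\|y\|_{H^\beta}(t_{j+1}-t_{j-1})^{\alpha+\beta}$, and then use Young's maximal-gap deletion trick (iteratively remove the point whose two adjacent intervals have the smallest combined length) to bound $|I_\sigma - x(0)(y(T)-y(0))|$ by $\|x\|_{H^\alpha}\|y\|_{H^\beta}T^{\alpha+\beta}\sum_{k\ge 1}(2/k)^{\alpha+\beta}$, the series converging since $\alpha+\beta>1$. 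The same estimate applied to the difference between two partitions (via a common refinement) gives the Cauchy property and hence existence of the limit. Alternatively, the sewing lemma (Lemma~\ref{couture} in the paper, with $\mu(a,b)=x(a)(y(b)-y(a))$ and $\omega(h)=\|x\|_{H^\alpha}\|y\|_{H^\beta}h^{\alpha+\beta}$) gives exactly this conclusion. As written, though, you have only stated the strategy, not the argument.
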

Ce résultat s'étend également à des modules de continuité pour $f$ et $g$, plus généraux que ceux de type Hölder, voir \cite{younggeneral, burkill} et le lemme \ref{couture}.
\begin{remark}\label{integrationpartie}
Compte tenu de la relation 
\begin{equation}\label{integrationparpartie}
\big|\int_\sigma x\,dy + \int_\sigma y\,dx-xy \Big|_0^1\big|=\big| \sum_{i=0}^N (x(t_{i+1})-x(t_i))(y(t_{i+1})-y(t_i))\big|,  
\end{equation}
on observe que si $x,y\in H^\frac{1}{2}([0,1])$, alors pour toute suite de subdivisions $\sigma_n$, $\|\sigma_n\|\underrel\longrightarrow{n\to \infty} 0$, 
$$\big|\lim\limits_{n\to \infty}\int_{\sigma_n} x\,dy+\lim\limits_{n\to \infty}\int_{\sigma_n} y\,dx - xy \Big|_0^1\big|\le \| x\|_\frac{1}{2}\| y\|_\frac{1}{2},$$ pourvu que les limites en question existent. Si en plus $x\in h_{1/2}$ ou $y\in h_{1/2}$, alors "l'intégration par parties" est valable, \ie la relation
\begin{equation*}
\lim\limits_{n\to \infty}\int_{\sigma_n} x\,dy+\lim\limits_{n\to \infty}\int_{\sigma_n} y\,dx =xy \Big|_0^1,
\end{equation*}
a lieu en supposant seulement qu'une des deux limites existe. 
\end{remark}

\newpage
\addcontentsline{toc}{section}{Références} 
\bibliographystyle{alpha}
\bibliography{H1R2}

\end{document}